\documentclass[11pt]{amsart}
\usepackage{setspace}
\usepackage{xcolor}
\usepackage{makecell}
\usepackage{amssymb,amscd,amsthm,verbatim,amsmath,color,fancyhdr,mathrsfs,amsfonts,amssymb,commath, graphicx,bbm}
\usepackage[bookmarks=false]{hyperref}
\usepackage[english]{babel}
\usepackage{parskip}
\usepackage{enumitem}

\usepackage[letterpaper,top=2cm,bottom=2cm,left=3cm,right=3cm,marginparwidth=1.75cm]{geometry}

\usepackage{tikz}
\usetikzlibrary{arrows.meta,positioning,calc}

\DeclareMathOperator{\Id}{\mathrm{Id}}

\newcommand{\Was}[1]{\mathbb{W}_{#1}}

\newtheorem{corollary}[]{Corollary}
\newcommand{\Var}{\mathrm{Var}}

\DeclareMathOperator*{\argmin}{argmin}

\newcommand{\cP}{\mathcal{P}}

\newcommand{\cL}{\mathcal{L}}
\newcommand{\cM}{\mathcal{M}}

\newcommand{\Ent}{\mathrm{Ent}}

\newcommand{\Tr}{\text{Tr}}

\newcommand{\Tan}[1]{\text{Tan}(#1)}
\newcommand{\Schro}{\text{Schr\"{o}dinger}}
\newcommand{\Ito}{\text{It\^{o}}}
\newcommand{\Exp}[1]{\mathrm{E}_{#1}}

\newcommand{\vol}{\mathrm{vol}}

\newcommand{\ricci}{\mathrm{Ric}}

\newcommand{\eps}{\varepsilon}

\newtheorem{lemma}{Lemma}
\newtheorem{fact}{Fact}
\newtheorem*{definition}{Definition}

\newtheorem{theorem}{Theorem}
\newtheorem{informaltheorem}{Main Theorem}
\newtheorem*{cor*}{Corollary}
\newtheorem{assumption}{Assumption}
\newtheorem{proposition}{Proposition}

\theoremstyle{definition}
\newtheorem{remark}{Remark}

\newcommand{\commentout}[1]{}

\title[Diffusion Approximations to Schr\"{o}dinger bridges]{Diffusion Approximations to Schr\"{o}dinger Bridges and the Convergence of Entropic Potentials}

\author{Garrett Mulcahy}
\address{Garrett Mulcahy\\ Department of Mathematics \\ University of Washington\\ Seattle WA 98195, USA\\ {Email: gmulcahy@uw.edu}}
\author{Soumik Pal}
\address{Soumik Pal\\ Department of Mathematics \\ University of Washington\\ Seattle WA 98195, USA\\ {Email: soumik@uw.edu}}

\keywords{Schr\"odinger bridges, entropic potentials, Markov projection, McCann interpolation, Reciprocal processes, score function}
	
\subjclass[2000]{49N99, 49Q22, 60J60}

\thanks{This research is partially supported by the following grants: NSF DMS-2502281, 2133244, 2052239, and
PIMS PRN-01 (Kantorovich Initiative).}

\date{\today}

\begin{document}

\begin{abstract}
Consider the Monge-Kantorovich optimal transport problem between two Euclidean densities $\mu$ and $\nu$ and quadratic cost. The $\varepsilon$-Schr\"{o}dinger bridge is the solution to the entropic regularized problem with regularization parameter $\varepsilon$. By taking the conditional expectation of the second coordinate given the first under this coupling, we obtain the $\varepsilon$-entropic Brenier map. As $\varepsilon$ goes down to zero, it is known that the entropic Brenier map converges to the quadratic cost optimal transport map between the two measures, i.e., the Brenier map. We show that, under some smoothness and log-concavity constraints on the marginals, the difference between the entropic Brenier map and the Brenier map is equal to $\eps$ times one-half of the score function of the first marginal plus an error that is $o(\eps)$ in $\mathbf{L}^2(\mu)$. This expansion holds irrespective of the second marginal $\nu$. The proof relies on an approximation of the Schr\"{o}dinger bridge by a noisy version of the McCann interpolation. Under additional assumptions we show a second approximation to the Schr\"odinger bridge via so-called Mirror Langevin diffusions, which are Langevin diffusions on the Hessian manifold generated by the Brenier map. These two approximations, that appear quite different at first glance, are nonetheless shown to be closely connected. Our proofs utilize a random surface and a novel stochastic operation called the tangent Markov projection.   
\end{abstract}

\maketitle

\section{Introduction}\label{sec:introduction}
Let $\cP_2(\mathbb{R}^{d})$ denote the set of probability measures on $\mathbb{R}^{d}$ with finite second moments, and let $\mu,\nu \in \cP_2(\mathbb{R}^{d})$. We make the standard abuse of notation of using the same symbol to refer to both a measure and its Lebesgue density (when it exists). The 2-Wasserstein distance between $\mu$ and $\nu$, denoted $\Was{2}$, is defined by the following Monge Kantorovich quadratic cost optimal transport problem
\begin{align}\label{defn:was2}
    \Was{2}^{2}(\mu,\nu) := \inf\limits_{\pi \in \Pi(\mu,\nu)} \int_{\mathbb{R}^{d} \times \mathbb{R}^{d}} \norm{x-y}^2 d\pi. 
\end{align}
Here $\Pi(\mu,\nu) := \{\gamma \in \cP(\mathbb{R}^{d} \times \mathbb{R}^{d}): (\pi_{x})_{\#}\gamma = \mu, (\pi_{y})_{\#}\gamma = \nu \}$ is the set of all couplings of $\mu$ and $\nu$. 
When $\mu$ has Lebesgue density, \cite{THEbrenier} established that the optimal transport plan $\pi^*$ in \eqref{defn:was2} is induced by a $\mu$-a.s.\ unique transport map, i.e.\ a measurable function $T: \mathbb{R}^{d} \to \mathbb{R}^{d}$ such that the pushforward of $\mu$ by $T$, denoted $T_{\#}\mu$, is equal to $\nu$. This optimal transport map is given by a gradient of a convex function $\nabla \varphi$. We call this the \textbf{Brenier map}. 

A related object is the $\Schro$ bridge from $\mu$ to $\nu$. Fix $\eps > 0$ and let $H(\cdot|\cdot)$ denote the relative entropy (Kullback-Leibler divergence) between two positive measures on the same measurable space
\begin{align}
    H(\mu_1|\mu_2) := \begin{cases}
        \Exp{\mu_1}\left[\log\left(d\mu_1/d\mu_2\right)\right] &\text{ if $\mu_1 \ll \mu_2$} \\
        +\infty &\text{ otherwise}
    \end{cases}
\end{align}
The notation $\mu \ll \nu$ denotes that $\mu$ is absolutely continuous with respect to $\nu$. We write $\Ent(\mu) := H(\mu|\mathrm{Leb})$. The (static) $\eps$-$\Schro$ bridge from $\mu$ to $\nu$ is the solution to the regularization of \eqref{defn:was2} with relative entropy
\begin{align}\label{defn:intro-static-sb}
    \pi^{\eps} := \argmin\limits_{\pi \in \Pi(\mu,\nu)} \left( \int_{\mathbb{R}^{d}} \frac{1}{2}\norm{x-y}^2d\pi  + \eps H(\pi|\mu\otimes \nu)\right).
\end{align}
Under mild assumptions on $\mu$ and $\nu$, $\pi^{\eps}$ exists and is unique \cite{schroLeonard13}. As $\eps \downarrow 0$, it is known that $\pi^{\eps}$ converges in various senses to a minimizer of \eqref{defn:was2} \cite{leo-sb-to-kp12}. By \cite[Theorem 2.8]{schroLeonard13}, the Radon-Nikodym derivative of the $\Schro$ bridge with respect to its reference measure has a product decomposition. To be precise, let $(r_{t}(\cdot,\cdot), t > 0)$ denote the standard Euclidean heat kernel. Then there exist functions $a^{\eps},b^{\eps}: \mathbb{R}^{d} \to [0,+\infty)$ unique up to multiplication by a positive constant such that 
\begin{align}\label{eq:fg-decomp}
    \frac{d\pi^{\eps}}{dr_{\eps}}(x,z) = a^{\eps}(x)b^{\eps}(z).  
\end{align}
A common change of variables employed is to define $f_{\eps} = \eps \log a^{\eps}$ and $g_{\eps} = \eps \log b^{\eps}$, which are called entropic potentials (also $\Schro$ potentials) in the literature. We note the following connection between these potentials and the entropic Brenier map; this can be seen from \cite[eqn (8)]{chewi2022entropic}, although we use a slightly different convention
\begin{align}\label{defn:entropic-potent}
   \nabla f_{\eps}(x) &:= \eps \nabla \log a^{\eps}(x) = x - \mathrm{E}_{\pi^{\eps}}[Y|X=x] - \eps \nabla f(x). 
\end{align}

We now set $\mu = e^{-f}$ and $\nu = e^{-h}$ to make it clear that we consider probability measures with Lebesgue density. Observe that $\pi^{\eps}$ is a coupling whereas the Brenier map is a measurable function. For this reason, \cite{pooladian2022entropic} introduces the entropic Brenier map, defined as 
\begin{align}
    \mathcal{B}_{\eps}(x) := \Exp{\pi^{\eps}}[Y|X=x],
\end{align}
and propose using it as a biased estimator for the Brenier map, as $\pi^{\eps}$ is more amenable to computation and statistical estimation \cite{cuturi2013sinkhorn,sinkhorn-OG,statOTbook}. As $\eps \downarrow 0$, \cite[Corollary 1]{pooladian2022entropic} establishes the $L^2(\mu)$ convergence of $\mathcal{B}_{\eps}$ to $\nabla \varphi$ as $\eps \downarrow 0$. This convergence is established in more generality (and in slightly different language) in \cite[Theorem 1.1]{chiarini2022gradient}. 

One of our main results identifies the next order term in $\eps$ in the convergence of the entropic Brenier map to the Brenier map in the following setting.
\begin{assumption}\label{assumption:standard-assumptions}
    Let $e^{-f},e^{-h} \in \cP_2(\mathbb{R}^{d})$ 
    and let $\nabla \varphi$ denote the Brenier map from $e^{-f}$ to $e^{-h}$. Assume that
    \begin{itemize}
        \item[(A1)] $f,h \in C^{3}(\mathbb{R}^{d})$ and there exists $\Lambda_1, \Lambda_2 > 0$ such that
        \begin{align}\label{eq:strong-ellipticity}
            \Lambda_1 \Id \leq \nabla^2 f(x), \nabla^2 h(x) \leq \Lambda_2 \Id \text{ for all $x \in \mathbb{R}^{d}$,} 
        \end{align}
        where $\leq$ between matrices denotes the positive semidefinite ordering.
        \item[(A2)] $\varphi \in C^{4}(\mathbb{R}^{d})$ and all third and fourth derivatives of $\varphi$ are globally bounded. By Caffarelli's Theorem \cite{THEcaffarelli}, \eqref{eq:strong-ellipticity} implies that
        \begin{align}
            \sqrt{\Lambda_1/\Lambda_2} \Id \leq \nabla^2 \varphi(x) \leq \sqrt{\Lambda_2/\Lambda_1}\Id \text{ for all $x \in \mathbb{R}^{d}$.}
        \end{align}
    \end{itemize}
\end{assumption}

\begin{informaltheorem}[Theorem \ref{thm:sb-o-eps-score}]\label{informal-theorem-1}
    Under Assumption \ref{assumption:standard-assumptions}, the following limit holds in $L^2(e^{-f})$ irrespective of $h$, 
    \begin{align}\label{eq:intro-score-function-limit}
        \lim\limits_{\eps \downarrow 0}\frac{1}{\eps}\left(\Exp{\pi^{\eps}}\left[Y|X=x\right]-\nabla \varphi(x)\right) &= -\frac{1}{2}\nabla f(x). 
    \end{align}
  Equivalently, for the entropic potentials $(f_{\eps}, \eps > 0)$ defined in \eqref{defn:entropic-potent}, it holds in $L^2(\mu)$ that
    \begin{align}
        \lim\limits_{\eps \downarrow 0} \frac{1}{\eps}(\nabla f_{\eps}(x) - (x-\nabla \varphi(x))) = -\frac{1}{2}\nabla f(x). 
    \end{align}
\end{informaltheorem}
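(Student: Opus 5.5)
The plan is to compare $\pi^{\eps}$ with an explicit coupling built from a noisy McCann interpolation, and to use the strong convexity of the entropic problem to transfer the first-order expansion of that coupling's conditional mean to $\mathcal{B}_{\eps}$.

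\textbf{Candidate coupling.} For $X\sim \mu=e^{-f}$ and an independent standard Gaussian $Z$, set
\[
Y^{\eps}:=\nabla\varphi\bigl(X+\tfrac{\eps}{2}\nabla \log \mu(X)\cdots\bigr).
\]
A simpler choice is to take $Y^{\eps}$ to be the time-one value of a noisy McCann interpolation,
\[
Y^{\eps} = \nabla\varphi(X) - \tfrac{\eps}{2}\nabla^2\varphi(X)\nabla f(X)\cdot(\text{correction}) + \sqrt{\eps}\,\sigma(X)Z .
\]
The drift correction is tuned so that the second marginal equals $\nu$ up to $o(\eps)$. I would then fix the marginal exactly by composing with a near-identity Brenier map.

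The heuristic to guide the choice is this. Conditional on $X=x$, the bridge $\pi^{\eps}(\cdot\mid x)$ should be approximately Gaussian with mean $\nabla\varphi(x)-\tfrac{\eps}{2}\nabla f(x)$ and covariance $\eps\nabla^2\varphi(x)$. This follows from the Laplace expansion of
\[
\pi^{\eps}(dy\mid x)\propto \exp\bigl((\langle x,y\rangle-\psi_\eps(y))/\eps\bigr)\,dy,
\]
where $\psi_\eps=\varphi^*+\eps\,\chi+o(\eps)$. Combining this with the marginal constraint $\int \pi^{\eps}(dx\mid y)\,\ldots$ and a symmetric Gaussian expansion in $x$, the terms involving $h$ and $\chi$ cancel. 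The only surviving term is $-\tfrac{\eps}{2}\nabla f$, which comes from differentiating the first marginal density. This is why the answer does not depend on $h$.

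\textbf{Key steps, in order.}
\begin{enumerate}
\item Construct the approximating process rigorously. I would use the paper's noisy McCann interpolation: the geodesic $X_t=(1-t)X+t\nabla\varphi(X)$ perturbed by a Brownian term, so that the law of the pair is within $o(\eps)$ in relative entropy of optimal. Assumption (A2) on the third and fourth derivatives of $\varphi$ controls the Taylor remainders.
\item Compute the entropic cost of the candidate coupling $\tilde\pi^{\eps}$ to second order in $\eps$. Then show, using the known expansion of the entropic OT value, that it exceeds the optimum by $o(\eps^2)$.
\item Use the $\eps$-strong convexity of $\pi\mapsto \int\tfrac12|x-y|^2\,d\pi+\eps H(\pi\mid\mu\otimes\nu)$ along marginal-preserving paths. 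This is realized through the entropy and the transport inequality for the log-concave marginals from (A1). It yields $H(\tilde\pi^{\eps}\mid\pi^{\eps})=o(\eps)$.
\item Convert this into a bound on the conditional means by a conditional Talagrand (or Donsker–Varadhan) inequality. Conditional laws that are $\eps$-scale Gaussians with bounded covariance give
\[
\|\mathcal{B}_\eps-\tilde{\mathcal{B}}_\eps\|_{L^2(\mu)}^2\lesssim \eps\, H(\tilde\pi^{\eps}\mid\pi^{\eps})=o(\eps^2).
\]
Since $\tilde{\mathcal{B}}_\eps=\nabla\varphi-\tfrac{\eps}{2}\nabla f+o(\eps)$ by construction, the theorem follows. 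The statement for $\nabla f_\eps$ is then immediate from \eqref{defn:entropic-potent}.
\end{enumerate}

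\textbf{Main obstacle.} The hardest part is Step 2: showing that the suboptimality is genuinely $o(\eps^2)$ and not merely $O(\eps^2)$. This needs the exact second-order expansion of the entropic cost, including the $\tfrac{\eps}{2}\log\det\nabla^2\varphi$ term and the $\eps^2$ Fisher-information-type term. It also needs the candidate's second marginal to match $\nu$ exactly. My first attempt would be to avoid matching $\nu$ exactly. Instead, I would compare through the dual: bound $\mathcal{B}_\eps-\nabla\varphi$ using the first-order optimality condition, namely that $\nabla f_\eps$ satisfies the Schr\"odinger system. I would then test that system against the Gaussian ansatz, controlling the remainders uniformly with Caffarelli's bounds on $\nabla^2\varphi$.
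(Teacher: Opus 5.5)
\textbf{There is a genuine gap.} It sits in Step 2, as you suspected, but it is not a bookkeeping issue: the static candidate you write down cannot work. Your $\tilde\pi^\eps$ has Gaussian conditional laws $\tilde\pi^\eps(\cdot\mid x)$. Your own Laplace expansion shows what $\pi^\eps(\cdot\mid x)$ looks like in the rescaled variable $u=(y-y_0)/\sqrt{\eps}$: a Gaussian times $1-\tfrac{\sqrt{\eps}}{6}D^3\psi_\eps(y_0)[u,u,u]+O(\eps)$. That cubic, third-Hermite component cannot be absorbed into a mean or a covariance. Hence $H(G\mid\pi^\eps(\cdot\mid x))$ is of order $\eps$, not $o(\eps)$, for every Gaussian $G$, wherever $D^3\varphi^*\neq 0$. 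Integrating in $x$ gives $H(\tilde\pi^\eps\mid\pi^\eps)\gtrsim\eps$ unless the Brenier map is affine. Your Step 4 then only gives $\|\mathcal{B}_\eps-\tilde{\mathcal{B}}_\eps\|_{L^2(\mu)}=O(\eps)$, which is the same order as the term you want to identify.

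The paper makes the analogous point for the static MLD coupling $\ell_\eps$: it already has the right conditional mean and covariance, yet it is only shown to be $o(1)$-close, and an $O(\eps)$ error is conjectured to be sharp. Composing with a near-identity map of size $o(\eps)$ cannot create skewness of relative order $\sqrt{\eps}$. It is also not precise enough. Correcting a marginal error of size $o(\eps)$ moves the transport cost at first order by roughly $\int(\tfrac12|y|^2-\varphi^*)\,d(\nu-\tilde\nu)=o(\eps)$, whereas Step 2 needs the cost to within $o(\eps^2)$.

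\textbf{The missing idea is to work on path space.} The $\eps$-noisy McCann interpolation is not the geodesic plus Brownian noise. It is the SDE $dY_t=(\nabla\varphi_{t\to1}-\nabla\varphi_{t\to0}+\tfrac{\eps}{2}\nabla\log\rho_t^0)(Y_t)\,dt+\sqrt{\eps}\,dB_t$ with $Y_0\sim\mu$. The score term makes its marginals exactly $\rho^0_t$, so no marginal correction is needed. You never compute a static entropy. Girsanov gives $H(\widetilde{Q}^\eps\mid R^\eps)=\Ent(\mu)+\tfrac{1}{2\eps}\int_0^1\|v^0_t+\tfrac{\eps}{2}\nabla\log\rho^0_t\|^2_{L^2(\rho^0_t)}dt$. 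Because the cross term integrates to $\Ent(\nu)-\Ent(\mu)$, this equals $\tfrac{1}{2\eps}\Was{2}^2(\mu,\nu)+\tfrac12(\Ent(\mu)+\Ent(\nu))+\tfrac{\eps}{8}\int_0^1 I(\rho^0_t)\,dt$ exactly, which is the Conforti--Tamanini expansion with no remainder. Step 3 then needs no strong convexity or transport inequality. Since $dP^\eps/dR^\eps=a^\eps(\omega_0)b^\eps(\omega_1)$, the exact identity $H(\widetilde{Q}^\eps\mid P^\eps)=H(\widetilde{Q}^\eps\mid R^\eps)-H(P^\eps\mid R^\eps)=o(\eps)$ holds, and data processing passes it to the endpoint laws.

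Two further points are missing.
\begin{enumerate}
\item For this candidate, $\tilde{\mathcal{B}}_\eps=\nabla\varphi-\tfrac{\eps}{2}\nabla f+o(\eps)$ is not ``by construction''. Proposition~\ref{prop:d-dt-matrix-identity} proves it by passing to $Z_t=\nabla\varphi_{t\to1}(Y_t)$ and writing its drift as $\tfrac{\eps}{2}e^{h}\nabla\cdot\bigl(e^{-h}(\nabla^2\varphi_{1\to t})^{-2}\bigr)$. The identity $\int_0^1(\nabla^2\varphi_{1\to t})^{-2}dt=(\nabla^2\varphi^*)^{-1}$ then turns the time-integrated drift into the dual Mirror Langevin drift $-\tfrac{\eps}{2}\nabla f\circ\nabla\varphi^*$. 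This is where the independence from $h$ actually comes from, not from the heuristic cancellation you describe.
\item The entropy you control is $H(\tilde\pi^\eps\mid\pi^\eps)$, so Talagrand must be applied to $\pi^\eps(\cdot\mid x)$. That requires the Chewi--Pooladian bound $-\nabla^2\log\pi^\eps(\cdot\mid x)\ge\kappa/\eps$, not Gaussianity of the candidate's conditionals, which would only serve the reverse direction.
\end{enumerate}
Your closing fallback, testing the Schr\"odinger system against a Gaussian ansatz, is the informal WKB argument. Its remainders are exactly the hard part.
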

The quantity $-\nabla f$ is called the \textbf{score function} of $e^{-f}$, and it is important in applications such as statistics and machine learning. As far as identifying the limit \eqref{eq:intro-score-function-limit}, there are a few recent attempts that are worth mentioning. First, \cite[Propositions 8, 10]{mordant24selfEOT} presents an informal argument for obtaining the limit \eqref{eq:intro-score-function-limit} based on a WKB ansatz. Second, there is a parallel effort based on geometric arguments that we believe are connected and complementary to our probabilistic arguments here.    
In \cite{legervialard23}, the authors develop a Laplace method involving the Kim-McCann geometry \cite{KimMcCann} that computes integrals that decay exponentially away from the graph of the optimal transport. As the authors note,  ``entropic regularization method leads to such integrals'' and announce that a future separate article will cover the Taylor expansion of the entropic potentials with respect to the regularization parameter. It was shown by \cite{WongYang} that the Kim-McCann geometry is related to the dualistic structure of information geometry, a connection that was utilized in an earlier approximation of the low temperature Schr\"odinger bridge in \cite{pal2019difference}. We believe our current probabilistic construction also has a parallel interpretation in this geometry that we leave for future investigations. See Section \ref{sec:sketch-of-proof} for more or follow the discussion around \cite[eqn. (11)]{MP25}.

We pause to make some remarks about the limit in \eqref{eq:intro-score-function-limit}. First, we note that the limit depends only on the score function of the first marginal. The appearance of only one marginal is perhaps surprising, but it has implications for the stability of (entropic) Brenier maps as studied in \cite{divol2024tightstabilityboundsentropic}. To be precise, fix $\mu, \nu_1,\nu_2 \in \cP_2(\mathbb{R}^{d})$ and let $\mathcal{B}_{\eps}^{\nu_i}$, $\nabla \varphi^{\nu_i}$ denote the entropic Brenier map and Brenier map, respectively, from $\mu$ to $\nu_i$ for $i = 1,2$. By Theorem \ref{thm:sb-o-eps-score}, the following expansions hold in $L^2(\mu)$
\begin{align}
    \mathcal{B}_{\eps}^{\nu_1} = \nabla \varphi^{\nu_1}+\frac{\eps}{2}\nabla \log \mu + o(\eps), \quad \mathcal{B}_{\eps}^{\nu_2} = \nabla \varphi^{\nu_2}+\frac{\eps}{2}\nabla \log \mu + o(\eps).
\end{align}
Thus, $(\mathcal{B}_{\eps}^{\nu_1}-\mathcal{B}_{\eps}^{\nu_2})$ and $(\nabla \varphi^{\nu_1}-\nabla \varphi^{\nu_2})$ differ by a quantity in $L^2(\mu)$ that is $o(\eps)$, which offers an unexpected improvement to the more immediate $o(1)$. This $o(1)$ difference is used to pass stability of entropic Brenier maps to the Brenier maps in \cite[Corollary 3.5]{divol2024tightstabilityboundsentropic}. We note that this result requires compact support, whereas Main Theorem \ref{informal-theorem-1} requires full support.

The crux of our approach in proving \eqref{eq:intro-score-function-limit} is to introduce the dynamic picture. In the notation of Main Theorem \ref{informal-theorem-1}, define for $t \in [0,1]$ the function $\nabla \varphi_{0 \to t}(x) = (1-t)x + t \nabla \varphi(x)$. The \textbf{McCann interpolation}, introduced properly in \eqref{eq:mccann-interp}, is the curve of probability measures $\rho_t^0 = (\nabla \varphi_{0 \to t})_{\#}e^{-f}$ for $t \in [0,1]$. Let $\nabla \varphi_{s\rightarrow t}$ denote the Brenier map transporting $\rho_s^0$ to $\rho_t^0$. We introduce a stochastic process called the \textbf{$\eps$-noisy McCann interpolation}, defined as satisfying the following SDE
\begin{align}\label{eq:noisy-mccann-intro}
    dX_t = \left(\nabla \varphi_{t \to 1}-\nabla \varphi_{t \to 0}+\frac{\eps}{2}\nabla \log \rho_t^{0}\right)(X_t)dt + \sqrt{\eps}dB_t, \quad X_0 \sim e^{-f}. 
\end{align}
This process has the same marginal flow as the McCann interpolation but the joint distribution of $(X_0, X_1)$ is absolutely continuous, while still being a coupling of $e^{-f}$ and $e^{-h}$. We show that $\Exp{}[X_1|X_0 =x]$ has the same $L^2(e^{-f})$ limit as \eqref{eq:intro-score-function-limit} (Proposition \ref{prop:d-dt-matrix-identity}). Let $\widetilde{\ell}_{\eps} = \mathrm{Law}(X_0,X_1)$, then the proof of Main Theorem \ref{informal-theorem-1} follows from establishing that $\widetilde{\ell}_{\eps}$ is an $o(\eps)$ approximation in relative entropy to $\pi^{\eps}$. While the $\eps$-noisy McCann interpolation provides an approximation to $\pi^{\eps}$ that is sufficient for computing \eqref{eq:intro-score-function-limit}, the qualitative properties of $\pi^\eps$ remain unclear. For example, it is not apparent from the SDE what the conditional covariance matrix of $X_1$, given $X_0=x$, is. We expect approximately the same answer as under $\pi^\eps$ \cite{pal2019difference}, $\eps \nabla^2 \varphi(x)$. 

This leads us to the second thrust of this paper. To obtain this qualitative insight into small temperature $\Schro$ bridges, we direct our attention to a diffusion process called the Mirror Langevin diffusion \cite{zhang-mld-20a,ahn2021efficient, MP25, CKP26}.

Let $\nabla \varphi$ denote the Brenier map from $e^{-f}$ to $e^{-h}$. Write $\varphi^*$ to denote the convex conjugate of $\varphi$, and as shorthand, define $x^* := \nabla \varphi(x)$ for $x \in \mathbb{R}^{d}$.  The \text{primal} Mirror Langevin diffusion (MLD) with respect to $e^{-f},e^{-h}$ and $\nabla \varphi$ is defined as the solution to the following SDE, where $Y_t = X_t^*$,
\begin{align}\label{defn:MLD}
    dX_t = -\frac{1}{2}\nabla h(Y_t)dt + \sqrt{\nabla^2 \varphi^*(Y_t)} dB_t.
\end{align}
The process $(Y_t, t \geq 0)$ satisfies a complementary SDE given by
\begin{align}\label{defn:dMLD}
    dY_t = -\frac{1}{2}\nabla f(X_t)dt + \sqrt{\nabla^2 \varphi(X_t)} dB_t.
\end{align}
We say that $(X_t, t \geq 0)$ is the MLD in \textbf{primal} coordinates and $(Y_t,t \geq 0)$ is the MLD in \textbf{dual} coordinates. This nomenclature is due to the fact that the two SDEs are two coordinate representations of a single diffusion on a manifold. Equip $\mathbb{R}^{d}$ with a Riemannian metric given by the Hessian of $\varphi$. This space is called a \textbf{Hessian manifold} \cite{shima2007geometry}, a real-valued analogue of Kähler manifolds \cite[Section 3]{kolesnikov-hessian-metric}. The MLD is then an analogue of the Langevin diffusion on the Hessian manifold with \eqref{defn:MLD} and \eqref{defn:dMLD} being its representations in dual affine coordinate charts. Under assumptions, the first SDE in \eqref{defn:MLD} is reversible with stationary measure equal to $e^{-f}$; for the second SDE the stationary measure is $e^{-h} = (\nabla \varphi)_{\#}e^{-f}$.

The setting considered in our earlier work \cite[Theorem 4]{MP25} is the following. Let $(X_t,t \geq 0)$ satisfy \eqref{defn:MLD} with initial condition $X_0 \sim e^{-f}$. As this is the stationary measure, $X_t \sim e^{-f}$ and $Y_t \sim e^{-h}$ for all $t \geq 0$. It then holds for all $\eps > 0$ that $(X_0,Y_{\eps}) = (X_0,X_{\eps}^*)$ is a coupling of $e^{-f}$ and $e^{-h}$. Set $\ell_{\eps} := \mathrm{Law}(X_0,X_{\eps}^*)$, then it follows from the SDE in \eqref{defn:MLD} that in $L^2(e^{-f})$
\begin{align}\label{eq:barycentric-proj-mld}
    \Exp{\ell_{\eps}}[Y|X=x] = \Exp{}[Y_{\eps}|X_0=x]= \Exp{}[Y_{\eps}|Y_{0}=\nabla \varphi(x)]= \nabla \varphi(x) - \frac{\eps}{2}\nabla f(x) + o(\eps). 
\end{align}
Observe that this matches the expansion of $\Exp{\pi^{\eps}}[Y|X=x]$ in \eqref{eq:intro-score-function-limit}. If $H(\ell_{\eps}|\pi^{\eps})$ were $o(\eps)$, then Theorem \ref{thm:sb-o-eps-score} would follow from the expansion in \eqref{eq:barycentric-proj-mld}. However, in \cite[Theorem 4]{MP25} we show that $H(\ell_{\eps}|\pi^{\eps}) = o(1)$ and we conjecture that it cannot be improved past $O(\eps)$. 

The key innovation in the current work is the following. Take $\ell_{\eps}$ as previously defined and sample $(X_0,X_1) \sim \ell_{\eps}$. Form a stochastic process over the time interval $[0,1]$ such that the conditional distribution $(X_t, t \in [0,1])$, given  $(X_0,X_{1})$, is the $\eps$-Brownian bridge joining $X_0$ to $X_1$. Equivalently, this process is the so-called \textbf{reciprocal process}, given by
\begin{align}\label{defn:intro-stochastic-proc}
    t \in [0,1] \mapsto (1-t)X_0 + tX_1 + \sqrt{\eps}Z_{t},
\end{align}
where $(Z_t, t \in [0,1])$ is the standard Brownian bridge, independent of the coupling $(X_0,X_1)$. 

Reciprocal processes are expected in this study since the dynamic Schr\"odinger bridge (defined later in \eqref{eq:dynam-schro-bridge-defn}) is itself one. 
However, as opposed to the dynamic Schr\"odinger bridge, other reciprocal processes are not Markov processes. In fact, the dynamic $\Schro$ bridge is the unique intersection of the set of reciprocal processes (with fixed bridge distribution and endpoint marginals) and the set of Markov processes, a fact used in the Iterative Markov Fitting (IMF) procedure for computing $\Schro$ bridges proposed by \cite{shi2023diffusion,peluchetti-23}.

Since the reciprocal process with endpoint distribution $\ell_\eps$ is non-Markovian, we perform an operation called the \textbf{tangent Markov projection} to make it Markovian. This operation is a modification of the Markov projection studied in \cite{gyongy-mimicking-86,brunick-mimicking13}, and it is defined properly later in Section \ref{subsec:markov-proj}. 
For now it suffices to say that a Markov projection of a stochastic process is a Markov process (typically a diffusion) that has the same marginal distribution as the original process at each point in time. Hence, both the stochastic process and its Markov projection trace out the same curve of marginal probabilities, but have different joint distributions across time. We introduce the concept of a tangent Markov projection of a stochastic process which, roughly, is a Markov projection diffusion for which the drift is a function of the gradient type. This is consistent with the metric theory of $2$-Wasserstein spaces. 

Denote the law of this new process by $\overline{Q}^{\eps}$, the connection between the Hessian geometry of the MLD and the $\Schro$ bridge is then given in the following result. 
\begin{informaltheorem}[Theorem \ref{thm:mp-mld-approx-of-sb}]\label{informal-theorem-2}
    For marginals $e^{-f}, e^{-h}$, let $\overline{Q}^{\eps}$ denote the law of the tangent Markov projection of the reciprocal process formed from the Mirror Langevin diffusion, see \eqref{defn:intro-stochastic-proc}. Let $P^{\eps}$ denote the dynamic $\Schro$ bridge. Under Assumptions \ref{assumption:standard-assumptions} and \ref{assumption:technical-rd} (see below), it holds that
    \begin{align}
        \lim\limits_{\eps \downarrow 0}\frac{1}{\eps}H(\overline{Q}^{\eps}|P^{\eps}) = 0. 
    \end{align}
\end{informaltheorem}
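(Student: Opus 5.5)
The plan is to compare both path measures with the law $R^{\eps}$ of reversible $\eps$-Brownian motion started from $\mu=e^{-f}$, or equivalently to write $H(\overline{Q}^{\eps}|P^{\eps})$ through Girsanov's theorem as a time integral of squared drift differences. The dynamic Schr\"odinger bridge $P^{\eps}$ is the diffusion
\begin{align*}
dX_t = \eps \nabla \log \mathcal{T}_{1-t} b^{\eps}(X_t)\,dt + \sqrt{\eps}\,dB_t, \qquad X_0\sim e^{-f},
\end{align*}
where $\mathcal{T}_s$ denotes the heat semigroup with variance $\eps s$. Its drift is the gradient of a function $\psi^{\eps}_t$ solving a Hamilton--Jacobi--Bellman equation with viscosity $\eps/2$. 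The tangent Markov projection $\overline{Q}^{\eps}$ is, by construction, a diffusion
\begin{align*}
dX_t = \nabla \overline{\psi}^{\eps}_t(X_t)\,dt + \sqrt{\eps}\,dB_t,
\end{align*}
with the same time marginals $(q^{\eps}_t)$ as the reciprocal process \eqref{defn:intro-stochastic-proc} built from $\ell_{\eps}$. Both processes start from $e^{-f}$, so Girsanov gives
\begin{align*}
H(\overline{Q}^{\eps}|P^{\eps}) = \frac{1}{2\eps}\int_0^1 \int \norm{\nabla \overline{\psi}^{\eps}_t - \nabla\psi^{\eps}_t}^2 \, dq^{\eps}_t \, dt .
\end{align*}
The goal is therefore to show $\int_0^1\norm{\nabla\overline{\psi}^{\eps}_t-\nabla\psi^{\eps}_t}^2_{L^2(q^{\eps}_t)}dt=o(\eps^2)$.

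I would avoid estimating $\psi^{\eps}_t$ directly and use a variational route instead. Since $P^{\eps}$ minimizes $H(\cdot|R^{\eps})$ among path measures with the given endpoint marginals, and $\overline{Q}^{\eps}$ has those marginals, the Pythagorean identity for Schr\"odinger bridges gives
\begin{align*}
H(\overline{Q}^{\eps}|P^{\eps}) = H(\overline{Q}^{\eps}|R^{\eps}) - H(P^{\eps}|R^{\eps}).
\end{align*}
This holds because $\log dP^{\eps}/dR^{\eps}=\log a^{\eps}(X_0)+\log b^{\eps}(X_1)$ depends only on the endpoints. Multiplying by $\eps$, the claim becomes the statement that the Benamou--Brenier-type cost
\begin{align*}
\frac12\int_0^1\int\norm{\nabla\overline{\psi}^{\eps}_t}^2dq^{\eps}_t\,dt
\end{align*}
matches the entropic optimal value up to $o(\eps^2)$, modulo entropy terms of the marginals. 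I would use the known second-order expansion of the entropic cost: $\eps H(P^\eps|R^\eps)$ equals $\tfrac12\Was{2}^2$ plus $\tfrac{\eps}{2}(\Ent(\mu)+\Ent(\nu))$-type terms, with a next correction of order $\eps^2$ given by the integrated Fisher information along the McCann interpolation. The task is then to show that $\overline{Q}^{\eps}$ attains the same $\eps^2$ coefficient.

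For $\overline{Q}^{\eps}$ the steps are the following.
\begin{enumerate}
\item Expand the marginals $q^{\eps}_t$. Using \eqref{eq:barycentric-proj-mld} and the fact that $Y_\eps$ given $X_0$ is approximately Gaussian with covariance $\eps\nabla^2\varphi$, the law of $(1-t)X_0+tX_1+\sqrt{\eps}Z_t$ is a Gaussian smoothing of $\rho^0_t$. Its covariance is $\eps\big(t^2\nabla^2\varphi+t(1-t)\Id\big)$, pushed through $\nabla\varphi_{0\to t}$, together with a mean shift of order $\eps$. This gives $q^\eps_t=\rho^0_t+\eps\,\delta_t+o(\eps)$ in a smooth weighted sense.
\item Solve the continuity equation $\partial_t q^\eps_t+\nabla\cdot\big(q^\eps_t(\nabla\overline{\psi}^\eps_t-\tfrac{\eps}{2}\nabla\log q^\eps_t)\big)=0$ perturbatively. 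The velocity is $\nabla\varphi_{t\to1}-\nabla\varphi_{t\to0}+\eps w_t+o(\eps)$, where $w_t$ is determined by an elliptic equation with weight $\rho^0_t$.
\item Compare with the corresponding expansion of $\nabla\psi^\eps_t$ from the HJB equation. The order-$\eps$ correctors coincide at both endpoints by Main Theorem \ref{informal-theorem-1}, and both satisfy the same linearized transport-type equation along the McCann flow. By uniqueness the correctors agree, so the drift difference is $o(\eps)$ in $L^2(q^\eps_t)$, which is exactly what the Girsanov integral requires.
\end{enumerate}

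I expect the main obstacle to be the last step. It requires rigorous $L^2$ control, uniform in $t\in[0,1]$, on the first-order expansion of the Schr\"odinger drift $\nabla\psi^\eps_t$, and the projection drift must be identified as a genuine gradient whose expansion has a remainder controlled uniformly in $t$. The plan there is to use Assumption \ref{assumption:technical-rd} together with the uniform convexity from Caffarelli and the bounded third and fourth derivatives of $\varphi$, which give Poincar\'e and Bakry--\'Emery bounds for $\rho^0_t$ and for its Gaussian smoothings $q^\eps_t$. These bounds would let me invert the weighted elliptic operator with uniform constants, so the remainder estimates close. Near $t=0$ and $t=1$, where the bridge noise degenerates, I would handle the endpoint layers separately, using the explicit Gaussian structure of the MLD transition.
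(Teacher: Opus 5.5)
Your reduction is correct and is the same as the paper's. Because $\log dP^{\eps}/dR^{\eps}=\log a^{\eps}(\omega_0)+\log b^{\eps}(\omega_1)$ and $\overline{Q}^{\eps}$ has endpoint marginals $e^{-f},e^{-h}$, you get $H(\overline{Q}^{\eps}|P^{\eps})=H(\overline{Q}^{\eps}|R^{\eps})-H(P^{\eps}|R^{\eps})$, and the Conforti--Tamanini expansion handles the second term. For the first term, Theorem \ref{thm:rel-ent-tangent-markov-proj} (drift $v^{\eps}_t+\frac{\eps}{2}\nabla\log\rho^{\eps}_t$, cross term integrating to $\Ent(e^{-h})-\Ent(e^{-f})$) makes the entropy terms match exactly, and the Fisher information converges by Proposition \ref{prop:uniform-poincare-constant}. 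Testing the continuity equation with the Hamilton--Jacobi potential gives the exact identity $\int_0^1\norm{v^{\eps}_t}^2_{L^2(\rho^{\eps}_t)}dt=\Was{2}^2(e^{-f},e^{-h})+\int_0^1\norm{v^{\eps}_t-v^0_t}^2_{L^2(\rho^{\eps}_t)}dt$. So, in your notation, the theorem is equivalent to the velocity corrector $w_t$ vanishing.

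This is where your proposal has a genuine gap. In step 1 you treat $\delta_t$ as a generic first-order correction, but $\delta_0=\delta_1=0$, since the endpoint marginals do not depend on $\eps$. If $w_t\equiv0$, the linearized continuity equation $\partial_t\delta_t+\nabla\cdot(\delta_t v^0_t+\rho^0_t w_t)=0$ transports $\delta_0=0$ along the McCann flow and forces $\delta_t\equiv0$. Equivalently, any nonzero $\delta_t$ produces $w_t\neq0$, and then $H(\overline{Q}^{\eps}|P^{\eps})$ is of exact order $\eps$. The result holds only because $\delta_t\equiv0$, and nothing in your plan establishes that.

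Step 3 does not fill the hole, for three reasons:
\begin{itemize}
\item It abandons your variational route and needs a uniform-in-$t$ $L^2$ expansion of the Schr\"odinger drift, which is as hard as the theorem.
\item Main Theorem \ref{informal-theorem-1} only concerns the barycentric projection, i.e.\ the Schr\"odinger drift at $t=0$. It says nothing about the drift of $\overline{Q}^{\eps}$.
\item $\overline{Q}^{\eps}$ is not an $h$-transform, so there is no reason its corrector solves the linearized HJB equation.
\end{itemize}
In fact, Theorem \ref{thm:sb-o-eps-score} already shows the Schr\"odinger drift is $v^0_t+\frac{\eps}{2}\nabla\log\rho^0_t+o(\eps)$ in $L^2(\rho^0_t\,dt)$, while your drift is $v^0_t+\eps(w_t+\frac12\nabla\log\rho^0_t)+o(\eps)$. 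So ``the correctors agree'' is literally the unproven claim $w_t=0$.

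The missing idea is Lemma \ref{lem:zero-initial-velocity}: for the reciprocal surface $U(t,s)=(1-t)X_0+tX^*_s+\sqrt{t(1-t)}W_s$ one has $\partial_s\rho^s_t|_{s=0}=0$. Your step 1 is one computation away from it. Apply It\^o's formula in $s$ and use $t^2\nabla^2\varphi+t(1-t)\Id=t\nabla^2\varphi_{0\to t}$. The initial $s$-velocity tested against $\xi\in C_c^\infty$ is then $t\,\Exp{}\bigl[-\frac12\langle\nabla\xi(\nabla\varphi_{0\to t}(X_0)),\nabla f(X_0)\rangle+\frac12\nabla^2\xi(\nabla\varphi_{0\to t}(X_0)):\nabla^2\varphi_{0\to t}(X_0)\bigr]$. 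This is $t$ times the generator of the dual $t$-MLD, whose stationary law is $\rho^0_t$. Integrating by parts against $e^{-f}$, using $\nabla\cdot[(\nabla\xi)\circ\nabla\varphi_{0\to t}]=\nabla^2\xi(\nabla\varphi_{0\to t}):\nabla^2\varphi_{0\to t}$, shows that your mean shift and your covariance smoothing cancel exactly.

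The paper then avoids any perturbative inversion. Set $q^{\eps}_t=\log(\rho^{\eps}_t/\rho^0_t)$ and $D_t=\partial_t+v^0_t\cdot\nabla$; then $-\nabla\cdot(\rho^{\eps}_t(v^{\eps}_t-v^0_t))=\rho^{\eps}_tD_tq^{\eps}_t$. The uniform Poincar\'e constant (obtained from the $\mathrm{CD}(\kappa,\infty)$ property of the MLD on the Hessian manifold plus an LSI mixing argument) and Proposition \ref{prop:neg-sob-norm-poincare} give $\norm{v^{\eps}_t-v^0_t}^2_{L^2(\rho^{\eps}_t)}\le C\norm{D_tq^{\eps}_t}^2_{L^2(\rho^{\eps}_t)}$. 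Finally, $D_tq^{\eps}_t=\int_0^{\eps}D_t\partial_u\log\rho^u_t\,du=o(\eps)$, because the integrand tends to $0$ as $u\downarrow0$ by the lemma, with Assumption \ref{assumption:technical-rd} and uniform sub-Gaussianity providing domination. This uses only the scalar Conforti--Tamanini expansion and never an expansion of the Schr\"odinger drift $\nabla\psi^{\eps}_t$.
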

Let $\widetilde{Q}^{\eps}$ denote the law of the $\eps$-noisy McCann interpolation in \eqref{eq:noisy-mccann-intro}. The reader might be puzzled with the apparent difference between the two approximations for the $\Schro$ bridge by $\overline{Q}^{\eps}$ and $\widetilde{Q}^{\eps}$. In fact, they are close to each other as well. 

\begin{cor*}[Corollary \ref{prop:mp-drift-approx}]
   Under Assumptions \ref{assumption:standard-assumptions} and  \ref{assumption:technical-rd},
$\lim_{\eps \rightarrow 0+}\eps^{-1}H(\overline{Q}^{\eps}|\widetilde{Q}^{\eps}) = 0$.
\end{cor*}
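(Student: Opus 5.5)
Both $\overline{Q}^{\eps}$ and $\widetilde{Q}^{\eps}$ are laws of diffusions with diffusion coefficient $\sqrt{\eps}\,\Id$ and the same initial law $e^{-f}$. I would therefore compare their drifts through Girsanov's theorem and then use Main Theorem \ref{informal-theorem-2} to handle the Schr\"odinger bridge. Write the tangent Markov projection as
\[
dX_t = \overline{b}^{\eps}_t(X_t)\,dt + \sqrt{\eps}\,dB_t ,
\]
where $\overline{b}^{\eps}_t$ is of gradient type, and write $\widetilde{b}^{\eps}_t = \nabla\varphi_{t\to 1}-\nabla\varphi_{t\to 0}+\frac{\eps}{2}\nabla\log\rho^0_t$ for the noisy McCann drift. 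Girsanov then gives
\[
H(\overline{Q}^{\eps}|\widetilde{Q}^{\eps}) = \frac{1}{2\eps}\int_0^1 \Exp{\overline{Q}^{\eps}}\bigl\|\overline{b}^{\eps}_t(X_t)-\widetilde{b}^{\eps}_t(X_t)\bigr\|^2 dt ,
\]
so the claim reduces to showing that the drift discrepancy is $o(\eps^2)$ in $L^2$ of the marginals of $\overline{Q}^{\eps}$.

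I expect the intended route to avoid computing this discrepancy directly and to go through $P^{\eps}$ instead. The dynamic Schr\"odinger bridge is itself an $\eps$-diffusion with drift $b^{\eps}_t$. Main Theorem \ref{informal-theorem-2} gives $\int_0^1 \Exp{\overline{Q}^{\eps}}\|\overline{b}^{\eps}_t-b^{\eps}_t\|^2\,dt = o(\eps^2)$. The argument behind Main Theorem \ref{informal-theorem-1} (that $\widetilde{\ell}_{\eps}$ is an $o(\eps)$ approximation of $\pi^{\eps}$) should give the dynamic statement $H(\widetilde{Q}^{\eps}|P^{\eps})=o(\eps)$, that is, $\int_0^1 \Exp{\widetilde{Q}^{\eps}}\|\widetilde{b}^{\eps}_t-b^{\eps}_t\|^2\,dt = o(\eps^2)$. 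The triangle inequality in $L^2$ would then finish the proof, except that the two bounds are taken under different path measures: one under $\overline{Q}^{\eps}$ and one under $\widetilde{Q}^{\eps}$.

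The main obstacle is this change of measure. I would resolve it by showing that the marginals $\overline{\rho}^{\eps}_t$ of $\overline{Q}^{\eps}$, which equal the marginals of the reciprocal process \eqref{defn:intro-stochastic-proc}, have densities comparable to $\rho^0_t$, the marginals of $\widetilde{Q}^{\eps}$, with bounded ratio uniformly in $t\in[0,1]$ and small $\eps$. The reciprocal marginal is the law of $(1-t)X_0+tX_\eps^*+\sqrt{\eps}Z_t$. Under the MLD, $X^*_\eps\approx\nabla\varphi(X_0)+O(\sqrt{\eps})$ Gaussian noise, so this law is a Gaussian smoothing at scale $\sqrt{\eps}$ of $\rho^0_t$, up to a small drift shift. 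Log-concavity from (A1), together with the Hessian bounds from Caffarelli and (A2), would give two-sided density comparisons or at least a bounded $\chi^2$-divergence. Combined with moment bounds on the drifts (polynomial growth controlled by (A2) and Assumption \ref{assumption:technical-rd}), a Cauchy--Schwarz or Hölder step converts the $o(\eps^2)$ bound under $\widetilde{Q}^{\eps}$ into an $o(\eps^2)$ bound under $\overline{Q}^{\eps}$.

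If the density comparison proves too delicate, the fallback is more direct. Expand $\overline{b}^{\eps}_t$ explicitly: it is the gradient part of the conditional velocity $\Exp{}[X_1-X_0 - \sqrt{\eps}Z_t/(1-t)\mid X_t]$. Its first-order term is $\nabla\varphi_{t\to1}-\nabla\varphi_{t\to0}$, and the $\eps$-order correction should match $\frac{\eps}{2}\nabla\log\rho^0_t$ by the same computation that produced \eqref{eq:barycentric-proj-mld}. One would then bound the remainder by $o(\eps)$ in $L^2$ using Taylor expansions of $\varphi$ to fourth order.
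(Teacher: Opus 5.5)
Your main route has a genuine gap at the step you yourself flag, and the repair you propose does not work. The two drift bounds live in different spaces: one in $L^2(\rho^\eps_t)$, the other in $L^2(\rho^0_t)$. The ratio $\rho^\eps_t/\rho^0_t$ is \emph{not} uniformly bounded. Already in the Gaussian example of Section \ref{subsec:gaussian-computations}, $\rho^\eps_t$ is a centered Gaussian with variance $\sigma_t^2+2t(1-t)\sigma\bigl(e^{-\eps/(2\sigma)}-1+\tfrac{\eps}{2\sigma}\bigr)>\sigma_t^2$ for every $t\in(0,1)$ and $\eps>0$, so $\rho^\eps_t/\rho^0_t\to\infty$ as $|x|\to\infty$. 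This is precisely the direction you need for the change of measure.

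Falling back on a $\chi^2$ bound with Cauchy--Schwarz would require $\int_0^1\Exp{\rho^0_t}\|\widetilde b^\eps_t-b^\eps_t\|^4\,dt=o(\eps^4)$. But $H(\widetilde Q^\eps|P^\eps)=o(\eps)$ only controls the second moment. The ``moment bounds on the drifts'' you invoke are not available either. (A2) and Assumption \ref{assumption:technical-rd} concern $\varphi$ and the surface $\rho^s_t$, not the Schr\"odinger bridge drift $\eps\nabla\log R^\eps_{1-t}b^\eps$ of \eqref{eq:sb-sde}. For that drift the paper only proves linear growth with an $\eps$-dependent constant (Proposition \ref{prop:sb-in-tangent-space}). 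A truncation at radius of order $\sqrt{\log(1/\eps)}$ could close the argument, but only given uniform-in-$\eps$ growth bounds on the Schr\"odinger drift. That is a substantial input on Schr\"odinger potentials which you neither state nor prove. Note also that Theorem \ref{thm:mp-mld-approx-of-sb} is itself derived from Theorem \ref{lem:ke-calc}, which already yields the corollary directly. The detour through $P^\eps$ therefore only adds the change-of-measure difficulty.

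Your fallback presupposes the key fact rather than proving it. By Proposition \ref{prop:relaxed-drift-calculation}, $\overline\beta^\eps_t=v^\eps_t+\frac\eps2\nabla\log\rho^\eps_t$. So the order-$\eps$ correction matches $\frac\eps2\nabla\log\rho^0_t$ exactly when $\int_0^1\|v^\eps_t-v^0_t\|^2_{L^2(\rho^\eps_t)}dt=o(\eps^2)$, which is Theorem \ref{lem:ke-calc}. This does not follow from the computation behind \eqref{eq:barycentric-proj-mld}. That expansion says the MLD endpoints deviate from the Brenier coupling at order $\eps$, which would naively move the reciprocal marginals at order $\eps$. The cancellation of the drift and diffusion contributions is the content of Lemma \ref{lem:zero-initial-velocity}: $\partial_s\rho^s_t|_{s=0}=0$, via stationarity of the dual $t$-MLD and an integration by parts. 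Moreover, projection onto $\Tan{\rho^\eps_t}$ is a nonlocal elliptic problem. Turning that cancellation into an $o(\eps^2)$ bound therefore needs the negative-Sobolev and uniform-Poincar\'e argument, not Taylor expansions of $\varphi$.

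Once Theorem \ref{lem:ke-calc} is available, the corollary is immediate, and this is the paper's proof. The drift discrepancy $(v^\eps_t-v^0_t)+\frac\eps2(\nabla\log\rho^\eps_t-\nabla\log\rho^0_t)$ is already measured in $L^2(\rho^\eps_t)$, the measure Girsanov requires. Its squared norm integrates to $o(\eps^2)+\frac{\eps^2}{4}\int_0^1 I(\rho^\eps_t|\rho^0_t)\,dt=o(\eps^2)$ by Proposition \ref{prop:uniform-poincare-constant}(4).

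If you want to keep the route through $P^\eps$, replace the $L^2$ triangle inequality by the Pythagorean identity of Proposition \ref{prop:tangent-markov-pythag}. Apply it once with $N=P^\eps$ and once with $N=\widetilde Q^\eps$; the latter is admissible because its drift is a gradient of linear growth, hence lies in $\Tan{\rho^\eps_t}$. Subtract the two identities and apply It\^o's formula to $\log(d\widetilde Q^\eps/dR^\eps)$, using the Hamilton--Jacobi and continuity equations. This gives $H(\overline Q^\eps|\widetilde Q^\eps)=H(\overline Q^\eps|P^\eps)-H(\widetilde Q^\eps|P^\eps)+\frac\eps8\int_0^1\int(2\Delta\log\rho^0_t+\|\nabla\log\rho^0_t\|^2)\,d(\rho^\eps_t-\rho^0_t)\,dt$. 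Every term on the right is $o(\eps)$, and no change of measure on drifts is needed.
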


In fact, the generator of the dual Mirror Langevin diffusion shows up explicitly in the proof of Proposition \ref{prop:d-dt-matrix-identity} where it is shown that $\Exp{}[X_1|X_0 =x]$ has the same $L^2(e^{-f})$ limit as \eqref{eq:intro-score-function-limit}. See the key matrix integral identity in Proposition \ref{prop:sqrd-inv-hessian-iden} and the related discussion below \eqref{eq:time-int-generator}.
The same correspondence also shows up in the proof of Lemma \ref{lem:zero-initial-velocity}. As we explain in the next section via the reciprocal surface construction this is due to a geometric ``miracle'' that fits the Hessian geometry transversely to the McCann interpolation. 

\textbf{Technical Assumptions and $\mathbb{T}^{d}$ Setting.} 
The conclusion of Main Theorem \ref{informal-theorem-2} holds under Assumptions \ref{assumption:standard-assumptions} and \ref{assumption:technical-rd}. While Assumption \ref{assumption:standard-assumptions}, assuming strong log-concavity of the marginals and the smoothness of Brenier maps, may be considered standard in this literature, Assumption \ref{assumption:technical-rd} is a more severe smoothness and growth requirement on the marginal densities of the reciprocal processes and their derivatives. Establishing this level of regularity solely under assumptions on $\mu$ and $\nu$ remains a technical challenge. In certain cases one may directly verify that these assumptions hold, such as when the Brenier map from $\mu$ and $\nu$ is an affine map (for example, when $\mu=\nu$) where it follows from \cite{AHMP25,MP25}. We provide some examples for which our assumptions hold: a Gaussian example in Section \ref{subsec:gaussian-computations}, and a non-Gaussian one in Proposition \ref{prop:example-class}. 

To further bolster our case, we also prove an analogous statement to the process approximation given in Main Theorem \ref{informal-theorem-2} on the flat $d$-dimensional torus $\mathbb{T}^{d}$ under the simplified Assumption \ref{assumption:torus}. In Section \ref{sec:torus-case} we state and prove an analogous result to this main theorem on $\mathbb{T}^{d}$. We retain the Euclidean statements with the technical assumptions for two reasons. One, in some Euclidean cases one can directly verify them, and, two, perhaps for some other cases it may follow from veins of existing literature (or future research) that we are not aware of.

\subsection{Related Works}\label{subsec:related-works} 
The most similar predecessors to this paper are previous works by the authors and collaborators in \cite{AHMP25,MP25}. \cite{AHMP25} analyzes the same marginal case, in which $e^{-f} = e^{-h}$, $x^* = \nabla \varphi(x) = x$. In this case, the noisy McCann \eqref{eq:noisy-mccann-intro} as well as both the Primal and the Dual MLD processes \eqref{defn:MLD} all reduce to the same standard Langevin diffusion. In this case, \cite[Theorem 1]{AHMP25} establishes that the symmetric relative entropy between $\pi^{\eps}$ and $\ell_{\eps} = \mathrm{Law}(X_0,X_{\eps}^*)$ is $o(\eps^2)$. For different marginals, \cite[Theorem 4]{MP25} establishes that $H(\ell_{\eps}|\pi^{\eps})$ is $o(1)$ under regularity assumptions on the Brenier map. 

\textbf{Small Temperature $\Schro$ Bridge.} The convergence of the $\Schro$ bridge to the Monge Kantorovich optimal transport problem is a rich area of research dating back to \cite{mikami04} and recounted in \cite{leo-sb-to-kp12,schroLeonard13}. It has been studied in various levels of generality, from $\mathbb{R}^{d}$ to manifolds and metric measure spaces. The motivation for such attention is the use of the small temperature $\Schro$ bridge and its associated quantities as proxies for the unregularized problem in machine learning and data science applications, a perspective initiated by \cite{cuturi2013sinkhorn,GalichonSalanie09}. For a presentation of the statistical aspects of (regularized) optimal transport, consult the recent text \cite{statOTbook}. 

There are many ways to analyze the convergence of the $\Schro$ bridge problem to the unregularized Monge Kantorovich problem in \eqref{defn:was2} as $\eps \downarrow 0$. One approach is to study the convergence of entropic cost (the optimal value of the objective function in \eqref{defn:intro-static-sb}) to $\Was{2}^{2}$. For a slightly more general cost, such an analysis is achieved via the so-called Gaussian approximation in \cite{pal2019difference}. A more abstract analytical argument in \cite{conforti21deriv} studies regularity of $\eps \in [0,+\infty) \mapsto H(\pi^{\eps}|r_{\eps})$ and establishes the first three terms of its Taylor expansion about $\eps = 0$. This Taylor expansion plays a key role in our proofs, see Section \ref{sec:sketch-of-proof}. More recent works have studied this entropic cost convergence for different classes of cost functions \cite{carlierEOTgeneralcost23,malamut-syl25,nutz2026entropicregularizationmongesproblem,aryan2025entropicselectionprinciplemonges}. More general $\Gamma$-convergence and large deviation statements are given in \cite{schroLeonard13,bgn-eot-gld}.

Similarly, the convergence of entropic potentials in \eqref{defn:entropic-potent} to their unregularized counterparts (and their gradients) has received a great deal of attention in the literature. For the convergence of the potentials to the so-called Kantorovich potential, \cite{nutz-weisel-22} establishes this for a large class of cost functions on Polish spaces via a weak compactness argument. The corresponding gradient limit in the Euclidean quadratic cost setting was established in \cite{pooladian2022entropic} as a consequence of an empirical process analysis. A more general gradient limit was established in \cite{chiarini2022gradient} in the manifold setting based on estimates provided from a curvature-dimension condition. Other related works have studied convexity bounds for the entropic potentials and how these persist under vanishing regularization \cite{nutzweisel-stab23,chewi2022entropic,divol2024tightstabilityboundsentropic,gozlan2025globalregularityestimatesoptimal,fathigozlan20}.  
There has been limited progress in determining the rate of convergence for potentials, a challenge that we take up here.

\subsection{AI Disclosure} 
The authors acknowledge the use of ChatGPT Sol 5.6 as well as Claude Opus 5 in the preparation and proofreading of this manuscript. In particular, ChatGPT Sol 5.6 drafted the kinetic energy bound in Theorem \ref{lem:ke-calc}. It revised an earlier Eulerian argument by the authors that had many difficult-to-verify assumptions into a more direct PDE-based argument. It also produced a simpler argument for Step 2 in Proposition \ref{prop:d-dt-matrix-identity} by suggesting an approach based on matrix divergences. It established Propositions \ref{prop:poly-bdd-nabla-log-semigrp} and \ref{prop:neg-sob-norm-poincare} and outlined the proof on the torus in Section \ref{sec:torus-case}. Lastly, it identified and verified the nontrivial class of probability measures in Proposition \ref{prop:example-class} under which the regularity assumptions hold. Claude Opus 5 helped minimize the redundancy in Assumptions \ref{assumption:standard-assumptions} and \ref{assumption:technical-rd} and proofread the paper.
The authors verified all calculations and arguments themselves. All the main theorems and arguments were original ideas of the authors. They also wrote the manuscript completely without any help from AI and retain all responsibility for its correctness.

\section{A sketch of the main ideas in the proofs}\label{sec:sketch-of-proof}
Let us give an overview of our arguments, presenting the intuition behind each step and the main technical innovations we introduce in this paper. 

Fix marginals $e^{-f},e^{-h} \in \cP_2(\mathbb{R}^{d})$ with Brenier map $(\nabla \varphi)_{\#}e^{-f} = e^{-h}$, and let $\pi^{\eps} \in \cP(\mathbb{R}^{d}\times \mathbb{R}^{d})$ be the $\eps$-static $\Schro$ bridge from $e^{-f}$ to $e^{-h}$. Recall that $(r_t(\cdot,\cdot),t > 0)$ is the standard Euclidean heat kernel. Let $P^{\eps} \in \cP(C^{d}[0,1])$ denote the dynamic $\Schro$ bridge, defined later in \eqref{eq:dynam-schro-bridge-defn}, and let $R^{\eps} \in \cP(C^{d}[0,1])$ denote the law of $\eps$-reversible Brownian motion. That is, $R^{\eps}$ is the law of $(B_{\eps t}, t \in [0,1])$ where $B_0 \sim \mathrm{Leb}$.

The starting point for our investigation is the expansion of $\eps \mapsto H(\pi^{\eps}|r_{\eps})$ given by \cite[Theorem 1.6]{conforti21deriv} about $\eps = 0$
\begin{align}\label{eq:ct-exp}
    H(\pi^{\eps}|r_{\eps}) &= \frac{1}{2\eps}\Was{2}^2(e^{-f},e^{-h})+ \frac{1}{2}\left(\Ent(e^{-f})+\Ent(e^{-h})\right)+\frac{\eps}{8}\int_0^{1} I(\rho_t^{0})dt + o(\eps).  
\end{align}
Here, $(\rho_t^0, t \in [0,1])$ denotes the \textbf{McCann interpolation}, defined later in \eqref{eq:mccann-interp}, and $I(\rho) = \Exp{\rho}\norm{\nabla \log \rho}^2$ is the Fisher information. It holds that $H(P^{\eps}|R^{\eps}) = H(\pi^{\eps}|r_{\eps})$, so the same expansion in \eqref{eq:ct-exp} holds in the dynamic setting. 

Our approach towards establishing the main results of this paper is the following ideas.

\textbf{Idea 1:} Guess a family of couplings $(q_{\eps},\eps > 0) \subset \Pi(e^{-f},e^{-h})$ such that $\eps \mapsto H(q_{\eps}|r_{\eps})$ has the same expansion up to $o(\eps)$ as \eqref{eq:ct-exp}. That is, $H(q_{\eps}|r_{\eps})$ is also equal to the RHS of \eqref{eq:ct-exp}, for a different $o(\eps)$ term. Then, prove and use a Pythagorean Theorem for relative entropy to claim that $H(q_{\eps}|\pi^{\eps})=o(\eps)$. 
The error must be of order $o(\eps)$ to prove Main Theorem \ref{informal-theorem-1}.

\textbf{Idea 2: How to guess a good candidate coupling?} Let $(X_t, t \geq 0)$ denote the primal MLD defined in \eqref{defn:MLD} with $X_0 \sim e^{-f}$, and recall the notation $x \mapsto x^* = \nabla \varphi(x)$. Set $\ell_{\eps} = \mathrm{Law}(X_0,X_{\eps}^*)$, then $(\ell_{\eps},\eps > 0) \subset \Pi(e^{-f},e^{-h})$. By path continuity $X_\eps \approx X_0$, and, hence, $X_\eps^* \approx X_0^*$, for $\eps \approx 0$. Hence the coupling is close to the Monge coupling. 
We claim that $\ell_{\eps}$ is a good candidate to approximate $\pi^\eps$. Why is that true?

Let's proceed term by term in \eqref{eq:ct-exp}.  In the expansion of $\eps \mapsto H(\ell_{\eps}|r_{\eps})$, we identify
\begin{itemize}
    \item the leading order $\frac{1}{2\eps}\Was{2}^2(e^{-f},e^{-h})$ term from the pushforward on the second coordinate and Varadhan's large deviation result for the transition density of the MLD \cite{varadhan-diff-ldp67};
    \item the $\frac{1}{2}\left(\Ent(e^{-f})+\Ent(e^{-h})\right)$ term follows from the Gaussian approximation developed in \cite{pal2019difference}.
\end{itemize}

Put another way, if we approximate the conditional density of $X_\eps^*$, given $X_0=x$, by using the small time asymptotics \cite{azencott84,benarous-expan-88} of the transition density of the dual MLD, up to Gaussian fluctuations, one recovers the first two terms of the RHS of \eqref{eq:ct-exp}.

However, this static picture gives an error of $O(\eps)$ and is not sufficient to identify the $\frac{\eps}{8}\int_0^{1} I(\rho_t^0)dt$ term in \eqref{eq:ct-exp}. This is reflected in the fact that $H(\ell_{\eps}|\pi^{\eps}) = o(1)$ in \cite[Theorem 4]{MP25}. Also, see the Conjecture in that paper around equation (12). Since we do not hope to get an $o(\eps)$ error with $\ell_\eps$, we must refine our coupling to a higher order.  

For a purely geometric intuition on why $\ell_\eps$ provides a good (but not good enough) approximation to $\pi^\eps$, see the discussion around \cite[eqn. (11)]{MP25}.

 \textbf{Idea 3: Bring in the dynamics.} To match the term $\frac{\eps}{8}\int_0^{1} I(\rho_t^0)dt$ in \eqref{eq:ct-exp} we introduce a novel modification to $\ell_\eps$ using dynamics. 

Form a reciprocal process with endpoints sampled from $\ell_\eps$ and joined by $\eps$-Brownian bridges as defined in \eqref{defn:intro-stochastic-proc}. Modify it via the \textit{tangent} Markov projection described in Section \ref{subsec:markov-proj}. 
Let $\overline{Q}^{\eps} \in \cP(C^{d}[0,1])$ denote the law of the tangent Markov projection, and let $\overline{\ell}^\eps$ denote the joint density of the two endpoints under $\overline{Q}^\eps$. Since the (tangent) Markov projection has the same marginal distributions as the original reciprocal process, $\overline{\ell}^\eps \in \Pi\left( e^{-f}, e^{-h} \right)$, i.e., it is an admissible coupling. Now we show that this coupling is a sharp enough approximation to the $\Schro$ bridge to capture all three terms on the RHS of \eqref{eq:ct-exp} up to an $o(\eps)$ error.

\textbf{Idea 4: A new coupling emerges.} However, this brings us to a new coupling. Let $\nabla \varphi$ be the Brenier map from $e^{-f}$ to $e^{-h}$, then for each $t \in [0,1]$ define the interpolation $\nabla \varphi_{0 \to t}(x) := (1-t)x + t\nabla \varphi(x)$. The McCann interpolation (see Section \ref{subsec:ot}) is the curve of measures obtained by pushforwarding $e^{-f}$ by $\nabla \varphi_{0 \to t}$, that is,
\begin{align}\label{eq:mccann-interp-intro}
    \rho_t^0 = (\nabla \varphi_{0 \to t})_{\#}e^{-f}, \quad t \in [0,1].
\end{align}
Because each $\nabla \varphi_{0 \to t}$ is the gradient of a convex function, each $\nabla \varphi_{0 \to t}$ is itself the Brenier map from $e^{-f}$ to $\rho_t^0$. In general, we use the notation $\nabla \varphi_{s \to t}$ to denote the Brenier map between $\rho_s^0$ and $\rho_t^0$.
Observe that $\nabla \varphi_{0 \to 0} = \Id$ and $\nabla \varphi_{0 \to 1} = \nabla \varphi$, so that $\rho_0^0 = e^{-f}$ and $\rho_1^0 = e^{-h}$.

 The coupling obtained from the McCann interpolation, $(\nabla \varphi_{0\rightarrow t}(X),\; t\in [0,1])$, $X \sim e^{-f}$, is singular and hence inadmissible for the relative entropy expansion \eqref{eq:ct-exp}. However, one can do a tangent Markov projection of this process with an $\eps$-Brownian noise to get an admissible coupling. This gives us the $\eps$-\textbf{noisy McCann interpolation} process that we introduce in this paper. This is a solution to the SDE
\begin{align}\label{eq:noisy-mcann-intro}
    dZ_t = \left(\nabla \varphi_{t \to 1}-\nabla \varphi_{t \to 0}+\frac{\eps}{2}\nabla \log \rho_t^0\right)(Z_t) dt + \sqrt{\eps}dB_t, \quad Z_0 \sim e^{-f}. 
\end{align}
Let the law of this process be $\widetilde{Q}^\eps$. It is obvious that the time marginal laws of $\widetilde{Q}^\eps$ also follow the McCann interpolation, in spite of the presence of the noise. A quick verification (see \eqref{eq:tan-mark-rel-ent}) yields that 
\[
 H(\widetilde{Q}^{\eps}|R^{\eps}) =  \frac{1}{2\eps}\Was{2}^2(e^{-f},e^{-h}) + \frac{1}{2}\left(\Ent(e^{-f})+\Ent(e^{-h})\right)+\frac{\eps}{8}\int_0^{1} I(\rho_t^0)dt,
\]
which matches the expansion in \eqref{eq:ct-exp} \textit{exactly} up to the $o(\eps)$ error.
We show that $H(\widetilde{Q}^\eps|P^\eps)=o(\eps)$, and that, under $\widetilde{Q}^\eps$, 
\[
   \lim\limits_{\eps \downarrow 0}\frac{1}{\eps}\left(\Exp{\widetilde{Q}^\eps}\left[Z_1|Z_0=z\right]-\nabla \varphi(x)\right) = -\frac{1}{2}\nabla f(z). 
\]
A functional inequality transfers the above limit to $\pi^\eps$ and Theorem \ref{thm:sb-o-eps-score} follows.

\textbf{Idea 5: The remarkable geometry of Schr\"{o}dinger bridges.} 


%
%
%
\providecommand{\FRrecip}{(50)}                 
\providecommand{\FRtmp}{(56)}                   
\providecommand{\FRdsb}{(42)}                   
\providecommand{\FRmccann}{(111)}               
\providecommand{\FRsectmp}{3.4}                 
\providecommand{\FRthmpyth}{Thm.~1}             
\providecommand{\FRthmmain}{Thm.~2}             
\providecommand{\FRthmbrenier}{Thm.~4}          
\providecommand{\FRpropmccann}{Prop.~5}         
\providecommand{\FRpropqq}{Prop.~7}             
\providecommand{\FRproptransfer}{Prop.~6}       

\begin{figure}[t]
\centering
\begin{tikzpicture}[
  x=1cm, y=1cm,
  meas/.style={draw, rounded corners=2pt, inner sep=4pt, align=center,
               minimum height=7.5mm, minimum width=8.5mm, fill=black!2},
  sub/.style={font=\scriptsize, align=center, text=black!65},
  est/.style={-{Stealth[length=2mm]}, thick},
  weak/.style={-{Stealth[length=2mm]}, dashed, black!45},
  none/.style={-{Stealth[length=2mm]}, densely dotted, black!45},
  proj/.style={-{Stealth[length=2.4mm]}, line width=1pt, black!55!black},
  push/.style={-{Stealth[length=1.8mm]}, densely dotted, black!55},
  lab/.style={font=\scriptsize, inner sep=1.5pt, fill=white, align=center},
  cite/.style={font=\scriptsize\itshape, inner sep=1.5pt, text=black!70,
               fill=white, align=center}
]

\node[meas] (Q)  at (0,0)     {$\widetilde{Q}^{\varepsilon}$};
\node[meas] (Qb) at (4.3,0)   {$\overline{Q}^{\varepsilon}$};
\node[meas] (P)  at (8.6,0)   {$P^{\varepsilon}$};
\node[meas] (Qt) at (4.3,2.7) {${Q}^{\varepsilon}$};

\node[sub, text width=3.3cm] at (-0.25,1.45)
     {$\varepsilon$-noisy McCann interpolation};
\node[sub, text width=2.7cm] at (8.9,1.45)
     {dynamic Schr\"odinger\\ bridge};
\node[sub] at (4.3,3.8)
     {reciprocal process, with MLD endpoints $\ell_{\varepsilon}$};
\node[sub] at (4.3,3.7) {
     };

\draw[proj] (Qb) -- (Q)
  node[lab, pos=0.5, above=0.7mm, text=black]
      {$H\bigl(\overline{Q}^{\varepsilon}\,\big|\,\widetilde{Q}^{\varepsilon}\bigr)
        =o(\varepsilon)$}
      ;

\draw[est] (Qb) -- (P)
  node[lab, pos=0.53, above=0.7mm]
      {$H\bigl(\overline{Q}^{\varepsilon}\,\big|\,P^{\varepsilon}\bigr)=o(\varepsilon)$}
  node[cite, pos=0.5, below=0.7mm] {};

\draw[est] (Qt) -- (Qb)
  node[lab, pos=0.42, left=1mm]
      {tangent Markov\\ projection}
  node[cite, pos=0.22, right=1mm] {};

\draw[est] (Q) to[bend right=32]
  node[lab, pos=0.55, below=1mm]
{$H\bigl(\widetilde{Q}^{\varepsilon}\,\big|\,P^{\varepsilon}\bigr)=o(\varepsilon)$} (P);


\node[meas] (l)  at (0,-2.9)   {$\widetilde{\ell}_{\varepsilon}$};
\node[meas] (lb) at (4.3,-2.9) {$\bar{\ell}_{\varepsilon}$};
\node[meas] (pi) at (8.6,-2.9) {$\pi^{\varepsilon}$};

\draw[push] (Q)  -- (l);
\draw[push] (Qb) -- (lb)
  node[lab, pos=0.5, right=1mm] {};
\draw[push] (P)  -- (pi);

\draw[est] (lb) -- (l)
  node[lab, pos=0.5, above=0.7mm]
      {$H\bigl(\bar{\ell}_{\varepsilon}\,\big|\,\widetilde{\ell}_\eps\bigr)=o(\varepsilon)$};

\draw[est] (lb) -- (pi)
  node[lab, pos=0.5, above=0.7mm]
      {$H\bigl(\bar{\ell}_{\varepsilon}\,\big|\,\pi^{\varepsilon}\bigr)=o(\varepsilon)$};

\draw[est] (l) to[bend right=32]
  node[lab, pos=0.55, below=1mm]
{$H\bigl(\widetilde{\ell}_{\varepsilon}\,\big|\,\pi^{\varepsilon}\bigr)=o(\varepsilon)$} (pi);


\end{tikzpicture}
\caption{The four path measures of the argument and the relative entropy
comparisons between them; an arrow $A\to B$ is labeled by $H(A|B)$. The dotted arrows represent a pushforward of the path measure by $\omega\mapsto (\omega_0, \omega_1)$.  
}
\label{fig:processes}
\end{figure}


We show in Corollary \ref{prop:mp-drift-approx}, Section \ref{lem:ke-calc}, that $H(\overline{Q}^\eps \mid \widetilde{Q}^\eps)=o(\eps)$. In Theorem \ref{thm:mp-mld-approx-of-sb}, Section \ref{sec:rel-ent-approx}, we show that $H(\overline{Q}^\eps \mid P^\eps)=o(\eps)$. Thus, \textit{both the MLD and the noisy McCann approximate the $\Schro$ bridge}. See Figure \ref{fig:processes} for a diagram showing various approximations. 

Note that, for the noisy McCann, we create the dynamics first, pull out the endpoints $(Z_0, Z_1)$ and show that their law is an approximation for the static $\Schro$ bridge. While for the MLD, we start with the endpoints $(X_0, X_\eps^*)$, join them by Brownian bridge and take a tangent Markov projection to approximate the dynamic $\Schro$ bridge.  Nevertheless, we are now left with an apparent paradox.  \textit{The endpoints of the $\eps$-noisy McCann process $(Z_0, Z_1)$ as well as the MLD coupling $(X_0, X_\eps^*)$ are both approximations to the same static $\Schro$ bridge, but they seem radically different}. For example, the conditional covariance of $Z_1$, given $Z_0=x$, is unclear, whereas it is immediate from the SDE \eqref{defn:dMLD} that the conditional covariance of $X_\eps^*$, given $X_0=x$, is approximately $\eps \nabla^2 \varphi(x)$, which matches with that of the $\Schro$ bridge by \cite{pal2019difference}. More generally, the MLD and the noisy McCann processes have different diffusion matrices. Thus, their small time behaviors  are determined by two different geometries \cite{varadhan-diff-ldp67, azencott84}, Hessian geometry for the former and flat geometry for the latter. If these processes are connected probabilistically, these geometries must be related as well.  The resolution of this apparent paradox is a key insight in the paper carefully explained below.

Our proofs rest on the analysis of a random surface $(U(t,s),\; t \in [0,1],\; s\ge 0)$, called the \textbf{reciprocal surface}. 

\begin{figure}[t!]
    \centering
    \includegraphics[width=1.0\linewidth]{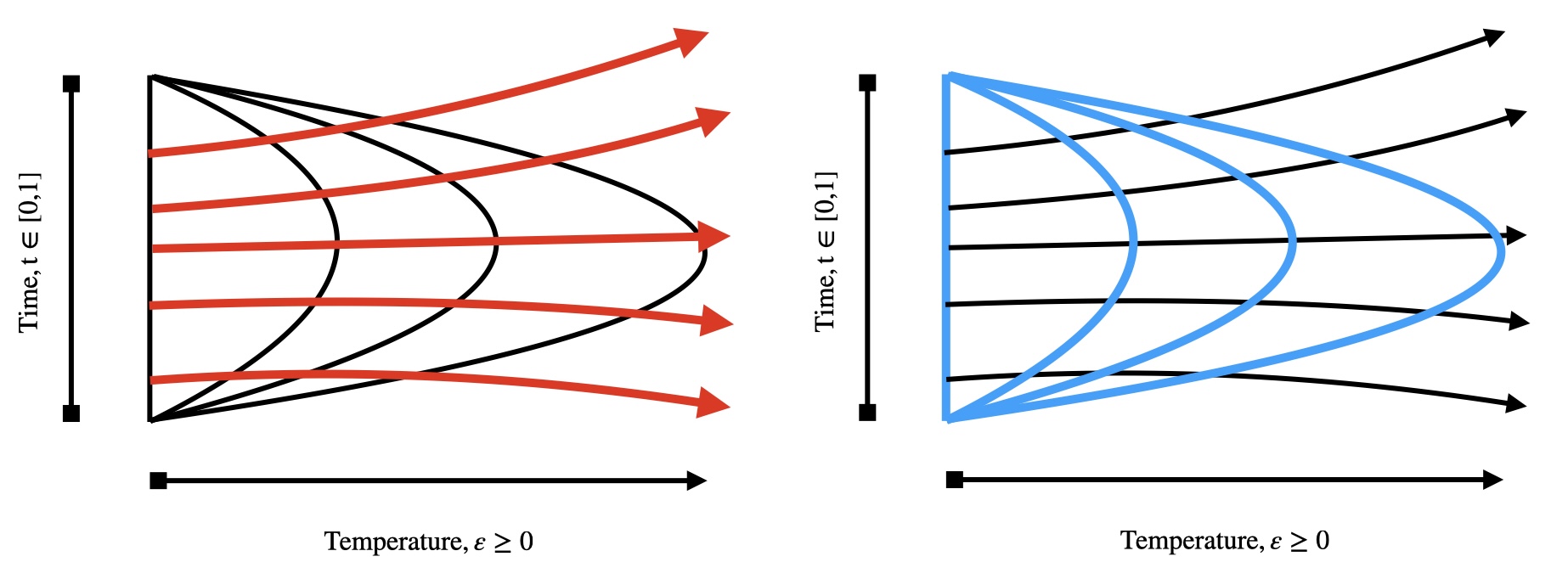}
    \caption{The laws of the reciprocal surface defined in \eqref{eq:defn-ust-intro}. The red curves (with arrows) illustrate $(\rho_t^{s}, s \geq 0)$, and when $s \approx 0$ these curves are approximately the law of the stationary $t$-MLD defined in \eqref{defn:t-mld-interp-defn-intro}. The blue curves (without arrows) illustrate $(\rho_t^{s}, t \in [0,1])$, and when $s \approx 0$ their tangent Markov projections are approximately the $s$-noisy McCann interpolation from \eqref{eq:noisy-mcann-intro}.} 
    \label{fig:sheet-of-curves}
\end{figure}

Let $(X_t, t \geq 0)$ satisfy the primal MLD defined in \eqref{defn:MLD} with initial distribution $X_0 \sim e^{-f}$ and recall the notation $x \mapsto x^* = \nabla \varphi(x)$. Let $(W_s, s \geq 0)$ be a standard Brownian motion independent of the MLD. Define the following family of random variables
\begin{align}\label{eq:defn-ust-intro}
    (t,s) \mapsto U(t,s) := (1-t)X_0 + tX_{s}^* + \sqrt{t(1-t)}W_{s}, \quad \mathrm{Law}(U(t,s))=\rho_t^{s}. 
\end{align}
Thus $U$ is a random surface indexed by $[0,1] \times [0, \infty)$. The variable $s \in [0,+\infty)$ plays the role of the {temperature} parameter $\eps$, and the variable $t \in [0,1]$ plays the role of {time}.  
See Figure \ref{fig:sheet-of-curves}. Note that, when $s=\eps$, the process $(U(t, \eps), \; t\in [0,1])$ has the same marginal laws as the reciprocal process from \eqref{defn:intro-stochastic-proc}. The results of our paper focus on understanding how this family of random variables and their corresponding laws behave in the low temperature regime, i.e., when $s\approx 0$.  We now give a high level description of the phenomena we identify. 

\textbf{Varying low temperature ($s \approx 0$, $t \in [0,1]$ fixed).}
Fix $t \in [0,1]$. We wish to understand the behavior of the family $(U(t,s), s \geq 0)$ defined in \eqref{eq:defn-ust-intro} when $s \approx 0$. This is the left panel in Figure \ref{fig:sheet-of-curves}. Note that setting $s = 0$ in \eqref{eq:defn-ust-intro} gives that $U(t,0) = \nabla \varphi_{0 \to t}(X_0) \sim \rho_{t}^{0}$, i.e.\ the time $t$ marginal on the McCann interpolation.

Recall the SDE definition of the MLD in \eqref{defn:MLD}. Take the same primal MLD $(X_s, s \geq 0)$, but now take the mirror map to be $\nabla \varphi_{0 \to t}$. That is, define $Z_s = \nabla \varphi_{0 \to t}(X_s)$. $(Z_s, s \geq 0)$ is distributed as the dual MLD with stationary distribution $\rho_t^0 = (\nabla \varphi_{0 \to t})_{\#}e^{-f}$. It satisfies the SDE
\begin{align}\label{defn:t-mld-interp-defn-intro}
    dZ_s = -\frac{1}{2}\nabla f(X_s) ds + \sqrt{\nabla ^2\varphi_{0\to t}(X_s)}dB_s, \quad Z_0 \sim \rho_t^0. 
\end{align}
Call this diffusion the \textbf{dual $t$-MLD}. We show that when $s \approx 0$, the joint law of $(U(t,s), U(t,0))$ is approximately the same as that of $(Z_{ts}, Z_0)$ sampled from \eqref{defn:t-mld-interp-defn-intro}. 
See the proof of Lemma \ref{lem:zero-initial-velocity}.

\textbf{Varying time ($s \approx 0$ fixed, $t \in [0,1]$).}
Fix $s \approx 0$ and consider the process $(U(t,s), t \in [0,1])$. This is the right panel in Figure \ref{fig:sheet-of-curves}. Observe, again, that at $s = 0$, the laws of the random variables $(U(t,0), t \in[0,1])$ follow exactly the McCann interpolation. As we make precise in Section \ref{subsec:ot}, the McCann interpolation satisfies the continuity equation with time dependent velocity field $\left(\nabla \varphi_{t \to 1}-\nabla \varphi_{t \to 0}, t \in [0,1]\right)$. We show that when $s \approx 0$, the tangent Markov projection of the stochastic process $(U(t,s), t \in [0,1])$ behaves approximately like the $s$-noisy McCann interpolation. 

These observations, coupled with the matrix identity in Proposition \ref{prop:sqrd-inv-hessian-iden}, connect the MLD with the noisy McCann both probabilistically as well as geometrically. 

\section{Preliminaries}\label{sec:preliminaries}
We now introduce the necessary notation and preliminary results to state and prove our main results. 

\subsection{Optimal Transport and the Wasserstein Space}\label{subsec:ot}
Equipping $\cP_{2}(\mathbb{R}^{d})$ with $\Was{2}$ results in a complete separable metric space called the \textbf{Wasserstein space}. The following notions we introduce may be found in \cite{ambrosio2005gradient}.  
We say a curve of probability measures $(\mu_t, t \in [0,T]) \subset \cP_2(\mathbb{R}^{d})$ is \textbf{absolutely continuous}, abbreviated AC, if there exists a collection of Borel velocity fields, $v_t : \mathbb{R}^{d} \to \mathbb{R}^{d}$, $t \in [0,T]$ such that $\int_0^{T} \norm{v_t}_{L^2(\mu_t)}^2 dt < +\infty$ and the continuity equation, 
\begin{align}\label{defn:continuity-eqn}
    \partial_t \mu_t + \nabla \cdot (v_t \mu_t) &= 0,
\end{align}
is satisfied in the sense of distributions. That is, for all functions $\psi \in C_c^{\infty}([0,T] \times \mathbb{R}^{d})$,
\begin{align}\label{defn:weak-soln}
    \int_{\mathbb{R}^{d}} \psi(T,x)\mu_T(dx) - \int_{\mathbb{R}^{d}} \psi(0,x)\mu_0(dx) = \int_0^{T} \int_{\mathbb{R}^{d}}\left( \partial_t \psi(t,x)+ \langle \nabla \psi(t,x),v_t(x)\rangle \right)\mu_{t}(dx) dt. 
\end{align}
This notion of AC curves is equivalent to the general metric space notion \cite[Theorem 8.3.1]{ambrosio2005gradient}. For an AC curve $(\mu_t, t \in [0,T])$, the \textbf{metric slope} is defined as the following limit that exists for $\mathrm{Leb}$-a.e.\ $t \in [0,T]$, see \cite[Theorem 1.1.2]{ambrosio2005gradient},
\begin{align}\label{defn:metric-slope}
    |(\mu_t)'| := \lim\limits_{h \to 0} \frac{1}{\abs{h}}\Was{2}(\mu_{t+h},\mu_t). 
\end{align}
Given an AC curve $(\mu_t, t \in [0,T])$, there is an infinite family of vector fields $(v_t, t \in [0,T])$ such that \eqref{defn:continuity-eqn} holds. We will be interested in the \textbf{tangent velocity field} obtained by choosing each $v_t$ to have minimum $L^2(\mu_t)$ norm across its equivalence class. Define
\begin{align}\label{defn:tangent-space}
    \Tan{\mu} :=  \overline{\left\{\nabla \varphi: \varphi \in C_c^{\infty}(\mathbb{R}^{d})\right\}}^{L^2(\mu)}.
\end{align}
By \cite[Theorem 8.3.1]{ambrosio2005gradient}, such a $\overline{v}_t$ exists for $\mathrm{Leb}$-a.e.\ $t \in [0,T]$ and satisfies
\begin{align}\label{defn:tangent-velocity}
    \norm{\overline{v}_t}_{L^2(\mu_t)} = |(\mu_t)'|, \quad \overline{v}_t \in \Tan{\mu_t}.
\end{align}
As a consequence, we note that all velocity fields that provide a solution to \eqref{defn:continuity-eqn} can be obtained from the tangent velocities by the addition of a \textbf{divergence-free vector field}, i.e.\ a vector field $w_t \in L^2(\mu_t)$ such that $\nabla \cdot (w_t \mu_t) = 0$, see \cite[Proposition 8.4.3]{ambrosio2005gradient}. As we see in Section \ref{subsec:markov-proj}, this fact is the key distinction between the Markov projection and the tangent Markov projection we introduce. 

Geodesics in the Wasserstein space will also be essential in our analysis. Let $\mu, \nu \in \cP_2(\mathbb{R}^{d})$. There is a corresponding dynamic picture to the definition of $\Was{2}$ in \eqref{defn:was2}, developed in \cite{benamou2000computational}, that casts the Monge Kantorovich problem as a constrained kinetic energy minimization problem: 
\begin{align}\label{eq:bb}
    \Was{2}^2(\mu,\nu) &= \inf\limits_{(\rho_t,v_t, t \in [0,1])}\left\{\int_0^{1}\norm{v_t}^2_{L^2(\rho_t)} dt: \partial_t \rho_t + \nabla \cdot (v_t \rho_t) = 0, \rho_0 = \mu,\ \rho_1 = \nu\right\}. 
\end{align}
The minimizer of \eqref{eq:bb} is the unit time geodesic from $\mu$ to $\nu$. In the case that $\mu$ has a Lebesgue density, the minimizer of \eqref{eq:bb} is given in terms of the Brenier map. Let $\nabla \varphi$ denote the Brenier map, and define for $t \in [0,1]$ the map $\nabla \varphi_{0 \to t} = (1-t) \Id + t\nabla \varphi$. Note that $\nabla \varphi_{0 \to t}$ is also the gradient of a convex function. The curve of measures
\begin{align}\label{eq:mccann-interp}
    (\rho_t, t \in [0,1]), \quad \rho_t := (\nabla \varphi_{0 \to t})_{\#}\mu \text{ with } \nabla \varphi_{0 \to t} = (1-t) \Id + t \nabla \varphi
\end{align}
is called the \textbf{McCann interpolation} \cite{mccann-interp97}. We will use extensively the following notation for transport along the McCann interpolation:
\begin{align}\label{eq:mccann-interp-notation}
    \nabla \varphi_{s \to t} \text{ is the Brenier map from $\rho_s$ to $\rho_t$.}
\end{align}
Indeed, when the McCann interpolation exists we can explicitly compute its corresponding tangent velocities via the Hamilton-Jacobi equation. Define $\psi_{0} = \varphi-\frac{1}{2}\norm{x}^2$. In the optimal transport literature, $(-\psi_0)$ is called a Kantorovich potential. For $t \in (0,1]$ define
\begin{align*}
    \psi_{t}(y) := \inf\limits_{x \in \mathbb{R}^{d}}\limits\left\{\frac{1}{2t}\norm{x-y}^2 + \psi_{0}(x)\right\}.
\end{align*}
Under sufficient regularity, the function $(\psi_t, t \in [0,1])$ is a classical solution to the following Hamilton-Jacobi PDE
\begin{align}\label{eq:hj-pde}
    \partial_{t}\psi_t + \frac{1}{2}\norm{\nabla \psi_{t}}^2 &= 0. 
\end{align}
Then it holds that
\begin{align}\label{eq:cont-eqn-mccann}
    \partial_t \rho_t + \nabla \cdot(\overline{v}_t \rho_t) &= 0 \text{ with }\overline{v}_t = \nabla \psi_t = \nabla \varphi_{t \to 1}-\nabla \varphi_{t \to 0}. 
\end{align}
In particular, from the definition of a weak solution to the continuity equation we obtain the following key identity
\begin{align}\label{eq:hjb-ce-identity}
    \frac{1}{2}\Was{2}^2(\mu,\nu) &= \int_{\mathbb{R}^{d}} \psi_1 d\rho_1 - \int_{\mathbb{R}^{d}} \psi_0 d\rho_0 = \int_0^1 \int_{\mathbb{R}^{d}} \left(\partial_t \psi_t + \langle \nabla \psi_t , \overline{v}_t \rangle \right)d\rho_t dt = \int_0^{1} \frac{1}{2}\norm{\overline{v}_t}_{L^2(\rho_t)}^2 dt,
\end{align}
where the last step follows from velocity field in \eqref{eq:cont-eqn-mccann} and the identity furnished by \eqref{eq:hj-pde}. 

We make the following note about the correspondence between solutions to the continuity equation and another PDE called the Fokker-Planck PDE. Let $(\mu_t, v_t, t \in [0,1])$ satisfy \eqref{defn:continuity-eqn}. Under enough regularity, we remark that for any fixed $\eps >0$, one can add and subtract $\frac{\eps}{2}\nabla \log \mu_t$ to $v_t$ and obtain that
\begin{align}\label{eq:cont-fp-corres}
    \partial_{t} \mu_t + \nabla \cdot(v_t\mu_t) = 0 \Leftrightarrow \partial_{t} \mu_t + \nabla \cdot \left(\left(v_t + \frac{\eps}{2}\nabla \log \mu_t\right)\mu_t\right) = \frac{\eps}{2}\Delta \mu_t. 
\end{align}
The PDE on the right hand side of \eqref{eq:cont-fp-corres} is called the Fokker-Planck PDE, and its solution gives the time marginals of the following SDE
\begin{align}\label{eq:sde-fp-correspondence}
    dX_t = \left(v_t + \frac{\eps}{2}\nabla \log \mu_t\right)(X_t)dt + \sqrt{\eps} dB_t, \quad X_0 \sim \mu_0.
\end{align}
In particular, we now see that the $\eps$-noisy McCann interpolation as defined in \eqref{eq:noisy-mcann-intro} in the Introduction has time marginal laws equal to the McCann interpolation.

Lastly, as the relative entropy between path measures will be an essential computation throughout this paper we record the following formula.

\begin{fact}[Girsanov under Finite Entropy (Theorem 2.3, \cite{leo-gis12})]\label{fact:girs-formula}
    For $i = 1,2$, let $P^{i} \in \cP(C^{d}[0,1])$ denote the laws of solutions to the following SDEs
    \begin{align}
        dX_t = \gamma_t^i(X_{[0,t]}) dt + \sqrt{\eps}dB_t, \quad X_0 \sim \mu_0^{(i)},
    \end{align}
    where $X_{[0,t]}$ means that $\gamma_t^i$ can depend on the whole path over the interval $[0,t]$. If $P^{1}$ has uniqueness in law as defined in \cite[Definition 1.3]{revuz2004continuous}, then when $H(P^2|P^1) < +\infty$ the following formula holds
    \begin{align*}
        H(P^2|P^1) = H(\mu_0^{(2)}|\mu_0^{(1)}) + \frac{1}{2\eps}\int_0^{1} \Exp{P^2}\left[\norm{\gamma_t^1(X_{[0,t]})-\gamma_t^2(X_{[0,t]})}^2 \right]dt.
    \end{align*}
\end{fact}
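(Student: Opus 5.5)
This is the Girsanov formula under finite entropy, so I would prove it by the standard disintegration-plus-Girsanov route of L\'eonard rather than Novikov-type arguments. The first step is the chain rule for relative entropy along the initial coordinate. Disintegrate each $P^i$ as $P^i(\cdot) = \int P^{i,x}(\cdot)\,\mu_0^{(i)}(dx)$, where $P^{i,x}$ is the law of the SDE started at $x$. This gives
\[
H(P^2|P^1) = H(\mu_0^{(2)}|\mu_0^{(1)}) + \int H(P^{2,x}|P^{1,x})\,\mu_0^{(2)}(dx).
\]
Finiteness of $H(P^2|P^1)$ forces $\mu_0^{(2)}\ll\mu_0^{(1)}$ and $H(P^{2,x}|P^{1,x})<\infty$ for $\mu_0^{(2)}$-a.e.\ $x$. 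So it suffices to treat the case of a common deterministic initial point.

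Next, with $Z = dP^2/dP^1$ (fixed initial point), I would consider the $P^1$-martingale $Z_t = \mathrm{E}_{P^1}[Z\,|\,\mathcal{F}_t]$. I would not assume $Z_t>0$; instead I would work under $P^2$, where $Z_t>0$ almost surely, using localization by the stopping times $\tau_n=\inf\{t: Z_t\le 1/n\text{ or } Z_t\ge n\}$. The key input is that $P^1$ has uniqueness in law. By martingale representation for the filtration generated by $X$ under $P^1$ (which uniqueness in law provides, since $X_t - \int_0^t \gamma^1_s\,ds$ is an $\eps$-Brownian motion generating the martingales), one gets
\[
dZ_t = Z_t\,\eps^{-1/2}\langle \beta_t, dW^1_t\rangle
\]
for some predictable $\beta$, where $W^1$ is the $P^1$-Brownian motion. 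Girsanov's theorem applied on $[0,\tau_n]$ then shows that under $P^2$ the drift is $\gamma^1+\sqrt{\eps}\,\beta$. Uniqueness of the Doob--Meyer semimartingale decomposition under $P^2$ identifies this with $\gamma^2$, so $\sqrt{\eps}\,\beta_t = \gamma^2_t-\gamma^1_t$, $dt\otimes P^2$-a.e.

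The last step is to compute
\[
\log Z_{\tau_n} = \int_0^{\tau_n}\eps^{-1/2}\langle\beta,dW^1\rangle - \tfrac12\int_0^{\tau_n}|\beta|^2\,dt.
\]
Rewriting this with the $P^2$-Brownian motion $dW^2 = dW^1 - \beta\,dt$ gives $\log Z_{\tau_n} = \int_0^{\tau_n}\langle\beta,dW^2\rangle + \tfrac12\int_0^{\tau_n}|\beta|^2\,dt$. The stochastic integral has zero $P^2$-expectation after localization, so
\[
\mathrm{E}_{P^2}[\log Z_{\tau_n}] = \frac{1}{2\eps}\,\mathrm{E}_{P^2}\int_0^{\tau_n}\norm{\gamma^1-\gamma^2}^2\,dt .
\]
The main obstacle is passing $n\to\infty$. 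The right side increases to the full integral by monotone convergence. For the left side, $\tau_n\to 1$ $P^2$-a.s., and $\mathrm{E}_{P^2}\log Z_{\tau_n} = H(P^2_{\tau_n}|P^1_{\tau_n}) \uparrow H(P^2|P^1)$. This holds because $\mathrm{E}_{P^2}\log Z_{\tau_n}$ is the relative entropy of the restrictions to $\mathcal{F}_{\tau_n}$, which is monotone in the $\sigma$-field and converges by the martingale convergence theorem together with lower semicontinuity of entropy. The one point needing care is uniform integrability of the negative parts of $\log Z_{\tau_n}$ under $P^2$. This follows from $\mathrm{E}_{P^2}[(\log Z_{\tau_n})^-]\le \mathrm{E}_{P^1}[Z_{\tau_n}\log^- Z_{\tau_n}]\le e^{-1}$. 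Combining this with the first step gives the stated formula.
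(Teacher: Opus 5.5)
The paper does not prove Fact~\ref{fact:girs-formula}; it imports it from L\'eonard \cite{leo-gis12}, and your argument is a genuinely different route. Yours is the classical one. Martingale representation under $P^1$ produces $\beta$ with $dZ_t=Z_t\langle\beta_t,dW^1_t\rangle$. Localized Girsanov plus uniqueness of the semimartingale decomposition then gives $\sqrt{\eps}\,\beta=\gamma^2-\gamma^1$, and the entropy is read off from $\log Z$. L\'eonard's approach is built precisely to avoid martingale representation. Roughly, the finite-entropy hypothesis, via the variational formula for relative entropy, yields a bounded linear functional on a Hilbert space of predictable integrands, and the drift perturbation comes from the Riesz representation theorem. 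Uniqueness in law of $P^1$ is then used only to identify $dP^2/dP^1$ with the stopped Girsanov exponential, which upgrades an inequality to the equality. His route is more self-contained and extends to jump processes. Yours is shorter, but it puts all the weight on the predictable representation property (PRP), which is where uniqueness in law enters.

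That is also the one step whose justification must change. Uniqueness in law does give PRP, but not because $W^1=\eps^{-1/2}\bigl(X-X_0-\int_0^{\cdot}\gamma^1_s\,ds\bigr)$ generates the filtration. For weak solutions that are not strong, the canonical filtration of $X$ is strictly larger than that of $W^1$; Tsirelson's equation, with bounded path-dependent drift and uniqueness in law, is an example. The correct input is the Jacod--Yor theorem \cite[Ch.~V]{revuz2004continuous}. Uniqueness in law makes $P^1$ the only, hence an extremal, solution of the martingale problem for $(\gamma^1,\eps\Id)$ with its initial law. Extremality is equivalent to PRP with respect to $W^1$, and this also supplies the continuity of $Z$ that your stopping times need. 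Since $P^2$ has no uniqueness, $P^{2,x}$ must be taken as the regular conditional law given $X_0=x$.

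The remaining issues are bookkeeping. With $dZ_t=Z_t\eps^{-1/2}\langle\beta_t,dW^1_t\rangle$, Girsanov gives drift $\gamma^1+\beta$, and $\log Z$ carries $-\frac{1}{2\eps}\int\norm{\beta_t}^2dt$. Dropping the factor $\eps^{-1/2}$ makes all your later formulas consistent. Also, $\sup_n\Exp{P^2}[(\log Z_{\tau_n})^-]\le e^{-1}$ is only $L^1$-boundedness, not uniform integrability. You can either sharpen it to $\Exp{P^2}[(\log Z_{\tau_n})^-;\,(\log Z_{\tau_n})^->K]\le Ke^{-K}$ for $K\ge 1$, or avoid it entirely:
\begin{itemize}
\item $Z_{\tau_n\wedge 1}\in[1/n,n]$;
\item $\Exp{P^2}\log Z_{\tau_n\wedge1}=\Exp{P^1}[\Phi(Z_{\tau_n\wedge1})]$ with $\Phi(x)=x\log x\ge -e^{-1}$;
\item $Z_{\tau_n\wedge1}\to Z$ $P^1$-a.s.
\end{itemize}
Fatou under $P^1$, together with the data-processing upper bound, then gives $\Exp{P^2}\log Z_{\tau_n\wedge1}\to H(P^2|P^1)$.
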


\subsection{Weighted Sobolev Spaces}
In our proofs, we will make use of weighted Sobolev spaces. We quickly recall the basic definitions here. Fix $\mu \in \cP(\mathbb{R}^{d})$. Let $f \in H^{1}(\mu)$, i.e.\ $f$ and its weak gradient are both in $L^2(\mu)$. We define the $\dot{H}^{1}(\mu)$ norm as
\begin{align}\label{defn:neg-sob-norm}
    \norm{f}_{\dot{H}^{1}(\mu)}^{2} := \int_{\mathbb{R}^{d}} \norm{\nabla f}^2 d\mu. 
\end{align}
The dual norm $\dot{H}^{-1}(\mu)$ is defined for a distribution $\nu$ as follows, where $\langle \cdot,\cdot\rangle$ denotes the usual dual pairing between distributions and $C_c^{\infty}(\mathbb{R}^{d})$,
\begin{align}\label{defn:neg-sob-norm}
    \norm{\nu}_{\dot{H}^{-1}(\mu)} &= \sup \left\{\abs{\langle f,\nu\rangle}: f \in H^{1}(\mu) \text{, } \norm{f}_{\dot{H}^{1}(\mu)} = 1\right\}.
\end{align}
For an AC curve $(\mu_t, t \in [0,1]) \subset \cP_2(\mathbb{R}^{d})$, we can write the metric slope defined in \eqref{defn:metric-slope} in terms of this negative Sobolev norm, interpreting $\partial_t \mu_t$ as a distribution,
\begin{align*}
    \int_{0}^{1} |(\mu_{t})'|^2 dt &= \int_0^{1} \norm{\partial_{t} \mu_{t}}^2_{\dot{H}^{-1}(\mu_t)}dt. 
\end{align*}

Lastly, recall that a distribution $\sigma$ satisfies $\sigma + \nabla \cdot (v \mu) = 0$ for some Borel $v: \mathbb{R}^{d} \to \mathbb{R}^{d}$ means that,
\begin{align}\label{defn:div-weak-sense}
    \langle \sigma, \psi \rangle &:= \int_{\mathbb{R}^{d}} \langle v, \nabla \psi \rangle d\mu, \quad \text{for all $\psi \in \dot{H}^{1}(\mu)$}.
\end{align}

\subsection{$\Schro$ Bridges and Reciprocal Processes}\label{subsec:sb-rp}
For a detailed survey on $\Schro$ bridges, we refer readers to \cite{schroLeonard13}. Recall that $(r_t(\cdot,\cdot), t > 0)$ denotes the standard Euclidean heat kernel. The $\Schro$ bridge $\pi^{\eps}$ between two measures $\mu,\nu \in \cP(\mathbb{R}^{d})$ defined in \eqref{defn:intro-static-sb} is equivalently the solution to the following relative entropy minimization problem
\begin{align}\label{defn:eot}
    \pi^{\eps} := \argmin \limits_{\pi \in \Pi(\mu,\nu)} H(\pi|r_{\eps}).
\end{align}
Let $\eps > 0$ and let $R^{\eps}$ denote the law of reversible Wiener measure on $C^{d}[0,1]$, i.e.\ the law of $(B_{\eps t}, t \in [0,1])$ with $B_0 \sim \mathrm{Leb}$. The $\eps$-dynamic $\Schro$ bridge from $\mu$ to $\nu$ is defined as
\begin{align}\label{eq:dynam-schro-bridge-defn}
    P^{\eps} &:= \argmin \left\{H(P|R^{\eps}): P \in \cP(C^{d}[0,1]) \text{ such that } (\omega_0)_{\#}P=\mu, (\omega_1)_{\#}P = \nu\right\}.
\end{align}
Additionally, with $b^{\eps}$ from \eqref{eq:fg-decomp}, $P^{\eps}$ is the law of $(X_t, t \in [0,1])$ satisfying the following SDE
\begin{align}\label{eq:sb-sde}
    dX_t &= \eps \nabla \log R^{\eps}_{1-t}b^{\eps}(X_t) dt + \sqrt{\eps}dB_t, \quad X_0 \sim \mu. 
\end{align} 
Let $(R^{\eps}_{xy}, x,y \in \mathbb{R}^{d})$ denote the laws of the bridges of $R^{\eps}$ on $C^{d}([0,1])$. In other words, this is the disintegration of $R^{\eps}$ with respect to the events $\{\omega_0 = x,\omega_1 = y\}$. Equivalently, each $R_{xy}^{\eps}$ is the law of the Brownian bridge from $x$ to $y$ with diffusion matrix $\sqrt{\eps}\Id$. For shorthand, we will refer to this collection as \textbf{$\eps$-Brownian bridges}. The static and dynamic $\Schro$ bridges are related by
\begin{align}
    P^{\eps} = \int_{\mathbb{R}^{d} \times \mathbb{R}^{d}} R^{\eps}_{xy}\pi^{\eps}(dxdy). 
\end{align}
Analogous to the static $\Schro$ bridge, in the notation of \eqref{eq:fg-decomp} it holds that $dP^{\eps}/dR^{\eps} = a^{\eps}(\omega_0)b^{\eps}(\omega_1)$. 

We will be interested in other reciprocal processes throughout this paper. In particular, we will closely consider a path measure $Q^{\eps}$ obtained by mixing a joint density $q_{\eps} \in \cP(\mathbb{R}^{d}\times \mathbb{R}^{d})$ with the bridges of $R^{\eps}$ in the following manner
\begin{align}\label{eq:intro-rp}
    \frac{dQ^{\eps}}{dR^{\eps}} = \frac{q_{\eps}(\omega_0,\omega_1)}{r_{\eps}(\omega_0,\omega_1)} \text{ or equivalently, } Q^{\eps} = \int_{\mathbb{R}^{d} \times \mathbb{R}^{d}} R^{\eps}_{xy}q_{\eps}(dxdy). 
\end{align}
While we use measure theoretic notation in \eqref{eq:intro-rp}, we note that this path measure is the law of the stochastic process written in \eqref{defn:intro-stochastic-proc} in the Introduction.  

Reciprocal measures, introduced in \cite{jamison-rp-74}, are well-studied objects. For a modern survey and proper definition, see \cite{leo-reciprocal14}. They have been used and studied in the context of entropy minimization \cite{follmergantert97} as well as diffusions and their bridge processes \cite{vonrenesse-conf18,conforti-recip-characteristics18}. Recently in the generative modeling literature, they have gained attention as the first ``bridge fitting'' step in the Iterative Markov Fitting (IMF) procedure for computing $\Schro$ bridges proposed by \cite{shi2023diffusion,peluchetti-23}. Importantly, $P^{\eps}$ is distinguished among all other reciprocal measures on $C^{d}[0,1]$ from $\mu$ to $\nu$ with bridges $(R^{\eps}_{xy},x,y \in \mathbb{R}^{d})$ in that it is also a Markov measure. This fact is due to the product structure in \eqref{eq:fg-decomp}, see \cite[Section 3]{leo-reciprocal14}. 

Lastly, we consider the absolutely continuous curve in $P_2(\mathbb{R}^{d})$ given by the marginal flow of the dynamic $\Schro$ bridge, $((\omega_t)_{\#}P^{\eps}, t \in [0,1]) \subset \cP_2(\mathbb{R}^{d})$, called the $\eps$-\textbf{entropic interpolation} from $\mu$ to $\nu$. This curve is a solution to the following entropic analogue of \eqref{eq:bb}. We first define the following action functional on AC curves $(\mu_t, v_t, t \in [0,1])$
\begin{align}\label{eq:action-functional}
    A_{\eps}((\mu_t,v_t)) = \frac{1}{2}\int_0^{1} \norm{v_t}_{L^{2}(\mu_t)}^2dt + \frac{\eps^2}{8}\int_0^{1} I(\mu_t) dt. 
\end{align}
Recall that the Fisher information and relative Fisher information are defined respectively as
\begin{align}\label{defn:fi}
    I(\mu) := 
    \begin{cases}
        \Exp{\mu} \norm{\nabla \log \mu}^2, & \text{ if $\mu \ll \mathrm{Leb}$} \\
        +\infty, &\text{otherwise}
    \end{cases}, \quad
    I(\mu|\nu) := 
    \begin{cases}
        \Exp{\mu} \norm{\nabla \log \left(d\mu/d\nu\right)}^2, & \text{ if $\mu \ll \nu$} \\
        +\infty, &\text{otherwise}
    \end{cases}
\end{align}
\begin{fact}[Entropic Benamou-Brenier, Corollary 5.8 in \cite{gentil2017analogy}]\label{prop:entropic-bb}
    Under mild assumptions, the $\eps$-entropic interpolation from $\mu$ to $\nu$ minimizes $A_{\eps}$, defined in \eqref{eq:action-functional}, over a class of absolutely continuous curves from $\mu$ to $\nu$. Let $(\mu_t^{\eps},v_t^{\eps}, t \in [0,1])$ denote the entropic interpolation with $v_t^{\eps} \in \Tan{\mu_t^{\eps}}$, then the entropic cost can be written as
    \begin{align}\label{eq:ent-cost-ent-interp}
        \eps H(\pi^{\eps}|r_{\eps}) &= \frac{\eps}{2}(\Ent(\mu)+\Ent(\nu))+\frac{1}{2}\int_0^{1} \norm{v_t^{\eps}}_{L^{2}(\mu_t^{\eps})}^2dt + \frac{\eps^2}{8}\int_0^{1} I(\mu_t^{\eps})dt. 
    \end{align}
\end{fact}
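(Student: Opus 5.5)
The plan is to obtain \eqref{eq:ent-cost-ent-interp} from two Girsanov computations for the dynamic $\Schro$ bridge $P^{\eps}$, one forward and one backward in time, and then to average them. The minimality statement will follow by running the same computation for an arbitrary competitor curve. Throughout I use $H(P^{\eps}|R^{\eps}) = H(\pi^{\eps}|r_{\eps})$, which holds because $P^{\eps}$ and $R^{\eps}$ share the same bridges, and I write $\mu_t^{\eps} = (\omega_t)_{\#}P^{\eps}$.

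\textbf{Step 1 (forward Girsanov).} By \eqref{eq:sb-sde}, $P^{\eps}$ is the law of an SDE with drift $\beta_t := \eps\nabla\log R^{\eps}_{1-t}b^{\eps}$ and noise $\sqrt{\eps}\,dB_t$. The reference $R^{\eps}$ is driftless Brownian motion started from Lebesgue measure. Fact \ref{fact:girs-formula}, applied with the infinite initial measure handled by localization, gives
\[
H(P^{\eps}|R^{\eps}) = \Ent(\mu) + \frac{1}{2\eps}\int_0^1 \Exp{P^{\eps}}\norm{\beta_t(X_t)}^2 dt .
\]

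\textbf{Step 2 (backward Girsanov).} $R^{\eps}$ is reversible, and the time reversal of $P^{\eps}$ is the $\Schro$ bridge from $\nu$ to $\mu$. Its drift is $\hat\beta_t = \eps\nabla\log R^{\eps}_{t}a^{\eps}$, evaluated along the reversed path. Hence
\[
H(P^{\eps}|R^{\eps}) = \Ent(\nu) + \frac{1}{2\eps}\int_0^1 \Exp{P^{\eps}}\norm{\hat\beta_t}^2 dt .
\]
Nelson's duality relation, obtained by integrating by parts in the Fokker--Planck equation, gives $\beta_t + \hat\beta_t = \eps\nabla\log\mu_t^{\eps}$; concretely, $\mu_t^{\eps} = R^{\eps}_t a^{\eps}\cdot R^{\eps}_{1-t}b^{\eps}$. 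Define the current velocity $v_t^{\eps} := \beta_t - \frac{\eps}{2}\nabla\log\mu_t^{\eps}$. Then $\hat\beta_t = -v_t^{\eps} + \frac{\eps}{2}\nabla\log\mu_t^{\eps}$. By \eqref{eq:cont-fp-corres}, $(\mu^{\eps}_t, v^{\eps}_t)$ solves the continuity equation. Moreover $v_t^{\eps}$ is a gradient, namely $\frac{\eps}{2}\nabla\log\bigl(R^{\eps}_{1-t}b^{\eps}/R^{\eps}_t a^{\eps}\bigr)$, so $v_t^{\eps}\in\Tan{\mu_t^{\eps}}$.

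\textbf{Step 3 (averaging).} Average the two identities and use the parallelogram law
\[
\norm{v+w}^2 + \norm{-v+w}^2 = 2\norm{v}^2 + 2\norm{w}^2, \qquad w = \tfrac{\eps}{2}\nabla\log\mu_t^{\eps}.
\]
This gives
\[
H = \tfrac12(\Ent\mu + \Ent\nu) + \frac{1}{2\eps}\int_0^1 \Bigl(\norm{v_t^{\eps}}^2_{L^2(\mu^{\eps}_t)} + \frac{\eps^2}{4}I(\mu_t^{\eps})\Bigr)dt .
\]
Multiplying by $\eps$ yields \eqref{eq:ent-cost-ent-interp}.

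\textbf{Step 4 (minimality).} Take any admissible AC curve $(\mu_t, v_t)$ from $\mu$ to $\nu$ with $\int_0^1 I(\mu_t)\,dt < \infty$, and replace $v_t$ by its tangent projection, which only lowers the kinetic term of $A_{\eps}$. Let $Q$ be the law of the SDE \eqref{eq:sde-fp-correspondence}, whose marginals are $\mu_t$ by \eqref{eq:cont-fp-corres}. Repeating Steps 1--3 for $Q$ requires the backward drift of $Q$ to be $-v_t + \frac{\eps}{2}\nabla\log\mu_t$; this again comes from Nelson's relation, since time reversal of a diffusion only needs the marginal flow. We get
\[
\eps H(Q|R^{\eps}) = \frac{\eps}{2}(\Ent\mu + \Ent\nu) + A_{\eps}((\mu_t, v_t)).
\]
Since $P^{\eps}$ minimizes $H(\cdot|R^{\eps})$ over path measures with these endpoint marginals, by \eqref{eq:dynam-schro-bridge-defn}, we conclude $A_{\eps}((\mu_t^{\eps}, v_t^{\eps})) \le A_{\eps}((\mu_t, v_t))$.

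\textbf{Main obstacle.} The delicate part is technical rather than structural. First, Girsanov must be justified against the $\sigma$-finite reference $R^{\eps}$. Second, the time-reversal formula of Step 2 must be established under only finite entropy and finite Fisher information, which is what the ``mild assumptions'' and the restricted class of curves encode. I would handle both by following F\"ollmer's time-reversal argument under finite entropy, together with a truncation/localization of Lebesgue measure to balls.
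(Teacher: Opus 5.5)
Your argument is correct. Steps 1--3 are the standard current/osmotic-velocity derivation of this formula. The paper does not prove this fact itself; it imports it from the cited reference.

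The paper does prove the analogous identity \eqref{eq:rel-ent-decomp-mp} in Theorem \ref{thm:rel-ent-tangent-markov-proj}, by a different route that uses only the forward Girsanov formula. There one expands $\|v_t+\frac{\eps}{2}\nabla\log\mu_t\|^2_{L^2(\mu_t)}$ and identifies the cross term through the entropy chain rule $\int_0^1\langle v_t,\nabla\log\mu_t\rangle_{L^2(\mu_t)}\,dt=\Ent(\mu_1)-\Ent(\mu_0)$. The two routes are equivalent: subtracting your forward and backward Girsanov identities gives exactly this chain rule. They differ in where the technical burden falls.

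\textbf{What your route buys.} Averaging produces the symmetric term $\frac{\eps}{2}(\Ent(\mu)+\Ent(\nu))$ and the $\frac{\eps^2}{8}I$ term by pure algebra. For $P^{\eps}$ itself the reversed drift is explicit from $\mu_t^{\eps}=R^{\eps}_ta^{\eps}\cdot R^{\eps}_{1-t}b^{\eps}$, so Steps 1--3 need no general time-reversal theory.

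\textbf{What your route costs.} In Step 4 you need a finite-entropy time-reversal theorem for an arbitrary competitor $Q$. You also need the backward drift of $Q$ to be a function of $X_t$ alone, which is where Markovianity of $Q$ enters. Without it, the backward Girsanov formula only gives an inequality, and it points the wrong way for minimality.

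\textbf{What the chain-rule route buys.} It removes the obstacle you flagged. Take any weak solution $Q$ of \eqref{eq:sde-fp-correspondence} with marginals $\mu_t$ and $\int_0^1(\|v_t\|^2_{L^2(\mu_t)}+I(\mu_t))\,dt<\infty$. Forward Girsanov plus the chain rule then give $\eps H(Q|R^{\eps})=\frac{\eps}{2}(\Ent(\mu)+\Ent(\nu))+A_{\eps}((\mu_t,v_t))$ directly, with no time reversal and no Markov property needed. Since $\nabla\log\mu_t\in\Tan{\mu_t}$, the cross term does not see any divergence-free part of $v_t$, so the tangent projection is not needed either.

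\textbf{Two small points that apply to either route.}
First, the $\sigma$-finite reference needs no localization. Disintegrating along $\omega_0$ gives $H(P|R^{\eps})=\Ent(\mu_0)+H(P|R^{\eps}_{\mu_0})$ with $R^{\eps}_{\mu_0}$ a probability measure, as in the proof of Theorem \ref{thm:rel-ent-tangent-markov-proj}.
Second, a solution of \eqref{eq:sde-fp-correspondence} with marginals exactly $\mu_t$ does not follow from \eqref{eq:cont-fp-corres} alone. It requires uniqueness for the Fokker--Planck equation or the superposition principle, as invoked in Proposition \ref{prop:proj-processes-exist}.
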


\subsection{(Tangent) Markov Projection}\label{subsec:markov-proj}
In this subsection, we introduce three stochastic processes, all possessing the same one-dimensional flow of marginals. We keep the exposition in this subsection \textbf{general}, as these constructions apply to more general settings than the one considered later in this paper.

Let $\ell_{\eps} \in \cP(\mathbb{R}^{d} \times \mathbb{R}^{d})$ denote a joint density satisfying some assumptions we delineate shortly. Now, let $Q^{\eps} \in \cP(C^{d}[0,1])$ denote the reciprocal measure formed by mixing the endpoint distribution $\ell_{\eps}$ with $\eps$-Brownian bridges,
\begin{align}\label{eq:recip-measure-varphi}
    \frac{dQ^{\eps}}{dR^{\eps}} &= \frac{\ell_{\eps}(\omega_0,\omega_1)}{r_{\eps}(\omega_0,\omega_1)} =:\varphi_{0\eps}(\omega_0,\omega_1).
\end{align}
Equivalently, $Q^{\eps}$ is the law of a stochastic process $(X_t, t \in [0,1])$ as in \eqref{defn:intro-stochastic-proc}. It is known that the process satisfies the following SDE
\begin{align}\label{eq:recip-process-sde}
    dX_t &= \beta_t^{\eps}(X_0,X_t) dt + \sqrt{\eps}dB_t, \quad X_0 \sim e^{-f},
\end{align}
where the drift is given by \cite[Theorem 5.4]{leonard2011stochastic} as, with $\varphi_{0\eps}$ defined in \eqref{eq:recip-measure-varphi},
\begin{align}\label{eq:recip-process-drift}
    \beta_t^{\eps}(x_0,x_t) &= \eps \nabla \log R^{\eps}_{1-t} \left(\varphi_{0\eps}(x_0,\cdot)\right)(x_t). 
\end{align}

Now fix some notation. Let $(\mu_t, \overline{v}_t, t \in [0,1])$ be the AC curve of measures governing the marginal flow of $Q^{\eps}$, i.e.\ $\mu_t = (\omega_t)_{\#}Q^{\eps}$, $\overline{v}_t \in \mathrm{Tan}(\mu_t)$, and the continuity equation is satisfied. The reciprocal process is not Markov, as can be seen from the dependence on $X_0$ at all times $t \in (0,1]$ in the drift \eqref{eq:recip-process-drift}. However, we can perform the \textbf{Markov projection} and obtain a Markov process that has the same one-dimensional flow \cite{gyongy-mimicking-86,brunick-mimicking13}. Define 
\begin{align}\label{eq:mp-drift-defn}
    \hat{\beta}_t^{\eps}(x) &= \Exp{Q^{\eps}}\left[\beta_t^{\eps}(X_0,X_t)|X_t = x\right].
\end{align}
Let $\hat{Q}^{\eps}$ denote the law of a process $(X_t, t \in [0,1])$ satisfying
\begin{align}\label{eq:mp-sde}
    dX_t &= \hat{\beta}^{\eps}_t(X_t)dt + \sqrt{\eps}dB_t, \quad X_0 \sim e^{-f}.
\end{align}
Then $\hat{Q}^{\eps}$ is called the Markov projection of $Q^{\eps}$.
We emphasize again that an important fact of this construction is that $(\mu_t, \overline{v}_t, t \in [0,1])$ still governs the marginal flow of $\hat{Q}^{\eps}$.

The Markov projection has seen a recent surge in interest in the machine learning literature. In theoretical analyses of flow-based generative models, the Markov projection has received attention as one is typically only interested in the one-dimensional time marginal evolution  from a source measure to a target \cite{gentiloni2024theoretical}. Moreover, in the aforementioned IMF scheme, the Markov projection is the key second step of the scheme. The Markov projection satisfies a certain Pythagorean Theorem of relative entropy \cite[Lemma 6]{shi2023diffusion} similar to that of the $\Schro$ bridge \cite{csiszar75idiv}, and more quantitative analysis of the Markov projection via functional inequalities is given in \cite[Theorem 4]{SCD-dsb-25}.

Lastly, we define the \textbf{tangent Markov projection}. This process is obtained by projecting the drift in \eqref{eq:recip-process-drift} onto $\Tan{\mu_t}$, the closed subspace of $L^2(\mu_t)$ defined in \eqref{defn:tangent-space}. For each $t \in [0,1]$ define $\overline{\beta}_{t}^{\eps}$ as 
\begin{align}\label{eq:relaxed-l2-proj}
    \overline{\beta}_t^{\eps} &:= \argmin\limits_{\beta_t \in \Tan{\mu_t}} \Exp{Q^{\eps}}\norm{\beta_{t}^{\eps}(X_0,X_t)-\beta_t(X_t)}^2
    = \argmin\limits_{\beta_t \in \Tan{\mu_t}} \norm{\hat{\beta}_t^{\eps}-\beta_t}_{L^2(\mu_t)}^2.
\end{align}
We note that the two definitions of  $\overline{\beta}_t^{\eps}$ are equivalent by a composition of projections. Indeed, let $\gamma \in L^2(\mu_t)$, then by the definition of conditional expectation it holds that
\begin{align*}
    \Exp{Q^{\eps}}\norm{\beta_t^{\eps}(X_0,X_t)-\gamma(X_t)}^2 &= \Exp{Q^{\eps}}\norm{\beta_t^{\eps}(X_0,X_t)-\hat{\beta}_t^{\eps}(X_t)}^2+\Exp{Q^{\eps}}\norm{\hat{\beta}_t^{\eps}(X_t)-\gamma(X_t)}^2.
\end{align*}
Observe that the first term on the right hand side has no dependence on $\gamma$, and recall that under $Q^{\eps}$, $X_{t} \sim \mu_t$. Thus, the argmin of the left and right hand sides over $\gamma \in \Tan{\mu_t}$ must be the same.

Let $\overline{Q}^{\eps}$ denote the law of a process $(X_t, t \in [0,1])$ that satisfies 
\begin{align}\label{eq:tan-mp-sde}
    dX_t &= \overline{\beta}_t^{\eps}(X_t)dt + \sqrt{\eps}dB_t, \quad X_0 \sim e^{-f}.
\end{align}
As $\overline{\beta}_t^{\eps}$ is obtained from $\hat{\beta}_t^{\eps}$ by subtracting a zero-divergence (in the $L^2(\mu_t)$ sense) vector field, the marginal flow of $\overline{Q}^{\eps}$ is still given by $(\mu_t,\overline{v}_t, t \in [0,1])$. The motivation for this projection is to put the stochastic definition \eqref{eq:tan-mp-sde} in alignment with the metric picture-- a point elucidated in Proposition \ref{prop:relaxed-drift-calculation}.

\begin{remark}
    Note that the tangent Markov projection defined above in \eqref{eq:tan-mp-sde} makes an implicit choice of $\eps$ and Brownian noise. In fact, we can generalize the construction to any AC curve of probabilities. Given an AC curve of probability measures and its corresponding tangent velocities, by changing $\eps$ and the diffusion matrix in \eqref{eq:tan-mp-sde}, one obtains a collection of Markov processes whose time marginals are all identical. For example, if we apply this operation to the McCann interpolation we recover the $\eps$-noisy McCann interpolation process \eqref{eq:noisy-mcann-intro}.
\end{remark}    

We summarize conditions for the existence of these processes in the following proposition.
\begin{proposition}[Existence of Tangent Markov Projection]\label{prop:proj-processes-exist}
Let $Q^{\eps}, \hat{Q}^{\eps}, \overline{Q}^{\eps}$ denote the reciprocal process, its Markov projection, and its tangent Markov projection, respectively, defined in \eqref{eq:recip-process-sde}, \eqref{eq:mp-sde}, and \eqref{eq:tan-mp-sde}. Recall $\left(\beta_{t}^{\eps}(\cdot,\cdot), t \in [0,1]\right)$ defined in \eqref{eq:recip-process-drift}. Assume that $\Ent(\mu_0)$ is finite. When $H(\ell_{\eps}|r_{\eps})$ is finite, the SDEs defining $\hat{Q}^{\eps}$ and $\overline{Q}^{\eps}$ have weak solutions. Additionally, the relative entropies with respect to the Wiener measure $R^{\eps}$ are given by the formulas 
    \begin{align}\label{eq:rel-ent-formulas}
        H(\overline{Q}^{\eps}|R^{\eps}) &= \Ent(\mu_0)+ \frac{1}{2\eps}\int_{0}^{1}\norm{\overline{\beta}_t^{\eps}}_{L^2(\mu_t)}^2 dt, \; H(\hat{Q}^{\eps}|R^{\eps}) = \Ent(\mu_0)+\frac{1}{2\eps}\int_{0}^{1}  \norm{\hat{\beta}_t^{\eps}}_{L^2(\mu_t)}^2 dt.
    \end{align}
\end{proposition}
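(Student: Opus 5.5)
The plan is to obtain everything from the reciprocal measure $Q^{\eps}$ and the Girsanov formula of Fact \ref{fact:girs-formula}, using convexity of $L^2$ norms under conditional expectation and orthogonal projection.

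\textbf{Step 1: the reciprocal process has finite energy.} Since $dQ^{\eps}/dR^{\eps}=\ell_{\eps}(\omega_0,\omega_1)/r_{\eps}(\omega_0,\omega_1)$, we have $H(Q^{\eps}|R^{\eps})=H(\ell_{\eps}|r_{\eps})<\infty$. The additivity of relative entropy under disintegration at time $0$ gives
\[
H(Q^{\eps}|R^{\eps})=\Ent(\mu_0)+\int H\bigl(Q^{\eps}_x|R^{\eps}_x\bigr)\,\mu_0(dx),
\]
where $R^{\eps}_x$ is $\eps$-Brownian motion started at $x$. I would apply Léonard's finite-entropy Girsanov theorem \cite{leo-gis12} to each conditional law $Q^{\eps}_x$, which has finite entropy for $\mu_0$-a.e.\ $x$ because $\Ent(\mu_0)$ is finite. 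This shows that $Q^{\eps}$ solves \eqref{eq:recip-process-sde} with drift $\beta^{\eps}_t$ from \eqref{eq:recip-process-drift}, and that
\[
\frac{1}{2\eps}\int_0^1 \Exp{Q^{\eps}}\norm{\beta_t^{\eps}(X_0,X_t)}^2dt = H(\ell_{\eps}|r_{\eps})-\Ent(\mu_0)<\infty.
\]

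\textbf{Step 2: the projected drifts are square integrable and preserve the marginals.} By Jensen's inequality for conditional expectation, $\int_0^1\norm{\hat\beta^{\eps}_t}^2_{L^2(\mu_t)}dt$ is at most the energy in Step 1. Since $\overline\beta^{\eps}_t$ is the $L^2(\mu_t)$-orthogonal projection of $\hat\beta^{\eps}_t$ onto $\Tan{\mu_t}$, its norm is smaller still. Taking expectations in Itô's formula under $Q^{\eps}$ shows that $(\mu_t)$ solves, in the weak sense, the Fokker--Planck equation with drift $\hat\beta^{\eps}_t$ and diffusion $\frac{\eps}{2}\Delta$. Because $\hat\beta^{\eps}_t-\overline\beta^{\eps}_t\perp\Tan{\mu_t}$, meaning $\nabla\cdot((\hat\beta^{\eps}_t-\overline\beta^{\eps}_t)\mu_t)=0$ in the sense of \eqref{defn:div-weak-sense}, the same equation holds with drift $\overline\beta^{\eps}_t$. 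Measurability of $(t,x)\mapsto\overline\beta^{\eps}_t(x)$ needs a short check. I would handle it by projecting in the space $L^2(dt\,\mu_t)$ onto the closed subspace of fields that are tangent for a.e.\ $t$.

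\textbf{Step 3: existence of weak solutions (the main obstacle).} The drifts are only in $L^2(dt\,\mu_t)$, not bounded or Lipschitz, so classical SDE theory does not apply. I would invoke the superposition principle of Figalli and Trevisan for Fokker--Planck equations. Any narrowly continuous solution $(\mu_t)$ of the Fokker--Planck equation with $\int_0^1\int\norm{b_t}\,d\mu_t\,dt<\infty$ is the marginal flow of a martingale-problem solution, hence of a weak solution of $dX_t=b_t(X_t)dt+\sqrt{\eps}dB_t$ with $X_0\sim\mu_0$. The integrability condition follows from the $L^2$ bound in Step 2 via Cauchy--Schwarz. Applying this with $b=\hat\beta^{\eps}$ gives $\hat Q^{\eps}$, and with $b=\overline\beta^{\eps}$ gives $\overline Q^{\eps}$. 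In both cases the marginals are $\mu_t$.

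\textbf{Step 4: the entropy formulas.} To get the formulas, I would use the fact that for a weak solution with $X_t\sim\mu_t$ and square-integrable Markov drift $b$, the same finite-entropy Girsanov theory gives
\[
H(\cdot|R^{\eps})\le \Ent(\mu_0)+\frac{1}{2\eps}\int_0^1\norm{b_t}^2_{L^2(\mu_t)}dt<\infty.
\]
With finiteness established, Fact \ref{fact:girs-formula} applies with $P^1=R^{\eps}$ (drift $0$, which has uniqueness in law) and initial law Lebesgue. The expectation under $P^2$ only involves $X_t\sim\mu_t$, so it turns into the $L^2(\mu_t)$ norms, and this yields \eqref{eq:rel-ent-formulas}. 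The delicate point is the finite-entropy upper bound, which requires localizing Girsanov via stopping times and Fatou's lemma. That argument goes through because the drift energy is finite, so I expect this part to be routine once Step 3 is in place.
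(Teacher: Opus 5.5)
Your proposal is correct and follows essentially the paper's route: finite energy of the reciprocal drift from the entropy factorization, the Jensen/projection ordering of the drift energies, existence via the superposition principle, and a localized Girsanov bound followed by Fact~\ref{fact:girs-formula} for the formulas. The differences are minor. You apply superposition directly to the known Fokker--Planck solution $(\mu_t)$, so you do not need the paper's appeal to Fokker--Planck uniqueness, nor Kadota--Shepp, since your entropy bound already gives absolute continuity. You also explicitly verify that $(\mu_t)$ solves both Fokker--Planck equations and that $\overline{\beta}^{\eps}$ is jointly measurable, points the paper leaves implicit.
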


\begin{proof}
The existence of the Markov projection $\hat{Q}^{\eps}$ is established in \cite[Appendix A, Theorem 4]{gentiloni2024theoretical}. As a similar argument establishes the existence of $\overline{Q}^{\eps}$, we recount the argument for both processes. Note that the existence of the Markov projection is considered in much greater generality in \cite{brunick-mimicking13}. 

We first make the following observation. For a path measure $P \in \cP(C^{d}[0,1])$ and $x \in \mathbb{R}^{d}$, let $P_x$ denote the path measure obtained by conditioning on the event $\{\omega_0 = x\}$. By the factorization of relative entropy and the stopping time argument of \cite[Theorem 5.6, Step 1]{DGW}, we compute that
\begin{align}
    H(Q^{\eps}|R^{\eps}) &= \Ent(\mu_0) + \Exp{\mu_0} \left[H(Q^{\eps}_{x}|R^{\eps}_{x})\right] = \Ent(\mu_0)+\frac{1}{2\eps}\int_0^{1} \Exp{Q^{\eps}}\norm{\beta_t^{\eps}(X_0,X_{t})}^2 dt.
\end{align}
As $Q^{\eps}$ and $R^{\eps}$ have the same bridge distribution, it follows again from the factorization of relative entropy that $H(Q^{\eps}|R^{\eps}) = H(\ell_{\eps}|r_{\eps})$. As $\Ent(\mu_0)$ is finite, it follows that
\begin{align}\label{eq:l2-dom-recip-drift}
    H(\ell_{\eps}|r_{\eps}) \text{ is finite} \Leftrightarrow \int_0^{1} \Exp{Q^{\eps}}\norm{\beta_t^{\eps}(X_0,X_{t})}^2 dt< +\infty.
\end{align}
We now proceed with the argument. First, from the definition \eqref{eq:tan-mp-sde} and applying Jensen's inequality to \eqref{eq:mp-drift-defn}, we obtain the following ordering
\begin{align}\label{eq:l2-ordering-of-drifts}
    \int_0^{1} \norm{\overline{\beta}_t^{\eps}}_{L^2(\mu_t)}^2 dt \leq \int_0^{1} \norm{\hat{\beta}_t^{\eps}}_{L^2(\mu_t)}^2 dt \leq \int_0^{1} \Exp{Q^{\eps}}\norm{\beta_t^{\eps}(X_0,X_{t})}^2 dt.
\end{align}
From \eqref{eq:l2-dom-recip-drift}, there is a unique weak solution to the Fokker-Planck equation corresponding to the generators of $\overline{Q}^{\eps}$ and $\hat{Q}^{\eps}$ by \cite[Theorem 9.4.3, Remark 9.4.4]{fpk-bogachev}. The superposition principle implies the existence of path measures solving the corresponding martingale problems \cite[Theorem 2.5]{trevisan16}. 

Now, it remains to justify the relative entropy expression. Indeed, \eqref{eq:l2-ordering-of-drifts} and \eqref{eq:l2-dom-recip-drift} imply by the Kadota-Shepp condition \cite[Theorem 1']{kadota70} that $\hat{Q}^{\eps}$ and $\overline{Q}^{\eps}$ are absolutely continuous with respect to $R^{\eps}$. From the domination \eqref{eq:l2-ordering-of-drifts}, by the stopping time argument in \cite[Theorem 5.6, Step 1]{DGW} and the lsc of relative entropy with respect to weak convergence \cite[Lemma 9.4.3]{ambrosio2005gradient}, the quantities $H(\hat{Q}^{\eps}|R^{\eps})$ and $H(\overline{Q}^{\eps}|R^{\eps})$ are finite by a routine application of Girsanov's Theorem \cite[Theorem 1.4]{revuz2004continuous}. Thus, the formulas in \eqref{eq:rel-ent-formulas} follow from \cite[Theorem 2.3]{leo-gis12} as presented in Fact \ref{fact:girs-formula}. 
\end{proof}

We now establish some quick properties of the tangent Markov projection. 
First, we show that the drift of the tangent Markov projection is determined entirely by metric quantities, i.e.\ its density and corresponding tangent velocities in \eqref{defn:continuity-eqn}. 
\begin{proposition}\label{prop:relaxed-drift-calculation}
    Recall the setting and notation of Proposition \ref{prop:proj-processes-exist}. Let $\overline{\beta}_t^{\eps}$ be as defined in \eqref{eq:relaxed-l2-proj}. If $\int_0^{1} I(\mu_t) dt < +\infty$, then it holds for $\mathrm{Leb}$-a.e.\ $t \in [0,1]$ that
    \begin{align*}
        \overline{\beta}_t^{\eps} &= \overline{v}_t+ \frac{\eps}{2}\nabla \log \mu_t,
    \end{align*}
    where we recall that $(\mu_t,\overline{v}_t, t \in [0,1])$ is the marginal flow of $Q^{\eps}$ defined in \eqref{eq:recip-process-sde} with each $\overline{v}_t \in \mathrm{Tan}(\mu_t)$. 
\end{proposition}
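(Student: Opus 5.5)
We need to show $\overline{\beta}_t^{\eps} = \overline{v}_t + \frac{\eps}{2}\nabla\log\mu_t$. The plan is to identify the $L^2(\mu_t)$-projection of $\hat{\beta}_t^{\eps}$ onto $\Tan{\mu_t}$ by a weak-form computation. We do this in three steps.

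\textbf{Step 1: the Fokker--Planck equation for $\hat{\beta}_t^{\eps}$.} Since the Markov projection $\hat{Q}^{\eps}$ has the same marginals $(\mu_t)$, Itô's formula applied to test functions $\psi\in C_c^\infty([0,1]\times\mathbb{R}^d)$ shows that $(\mu_t)$ solves, in the weak sense,
\begin{align*}
\partial_t\mu_t + \nabla\cdot(\hat{\beta}_t^{\eps}\mu_t) = \tfrac{\eps}{2}\Delta\mu_t .
\end{align*}
This is justified by the integrability \eqref{eq:l2-ordering-of-drifts}. Equivalently, one can work directly under $Q^{\eps}$ with the drift $\beta_t^{\eps}(X_0,X_t)$ and condition on $X_t$.

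\textbf{Step 2: rewrite as a continuity equation.} Because $\int_0^1 I(\mu_t)\,dt<\infty$, for a.e.\ $t$ the score $\nabla\log\mu_t$ lies in $L^2(\mu_t)$. Moreover, $\int\Delta\psi\, d\mu_t = -\int\langle\nabla\psi,\nabla\log\mu_t\rangle\, d\mu_t$. To justify this for $\mu_t$ with only finite Fisher information, write $\nabla\mu_t = 2\sqrt{\mu_t}\,\nabla\sqrt{\mu_t}$ with $\sqrt{\mu_t}\in H^1$. Substituting, $(\mu_t)$ solves the continuity equation with velocity
\begin{align*}
w_t := \hat{\beta}_t^{\eps} - \tfrac{\eps}{2}\nabla\log\mu_t .
\end{align*}
By the characterization recalled after \eqref{defn:tangent-velocity} (Proposition 8.4.3 in \cite{ambrosio2005gradient}), we get $w_t-\overline{v}_t \perp \Tan{\mu_t}$ in $L^2(\mu_t)$ for a.e.\ $t$, that is, $\nabla\cdot((w_t-\overline v_t)\mu_t)=0$. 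A small technicality is that the weak formulation holds with test functions in $(t,x)$. Passing to a.e.\ $t$ with $\nabla\psi$ for $\psi\in C_c^\infty(\mathbb{R}^d)$ is the standard separability argument over a countable dense family of test functions.

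\textbf{Step 3: identify the projection.} Write
\begin{align*}
\hat{\beta}_t^{\eps} = \Bigl(\overline{v}_t + \tfrac{\eps}{2}\nabla\log\mu_t\Bigr) + (w_t-\overline{v}_t).
\end{align*}
The first summand belongs to $\Tan{\mu_t}$. This holds for $\overline v_t$ by definition. For the score, $\nabla\log\mu_t\in\Tan{\mu_t}$ whenever it is in $L^2(\mu_t)$: for any $\xi\in L^2(\mu_t)$ with $\nabla\cdot(\xi\mu_t)=0$ we have $\int\langle \xi,\nabla\log\mu_t\rangle\, d\mu_t = \int\langle\xi,\nabla\mu_t\rangle\, dx = 0$. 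The second summand is orthogonal to $\Tan{\mu_t}$. By uniqueness of orthogonal projection onto the closed subspace $\Tan{\mu_t}$, the second characterization in \eqref{eq:relaxed-l2-proj} gives $\overline{\beta}_t^{\eps}=\overline v_t+\frac{\eps}{2}\nabla\log\mu_t$.

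\textbf{Main obstacle.} The one genuinely delicate point is showing $\nabla\log\mu_t\in\Tan{\mu_t}$ (the closure of smooth compactly supported gradients). The pairing computation above, $\int\langle\xi,\nabla\mu_t\rangle\,dx=0$, is only formal: it needs approximating $\mu_t$, or $\log\mu_t$ via truncations $\log(\mu_t\vee\delta)$ multiplied by cutoffs, and passing to the limit using $I(\mu_t)<\infty$. I would handle this via the $H^1$ regularity of $\sqrt{\mu_t}$. Concretely, approximate $\sqrt{\mu_t}$ by compactly supported smooth positive functions and use $\nabla\mu_t = 2\sqrt{\mu_t}\nabla\sqrt{\mu_t}$, with $\sqrt{\mu_t}\,\xi\in L^2(dx)$, to control the limit by dominated convergence.
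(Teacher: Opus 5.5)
Your proposal is correct and follows essentially the paper's argument. The paper also sets $\hat v_t^{\eps} = \hat\beta_t^{\eps} - \frac{\eps}{2}\nabla\log\mu_t$, notes that it solves the continuity equation so that $\overline v_t$ is its $L^2(\mu_t)$-projection onto $\Tan{\mu_t}$, and then shifts the argmin in \eqref{eq:relaxed-l2-proj} by $\frac{\eps}{2}\nabla\log\mu_t\in\Tan{\mu_t}$, which is your Step 3 orthogonal decomposition in different words. The paper simply asserts $\nabla\log\mu_t\in\Tan{\mu_t}$ from $I(\mu_t)<+\infty$ (a standard fact for finite Fisher information) and does not spell out the Fokker--Planck derivation, so your Steps 1--2 and your approximation sketch add detail beyond the paper rather than being needed for your argument.
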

\begin{proof}
    Recall $(\hat{\beta}_t^{\eps}, t \in [0,1])$ from \eqref{eq:mp-drift-defn}. Define the following velocity field 
    \begin{align*}
        \hat{v}_t^{\eps} := \hat{\beta}_t^{\eps} - \frac{\eps}{2}\nabla \log \mu_t.
    \end{align*}
    It holds that $(\mu_t, \hat{v}_t^{\eps}, t \in [0,1])$ is also a solution to the continuity equation, and by definition $\overline{v}_{t}$ is the projection of $\hat{v}_t^{\eps}$ onto $\Tan{\mu_t}$ for each $t \in [0,1]$.
    
    We now reparameterize the optimization problem in \eqref{eq:relaxed-l2-proj} to obtain for $\mathrm{Leb}$-a.e.\ $t \in [0,1]$
    \begin{align*}
        \overline{\beta}_t^{\eps} &= \argmin\limits_{\beta_t \in \Tan{\mu_t}} \Exp{\mu_t}\norm{\hat{\beta}_t^{\eps}-\beta_t}^2 
        = \argmin\limits_{\beta_t \in \Tan{\mu_t}} \Exp{\mu_t}\norm{\hat{v}_t^{\eps}-\left(\beta_t-\frac{\eps}{2}\nabla \log \mu_t\right)}^2 \\
        &= \frac{\eps}{2}\nabla \log \mu_t + \argmin\limits_{w_t \in \Tan{\mu_t}} \Exp{\mu_t}\norm{\hat{v}_t^{\eps}-w_t}^2 
        = \frac{\eps}{2}\nabla \log \mu_t + \overline{v}_t. 
    \end{align*}
    Note that the third equality follows from the fact that $\Tan{\mu_t} \subset L^2(\mu_t)$ is a closed subspace and $\nabla \log \mu_t \in \Tan{\mu_t}$ as $I(\mu_t) < +\infty$ for $\mathrm{Leb}$-a.e.\ $t \in [0,1]$. 
\end{proof}

The following is similar to the Pythagorean Theorems in relative entropy satisfied by Markov processes \cite[Lemma 6]{shi2023diffusion} and of the $\Schro$ bridge \cite{csiszar75idiv}. However, we must constrain the collection of stochastic processes we consider. 
\begin{proposition}\label{prop:tangent-markov-pythag}
Recall the setting and notation of Proposition \ref{prop:proj-processes-exist}. Let $Q^{\eps}, \overline{Q}^{\eps}$ denote the reciprocal process and its tangent Markov projection, respectively, defined in \eqref{eq:recip-process-sde} and \eqref{eq:tan-mp-sde}. Let $(\mu_t, t \in [0,1])$ denote the marginal flow of $Q^{\eps}$. 

Let $(\gamma_t, t \in [0,1])$ be such that $\gamma_t \in \Tan{\mu_t}$, and suppose that the following SDE has a weak solution
    \begin{align*}
        dX_t &= \gamma_t(X_t)dt + \sqrt{\eps} dB_t, \quad X_0 \sim \mu_0. 
    \end{align*}
    Let $N \in \cP(C^{d}[0,1])$ denote the law of $(X_t, t \in [0,1])$. Then when all the quantities below are finite, it holds that
    \begin{align*}
        H(Q^{\eps}|N) &= H(Q^{\eps}|\overline{Q}^{\eps})+H(\overline{Q}^{\eps}|N).
    \end{align*}
\end{proposition}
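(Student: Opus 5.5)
We compute all three relative entropies with the Girsanov formula (Fact \ref{fact:girs-formula}) and then show that a cross term vanishes. The three path measures $Q^{\eps}$, $\overline{Q}^{\eps}$ and $N$ all start from $\mu_0$, so none of the entropies has an initial-law contribution. The process $Q^{\eps}$ has the path-dependent drift $\beta_t^{\eps}(X_0,X_t)$; the processes $\overline{Q}^{\eps}$ and $N$ have the Markovian drifts $\overline{\beta}_t^{\eps}(X_t)$ and $\gamma_t(X_t)$. The reference measures $\overline{Q}^{\eps}$ and $N$ are given by Markovian SDEs. For $\overline{Q}^{\eps}$, uniqueness in law follows from the uniqueness of the Fokker–Planck solution used in Proposition \ref{prop:proj-processes-exist}. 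For $N$, I would assume it as part of the finiteness and well-posedness hypotheses. Under these conditions Fact \ref{fact:girs-formula} gives
\begin{align*}
H(Q^{\eps}|N) &= \frac{1}{2\eps}\int_0^1 \Exp{Q^{\eps}}\norm{\beta_t^{\eps}(X_0,X_t)-\gamma_t(X_t)}^2dt,\\
H(Q^{\eps}|\overline{Q}^{\eps}) &= \frac{1}{2\eps}\int_0^1 \Exp{Q^{\eps}}\norm{\beta_t^{\eps}(X_0,X_t)-\overline{\beta}_t^{\eps}(X_t)}^2dt,\\
H(\overline{Q}^{\eps}|N) &= \frac{1}{2\eps}\int_0^1 \Exp{\overline{Q}^{\eps}}\norm{\overline{\beta}_t^{\eps}(X_t)-\gamma_t(X_t)}^2dt = \frac{1}{2\eps}\int_0^1 \norm{\overline{\beta}_t^{\eps}-\gamma_t}^2_{L^2(\mu_t)}dt .
\end{align*}
The last equality holds because the tangent Markov projection has time marginals $\mu_t$, the same as $Q^{\eps}$. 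This is the point where we use that $\overline{Q}^{\eps}$ has marginal flow $(\mu_t)$.

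Next, in the first expression, write $\beta_t^{\eps}-\gamma_t = (\beta_t^{\eps}-\overline{\beta}_t^{\eps}) + (\overline{\beta}_t^{\eps}-\gamma_t)$ and expand the square. The two squared terms are exactly $H(Q^{\eps}|\overline{Q}^{\eps})$ and $H(\overline{Q}^{\eps}|N)$, where for the second we again use $X_t\sim\mu_t$ under $Q^{\eps}$. The claim therefore reduces to showing that, for a.e.\ $t$,
\[
\Exp{Q^{\eps}}\big\langle \beta_t^{\eps}(X_0,X_t)-\overline{\beta}_t^{\eps}(X_t),\ \overline{\beta}_t^{\eps}(X_t)-\gamma_t(X_t)\big\rangle = 0 .
\]
To see this, first condition on $X_t$. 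Since the second factor depends only on $X_t$, the expectation becomes $\langle \hat{\beta}_t^{\eps}-\overline{\beta}_t^{\eps},\ \overline{\beta}_t^{\eps}-\gamma_t\rangle_{L^2(\mu_t)}$. By \eqref{eq:relaxed-l2-proj}, $\overline{\beta}_t^{\eps}$ is the orthogonal projection of $\hat{\beta}_t^{\eps}$ onto the closed subspace $\Tan{\mu_t}$. Both $\overline{\beta}_t^{\eps}$ and $\gamma_t$ lie in $\Tan{\mu_t}$, so their difference does as well. Hence $\hat{\beta}_t^{\eps}-\overline{\beta}_t^{\eps}\perp\Tan{\mu_t}$ kills the inner product.

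The main obstacle is the rigour of the Girsanov step, not the algebra. We must justify applying Fact \ref{fact:girs-formula} with reference measure $N$ (uniqueness in law of $N$), and we must make sure all the integrals are finite so that expanding the square and splitting the sum is legitimate. I would handle this by invoking the standing hypothesis that all quantities are finite. Finiteness of $H(Q^{\eps}|\overline{Q}^{\eps})$ and $H(\overline{Q}^{\eps}|N)$ puts $\beta^{\eps}-\overline{\beta}^{\eps}$ and $\overline{\beta}^{\eps}-\gamma$ in $L^2(dt\otimes Q^{\eps})$, so the cross term is integrable by Cauchy–Schwarz and Fubini applies. If necessary, I would first localize with stopping times, as in the proof of Proposition \ref{prop:proj-processes-exist}, and then pass to the limit using monotone convergence.
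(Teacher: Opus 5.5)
Your proposal is correct and is essentially the paper's proof: you apply Fact \ref{fact:girs-formula} to all three entropies, insert $\overline{\beta}_t^{\eps}$ into $\beta_t^{\eps}-\gamma_t$, and kill the cross term by conditioning on $X_t$ and using $\hat{\beta}_t^{\eps}-\overline{\beta}_t^{\eps}\perp \Tan{\mu_t}\ni\overline{\beta}_t^{\eps}-\gamma_t$. Your extra remarks on uniqueness in law of the reference measures and on integrability of the cross term spell out points the paper covers only by citing Fact \ref{fact:girs-formula} and the finiteness hypothesis.
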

\begin{proof}
    The key point is the following observation. Fix $t \in [0,1]$, then by conditioning on $X_{t}$ the Tower property gives
    \begin{align*}
        \Exp{Q^{\eps}}\left[\langle \beta_t^{\eps}(X_0,X_t)- \overline{\beta}_t^{\eps}(X_t),\overline{\beta}_{t}^{\eps}(X_t)-\gamma_t(X_t)\rangle\right] &= \Exp{\mu_t} \left[\langle \hat{\beta}_t^{\eps}-\overline{\beta}_t^{\eps},\overline{\beta}_t^{\eps}-\gamma_t\rangle\right]=0,
    \end{align*}
    the last equality follows from the fact that $(\hat{\beta}_t^{\eps}-\overline{\beta}_t^{\eps}) \in \Tan{\mu_t}^{\perp}$ by  \eqref{eq:relaxed-l2-proj} and $(\overline{\beta}_t^{\eps}-\gamma_t) \in \Tan{\mu_t}$. Thus,
    \begin{align*}
        2\eps H(Q^{\eps}|N) &= \Exp{Q^{\eps}} \int_0^{1} \norm{\beta_t^{\eps}(X_0,X_t) - \gamma_t(X_t)}^2 dt \\
        &= \Exp{Q^{\eps}} \int_0^{1} \norm{\beta_t^{\eps}(X_0,X_t) - \overline{\beta}_t^{\eps}(X_t)}^2 dt +  \int_0^{1} \Exp{\mu_t}\norm{\overline{\beta}_t^{\eps}- \gamma_t}^2 dt \\
        &+ \int_0^{1} \Exp{Q^{\eps}}\left[\langle \beta_t^{\eps}(X_0,X_t)- \overline{\beta}_t^{\eps}(X_t),\overline{\beta}_{t}^{\eps}(X_t)-\gamma_t(X_t)\rangle\right] dt \\
        &= 2\eps \left(H(Q^{\eps}|\overline{Q}^{\eps})+H(\overline{Q}^{\eps}|N)\right).
    \end{align*}
    The first and last steps follow from Fact \ref{fact:girs-formula}. 
\end{proof}

We now conclude with our first theorem, giving an expansion of relative entropy for the tangent Markov projection that is reminiscent of the entropic Benamou-Brenier in Fact \ref{prop:entropic-bb}. 
\begin{theorem}\label{thm:rel-ent-tangent-markov-proj}
Recall the setting and notation of Proposition \ref{prop:proj-processes-exist}. Let $Q^{\eps}, \overline{Q}^{\eps}$ denote the reciprocal process and its tangent Markov projection, respectively, defined in \eqref{eq:recip-process-sde} and \eqref{eq:tan-mp-sde}. Let $(\mu_t, t \in [0,1])$ denote the marginal flow of $Q^{\eps}$. Then it holds that
    \begin{align}\label{eq:pythag-recip-tan-mp}
        H(Q^{\eps}|R^{\eps}) &= H(Q^{\eps}|\overline{Q}^{\eps})+H(\overline{Q}^{\eps}|R^{\eps}).
    \end{align}
    Moreover, whenever $\int_0^{1} I(\mu_t) dt < +\infty$ it holds that
    \begin{align}\label{eq:rel-ent-decomp-mp}
        H(\overline{Q}^{\eps}|R^{\eps}) &= \frac{1}{2\eps}\int_{0}^{1} |(\mu_t)'|^2 dt +\frac{1}{2}\left(\Ent(\mu_0)+\Ent(\mu_1)\right)+\frac{\eps}{8}\int_0^{1} I(\mu_t)dt.
    \end{align}
\end{theorem}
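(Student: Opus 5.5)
For the first identity \eqref{eq:pythag-recip-tan-mp}, I would take $N = R^{\eps}$ in Proposition \ref{prop:tangent-markov-pythag}. The catch is that $R^{\eps}$ has zero drift, so $\gamma_t \equiv 0 \in \Tan{\mu_t}$, but its initial law is Lebesgue rather than $\mu_0$. The plan is therefore to redo the Pythagorean computation directly with Fact \ref{fact:girs-formula}, keeping the initial-entropy term. By Proposition \ref{prop:proj-processes-exist} and its proof,
\[
H(Q^{\eps}|R^{\eps}) = \Ent(\mu_0)+\frac{1}{2\eps}\int_0^1 \Exp{Q^{\eps}}\norm{\beta_t^{\eps}(X_0,X_t)}^2dt,\qquad H(\overline{Q}^{\eps}|R^{\eps}) = \Ent(\mu_0)+\frac{1}{2\eps}\int_0^1\norm{\overline{\beta}_t^{\eps}}^2_{L^2(\mu_t)}dt.
\]
Since $Q^{\eps}$ and $\overline{Q}^{\eps}$ share the initial law $\mu_0$, Girsanov gives
\[
H(Q^{\eps}|\overline{Q}^{\eps}) = \frac{1}{2\eps}\int_0^1\Exp{Q^{\eps}}\norm{\beta_t^{\eps}-\overline{\beta}_t^{\eps}}^2dt .
\]
Expanding $\norm{\beta_t^{\eps}}^2 = \norm{\beta_t^{\eps}-\overline{\beta}_t^{\eps}}^2+\norm{\overline{\beta}_t^{\eps}}^2 + 2\langle \beta_t^{\eps}-\overline{\beta}_t^{\eps},\overline{\beta}_t^{\eps}\rangle$, the cross term is handled as in Proposition \ref{prop:tangent-markov-pythag}. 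Conditioning on $X_t$ turns it into $\Exp{\mu_t}\langle \hat\beta_t^{\eps}-\overline{\beta}_t^{\eps},\overline{\beta}_t^{\eps}\rangle$, which vanishes because $\overline{\beta}_t^{\eps}$ is the orthogonal projection of $\hat\beta_t^{\eps}$ onto $\Tan{\mu_t}$. All terms are finite by \eqref{eq:l2-ordering-of-drifts} and the finiteness of $H(\ell_\eps|r_\eps)$, so \eqref{eq:pythag-recip-tan-mp} follows.

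For \eqref{eq:rel-ent-decomp-mp}, I would insert Proposition \ref{prop:relaxed-drift-calculation}, $\overline{\beta}_t^{\eps}=\overline{v}_t+\frac{\eps}{2}\nabla\log\mu_t$, into the formula for $H(\overline{Q}^{\eps}|R^{\eps})$ and expand the square:
\[
\frac{1}{2\eps}\int_0^1\norm{\overline{v}_t}^2_{L^2(\mu_t)}dt + \frac12\int_0^1\int \langle \overline{v}_t,\nabla\log\mu_t\rangle d\mu_t\,dt + \frac{\eps}{8}\int_0^1 I(\mu_t)dt.
\]
By \eqref{defn:tangent-velocity}, the first term is $\frac{1}{2\eps}\int_0^1|(\mu_t)'|^2dt$. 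For the cross term, the formal computation is
\[
\int\langle \overline{v}_t,\nabla\mu_t\rangle dx = -\int \log\mu_t\,\nabla\cdot(\overline{v}_t\mu_t)\,dx = \int \log\mu_t\,\partial_t\mu_t\,dx = \frac{d}{dt}\Ent(\mu_t),
\]
using the continuity equation \eqref{defn:continuity-eqn} and $\int\partial_t\mu_t=0$. Integrating over $t$ gives $\frac12(\Ent(\mu_1)-\Ent(\mu_0))$, and adding $\Ent(\mu_0)$ produces $\frac12(\Ent(\mu_0)+\Ent(\mu_1))$, which is exactly the claimed formula.

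The main obstacle is making this chain rule $\frac{d}{dt}\Ent(\mu_t)=\int\langle\overline{v}_t,\nabla\log\mu_t\rangle d\mu_t$ rigorous. The only available integrability is $\int_0^1 I(\mu_t)dt<\infty$ together with $\int_0^1\norm{\overline{v}_t}^2_{L^2(\mu_t)}dt<\infty$, so Cauchy--Schwarz makes the integrand $L^1$ in time. I would first try the chain rule for the entropy functional along AC curves from \cite[Section 10.1.2, Proposition 10.3.18]{ambrosio2005gradient}. The entropy is displacement convex, its strong subdifferential at $\mu_t$ is $\nabla\log\mu_t$ whenever $I(\mu_t)<\infty$, and these facts give absolute continuity of $t\mapsto\Ent(\mu_t)$ with the stated derivative. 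For that I also need finite second moments (fine, since $\mu_t\in\cP_2$) and finiteness of $\Ent(\mu_0)$ and $\Ent(\mu_1)$, the latter following from the a.e.\ finiteness and the AC bound. If that route fails, the fallback is to mollify $\log\mu_t$, truncate with cutoffs, test the weak continuity equation \eqref{defn:weak-soln}, and pass to the limit by dominated convergence.
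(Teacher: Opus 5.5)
Your proposal is correct and is essentially the paper's proof. The paper obtains \eqref{eq:pythag-recip-tan-mp} by applying Proposition \ref{prop:tangent-markov-pythag} with $\gamma_t\equiv 0$ to Brownian motion started from $\mu_0$ and then adding $\Ent(\mu_0)$ to both sides, which is the same orthogonal-projection computation you carry out inline with $R^{\eps}$. For \eqref{eq:rel-ent-decomp-mp} it likewise substitutes Proposition \ref{prop:relaxed-drift-calculation}, expands the square, and evaluates the cross term with the entropy chain rule of \cite[Section 10.1.2(E)]{ambrosio2005gradient}.
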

\begin{proof}[Proof of Theorem \ref{thm:rel-ent-tangent-markov-proj}]
The first equality in the theorem follows by setting $\gamma_{t} = 0$ for all $t \in [0,1]$ in Proposition \ref{prop:tangent-markov-pythag} with the initial condition $X_0 \sim \mu_0$, call this path measure $R^{\eps}_{\mu_{0}}$. Then, observe that $H(P|R^{\eps}_{\mu_{0}}) = H(P|R^{\eps})-\Ent(\mu_0)$ for all $P \in \cP(C^{d}[0,1])$. Next, by the factorization of relative entropy and Fubini observe that
\begin{align*}
    H(\overline{Q}^{\eps}|R^{\eps}) &= \Ent(\mu_0) + \Exp{\mu_0}\left[H(\overline{Q}^{\eps}_{x}|R^{\eps}_{x})\right] \\
    &= \Ent(\mu_0) + \Exp{\mu_0}\left[\frac{1}{2\eps}\int_{0}^{1} \Exp{\overline{Q}^{\eps}_{x}}\norm{\overline{\beta}_t^{\eps}(X_t)}^2dt\right] 
    = \Ent(\mu_0) + \frac{1}{2\eps}\int_0^{1} \Exp{\mu_t}\norm{\overline{\beta}_t^{\eps}}^2 dt. 
\end{align*}
From Proposition \ref{prop:relaxed-drift-calculation} then,
\begin{align*}
    \frac{1}{2\eps}\int_0^{1} \Exp{\mu_t}\norm{\overline{\beta}_t^{\eps}}^2dt &= \frac{1}{2\eps}\int_0^{1} \Exp{\mu_t}\norm{\overline{v}_t+\frac{\eps}{2}\nabla \log \mu_t}^2dt \\
    &= \frac{1}{2\eps}\int_0^{1} \norm{\overline{v}_t}^2_{L^2(\mu_t)}dt + \frac{1}{2}\int_0^{1} \Exp{\mu_t}\left[\langle \overline{v}_t,\nabla \log \mu_t \rangle\right]dt + \frac{\eps}{8}\int_0^{1} I(\mu_t)dt. 
\end{align*}
The stated claim then follows from noting that $\norm{\overline{v}_t}^2_{L^2(\mu_t)} = |(\mu_t)'|^2$, a.s., and
\begin{align*}
    \int_0^{1} \Exp{\mu_t}\left[\langle \overline{v}_t,\nabla \log \mu_t\rangle\right]dt &= \int_0^{1} \frac{d}{dt} \Ent(\mu_t)dt = \Ent(\mu_1)-\Ent(\mu_0),
\end{align*}
by \cite[Section 10.1.2(E)]{ambrosio2005gradient}.
\end{proof}

\section{Entropic Potential Expansion}\label{sec:schro-potent-exp}
In this section we prove the entropic Brenier map limit stated in Main Theorem \ref{informal-theorem-1}. This will follow by comparing the $\eps$-$\Schro$ bridge with the \textbf{$\eps$-noisy McCann interpolation} introduced in the introduction in \eqref{eq:noisy-mcann-intro}. We summarize the main result of this section in the following statement.
\begin{theorem}\label{thm:sb-o-eps-score}
    Let $P^{\eps}$ denote the $\eps$-dynamic $\Schro$ bridge, and let $\widetilde{Q}^{\eps}$ denote the law of the $\eps$-noisy McCann interpolation in \eqref{eq:noisy-mcann-intro}. Under Assumption \ref{assumption:standard-assumptions},
    \begin{align}\label{eq:rel-ent-approx-sb-mccann}
        H(\widetilde{Q}^{\eps}|P^{\eps}) = o(\eps). 
    \end{align}
    As a consequence, the following limit holds in $L^2(e^{-f})$,
    \begin{align}\label{eq:thm-statement-sb-score}
        \lim\limits_{\eps \downarrow 0}\frac{1}{\eps}\left(\Exp{\pi^{\eps}}[Y|X=x] - \nabla \varphi(x)\right) = -\frac{1}{2}\nabla f(x).
    \end{align}
    Equivalently, with $(f_{\eps}, \eps > 0)$ being the entropic potentials defined in \eqref{defn:entropic-potent}, in $L^2(e^{-f})$
    \begin{align}
        \lim\limits_{\eps \downarrow 0}\frac{1}{\eps}(\nabla f_{\eps}(x) - (x-\nabla \varphi(x))) = -\frac{1}{2}\nabla f(x). 
    \end{align}
\end{theorem}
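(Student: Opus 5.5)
The plan has three steps: a Pythagorean identity for $P^{\eps}$, an exact entropy computation for $\widetilde{Q}^{\eps}$ compared with the Conforti--Tamanini expansion \eqref{eq:ct-exp}, and a transfer from relative entropy to the barycentric projection.

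\textbf{Step 1: Pythagorean identity for the Schr\"odinger bridge.} Since $dP^{\eps}/dR^{\eps} = a^{\eps}(\omega_0)b^{\eps}(\omega_1)$, every $Q\in\cP(C^d[0,1])$ with marginals $e^{-f}$ at time $0$ and $e^{-h}$ at time $1$ and finite entropy satisfies
\[
H(Q|R^{\eps}) = H(Q|P^{\eps}) + \int \log a^{\eps}\,de^{-f} + \int \log b^{\eps}\,de^{-h} = H(Q|P^{\eps}) + H(P^{\eps}|R^{\eps}).
\]
The only work is integrability of $\log a^{\eps},\log b^{\eps}$ against the marginals. I would obtain this from standard quadratic growth bounds on entropic potentials for strongly log-concave marginals, for instance from \cite{chewi2022entropic}. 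Applying the identity with $Q=\widetilde{Q}^{\eps}$, and using $H(P^{\eps}|R^{\eps}) = H(\pi^{\eps}|r_{\eps})$, gives $H(\widetilde{Q}^{\eps}|P^{\eps}) = H(\widetilde{Q}^{\eps}|R^{\eps}) - H(\pi^{\eps}|r_{\eps})$.

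\textbf{Step 2: exact entropy of the noisy McCann interpolation.} By \eqref{eq:cont-fp-corres}, $\widetilde{Q}^{\eps}$ has marginals $\rho^0_t$. By \eqref{eq:cont-eqn-mccann} its drift is $\overline{v}_t + \frac{\eps}{2}\nabla\log\rho_t^0$ with $\overline{v}_t\in\Tan{\rho^0_t}$.
\begin{itemize}
\item Girsanov (Fact \ref{fact:girs-formula}) against $R^{\eps}$ started from Lebesgue measure gives $H(\widetilde{Q}^{\eps}|R^{\eps}) = \Ent(e^{-f}) + \frac{1}{2\eps}\int_0^1 \|\overline{v}_t+\frac{\eps}{2}\nabla\log\rho^0_t\|^2_{L^2(\rho^0_t)}dt$.
\item Expanding the square as in the proof of Theorem \ref{thm:rel-ent-tangent-markov-proj}, with \eqref{eq:hjb-ce-identity} and $\frac{d}{dt}\Ent(\rho_t^0)=\langle \overline{v}_t,\nabla\log\rho^0_t\rangle_{L^2(\rho^0_t)}$, yields
\[
H(\widetilde{Q}^{\eps}|R^{\eps}) = \frac{1}{2\eps}\Was{2}^2(e^{-f},e^{-h}) + \frac{1}{2}\left(\Ent(e^{-f})+\Ent(e^{-h})\right) + \frac{\eps}{8}\int_0^1 I(\rho^0_t)dt
\]
exactly.
\end{itemize}
Assumption \ref{assumption:standard-assumptions} provides the needed regularity. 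Each $\nabla\varphi_{0\to t}$ is a $C^3$ diffeomorphism with uniformly elliptic Jacobian, so $\log\rho_t^0(y) = -f(x)-\log\det\nabla^2\varphi_{0\to t}(x)$ with $y=\nabla\varphi_{0\to t}(x)$. Its gradient has linear growth, using bounded third derivatives of $\varphi$. This gives $\sup_t I(\rho_t^0)<\infty$ and well-posedness of the SDE with a locally Lipschitz, linearly growing drift. Comparing with \eqref{eq:ct-exp}, which applies under these hypotheses, gives $H(\widetilde{Q}^{\eps}|P^{\eps}) = o(\eps)$, i.e.\ \eqref{eq:rel-ent-approx-sb-mccann}.

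\textbf{Step 3: transfer to the entropic Brenier map.} By data processing, $H(\widetilde{\ell}_{\eps}|\pi^{\eps}) = o(\eps)$, where $\widetilde{\ell}_\eps=\mathrm{Law}(Z_0,Z_1)$. I would split
\[
\mathcal{B}_\eps - \nabla\varphi = (\mathcal{B}_\eps - \widetilde{\mathcal{B}}_\eps) + (\widetilde{\mathcal{B}}_\eps-\nabla\varphi), \qquad \widetilde{\mathcal{B}}_\eps(x) := \Exp{}[Z_1|Z_0=x].
\]

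\emph{First term.} Both couplings have first marginal $e^{-f}$, so $H(\widetilde{\ell}_\eps|\pi^\eps)=\Exp{e^{-f}}H(\widetilde{\ell}_\eps^x|\pi^{\eps,x})$. The conditionals $\pi^{\eps,x}$ are proportional to $b^{\eps}(y)r_\eps(x,y)$. By the known semiconcavity/convexity bounds on $g_\eps$ \cite{chewi2022entropic}, they are strongly log-concave with parameter of order $1/\eps$, so they satisfy a $T_2$ inequality with constant $C\eps$. Then
\[
|\mathcal{B}_\eps(x)-\widetilde{\mathcal{B}}_\eps(x)|^2 \le \Was{2}^2(\widetilde{\ell}^x_\eps,\pi^{\eps,x}) \le C\eps H(\widetilde{\ell}_\eps^x|\pi^{\eps,x}).
\]
Integrating gives $\|\mathcal{B}_\eps-\widetilde{\mathcal{B}}_\eps\|^2_{L^2}=o(\eps^2)$, which is exactly the needed rate.

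\emph{Second term.} I must show $\widetilde{\mathcal{B}}_\eps = \nabla\varphi - \frac{\eps}{2}\nabla f + o(\eps)$ in $L^2(e^{-f})$. Writing $Z_1-Z_0=\int_0^1 b^\eps_t(Z_t)dt + \sqrt{\eps}B_1$ and Taylor expanding the drift along the deterministic flow, the $O(\eps)$ correction involves two contributions:
\begin{itemize}
\item the score term $\frac{\eps}{2}\int_0^1\nabla\log\rho_t^0(\nabla\varphi_{0\to t}x)dt$;
\item the It\^o correction $\frac{\eps}{2}\int_0^1\Delta(\nabla\varphi_{t\to1}-\nabla\varphi_{t\to0})$ composed along the flow, plus its linearized transport through $\nabla\varphi_{s\to t}$.
\end{itemize}
These must sum to $-\frac{\eps}{2}\nabla f(x)$. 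I expect this to be the main obstacle: it is a nontrivial matrix integral identity in $t$ involving $\nabla^2\varphi_{0\to t}^{-1}$ and third derivatives of $\varphi$. This is presumably the content of Propositions \ref{prop:d-dt-matrix-identity} and \ref{prop:sqrd-inv-hessian-iden}. I would first attempt it by differentiating in $t$ and recognizing a total derivative, then bound the remainders in $L^2$ using the bounded third and fourth derivatives.
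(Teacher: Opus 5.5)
Your Steps 1 and 2 and the first half of Step 3 match the paper's proof. The product structure of $dP^{\eps}/dR^{\eps}$ gives $H(\widetilde{Q}^{\eps}|P^{\eps}) = H(\widetilde{Q}^{\eps}|R^{\eps}) - H(P^{\eps}|R^{\eps})$. The tangent-Markov entropy formula gives the three-term expression for $H(\widetilde{Q}^{\eps}|R^{\eps})$ exactly. Comparison with the Conforti--Tamanini expansion gives $o(\eps)$; in this noncompact setting that expansion needs the extension in \cite[Proposition 15]{MP25}. Data processing plus a Talagrand inequality with constant of order $\eps$ for the conditionals $\pi^{\eps}_x$ then yields $\norm{\mathcal{B}_\eps-\widetilde{\mathcal{B}}_\eps}^2_{L^2(e^{-f})}=o(\eps^2)$.

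\textbf{The gap.} What remains is the claim $\widetilde{\mathcal{B}}_\eps = \nabla\varphi - \frac{\eps}{2}\nabla f + o(\eps)$ in $L^2(e^{-f})$, and this is the only place the specific limit $-\frac12\nabla f$ enters. In the paper it is Proposition \ref{prop:d-dt-matrix-identity}, proved separately and then invoked. If you mean to cite it, your argument is the paper's; but you do not prove it, and your description of the $O(\eps)$ correction is inaccurate. Expanding around $y_t=\nabla\varphi_{0\to t}(x)$, the second-order term is $\nabla^2 v_t^0(y_t)$ contracted against the accumulated covariance $\eps\int_0^t\Phi(t,s)\Phi(t,s)^{\top}ds$, then transported by $\Phi(1,t)$, where $\Phi$ is the linearized flow. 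It is not $\Delta v^0_t$ composed along the flow. Verifying the resulting identity that way is a genuine computation you have not done.

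\textbf{The missing idea: center the process instead of Taylor expanding.} Write $Y$ for the noisy McCann process (your $Z$) and put $Z_t:=\nabla\varphi_{t\to 1}(Y_t)$.
\begin{enumerate}
\item Since $\nabla\varphi_{t\to1}(\nabla\varphi_{0\to t}(x))=\nabla\varphi(x)$ for every $t$, the $O(1)$ part of the It\^{o} drift, $\partial_t[\nabla\varphi_{t\to1}]+\nabla^2\varphi_{t\to1}\,v_t^0$, vanishes identically. This is the total derivative you were looking for, and it leaves $Z$ with drift of order $\eps$ and noise of order $\sqrt{\eps}$.
\item Using $\rho^0_t=(\nabla\varphi_{1\to t})_{\#}e^{-h}$ and Proposition \ref{prop:matrix-divergence-identity}, the remaining drift is $\gamma^{\eps}_t(z)=\frac{\eps}{2}e^{h(z)}\nabla\cdot\bigl(e^{-h(z)}A_t(z)\bigr)$ with $A_t=(\nabla^2\varphi_{1\to t})^{-2}$.
\item Because $\nabla^2\varphi_{1\to t}=t\Id+(1-t)\nabla^2\varphi^*$, the scalar identity $\int_0^1(t+(1-t)a)^{-2}dt=1/a$ gives $\int_0^1A_t\,dt=(\nabla^2\varphi^*)^{-1}$. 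Hence $\int_0^1\gamma^{\eps}_t\,dt=\frac{\eps}{2}e^{h}\nabla\cdot\bigl(e^{-h}(\nabla^2\varphi^*)^{-1}\bigr)$.
\item This is $\eps$ times the dual MLD generator \eqref{eq:div-form-mld-dual} applied to the identity map, i.e.\ $-\frac{\eps}{2}\nabla f\circ\nabla\varphi^*$, using invariance of $e^{-h}$ for the dual MLD. At $z=\nabla\varphi(x)$ it equals $-\frac{\eps}{2}\nabla f(x)$.
\item Replacing $Z_t$ by $Z_0$ inside the drift costs $o(\eps)$ in $L^2(e^{-f})$, because $\int_0^1\Exp{}\norm{Z_t-Z_0}^2dt=O(\eps)$.
\item Finally, $\Exp{}[Z_1|Z_0=\nabla\varphi(x)]=\Exp{}[Y_1|Y_0=x]$, since $Z_1=Y_1$ and $Z_0=\nabla\varphi(Y_0)$.
\end{enumerate}
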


Recall that the $\eps$-noisy McCann interpolation is defined as the solution to the following SDE
\begin{align}\label{defn:sec5-mccann-interp}
    dY_t = \alpha_t^{\eps}(Y_t) dt + \sqrt{\eps}dB_t, \quad Y_0 \sim e^{-f},
\end{align}
where 
\begin{align}
    \alpha_t^{\eps}(x) = v_t^0(x) + \frac{\eps}{2}\nabla \log \rho_t^0(x) = \nabla \varphi_{t \to 1}(x)-\nabla \varphi_{t \to 0}(x)+\frac{\eps}{2}\nabla \log \rho_t^0(x).
\end{align}
As argued in Section \ref{subsec:ot}, $Y_t \sim \rho_t^0$ for all $t \in [0,1]$. Moreover, by setting $\eps = 0$ in \eqref{defn:sec5-mccann-interp} we recover the standard McCann interpolation. Note that by the Hessian assumptions on $f$ and $h$ as well as the bounded higher derivatives on the Brenier map in Assumption \ref{assumption:standard-assumptions}, each $\alpha_t^{\eps}$ is Lipschitz with a constant that can be picked to be uniform over $t \in [0,1]$. Thus, the SDE in \eqref{defn:sec5-mccann-interp} has a strong solution. 

The main result we will prove about the $\eps$-noisy McCann interpolation is the following limit analogous to \eqref{eq:thm-statement-sb-score}. 
\begin{proposition}\label{prop:d-dt-matrix-identity}
    Under Assumption \ref{assumption:standard-assumptions}, fix $\eps > 0$ and let $(Y_t, t \geq 0)$ denote the $\eps$-noisy McCann interpolation defined in \eqref{defn:sec5-mccann-interp}. Denote the law of $(Y_t, t \in [0,1])$ by $\widetilde{Q}^{\eps}$. It holds in $L^2(e^{-f})$ that
    \begin{align}\label{eq:cond-exp-mccann}
        \lim\limits_{\eps \downarrow 0} \frac{1}{\eps}\left(\Exp{\widetilde{Q}^{\eps}}[Y_1|Y_0 = x] - \nabla \varphi(x)\right) = -\frac{1}{2}\nabla f(x). 
    \end{align}
\end{proposition}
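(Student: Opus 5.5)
The plan is to write $Y_1$ as an integral of the drift along the path, use the structure of the McCann velocity field to extract the $O(\eps)$ correction explicitly, and then identify the $\eps\downarrow 0$ limit of that correction along the deterministic McCann trajectory through a matrix identity.

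\textbf{Step 1 (Itô expansion).} Write $v_t:=v_t^0=\nabla\psi_t$ and $\Phi_t(x):=\nabla\varphi_{0\to t}(x)$. From \eqref{defn:sec5-mccann-interp},
\[
Y_1=Y_0+\int_0^1 v_t(Y_t)\,dt+\frac{\eps}{2}\int_0^1\nabla\log\rho_t^0(Y_t)\,dt+\sqrt{\eps}B_1 .
\]
The Hamilton--Jacobi equation \eqref{eq:hj-pde} implies the pressureless Euler equation $\partial_t v_t+(\nabla v_t)v_t=0$. Applying Itô's formula to $v_t(Y_t)$ therefore cancels the first-order transport terms, and we get
\[
v_t(Y_t)=v_0(Y_0)+\frac{\eps}{2}\int_0^t\bigl(\nabla v_s\nabla\log\rho^0_s+\Delta v_s\bigr)(Y_s)\,ds+M_t ,
\]
where $M$ is a martingale. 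The bounds in Assumption \ref{assumption:standard-assumptions} (Lipschitz drift, bounded third and fourth derivatives of $\varphi$) should make $M$ a true martingale. We have $v_0(x)=\nabla\varphi(x)-x$. Taking conditional expectations and integrating in $t$ (Fubini produces the weight $1-s$) gives the exact identity
\[
\frac1\eps\Bigl(\Exp{}[Y_1|Y_0=x]-\nabla\varphi(x)\Bigr)=\frac12\,\Exp{}\Bigl[\int_0^1 \bigl(\nabla\log\rho^0_s+(1-s)(\nabla v_s\nabla\log\rho^0_s+\Delta v_s)\bigr)(Y_s)\,ds\,\Big|\,Y_0=x\Bigr].
\]

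\textbf{Step 2 (Passing to $\eps=0$).} By Gronwall's inequality and the uniform Lipschitz bound on $\alpha_t^\eps$, we have $\Exp{}\|Y_s-\Phi_s(Y_0)\|^2=O(\eps)$ uniformly in $s\in[0,1]$. So the integrand evaluated at $Y_s$ can be replaced by its value at $\Phi_s(x)$ in $L^2(e^{-f})$, provided the integrand has at most linear growth and is locally Lipschitz. This follows from Assumption \ref{assumption:standard-assumptions}, using the change of variables formula $\log\rho^0_t(\Phi_t(x))=-f(x)-\log\det A_t(x)$ with $A_t:=(1-t)\Id+tH$ and $H:=\nabla^2\varphi$, together with the uniform bounds on $A_t^{-1}$. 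It then remains to show the deterministic identity
\[
\int_0^1\bigl[\nabla\log\rho_t^0+(1-t)(\nabla v_t\nabla\log\rho_t^0+\Delta v_t)\bigr](\Phi_t(x))\,dt=-\nabla f(x).
\]

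\textbf{Step 3 (Matrix identity).} Differentiate the two relations along the flow:
\[
v_t(\Phi_t(x))=\nabla\varphi(x)-x,\qquad \log\rho_t^0(\Phi_t(x))=-f(x)-\log\det A_t(x).
\]
This gives $\nabla v_t(\Phi_t)=(H-\Id)A_t^{-1}$ and $\nabla\log\rho_t^0(\Phi_t)=-A_t^{-1}(\nabla f+\nabla\log\det A_t)$. A direct computation shows $\Id+(1-t)\nabla v_t(\Phi_t)=HA_t^{-1}$. Since $\int_0^1A_t^{-2}\,dt=H^{-1}$, because $\frac{d}{dt}A_t^{-1}=-A_t^{-1}(H-\Id)A_t^{-1}$ and all these matrices commute, the $\nabla f$ contribution integrates to exactly $-\nabla f(x)$. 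This is the mechanism behind the identity referred to in Proposition \ref{prop:sqrd-inv-hessian-iden}.

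\textbf{Step 4 (Main obstacle: the remaining terms cancel).} What remains is to show
\[
\int_0^1\Bigl[-HA_t^{-2}\nabla\log\det A_t+(1-t)\Delta v_t(\Phi_t)\Bigr]dt=0 .
\]
I expect this to be the hard part, because $\Delta v_t$ involves third derivatives of $\varphi$ composed with $\Phi_t^{-1}$. I would first rewrite $\Delta v_t(\Phi_t)$ as a matrix divergence in the $x$-variable, $\nabla_x\cdot\bigl((H-\Id)A_t^{-1}\bigr)$ transported by $A_t^{-1}$. I would then express $\nabla\log\det A_t$ as $\mathrm{Tr}(A_t^{-1}\partial_i A_t)$, and look for a total $t$-derivative of a divergence of $A_t^{-1}$-type matrices, so that the integral telescopes between $t=0$ and $t=1$ to zero. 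As a sanity check, both terms vanish when $\nabla\varphi$ is affine. If the telescoping structure does not emerge cleanly, the fallback is to verify the identity weakly: integrate against $\nabla g$ for test functions $g$, integrate by parts under $e^{-f}$, and use the continuity equation for $\rho_t^0$.
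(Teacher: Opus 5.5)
Your Steps 1--3 are correct. The proof is not finished, however: Step 4 carries all the content beyond the $\nabla f$ term, and you only conjecture that it telescopes. The telescoping does hold, with an explicit antiderivative. Keep your notation $A_t=(1-t)\Id+tH$ with $H=\nabla^2\varphi(x)$, and set $w_t:=\bigl(\mathrm{Tr}(A_t^{-2}\partial_a H)\bigr)_{a=1}^{d}$.

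(i) Differentiating $\nabla v_t(\Phi_t)=(H-\Id)A_t^{-1}$ gives $\partial_{x_l}[\nabla v_t(\Phi_t)]=A_t^{-1}(\partial_l H)A_t^{-1}$, because $\Id-t(H-\Id)A_t^{-1}=A_t^{-1}$. Converting to $y$-derivatives with $D\Phi_t=A_t$ and using the full symmetry of $\nabla^3\varphi$ then yields $\Delta v_t(\Phi_t)=A_t^{-1}w_t$.

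(ii) Since $(\nabla_x\log\det A_t)_a=t\,\mathrm{Tr}(A_t^{-1}\partial_aH)$ and $t\,\partial_tA_t^{-1}=-(A_t^{-1}-A_t^{-2})$, one gets $\partial_t\nabla_x\log\det A_t=w_t$.

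(iii) Using $A_t+(1-t)(H-\Id)=H$, and the fact that $H$ commutes with $A_t$,
\[
\frac{d}{dt}\Bigl[(1-t)A_t^{-1}\nabla_x\log\det A_t\Bigr]=-HA_t^{-2}\nabla_x\log\det A_t+(1-t)A_t^{-1}w_t ,
\]
which is exactly your Step 4 integrand. The bracket vanishes at $t=1$ because of the factor $1-t$, and at $t=0$ because $A_0=\Id$. So the integral is $0$ and your argument closes. In $d=1$ this is just $\varphi'''\,\frac{d}{dt}\frac{t(1-t)}{A_t^2}=\varphi'''\,\frac{1-t-tH}{A_t^3}$.

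With that filled in, yours is a genuinely different route from the paper's. The paper never sees these third-derivative terms separately. It pushes forward by $\nabla\varphi_{t\to1}$, which freezes the McCann trajectory of $x$ at the point $z=\nabla\varphi(x)$. It then uses Proposition \ref{prop:matrix-divergence-identity} to write the drift of $Z_t$ as $\frac{\eps}{2}e^{h}\nabla_z\cdot\bigl(e^{-h}(\nabla^2\varphi_{1\to t})^{-2}\bigr)$. Because $z$ does not move with $t$, the time integral passes inside the divergence, and Proposition \ref{prop:sqrd-inv-hessian-iden} produces the dual MLD drift $-\frac{\eps}{2}\nabla f(\nabla\varphi^*(z))$ directly.

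Your Eulerian version (Burgers equation for $v_t$, It\^{o} on $v_t(Y_t)$) avoids the change of variables and the divergence-form identity. The price is that it works along the moving point $\Phi_t(x)$, which is why the cancellation has to be found by hand. The paper's frame makes that cancellation structural and exhibits the Mirror Langevin generator. The closing Gronwall step is common to both. Your version of it, with local Lipschitz bounds whose constants grow linearly plus moment bounds, is the appropriate one, since products such as a bounded nonconstant matrix times $\nabla h$ are not globally Lipschitz.
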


We now establish how Theorem \ref{thm:sb-o-eps-score} follows from Proposition \ref{prop:d-dt-matrix-identity}.
\begin{proof}[Proof of Theorem \ref{thm:sb-o-eps-score}]
The proof of this theorem follows from two steps. 

\textbf{Step 1: Establish Relative Entropy Comparison.}
This follows from Theorem \ref{thm:rel-ent-tangent-markov-proj}. First, as argued in \cite[Theorem 4]{MP25}, since $\widetilde{Q}^{\eps}, P^{\eps}$ are path measures with endpoint marginals $e^{-f}$ and $e^{-h}$ and $dP^{\eps}/dR^{\eps}$ has the product structure in \eqref{eq:fg-decomp},
\begin{align}\label{eq:split-1}
    H(\widetilde{Q}^{\eps}|P^{\eps})= H(\widetilde{Q}^{\eps}|R^{\eps})-H(P^{\eps}|R^{\eps}).
\end{align}
By \cite[Theorem 1.6]{conforti21deriv} (extended to the current noncompact setting in \cite[Proposition 15]{MP25}), it holds as $\eps \downarrow 0$ that
\begin{align}\label{eq:ct-exp-again}
    H(P^{\eps}|R^{\eps}) &= \frac{1}{2\eps}\Was{2}^2(e^{-f},e^{-h}) + \frac{1}{2}\left(\Ent(e^{-f})+\Ent(e^{-h})\right)+\frac{\eps}{8}\int_0^{1} I(\rho_t^0)dt + o(\eps).
\end{align}
Recall that $\widetilde{Q}^{\eps}$ has marginal flow given by the McCann interpolation \eqref{eq:mccann-interp}. As $\widetilde{Q}^{\eps}$ is its own tangent Markov projection, Theorem \ref{thm:rel-ent-tangent-markov-proj} gives
\begin{align}\label{eq:tan-mark-rel-ent}
    H(\widetilde{Q}^{\eps}|R^{\eps}) =  \frac{1}{2\eps}\Was{2}^2(e^{-f},e^{-h}) + \frac{1}{2}\left(\Ent(e^{-f})+\Ent(e^{-h})\right)+\frac{\eps}{8}\int_0^{1} I(\rho_t^0)dt.
\end{align}
It follows from \eqref{eq:split-1}, \eqref{eq:ct-exp-again}, and \eqref{eq:tan-mark-rel-ent} that $H(\widetilde{Q}^{\eps}|P^{\eps}) = o(\eps)$.

\textbf{Step 2: Pass limit to the $\Schro$ bridge.} 
Set $\widetilde{\ell}_{\eps} := (\omega_0,\omega_1)_{\#}\widetilde{Q}^{\eps}$, and observe that $\Exp{\widetilde{\ell}_{\eps}}[Y|X=x] = \Exp{\widetilde{Q}^{\eps}}[X_1|X_0 = x]$. By the information processing inequality, $H(\widetilde{\ell}_{\eps}|\pi^{\eps}) \leq H(\widetilde{Q}^{\eps}|P^{\eps}) = o(\eps)$. We now pass the limit from Proposition \ref{prop:d-dt-matrix-identity} to $\pi^{\eps}$ via a functional inequality.

By the Hessian bounds on the log densities of the marginals in Assumption \ref{assumption:standard-assumptions}, it follows from \cite[Lemma 6, Theorem 8]{chewi2022entropic} that for all $\eps > 0$ small enough there exists a constant $\kappa > 0$ such that $-\nabla^2 \log \pi^{\eps}_{x}(y) \geq \eps^{-1}\kappa \Id$ for all $y \in \mathbb{R}^{d}$. Note that we use $\pi^{\eps}_{x}$ to denote the conditional distribution $\pi^{\eps}(\cdot|x)$. Thus, $\pi^{\eps}_{x}$ satisfies a Talagrand inequality with constant $\eps/\kappa$ \cite[Corollary 9.3.2]{bgl-markov}. By repeating the gluing coupling construction from \cite[Theorem 2]{MP25}, we obtain a triplet $(X,Y,Z)$ such that $(X,Y) \sim \pi^{\eps}$, $(X,Z) \sim \widetilde{\ell}_{\eps}$, and the joint distribution of $Y|X=x$ and $Z|X=x$ is equal to the $\Was{2}$ optimal coupling for $e^{-f}$-a.s.\ $x \in \mathbb{R}^{d}$. Altogether then,
    \begin{align*}
        \norm{\Exp{\pi^{\eps}}[Y|X=x]-\Exp{\widetilde{\ell}_{\eps}}[Z|X=x]}^2 &\leq \Exp{}\left[\norm{Y-Z}|X=x\right]^2 \\
    &\leq  \Was{2}^2(\pi^{\eps}_{x},\widetilde{\ell}_{\eps}(\cdot|x)) 
    \leq  \frac{2\eps}{\kappa} H(\widetilde{\ell}_{\eps}(\cdot|x)|\pi^{\eps}_{x}).
    \end{align*}
    Integrating with respect to $e^{-f}$ and rearranging terms, 
     \begin{align*}
        \frac{1}{\eps}\norm{\Exp{\pi^{\eps}}[Y|X=x]-\Exp{\widetilde{\ell}_{\eps}}[Y|X=x]}_{L^2(e^{-f})} &\leq \sqrt{\frac{1}{\eps^2}\frac{2\eps}{\kappa}H(\widetilde{\ell}_{\eps}|\pi^{\eps})},
    \end{align*}
    which vanishes as $\eps \downarrow 0$ by \eqref{eq:rel-ent-approx-sb-mccann} established in Step 1. Note that the same argument will work for any Lipschitz function of the $Y$ coordinate. 
\end{proof}

\begin{remark}
    Step 2 in the above proof is similar to the proofs of \cite[Theorem 2]{AHMP25}, \cite[Theorem 2]{MP25}. There is one key difference: Step 1 gives a bound on $H(\widetilde{\ell}_{\eps}|\pi^{\eps})$, whereas \cite{MP25,AHMP25} work with a bound on the relative entropy with the arguments reversed. In the current case, we must use functional inequalities that hold on the conditional densities of the $\Schro$ bridge. In fact, this turns out to be for our benefit as we do not have to produce a functional inequality for the $\eps$-noisy McCann interpolation. 
\end{remark}

Hence, the rest of this section is now devoted to proving Proposition \ref{prop:d-dt-matrix-identity} below in Section \ref{subsec:noisy-mccann-proof}.

\subsection{Proof of Proposition \ref{prop:d-dt-matrix-identity}}\label{subsec:noisy-mccann-proof}
For convenience, we recall that the $\eps$-noisy McCann interpolation is the solution to the following SDE
\begin{align}\label{defn:sec51-mccann-interp}
    dY_t = \alpha_t^{\eps}(Y_t) dt + \sqrt{\eps}dB_t, \quad Y_0 \sim e^{-f},
\end{align}
where 
\begin{align}\label{eq:alpha-t-defn}
    \alpha_t^{\eps}(x) = v_t^0(x) + \frac{\eps}{2}\nabla \log \rho_t^0(x) = \nabla \varphi_{t \to 1}(x)-\nabla \varphi_{t \to 0}(x)+\frac{\eps}{2}\nabla \log \rho_t^0(x).
\end{align} 
We use $\widetilde{Q}^{\eps}$ to denote the law of $(Y_t, t \in [0,1])$ satisfying \eqref{defn:sec51-mccann-interp}.

In the course of the proof, we will use a couple of important technical calculations that are presented later in the Appendix. In particular, we advise readers to consult the matrix divergence identity in Proposition \ref{prop:matrix-divergence-identity} as well as the matrix calculations involving the McCann interpolation in Proposition \ref{prop:sqrd-inv-hessian-iden}. 
\begin{proof}[Proof of Proposition \ref{prop:d-dt-matrix-identity}]
    Fix $\eps > 0$. The goal of this calculation is to identify the mean deviation of $Y_{1}|Y_0 =x$ about $\nabla \varphi(x)$. To facilitate this calculation, we perform a change of variables to ``center'' the process. In particular, we observe that $(\nabla \varphi_{t \to 1})_{\#}\rho_t^0 = e^{-h}$ for all $t \in [0,1]$. That is, under this time-inhomogeneous transformation, the process in \eqref{defn:sec51-mccann-interp} becomes a time-inhomogeneous diffusion with a stationary measure equal to $e^{-h}$. We will thus take the map $(t,x) \mapsto \nabla \varphi_{t \to 1}(x)$ as our centering transformation. 

    Define $Z_t = \nabla \varphi_{t \to 1}(Y_t)$. To simplify some bookkeeping, we define the following quantities
    \begin{align}
        a_t(z) := \left(\nabla^2 \varphi_{1 \to t}(z)\right)^{-1}, \quad A_{t}(z) := \left(\nabla^2 \varphi_{1 \to t}(z)\right)^{-2} = a_t^2(z). 
    \end{align}

    By $\Ito$'s formula, the SDE for $(Z_t, t \geq 0)$ is
    \begin{align}\label{eq:pushforward-mccann-interp}
        dZ_t = \gamma_{t}^{\eps}(Z_t)dt+ \sqrt{\eps} a_{t}(Z_t) dB_t, \quad Z_0 \sim e^{-h},
    \end{align}
    where the drift is given by
    \begin{align}\label{eq:pushforward-drift-mccann}
        \gamma_{t}^{\eps}(z) &= \left(\nabla^2 \varphi_{1 \to t}(z)\right)^{-1}\alpha_{t}^{\eps}(\nabla \varphi_{1 \to t}(z))+\partial_{t}[\nabla \varphi_{t \to 1}](\nabla \varphi_{1 \to t}(z))+\frac{\eps}{2}\Delta \nabla \varphi_{t \to 1}(\nabla \varphi_{1 \to t}(z)).
    \end{align}
    with $\alpha_t^\eps$ given in \eqref{eq:alpha-t-defn}.

    \textbf{Step 1:} The drift in \eqref{eq:pushforward-drift-mccann} can be written in the following divergence form
    \begin{align}\label{eq:simple-transformed-drift}
        \gamma_t^{\eps}(z) = -\frac{\eps}{2}\left(A_{t}(z)\nabla h(z) - \nabla_z \cdot A_{t}(z)\right).
    \end{align}
    This is a straightforward calculation we relegate to Proposition \ref{prop:noisy-mccann-drift-into-divergence} in the Appendix.

    \textbf{Step 2:} Next, we show that integrating $\gamma_t^{\eps}$ over $t \in [0,1]$ gives
    \begin{align}\label{eq:gamma-t-time-iden}
        \int_0^{1} \gamma_t^{\eps}(z) dt = -\frac{\eps}{2}\nabla f(\nabla \varphi^*(z)). 
    \end{align}
    For each time $t \in [0,1]$, recall that the (forward/backward) \textbf{stochastic derivative} at $t$, defined in \cite[Definition 2.6]{leonard2011stochastic}, is the differential operator $\mathcal{L}_{t}$ defined for $u \in C_c^{\infty}(\mathbb{R}^{d})$ by 
    \begin{align}
        \mathcal{L}_t u(z) := \lim\limits_{s \to t} \frac{1}{s-t} \Exp{\widetilde{Q}^{\eps}}[u(Z_{s})-u(Z_t)|Z_t = z].
    \end{align}
   
    From \eqref{eq:pushforward-mccann-interp} and \eqref{eq:simple-transformed-drift}, the stochastic derivative of the process $(Z_t, t \geq 0)$ is equal to  
    \begin{align}\label{eq:stoch-deriv-1}
        \mathcal{L}_t u(z) = -\frac{\eps}{2}\langle A_t(z)\nabla h(z) - \nabla_z \cdot A_t(z), \nabla u(z)\rangle+\frac{\eps}{2}A_{t}(z): \nabla^2 u(z). 
    \end{align}
    An equivalent way to write \eqref{eq:stoch-deriv-1} is to put it in divergence form. For $t \in [0,1]$ it holds that
    \begin{align}\label{eq:stoch-deriv-2}
        \mathcal{L}_{t} u(z) = \frac{\eps}{2e^{-h(z)}} \nabla_{z} \cdot \left(e^{-h(z)} A_{t}(z) \nabla u(z)\right)
    \end{align}
    We now pick the function $u(z) = z$. Observe that this is smooth and square integrable with respect to all $(\rho_t^0, t \in [0,1])$. Plugging in this choice of $u$ into \eqref{eq:stoch-deriv-1} and \eqref{eq:stoch-deriv-2} gives the identity
    \begin{align}
        \gamma_{t}^{\eps}(z) &= -\frac{\eps}{2}\left(A_{t}(z)\nabla h(z) - \nabla_z \cdot A_{t}(z)\right) = \frac{\eps}{2e^{-h(z)}}\nabla \cdot \left(e^{-h(z)}A_t(z)\right).
    \end{align}
    Integrate the above identity in $t \in [0,1]$. From the identity in Proposition \ref{prop:sqrd-inv-hessian-iden}, it holds that
    \begin{align}\label{eq:time-int-generator}
        \int_0^{1} \gamma_t^{\eps}(z)dt &= \frac{\eps}{2e^{-h(z)}}\nabla_{z} \cdot \left(e^{-h(z)}\left(\int_0^{1} A_t(z) dt\right)\right) = \frac{\eps}{2e^{-h(z)}} \nabla_{z} \cdot \left(e^{-h(z)} \left(\nabla^2 \varphi_{1 \to 0}(z)\right)^{-1}\right).
    \end{align}
    Recall that $\varphi_{1 \to 0} = \varphi^*$. Let $\overline{\cL}$ denote the generator of the dual Mirror Langevin diffusion defined in \eqref{defn:dMLD}, written in divergence form in \eqref{eq:div-form-mld-dual}. Observe that the right hand side of \eqref{eq:time-int-generator} is equal to $\eps \overline{\cL}u$, where $u(z) = z$, which is equal to the drift of the dual MLD. Hence, 
    \begin{align}
        \int_{0}^{1} \gamma_t^{\eps}(z) dt = -\frac{\eps}{2}\nabla f(\nabla \varphi^{*}(z)). 
    \end{align}

    \textbf{Step 3:} We now use  Step 2 to establish \eqref{eq:cond-exp-mccann}. From Step 1, define the quantity
    \begin{align}\label{eq:theta-leading-order}
        \theta_t(z) = A_{t}(z)\nabla h(z) - \nabla_z \cdot A_{t}(z),
    \end{align}
    so that $\gamma_t^{\eps}(z) = -\frac{\eps}{2}\theta_t(z)$. By the SDE for $(Z_t, t \in [0,1])$ in \eqref{eq:pushforward-mccann-interp}, it holds that
    \begin{align}\label{eq:cond-exp-push}
        \Exp{\widetilde{Q}^{\eps}}[Z_1|Z_0 = \nabla \varphi(x)] - \nabla \varphi(x) = - \frac{\eps}{2}\int_0^{1} \Exp{\widetilde{Q}^{\eps}}[\theta_t(Z_t)|Z_0= \nabla \varphi(x)]dt.
    \end{align}
    By \eqref{eq:gamma-t-time-iden},
        $-\frac{\eps}{2}\int_0^{1} \theta_t(\nabla \varphi(x)) dt =\int_0^{1} \gamma_t^{\eps}(\nabla \varphi(x))dt = -\frac{\eps}{2}\nabla f(x)$.  
    Combining with \eqref{eq:cond-exp-push}  gives  
    \begin{align}\label{eq:remainder-identification}
        \frac{1}{\eps}\left(\Exp{\widetilde{Q}^{\eps}}[Z_1|Z_0 = \nabla \varphi(x)] - \nabla \varphi(x)\right)+\frac{1}{2}\nabla f(x) = \frac{1}{2}\int_0^{1}\Exp{\widetilde{Q}^{\eps}}\left[\theta_t(Z_t)-\theta_t(\nabla \varphi(x))|Z_0=\nabla \varphi(x)\right]dt. 
    \end{align}
    Let us argue that the right hand side of \eqref{eq:remainder-identification} vanishes in $L^2(e^{-f})$, as $\eps \downarrow 0$. By the global derivative bounds in Assumption \ref{assumption:standard-assumptions}, each $\theta_t$ is Lipschitz with some constant $L>0$ that is uniform in $t \in [0,1]$. By Jensen's inequality and Fubini,
    \begin{align*}
    &\int_{\mathbb{R}^{d}}\norm{\int_0^{1}\Exp{\widetilde{Q}^{\eps}}\left[\theta_t(Z_t)-\theta_t(\nabla \varphi(x))|Z_0=\nabla \varphi(x)\right]dt}^2 e^{-f(x)}dx \\
        &\leq L^2 \int_0^{1} \int_{\mathbb{R}^{d}} \Exp{\widetilde{Q}^{\eps}}\left[\norm{\theta_t(Z_t)-\theta_t(\nabla \varphi(x))}^2 |Z_0 = \nabla \varphi(x) \right] e^{-f(x)}dx dt \leq L^2 \int_0^{1} \Exp{\widetilde{Q}^{\eps}}\left[\norm{Z_t - Z_0}^2\right]dt. 
    \end{align*}
Finally, the fact that $\int_0^{1} \Exp{\widetilde{Q}^{\eps}}\left[\norm{Z_t-Z_0}^2\right]dt = O(\eps)$, follows from the SDE
\begin{align}
    Z_t - Z_0 &= \frac{\eps}{2}\int_0^{t} \theta_{s}(Z_s)ds + \sqrt{\eps}\int_0^{t} \left(\nabla^2 \varphi_{1 \to s}(Z_s)\right)^{-1}dB_s,
\end{align}
where $\theta_s$ has a linear bound that is uniform in $s$ and $\nabla^2 \varphi_{1 \to s}$ is uniformly bounded. 

To summarize, from \eqref{eq:remainder-identification} it holds in $L^2(e^{-f})$ that
\begin{align}\label{eq:zt-limit}
    \lim\limits_{\eps \downarrow 0}\frac{1}{\eps}\left(\Exp{\widetilde{Q}^{\eps}}[Z_1|Z_0 = \nabla \varphi(x)] - \nabla \varphi(x)\right)= -\frac{1}{2}\nabla f(x).
\end{align}
Recall that $Z_t = \nabla \varphi_{t \to 1}(Y_t)$. As $Z_1 = \nabla \varphi_{1 \to 1}(Y_1) = Y_1$ and $Z_0 = \nabla \varphi(Y_0)$, it holds that $\Exp{}[Z_1|Z_0 = \nabla \varphi(x)] = \Exp{}[Y_1|Y_0 = x]$, and thus \eqref{eq:zt-limit} also gives in $L^2(e^{-f})$ that
\begin{align}
    \lim\limits_{\eps \downarrow 0} \frac{1}{\eps}\left(\Exp{\widetilde{Q}^{\eps}}[Y_1|Y_0 = x] - \nabla \varphi(x)\right)= -\frac{1}{2}\nabla f(x).
\end{align}
\end{proof}

\section{Main Process Approximation}\label{sec:rel-ent-approx}
The main result in this section is Theorem \ref{thm:mp-mld-approx-of-sb}. In particular, we will take attention in this section to illustrate our assumptions and point out how they are used in our argument.

To begin, we establish notation and assumptions for this section and Section \ref{sec:kinetic-energy-bdds}. Recall Assumption \ref{assumption:standard-assumptions} from the Introduction. We will show in Proposition \ref{prop:uniform-poincare-constant} below that all the objects we work with in this section are well-defined. 

We now fix notation. Let $\ell_{\eps} = \mathrm{Law}(X_0,X_{\eps}^*)$, where $(X_t, t \geq 0)$ is the stationary primal MLD defined in \eqref{defn:MLD} with initial distribution $X_0 \sim e^{-f}$. Recall the notation $x \mapsto x^* = \nabla \varphi(x)$. Recall that $R^{\eps} \in \cM_{+}(C^{d}[0,1])$ is the law of $\eps$-reversible Brownian motion, and $(R^{\eps}_{xy},x,y\in\mathbb{R}^{d})$ denotes its bridges. We also use $(r_{t}(\cdot,\cdot),t > 0)$ to denote the standard Euclidean heat kernel. As before, let
\begin{align}\label{eq:tan-mp-mld-rp}
    \overline{Q}^{\eps}\; \text{be the tangent Markov projection of } Q^{\eps} = \int_{\mathbb{R}^{d} \times \mathbb{R}^{d}} R^{\eps}_{xy}\ell_{\eps}(dxdy). 
\end{align}
We denote the time marginal flow of $\overline{Q}^{\eps}$ and $Q^{\eps}$ (recall that they are the same) and its corresponding tangent velocities by
\begin{align}\label{eq:recip-marginal-flow}
    (\rho_t^{\eps}, v_t^{\eps}, t \in [0,1]), \quad v_t^{\eps} \in \Tan{\rho_t^{\eps}}. 
\end{align}
By definition, $(\rho_t^0, v_t^{0}, t \in [0,1])$ is the McCann interpolation as defined in \eqref{eq:cont-eqn-mccann}.

We now impose a more technical assumption related to the joint smoothness in $(t,\eps)$ of the family $(\rho_t^{\eps},v_t^{\eps}, t \in [0,1], \eps \geq 0)$ defined in \eqref{eq:recip-marginal-flow}. In particular, we require a certain amount of regularity jointly in the $t$ and $\eps$ variables that holds \text{up to and including the boundary} values $\eps = 0$ and $t = 0,1$. Moreover, we require the necessary domination so that the quantities defined below are in the appropriate weighted $L^2$ spaces. 
\begin{assumption}[Smoothness in $(t, \eps)$ at $\eps=0$]\label{assumption:technical-rd}
     
     Let $(\rho_t^{\eps}, v_t^{\eps}, t \in [0,1])$ as in \eqref{eq:recip-marginal-flow}. 
     Assume that the map $(t,s,x) \mapsto \rho_t^{s}(x)$ is twice continuously differentiable for $(t,s,x) \in [0,1] \times [0,+\infty) \times \mathbb{R}^{d}$, up to and including the boundary. Moreover, there exists $s_0 > 0$ and $C > 0$, $k \geq 1$ such that
     \begin{align}\label{eq:polynomial-dom-to-zero}
        \abs{\partial_s \partial_t \log \rho_t^{s}(x)} + \norm{\nabla \partial_s \log \rho_t^s(x)} \leq C\left(1+\norm{x}^{k}\right), \quad \text{for all $(t,s,x) \in [0,1]\times [0,s_0] \times \mathbb{R}^{d}$.}
     \end{align}
\end{assumption}

We pause to make some remarks on the above assumptions.

\begin{remark}
    We emphasize that Assumption \ref{assumption:technical-rd} should be interpreted as providing sufficient joint regularity in $(t,\eps)$ up to and including their boundary values. Indeed, by parabolic regularity it holds that $(t,\eps,x) \mapsto \rho_t^{\eps}(x)$ has mixed derivatives of many orders over $(t,\eps,x) \in (0,1) \times (0,+\infty) \times \mathbb{R}^{d}$. The uniform polynomial bound in \eqref{eq:polynomial-dom-to-zero} along with the uniform sub-Gaussian tail control established below in Proposition \ref{prop:uniform-poincare-constant} will provide the necessary domination as $\eps \downarrow 0$.  
    
    On the torus, compactness allows us to avoid the bound in \eqref{eq:polynomial-dom-to-zero}. 
    Indeed, Assumption \ref{assumption:torus} just requires that the map $(t,s,x) \mapsto \rho_t^{s}(x)$ is in $C^{\infty}([0,1] \times [0,+\infty) \times \mathbb{T}^{d})$. 

    We also remark that Assumption \ref{assumption:technical-rd} holds for Gaussian marginals (see the univariate calculations in Section \ref{subsec:gaussian-computations}) as well as a nontrivial class of marginals developed in the Appendix in Proposition \ref{prop:example-class}. 
\end{remark}

One of our main theorems is as follows. Once again, recall that $\ell_{\eps} = \mathrm{Law}(X_0,X_{\eps}^*)$, where $(X_t, t \geq 0)$ is the stationary primal MLD \eqref{defn:MLD}, 
$Q^\eps$ is the law of the reciprocal process obtained by joining $X_0$ and $X_\eps^*$ by an $\eps$-Brownian bridge, and $\overline{Q}^\eps$ is its tangent Markov projection, as in \eqref{eq:tan-mp-mld-rp}. Let $\overline{\ell}_{\eps} := (\omega_0,\omega_1)_{\#}\overline{Q}^{\eps}$ be the joint distribution of the endpoints under $\overline{Q}^\eps$.

\begin{theorem}\label{thm:mp-mld-approx-of-sb}
    Let $P^{\eps}$ denote the dynamic $\Schro$ bridge from $e^{-f}$ to $e^{-h}$. Under Assumptions \ref{assumption:standard-assumptions} and \ref{assumption:technical-rd},
    \begin{align}
        \lim\limits_{\eps \downarrow 0}\frac{1}{\eps}H(\overline{Q}^{\eps}|P^{\eps}) = 0 \text{ and } \lim\limits_{\eps \downarrow 0} \frac{1}{\eps}H(\overline{\ell}_{\eps}|\pi^{\eps}) = 0. 
    \end{align}
\end{theorem}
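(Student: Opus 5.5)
We follow the template of Step 1 in the proof of Theorem \ref{thm:sb-o-eps-score}. Since $\overline{Q}^{\eps}$ has endpoint marginals $e^{-f}$ and $e^{-h}$, and $dP^{\eps}/dR^{\eps}=a^{\eps}(\omega_0)b^{\eps}(\omega_1)$ has product form, we have
\[
H(\overline{Q}^{\eps}|P^{\eps}) = H(\overline{Q}^{\eps}|R^{\eps}) - H(P^{\eps}|R^{\eps}).
\]
The second term is given by the expansion \eqref{eq:ct-exp-again}. For the first term we use Theorem \ref{thm:rel-ent-tangent-markov-proj}, applied to the marginal flow $(\rho_t^{\eps},v_t^{\eps})$. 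Its hypothesis $\int_0^1 I(\rho_t^\eps)dt<\infty$ should follow from Assumption \ref{assumption:technical-rd} and Proposition \ref{prop:uniform-poincare-constant}. The theorem gives
\[
H(\overline{Q}^{\eps}|R^{\eps}) = \frac{1}{2\eps}\int_0^1 \|v_t^{\eps}\|^2_{L^2(\rho_t^{\eps})}dt + \frac12(\Ent(e^{-f})+\Ent(e^{-h})) + \frac{\eps}{8}\int_0^1 I(\rho_t^{\eps})dt.
\]
The entropy terms match exactly. The claim therefore reduces to two estimates:
\[
\text{(a)}\quad \int_0^1 I(\rho_t^{\eps})dt \to \int_0^1 I(\rho_t^0)dt,
\qquad
\text{(b)}\quad \int_0^1 \|v_t^{\eps}\|^2_{L^2(\rho_t^\eps)}dt = \Was{2}^2(e^{-f},e^{-h}) + o(\eps^2).
\]
The static statement then follows from the data processing inequality, $H(\overline{\ell}_\eps|\pi^\eps)\le H(\overline{Q}^\eps|P^\eps)$.

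\textbf{Estimate (a).} This is a continuity statement. By Assumption \ref{assumption:technical-rd}, $\nabla\log\rho_t^s$ is continuous in $s$ up to $s=0$ and has polynomial growth. The uniform sub-Gaussian tails of Proposition \ref{prop:uniform-poincare-constant} supply the domination needed for dominated convergence.

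\textbf{Estimate (b), a Taylor expansion in $s$.} Set $K(s)=\int_0^1\|v_t^s\|^2_{L^2(\rho_t^s)}dt$. We have $K(0)=\Was{2}^2$ by \eqref{eq:hjb-ce-identity}, and $K(s)\ge \Was{2}^2$ for all $s$ by Benamou--Brenier \eqref{eq:bb}, since every $\rho^s$ joins $e^{-f}$ to $e^{-h}$. It therefore suffices to show that $K$ is differentiable at $0$ with $K'(0)=0$, together with a uniform second-order bound giving $K(s)-K(0)=O(s^2)$; this forces an $o(s^2)$ remainder only if $K''(0)=0$ as well, so we instead aim directly at $K(s)-K(0)=o(s^2)$.

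To do this, write $v_t^s=\nabla\psi_t^s$ with $-\nabla\cdot(\rho_t^s\nabla\psi_t^s)=\partial_t\rho_t^s$. Differentiating in $s$ expresses $\partial_s v_t^s$ through an elliptic equation driven by $\partial_s\partial_t\rho_t^s$ and $\partial_s\rho_t^s$. This is exactly where the bounds \eqref{eq:polynomial-dom-to-zero} and the uniform Poincaré constant, via the negative Sobolev estimate of Proposition \ref{prop:neg-sob-norm-poincare}, control $\partial_s v$ in $L^2$.

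The first variation is
\[
K'(0)=\int_0^1\!\!\int \bigl(2\langle v^0_t,\partial_s v_t\rangle\rho_t^0+\|v_t^0\|^2\partial_s\rho_t\bigr)dx\,dt .
\]
Integrating by parts with the Hamilton--Jacobi equation \eqref{eq:hj-pde}, it collapses to boundary terms in $t$. These vanish because $\rho_0^s=e^{-f}$ and $\rho_1^s=e^{-h}$ for all $s$.

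The genuine content is the second-order cancellation, which we expect to be the main obstacle. The reciprocal surface only lives at scale $\sqrt s$ fluctuations, so we anticipate that the leading correction to $K$ is of order $s$ times an integral involving the initial velocity $\partial_s\rho_t^s|_{s=0}$. Lemma \ref{lem:zero-initial-velocity}, which uses the $t$-MLD picture and the matrix identity of Proposition \ref{prop:sqrd-inv-hessian-iden}, should show this initial velocity vanishes in the $\dot H^{-1}(\rho_t^0)$ sense. That makes the defect $\frac{1}{2\eps}(K(\eps)-\Was{2}^2)$ equal to $o(\eps)$. Concretely, we would aim for a bound
\[
K(s)-K(0)\le \int_0^1 \|\rho^s_t-\rho^0_t\text{-velocity defect}\|^2_{\dot H^{-1}}dt = o(s^2),
\]
obtained by comparing $v^s$ with the McCann velocity transported along $\rho^s$. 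This is the kinetic-energy bound of Theorem \ref{lem:ke-calc}, which we would invoke here. Combining (a) and (b) gives $H(\overline{Q}^\eps|P^\eps)=o(\eps)$.
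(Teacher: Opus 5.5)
Your proposal is correct and follows the paper's proof. You reduce to $H(\overline{Q}^{\eps}|R^{\eps})-H(P^{\eps}|R^{\eps})$, expand via Theorem \ref{thm:rel-ent-tangent-markov-proj} and \eqref{eq:ct-exp-again}, and close with Proposition \ref{prop:uniform-poincare-constant}(4) and Theorem \ref{lem:ke-calc}. The one difference is that you get the decomposition directly from the product form of $dP^{\eps}/dR^{\eps}$ and the matching endpoint marginals, exactly as in Step 1 of Theorem \ref{thm:sb-o-eps-score}. The paper instead routes it through $Q^{\eps}$ and the restricted Pythagorean identity of Proposition \ref{prop:tangent-markov-pythag} with $N=P^{\eps}$; both routes are valid, and yours is slightly more economical.

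Your Taylor-expansion heuristic for (b) is superfluous once you invoke Theorem \ref{lem:ke-calc}, and as phrased it misplaces the order: a vanishing ``order $s$'' correction to $K$ would only give $K(\eps)-\Was{2}^2=o(\eps)$, not the $o(\eps^2)$ you need. In the paper the $o(s)$ statement is for $D_tq_t^s$. That quantity controls $\|v_t^s-v_t^0\|_{L^2(\rho_t^s)}$, and squaring gives $o(s^2)$ for $K(s)-K(0)=\int_0^1\|v_t^s-v_t^0\|^2_{L^2(\rho_t^s)}dt$.
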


Before passing to the proof of Theorem \ref{thm:mp-mld-approx-of-sb}, we point out some technical results that hold in our setting under Assumptions \ref{assumption:standard-assumptions} and \ref{assumption:technical-rd}. Importantly, we establish a uniform choice of Poincaré inequality constant across the curve of measures defined in \eqref{eq:recip-marginal-flow}. Recall that a measure $\mu \in \cP(\mathbb{R}^{d})$ satisfies a Poincaré inequality with constant $C > 0$ if for all $f: \mathbb{R}^{d} \to \mathbb{R}$ in the domain of the Dirichlet form associated with $\mu$, see \cite[Section 3.4.1 (D4)]{bgl-markov}, it holds that
\begin{align}\label{defn:poincare-inequality}
    \int_{\mathbb{R}^{d}} \abs{f-\int_{\mathbb{R}^{d}}fd\mu}^2 d\mu \leq C\int_{\mathbb{R}^{d}} \norm{\nabla f}^2 d\mu.
\end{align}
We define $C_{P}(\mu) > 0$ to be the smallest constant for which \eqref{defn:poincare-inequality} holds. The proof of the following proposition is relegated to the Appendix.
\begin{proposition}\label{prop:uniform-poincare-constant}
The following results hold under Assumption \ref{assumption:standard-assumptions}.
\begin{itemize}
    \item[(1)] The MLD as defined in \eqref{defn:MLD} has a unique (weak) solution with a.s.\ infinite explosion time. Setting $\ell_{\eps}:=   \mathrm{Law}(X_0,X_{\eps}^*)$, it holds that $H(\ell_{\eps}|r_{\eps}) < +\infty$ for each $\eps > 0$. Moreover, for all $\eps_0 > 0$ it holds that
    \begin{align}\label{eq:uniform-poincare-constant}
        \sup\limits_{\eps \in (0,\eps_0], t \in [0,1]} C_{P}(\rho_t^{\eps}) < +\infty.
    \end{align}
    \item[(2)] It holds that $\Ent(e^{-f}), \Ent(e^{-h})$ are both finite. Thus, the $\Schro$ bridge from $e^{-f}$ to $e^{-h}$ exists at each $\eps > 0$. Additionally, each of $I(e^{-f})$, $I(e^{-h})$, and $\int_0^{1} I(\rho_t^0) dt$ are finite. 
    \item[(3)] The $(\rho_t^{\eps}, t \in [0,1], \eps \in [0,\eps_0])$ are uniformly sub-Gaussian for all $\eps_0$, meaning that the sub-Gaussian norm $\norm{\cdot}_{\psi_{2}}$ defined in \cite[Definition 2.5.6]{vershynin-hdp} has a uniform bound. For all $p \in [1,+\infty)$, it holds that $\rho_t^{\eps} \to \rho_t^0$ in $\Was{p}$ as $\eps \downarrow 0$ for all $t \in [0,1]$.
    \item[(4)] Additionally, under Assumption \ref{assumption:technical-rd}, it holds that
    \begin{align}\label{eq:fi-limit}
        \lim\limits_{\eps \downarrow 0} \int_0^{1} I(\rho_t^\eps) dt  = \int_0^{1} I(\rho_t^0)dt, \quad \lim\limits_{\eps \downarrow 0}\int_0^{1} I(\rho_t^{\eps}|\rho_t^0) dt = 0.   
    \end{align}
\end{itemize}
\end{proposition}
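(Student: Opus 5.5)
I would prove the four items in the order stated. Item (2) comes first. Under (A1), $e^{-f}$ and $e^{-h}$ are strongly log-concave. Hence they have Gaussian tails, and $\Ent = -\int f e^{-f}$ is finite because $f$ grows at most quadratically. The Fisher information $I(e^{-f})=\int \|\nabla f\|^2 e^{-f}$ is finite because $\nabla f$ is Lipschitz, so it grows linearly. For the McCann interpolation, $\rho_t^0=(\nabla\varphi_{0\to t})_\#e^{-f}$, and the change of variables formula gives
\[
\log\rho_t^0(\nabla\varphi_{0\to t}(x)) = -f(x)-\log\det\nabla^2\varphi_{0\to t}(x).
\]
By (A2), $\nabla^2\varphi_{0\to t}$ is uniformly elliptic with bounded derivatives, so the score of $\rho_t^0$ grows at most linearly, uniformly in $t$. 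This gives $\int_0^1 I(\rho_t^0)\,dt<\infty$. Existence of the Schr\"odinger bridge then follows from the finite entropies and second moments, as in \cite{schroLeonard13}.

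For item (1), I would work with the dual MLD \eqref{defn:dMLD} in the variable $Y=X^*$. Its diffusion matrix $\nabla^2\varphi(\nabla\varphi^*(y))$ is bounded above and below and Lipschitz by (A2). Its drift $-\frac12\nabla f(\nabla\varphi^*(y))$ is Lipschitz. So strong existence and uniqueness hold with no explosion, and transferring through the diffeomorphism $\nabla\varphi^*$ gives the same for the primal MLD.

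Next I would show $H(\ell_\eps|r_\eps)<\infty$. Uniform ellipticity and Lipschitz coefficients give Aronson-type Gaussian upper bounds on the transition density of $Y$. Together with the finite entropy of $e^{-f}$ and finite second moments, this bounds $H(\ell_\eps|r_\eps)$.

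For the uniform Poincar\'e constant, I would use the representation
\[
U(t,\eps) = (1-t)X_0 + tX_\eps^* + \sqrt{\eps t(1-t)}\,Z, \qquad Z \text{ standard Gaussian, independent}.
\]
The Gaussian term adds at most $\eps/4$ to the Poincar\'e constant, since Poincar\'e constants are subadditive under independent convolution. The remaining task is to control $C_P(\mathrm{Law}((1-t)X_0+tX_\eps^*))$. Here I would write $X_\eps^*=\nabla\varphi(X_0)+\Delta_\eps$. The map $\nabla\varphi_{0\to t}$ is bi-Lipschitz, so it carries the Poincar\'e inequality of $e^{-f}$ to $\rho_t^0$. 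The perturbation $\Delta_\eps$ is the hard part, because it is not independent of $X_0$.

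My first attempt at the perturbation is to realize the joint law as a Lipschitz image of a Gaussian-type product measure. Concretely, use a strong solution of the MLD driven by $(X_0,B)$, where $X_0$ carries a log-concave law and $B$ is Brownian motion; the product law satisfies a Poincar\'e inequality on path space. The endpoint map $(X_0,B)\mapsto X_\eps$ is Lipschitz in the Malliavin/Cameron–Martin sense, with constants uniform for $\eps\le\eps_0$, since the coefficients are Lipschitz and the diffusion matrix is bounded. Poincar\'e inequalities pass through Lipschitz pushforwards, so this yields \eqref{eq:uniform-poincare-constant}. I expect this step to be the main obstacle.

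Item (3) follows from the same representation. Sub-Gaussianity of $X_0$, of $X_\eps^*$ (which has law $e^{-h}$ exactly), and of $Z$ gives a uniform $\psi_2$ bound, using the triangle inequality for $\|\cdot\|_{\psi_2}$. For $\Was{p}$ convergence, Doob's inequality gives $X_\eps\to X_0$ in $L^p$, and the Lipschitz property of $\nabla\varphi$ then gives $U(t,\eps)\to U(t,0)$ in $L^p$. Since the two are coupled, this bounds $\Was{p}(\rho_t^\eps,\rho_t^0)$.

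Item (4) uses Assumption \ref{assumption:technical-rd}. By the mean value theorem in $s$,
\[
\|\nabla\log\rho_t^\eps-\nabla\log\rho_t^0\|\le \eps\, C(1+\|x\|^k).
\]
Therefore $I(\rho_t^\eps|\rho_t^0)\le \eps^2 C^2\int(1+\|x\|^k)^2\,d\rho_t^\eps$, and the moments are uniformly bounded by (3). This gives the second limit in \eqref{eq:fi-limit}. For the first limit, expand $\|\nabla\log\rho_t^\eps\|^2$ around $\nabla\log\rho_t^0$. The cross terms are controlled by Cauchy–Schwarz using the relative Fisher bound just obtained. The leftover term $\int\|\nabla\log\rho_t^0\|^2\,d\rho_t^\eps$ converges to $I(\rho_t^0)$ by $\Was{p}$ convergence, since the integrand grows polynomially and the moments are uniformly integrable. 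Dominated convergence in $t$ then finishes the argument.
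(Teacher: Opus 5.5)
Items (2), (3) and (4), the well-posedness argument and the finiteness of $H(\ell_\eps|r_\eps)$ are fine. For (4), your mean value bound from \eqref{eq:polynomial-dom-to-zero} gives $\int_0^1 I(\rho_t^\eps|\rho_t^0)\,dt=O(\eps^2)$ more directly than the paper's lower semicontinuity and dominated convergence route.

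The genuine gap is the uniform Poincar\'e constant \eqref{eq:uniform-poincare-constant}. You claim that $(X_0,B)\mapsto X_\eps$ is Lipschitz in the Cameron--Martin sense because the coefficients are Lipschitz. This is false here. The diffusion coefficient $(\nabla^2\varphi(x))^{-1/2}$ depends on the state unless $\nabla\varphi$ is affine. Consequently the Jacobian flow $J_s=\partial X_s/\partial X_0$, which solves $dJ_s=\nabla b(X_s)J_s\,ds+\sum_k\nabla\sigma_k(X_s)J_s\,dB^k_s$, is a stochastic exponential. So is the Malliavin derivative $D_rX_\eps=J_\eps J_r^{-1}\sigma(X_r)$. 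Lipschitz coefficients give only moment bounds on these, not almost sure bounds. Pushing the path-space Poincar\'e inequality through $\Phi$ gives only $\mathrm{Var}(g\circ\Phi)\le C\,\Exp{}[|\nabla g(\Phi)|^2\,\|\nabla\Phi\|^2]$. Without an almost sure bound on $\|\nabla\Phi\|$ (or one conditional on $\Phi$), this cannot be compared with $\int|\nabla g|^2\,d\rho_t^\eps$. So the step you flagged as the main obstacle fails as stated.

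What is missing is a gradient estimate for the MLD semigroup $P_s$, rather than a pathwise one. The paper gets it from geometry, in four steps.
\begin{enumerate}
\item By Kolesnikov's criterion, the MLD on the Hessian manifold $(\mathbb{R}^d,\nabla^2\varphi)$ satisfies $\mathrm{CD}(\kappa,\infty)$ with $\kappa>0$.
\item This yields a local LSI for $X_\eps\mid X_0=x$, with constant $O(\eps)$, written in Euclidean gradients.
\item Conforti's mixing lemma then gives an LSI for $(X_0,X_\eps)$, uniformly in $\eps\le\eps_0$.
\item This LSI is tensorized with $W_\eps$ and pushed forward by the Lipschitz map $(x,y,z)\mapsto(1-t)x+t\nabla\varphi(y)+\sqrt{t(1-t)}z$.
\end{enumerate}

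Your Malliavin idea can also be repaired without curvature, by conditioning on $X_0$ instead of working on path space.
\begin{enumerate}
\item Since $\nabla P_su(x)=\Exp{}[J_s^{T}\nabla u(X_s)]$ is an expectation, Cauchy--Schwarz gives $|\nabla P_su|^2\le \sup_x\Exp{}\|J_s\|^2\,P_s|\nabla u|^2$. This uses only moment bounds on $J$.
\item Integrating $\frac{d}{ds}P_s((P_{\eps-s}u)^2)$ over $s\in[0,\eps]$ then gives a local Poincar\'e inequality with constant $O(\eps)$.
\item Finally, split $\mathrm{Var}(g(U))=\Exp{}[\mathrm{Var}(g(U)\mid X_0)]+\mathrm{Var}(\psi(X_0))$, where $\psi(x)=\Exp{}[g(U)\mid X_0=x]$. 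The first term is bounded by the local inequality. For the second, bound $|\nabla\psi|^2$ by the same Cauchy--Schwarz step and apply the Poincar\'e inequality of $e^{-f}$.
\end{enumerate}
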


We are now able to present the proof of Theorem \ref{thm:mp-mld-approx-of-sb} contingent on the kinetic energy bound proven below in Theorem \ref{lem:ke-calc} in Section \ref{sec:kinetic-energy-bdds}.
\begin{proof}[Proof of Theorem \ref{thm:mp-mld-approx-of-sb}]
This follows from Theorem \ref{thm:rel-ent-tangent-markov-proj} and Proposition \ref{prop:tangent-markov-pythag}. First, by Assumption 1 it follows from Proposition \ref{prop:sb-in-tangent-space} in the Appendix that $P^{\eps}$ is a permissible choice for $N$ in Proposition \ref{prop:tangent-markov-pythag}. That is,
\begin{align}\label{eq:decomp-1}
    H(\overline{Q}^{\eps}|P^{\eps}) &= H(Q^{\eps}|P^{\eps}) -H(Q^{\eps}|\overline{Q}^{\eps}).
\end{align}
Next, recall that as argued in \cite[Theorem 4]{MP25}, due to the product structure of $\pi^{\eps}$ and the fact that $Q^{\eps}$, $P^{\eps}$, and $R^{\eps}$ have the same bridges,
\begin{align}\label{eq:decomp-2}
    H(Q^{\eps}|P^{\eps}) = H(\ell_{\eps}|\pi_{\eps})= H(\ell_{\eps}|r_{\eps})-H(\pi_{\eps}|r_{\eps}) = H(Q^{\eps}|R^{\eps})-H(P^{\eps}|R^{\eps}).
\end{align}
From \eqref{eq:decomp-1}, \eqref{eq:decomp-2}, and \eqref{eq:pythag-recip-tan-mp} in Theorem \ref{thm:rel-ent-tangent-markov-proj},
\begin{align}\label{eq:key-decomp}
    H(\overline{Q}^{\eps}|P^{\eps}) = \left(H(Q^{\eps}|R^{\eps})-H(Q^{\eps}|\overline{Q}^{\eps})\right)-H(P^{\eps}|R^{\eps}) = H(\overline{Q}^{\eps}|R^{\eps})-H(P^{\eps}|R^{\eps}).
\end{align}
By \cite[Theorem 1.6]{conforti21deriv} (extended to the current noncompact setting in \cite[Proposition 15]{MP25}), it holds as $\eps \downarrow 0$ that
\begin{align}\label{eq:sb-cost-expansion}
    H(P^{\eps}|R^{\eps}) &= \frac{1}{2\eps}\Was{2}^2(e^{-f},e^{-h}) + \frac{1}{2}\left(\Ent(e^{-f})+\Ent(e^{-h})\right)+\frac{\eps}{8}\int_0^{1} I(\rho_t^0)dt + o(\eps).
\end{align}
Recall that $\overline{Q}^{\eps}$ has marginal flow given by \eqref{eq:recip-marginal-flow}. We now look at the expression of $\eps \mapsto H(\overline{Q}^{\eps}|R^{\eps})$ given by \eqref{eq:rel-ent-decomp-mp} of Theorem \ref{thm:rel-ent-tangent-markov-proj}. With Assumption \ref{assumption:technical-rd}, Theorem \ref{lem:ke-calc} below in Section \ref{sec:kinetic-energy-bdds} establishes that
\begin{align}\label{eq:asympt-terms-in-rel-ent}
    \frac{1}{2\eps}\int_0^{1} |(\rho_t^{\eps})'|^2 dt &= \frac{1}{2\eps}\Was{2}^2(e^{-f},e^{-h})+o(\eps).
\end{align}
From \eqref{eq:fi-limit}, $\lim\limits_{\eps \downarrow 0} \int_0^{1} I(\rho_t^{\eps})dt = \int_0^{1} I(\rho_t^0)dt$. 
Plugging everything into the expansion of $H(\overline{Q}^{\eps}|R^{\eps})$ given in \eqref{eq:rel-ent-decomp-mp}, it follows from \eqref{eq:key-decomp} and \eqref{eq:sb-cost-expansion} that $H(\overline{\ell}_{\eps}|\pi_{\eps})\leq H(\overline{Q}^{\eps}|P^{\eps}) = o(\eps)$,
where the inequality follows from the information processing inequality.
\end{proof}

\section{Kinetic Energy Bound}\label{sec:kinetic-energy-bdds}
The main result of this section is the following kinetic energy estimate that gets used in \eqref{eq:asympt-terms-in-rel-ent} as a part of the proof of Theorem \ref{thm:mp-mld-approx-of-sb}. Continue with the same setting and notation introduced in Section \ref{sec:rel-ent-approx}. In particular,  $\overline{Q}^{\eps}$ is the tangent Markov projection of the reciprocal process formed from the MLD defined in \eqref{eq:tan-mp-mld-rp}, and has the marginal flow and tangent velocities  $(\rho_t^{\eps},v_t^{\eps},t \in [0,1])$ as in \eqref{eq:recip-marginal-flow}. 

\begin{theorem}\label{lem:ke-calc}
    Under Assumptions \ref{assumption:standard-assumptions} and \ref{assumption:technical-rd},
\begin{align}
    \frac{1}{2}\int_{0}^{1} |(\rho_t^{\eps})'|^{2}dt = \frac{1}{2}\Was{2}^{2}(e^{-f},e^{-h})+o(\eps^2). 
\end{align}
As a consequence, 
\begin{align}\label{eq:tan-vel-comp}
    \int_{0}^{1} \norm{v_t^{\eps}-v_{t}^0}_{L^2(\rho_t^{\eps})}^2 dt = o(\eps^2). 
\end{align}
\end{theorem}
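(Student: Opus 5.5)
The plan is to turn the kinetic-energy statement into an exact identity, reducing it to \eqref{eq:tan-vel-comp}. Then the tangent velocity defect is controlled by a negative Sobolev norm, and that norm is bounded using a first-order-in-$s$ cancellation coming from the Mirror Langevin diffusion.

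\textbf{Step 1 (exact identity).} Let $\psi_t$ be the Hamilton--Jacobi solution from \eqref{eq:hj-pde}, so that $v_t^0=\nabla\psi_t$. Write $|v^\eps_t|^2=|v^\eps_t-\nabla\psi_t|^2+2\langle v^\eps_t,\nabla\psi_t\rangle-|\nabla\psi_t|^2$ and integrate against $\rho^\eps_t\,dt$. The curve $(\rho^\eps_t,v^\eps_t)$ solves the continuity equation with endpoints $\rho^\eps_0=e^{-f}$ and $\rho^\eps_1=e^{-h}$, which coincide with the McCann endpoints. Testing against $\psi_t$ as in \eqref{eq:hjb-ce-identity} and using $\partial_t\psi_t=-\tfrac12|\nabla\psi_t|^2$ gives
\[
\int_0^1\!\!\int 2\langle v^\eps_t,\nabla\psi_t\rangle d\rho^\eps_t\,dt=\Was{2}^2(e^{-f},e^{-h})+\int_0^1\!\!\int|\nabla\psi_t|^2d\rho^\eps_t\,dt .
\]
The growth of $\psi_t$ is controlled by Assumption \ref{assumption:standard-assumptions} and the sub-Gaussian bounds of Proposition \ref{prop:uniform-poincare-constant}(3), which justifies the test. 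It follows that
\[
\int_0^1|(\rho^\eps_t)'|^2dt=\Was{2}^2(e^{-f},e^{-h})+\int_0^1\norm{v^\eps_t-v^0_t}^2_{L^2(\rho^\eps_t)}dt .
\]
So both claims of the theorem are equivalent to \eqref{eq:tan-vel-comp}.

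\textbf{Step 2 (reduction to $\dot H^{-1}$).} Since $v^0_t$ is a gradient, it lies in $\Tan{\rho^\eps_t}$, and hence so does $v^\eps_t-v^0_t$. Set $\sigma^\eps_t:=\partial_t\rho^\eps_t+\nabla\cdot(v^0_t\rho^\eps_t)$. Then $\nabla\cdot((v^\eps_t-v^0_t)\rho^\eps_t)=-\sigma^\eps_t$, so $\norm{v^\eps_t-v^0_t}_{L^2(\rho^\eps_t)}=\norm{\sigma^\eps_t}_{\dot H^{-1}(\rho^\eps_t)}$. The distribution $\sigma^\eps_t$ has zero mass. By duality and the uniform Poincaré constant \eqref{eq:uniform-poincare-constant},
\[
\norm{\sigma^\eps_t}_{\dot H^{-1}(\rho^\eps_t)}\le \sqrt{C_P}\,\norm{\sigma^\eps_t/\rho^\eps_t}_{L^2(\rho^\eps_t)} .
\]
Because $\sigma^0_t=0$, we may write $\sigma^\eps_t=\int_0^\eps\partial_r\big[\partial_t\rho^r_t+\nabla\cdot(v^0_t\rho^r_t)\big]dr$. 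After dividing by $\rho^\eps_t$, the integrand is expressed through $\partial_r\partial_t\log\rho^r_t$, $\nabla\partial_r\log\rho^r_t$, $\partial_r\log\rho^r_t$ and the density ratios $\rho^r_t/\rho^\eps_t$. Assumption \ref{assumption:technical-rd} makes these $C^0$ in $(t,r,x)$ up to $r=0$ and polynomially dominated by \eqref{eq:polynomial-dom-to-zero}. So if all these first $s$-derivatives vanish at $s=0$, continuity together with the uniform sub-Gaussianity gives, by dominated convergence, $\norm{\sigma^\eps_t/\rho^\eps_t}_{L^2(\rho^\eps_t)}=o(\eps)$ uniformly in $t$. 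Squaring and integrating in $t$ then gives $o(\eps^2)$.

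\textbf{Step 3 (main obstacle: $\partial_s\rho^s_t|_{s=0}=0$).} This is the zero-initial-velocity claim (Lemma \ref{lem:zero-initial-velocity}) and is where the geometry enters. Using the reciprocal surface \eqref{eq:defn-ust-intro}, for a test function $u$ we compute $\frac{d}{ds}\Exp{}[u(U(t,s))]$ at $s=0$ by Itô's formula. By \eqref{defn:dMLD}, $dX^*_s=-\tfrac12\nabla f(X_s)ds+\sqrt{\nabla^2\varphi(X_s)}dB_s$, and $\sqrt{t(1-t)}W_s$ is an independent Brownian term. The derivative is therefore
\[
\Exp{}\Big[-\tfrac t2\langle\nabla f(X_0),\nabla u\rangle+\tfrac12\big(t^2\nabla^2\varphi(X_0)+t(1-t)\Id\big):\nabla^2u\Big],
\]
with $\nabla u$ and $\nabla^2u$ evaluated at $U(t,0)=\nabla\varphi_{0\to t}(X_0)$. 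Since $t^2\nabla^2\varphi+t(1-t)\Id=t\,\nabla^2\varphi_{0\to t}$, this equals $t\,\Exp{\rho^0_t}[\mathcal L^{(t)}u]$, where $\mathcal L^{(t)}$ is the generator of the dual $t$-MLD \eqref{defn:t-mld-interp-defn-intro}. By stationarity of $\rho^0_t$ for the $t$-MLD, the expectation vanishes. Hence $\partial_s\rho^s_t|_{s=0}=0$ for every $t$, and consequently $\partial_t\partial_s\rho$ and $\nabla\partial_s\rho$ also vanish at $s=0$. The main technical hurdle will be justifying the interchange of $d/ds$ with expectations at the boundary $s=0$ for unbounded test functions, and upgrading weak vanishing to pointwise vanishing of $\partial_s\log\rho$ and its derivatives. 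I would handle this using the $C^2$ regularity in Assumption \ref{assumption:technical-rd} and the moment bounds of Proposition \ref{prop:uniform-poincare-constant}.
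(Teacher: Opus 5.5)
Your Step 1 and your $\dot H^{-1}$/Poincaré reduction match the paper's argument; your $\sigma^\eps_t$ is exactly the paper's $r^\eps_t=-\nabla\cdot(\rho^\eps_t(v^\eps_t-v^0_t))$. Your Step 3 also matches: Itô on the reciprocal surface produces $t$ times the dual $t$-MLD generator, which integrates to zero against $\rho^0_t$.

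The gap is in the middle of Step 2. There you apply the fundamental theorem of calculus to $\sigma^r_t$ at the level of densities, and only afterwards divide by $\rho^\eps_t$. This puts the factor $\rho^r_t/\rho^\eps_t$, $r\in[0,\eps]$, into your integrand, and $(\rho^r_t)^2/\rho^\eps_t$ after Jensen. Assumption \ref{assumption:technical-rd} gives no polynomial domination of this ratio. The best that follows from \eqref{eq:polynomial-dom-to-zero} is $\abs{\partial_u\log\rho^u_t(x)}\le C(1+\norm{x}^{k+1})$, obtained by integrating the gradient bound in $x$. That yields only $\rho^r_t/\rho^\eps_t\le\exp\bigl(C\eps(1+\norm{x}^{k+1})\bigr)$, which need not be integrable against the merely sub-Gaussian $\rho^r_t$ once $k\ge 2$. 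So the dominated-convergence step, as you state it, is unjustified.

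The repair is to divide before differentiating in $r$. Write $\rho^\eps_t=e^{q^\eps_t}\rho^0_t$ with $q^\eps_t=\log(\rho^\eps_t/\rho^0_t)$. Since $\rho^0_t$ solves the continuity equation with $v^0_t$, one gets $\sigma^\eps_t=\rho^\eps_t\,D_tq^\eps_t$ with $D_t=\partial_t+v^0_t\cdot\nabla$. Because $q^0_t\equiv 0$ and $\partial_rq^r_t=\partial_r\log\rho^r_t$, this gives $\sigma^\eps_t/\rho^\eps_t=\int_0^\eps D_t\partial_r\log\rho^r_t\,dr$. This integrand involves only the two quantities controlled by \eqref{eq:polynomial-dom-to-zero} and the linearly growing $v^0_t$, evaluated pointwise, with no density ratio. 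Jensen then gives $\int_0^1\norm{\sigma^\eps_t/\rho^\eps_t}^2_{L^2(\rho^\eps_t)}dt\le\eps^2\sup_{r\le\eps}\sup_{s\le s_0}\int_0^1\!\int\abs{D_t\partial_r\log\rho^r_t}^2d\rho^s_t\,dt$. Your continuity-plus-uniform-tails argument, together with Step 3, sends this supremum to zero. This is exactly the paper's route.

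Finally, Step 3 needs no unbounded test functions. Taking $\xi\in C_c^\infty$ already gives $\partial_s\rho^s_t|_{s=0}=0$ as a distribution. The $C^2$ regularity up to $s=0$ then upgrades this to pointwise vanishing of $\partial_s\log\rho^s_t$, $\partial_t\partial_s\log\rho^s_t$ and $\nabla\partial_s\log\rho^s_t$ at $s=0$.
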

From Theorem \ref{lem:ke-calc}, we obtain the following relative entropy approximation result between the $\eps$-noisy McCann interpolation $\widetilde{Q}^{\eps}$ in \eqref{eq:noisy-mcann-intro} and $\overline{Q}^{\eps}$. 
\begin{corollary}\label{prop:mp-drift-approx}
   Under Assumptions \ref{assumption:standard-assumptions} and  \ref{assumption:technical-rd},
    \begin{align}\label{eq:mp-drift-approx}
        \lim\limits_{\eps \downarrow 0}\frac{1}{\eps^2}\int_0^{1} \int_{\mathbb{R}^{d}}\norm{\overline{\beta}_{t}^{\eps}-\left(\nabla \varphi_{t \to 1}-\nabla \varphi_{t \to 0}+\frac{\eps}{2}\nabla \log \rho_{t}^0\right)}^2 \rho_t^{\eps}(dx)dt &= 0. 
    \end{align}
    In other words, $H(\overline{Q}^{\eps}|\widetilde{Q}^{\eps}) = o(\eps)$.
\end{corollary}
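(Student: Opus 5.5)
Given Theorem \ref{lem:ke-calc}, the plan is to expand the drift difference using Proposition \ref{prop:relaxed-drift-calculation}, and then read off the entropy statement from the Girsanov formula in Fact \ref{fact:girs-formula}.

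\textbf{Rewriting the drift.} Proposition \ref{prop:uniform-poincare-constant}(4) gives $\int_0^1 I(\rho_t^\eps)\,dt<\infty$, so Proposition \ref{prop:relaxed-drift-calculation} applies and
\[
\overline{\beta}_t^{\eps} = v_t^{\eps} + \frac{\eps}{2}\nabla\log\rho_t^{\eps}.
\]
By \eqref{eq:cont-eqn-mccann}, $\nabla\varphi_{t\to1}-\nabla\varphi_{t\to0}=v_t^0$. The integrand in \eqref{eq:mp-drift-approx} therefore becomes
\[
\norm{(v_t^\eps - v_t^0) + \tfrac{\eps}{2}\nabla\log(\rho_t^\eps/\rho_t^0)}^2 .
\]
We apply $\norm{a+b}^2\le 2\norm{a}^2+2\norm{b}^2$ and integrate against $\rho_t^\eps$ over $t\in[0,1]$.

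\textbf{Bounding the two pieces.} The first piece is $2\int_0^1\norm{v_t^\eps-v_t^0}^2_{L^2(\rho_t^\eps)}dt$, which is $o(\eps^2)$ by \eqref{eq:tan-vel-comp} in Theorem \ref{lem:ke-calc}. The second piece is
\[
\frac{\eps^2}{2}\int_0^1 \Exp{\rho_t^\eps}\norm{\nabla \log(\rho_t^\eps/\rho_t^0)}^2dt=\frac{\eps^2}{2}\int_0^1 I(\rho_t^\eps|\rho_t^0)\,dt .
\]
This is also $o(\eps^2)$, because the second limit in \eqref{eq:fi-limit} of Proposition \ref{prop:uniform-poincare-constant}(4) sends $\int_0^1 I(\rho_t^\eps|\rho_t^0)\,dt$ to zero. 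Dividing the sum by $\eps^2$ gives \eqref{eq:mp-drift-approx}.

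\textbf{The relative entropy statement.} Both $\overline{Q}^\eps$ and $\widetilde{Q}^\eps$ are Markov diffusions with noise $\sqrt{\eps}dB_t$ and the same initial law $e^{-f}$. The measure $\widetilde{Q}^\eps$ has uniqueness in law, since its drift $\alpha_t^\eps$ is Lipschitz uniformly in $t$ under Assumption \ref{assumption:standard-assumptions} (as noted after \eqref{defn:sec5-mccann-interp}). The time marginals of $\overline{Q}^\eps$ are $\rho_t^\eps$. Fact \ref{fact:girs-formula} then gives
\[
H(\overline{Q}^\eps|\widetilde{Q}^\eps)=\frac{1}{2\eps}\int_0^1\int\norm{\overline\beta_t^\eps-\alpha_t^\eps}^2\rho_t^\eps\,dx\,dt=\frac{1}{2\eps}\,o(\eps^2)=o(\eps).
\]

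\textbf{Main obstacle: finiteness of the entropy.} Fact \ref{fact:girs-formula} presupposes $H(\overline{Q}^\eps|\widetilde{Q}^\eps)<\infty$. My plan is to get this the same way as in Proposition \ref{prop:proj-processes-exist}: use the Kadota--Shepp condition together with the stopping-time and lower-semicontinuity argument. The required input is that the $L^2(\rho^\eps_t)$ norm of the drift difference is finite, which is exactly what the bound just established provides. The terms $\nabla\log\rho_t^0$ and $v_t^0$ have at most linear growth, and by Proposition \ref{prop:uniform-poincare-constant}(3) the measures $\rho_t^\eps$ are uniformly sub-Gaussian, so these terms lie in $L^2(\rho_t^\eps)$ and all the quantities involved are well defined.
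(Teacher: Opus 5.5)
Your proposal is correct and follows the paper's own proof essentially step for step. You rewrite $\overline{\beta}_t^{\eps}$ via Proposition \ref{prop:relaxed-drift-calculation}, bound the velocity difference by \eqref{eq:tan-vel-comp} and the score difference by the relative Fisher information limit in \eqref{eq:fi-limit}, and conclude with Fact \ref{fact:girs-formula}; your extra remarks on uniqueness in law for $\widetilde{Q}^{\eps}$ and on finiteness of $H(\overline{Q}^{\eps}|\widetilde{Q}^{\eps})$ make explicit justifications the paper leaves implicit.
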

\begin{proof}[Proof of Corollary \ref{prop:mp-drift-approx}]
Recall the notation from \eqref{eq:recip-marginal-flow}. By Proposition \ref{prop:relaxed-drift-calculation}, it then holds for all $\eps > 0$ and Lebesgue-a.e.\ $t \in [0,1]$ that the drift of $\overline{Q}^{\eps}$ is
    $\overline{\beta}_{t}^{\eps}(x) = v_{t}^{\eps}(x) + \frac{\eps}{2}\nabla \log \rho_t^{\eps}(x)$. 
By Theorem \ref{lem:ke-calc},
\begin{align*}
    \int_{0}^{1} \norm{v_{t}^{\eps}-(\nabla \varphi_{t \to 1}-\nabla \varphi_{t \to 0})}^2_{L^2(\rho_t^{\eps})}dt = o(\eps^2). 
\end{align*}
Now, by \eqref{eq:fi-limit} in Proposition \ref{prop:uniform-poincare-constant}, 
\begin{align*}
    \int_0^{1} \int_{\mathbb{R}^{d}} \norm{\frac{\eps}{2}\nabla \log \rho_t^{\eps}-\frac{\eps}{2}\nabla \log \rho_t^0}^2 \rho_t^{\eps}(dx)dt = \frac{\eps^2}{4}\int_0^{1} I(\rho_t^{\eps}|\rho_t^0)dt = o(\eps^2).
\end{align*}
Let $(\alpha_t^{\eps}, t \in [0,1])$ denote the drift of $\widetilde{Q}^{\eps}$. Since both $\overline{Q}^{\eps}$ and $\widetilde{Q}^{\eps}$ have the same initial distribution, $e^{-f}$, from Fact \ref{fact:girs-formula},
$H(\overline{Q}^{\eps}|\widetilde{Q}^{\eps}) = H(e^{-f}|e^{-f})+\frac{1}{2\eps} \int_{0}^{1} \Exp{\rho_t^{\eps}}\norm{\overline{\beta}_t^{\eps}-\alpha_t^{\eps}}^2dt = o(\eps)$. 
\end{proof}

The proof of Theorem \ref{lem:ke-calc} is based on the following lemma that requires the reciprocal surface construction outlined in Section \ref{sec:sketch-of-proof}.
Consider the random surface $(U(t,s),\; t \in [0,1],\; s \geq 0)$ from \eqref{eq:defn-ust-intro}:
\begin{align}\label{eq:defn-ust}
     U(t,s) := (1-t)X_0 + tX_{s}^* + \sqrt{t(1-t)}W_{s}, \quad \mathrm{Law}(U(t,s))=\rho_t^{s}. 
\end{align}

\begin{lemma}\label{lem:zero-initial-velocity}
    Retain the setting of Theorem \ref{lem:ke-calc}. For each $t \in [0,1]$, it holds that 
    \begin{align}\label{eq:zero-in-dist}
        \left.\partial_{s}\rho_{t}^{s}\right|_{s = 0} = 0. 
    \end{align}
\end{lemma}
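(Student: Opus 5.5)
We prove \eqref{eq:zero-in-dist} in the weak sense: for every test function $u \in C_c^{\infty}(\mathbb{R}^{d})$ we show $\frac{d}{ds}\big|_{s=0}\Exp{}[u(U(t,s))] = 0$. Assumption \ref{assumption:technical-rd} gives $C^2$ regularity of $(t,s,x)\mapsto\rho_t^s(x)$ up to $s=0$, so the weak derivative identifies the pointwise one. Fix $t$. At $s=0$ we have $U(t,0) = \nabla\varphi_{0\to t}(X_0)$.

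\textbf{Step 1: expand $U(t,s)$ in small $s$.} Write $X_s^* = Y_s$, the dual MLD started at $Y_0 = \nabla\varphi(X_0)$. From \eqref{defn:dMLD},
\begin{align*}
X_s^* = X_0^* - \frac{s}{2}\nabla f(X_0) + \nabla^2\varphi(X_0)^{1/2}B_s + R_s, \qquad \Exp{}\norm{R_s}^2 = o(s).
\end{align*}
The remainder bound uses the Lipschitz and bounded-Hessian properties from Assumption \ref{assumption:standard-assumptions}. Substituting into \eqref{eq:defn-ust} gives
\begin{align*}
U(t,s) = \nabla\varphi_{0\to t}(X_0) - \frac{ts}{2}\nabla f(X_0) + t\,\nabla^2\varphi(X_0)^{1/2}B_s + \sqrt{t(1-t)}\,W_s + tR_s .
\end{align*}
Conditionally on $X_0=x$, the two Gaussian terms are independent with total covariance
\begin{align*}
s\big(t^2\nabla^2\varphi(x) + t(1-t)\Id\big).
\end{align*}
The key algebraic observation is the identity
\begin{align*}
t^2\nabla^2\varphi + t(1-t)\Id = t\,\nabla^2\varphi_{0\to t}, \qquad \text{since } \nabla^2\varphi_{0\to t} = (1-t)\Id + t\nabla^2\varphi .
\end{align*}
So, to first order in $s$, $U(t,s)$ has the same law as $Z_{ts}$, where $Z$ is the dual $t$-MLD \eqref{defn:t-mld-interp-defn-intro} run for time $ts$. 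This is exactly the claim made in Section \ref{sec:sketch-of-proof}.

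\textbf{Step 2: Taylor-expand the test function.} Apply a second-order Taylor expansion of $u$ around $\nabla\varphi_{0\to t}(X_0)$ and take expectations; the martingale terms have zero mean. This yields
\begin{align*}
\Exp{}[u(U(t,s))] - \Exp{}[u(U(t,0))] = ts\,\Exp{}\big[(\overline{\cL}_t u)(\nabla\varphi_{0\to t}(X_0))\big] + o(s),
\end{align*}
where
\begin{align*}
\overline{\cL}_t u(z) = -\frac12\langle \nabla f(x),\nabla u(z)\rangle + \frac12 \nabla^2\varphi_{0\to t}(x):\nabla^2 u(z), \qquad z = \nabla\varphi_{0\to t}(x),
\end{align*}
is the generator of the dual $t$-MLD. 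Controlling the $o(s)$ term uses boundedness of $\nabla^2 u$, $\nabla^3 u$ together with the moment bounds of Proposition \ref{prop:uniform-poincare-constant}(3).

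\textbf{Step 3: stationarity kills the first-order term.} The dual $t$-MLD has stationary law $\rho_t^0 = (\nabla\varphi_{0\to t})_{\#}e^{-f}$, because it is the pushforward of the stationary primal MLD under $\nabla\varphi_{0\to t}$. Hence
\begin{align*}
\int \overline{\cL}_t u \, d\rho_t^0 = 0,
\end{align*}
and the first-order coefficient vanishes. Concretely, write $\overline{\cL}_t$ in divergence form as $\frac{1}{2\rho_t^0}\nabla\cdot\big(\rho_t^0 (\nabla^2\varphi_{0\to t}\circ\nabla\varphi_{0\to t}^{-1})^{-1}\ldots\big)$, analogously to \eqref{eq:div-form-mld-dual}, or verify directly by pulling back to $x$-coordinates and integrating by parts against $e^{-f}$. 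In pulled-back form the integrand is
\begin{align*}
-\tfrac12\langle\nabla f,\nabla u\rangle + \tfrac12 \nabla^2\varphi_{0\to t}:\nabla^2u ,
\end{align*}
and the identity follows from $\nabla_x (u\circ\nabla\varphi_{0\to t}) = \nabla^2\varphi_{0\to t}\,\nabla u$ together with symmetry of the Hessian. Therefore $\frac{d}{ds}\big|_{s=0}\int u\,d\rho_t^s = 0$ for all test functions $u$. The endpoint cases $t=0,1$ are trivial: $\rho_0^s = e^{-f}$ and $\rho_1^s = e^{-h}$.

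The main obstacle is making the $o(s)$ remainders in Steps 1 and 2 rigorous, namely short-time $L^2$ estimates for the MLD with coefficients that are not globally bounded. I would handle these using the linear growth of $\nabla f$, the uniform Hessian bounds, and the sub-Gaussian moments, via It\^o's formula and the Burkholder--Davis--Gundy inequality.
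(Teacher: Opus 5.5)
Your argument is the paper's proof in substance. The covariance identity $t^2\nabla^2\varphi+t(1-t)\Id=t\nabla^2\varphi_{0\to t}$ turns the first-order term into $t$ times the dual $t$-MLD generator, which is exactly \eqref{eq:initial-velocity}. Integration by parts against $e^{-f}$ then kills it via $\nabla_x\cdot\big[(\nabla u)(\nabla\varphi_{0\to t}(x))\big]=\nabla^2u(\nabla\varphi_{0\to t}(x)):\nabla^2\varphi_{0\to t}(x)$; it is this divergence identity, not the gradient of $u\circ\nabla\varphi_{0\to t}$ that you cite, which gets integrated. The one methodological difference is that the paper applies It\^o's formula in $s$ directly to $\xi(U(t,s))$, conditionally on $X_0$. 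Since $\xi\in C_c^\infty$, the stochastic integrals have bounded integrands and vanish in expectation. This gives an exact formula for $\frac{d}{ds}\Exp{}[\xi(U(t,s))]$, and one simply lets $s\downarrow 0$. No strong approximation of $X_s^*$ and no remainder bookkeeping are needed.

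If you keep your Taylor route, the bound $\Exp{}\norm{R_s}^2=o(s)$ by itself only gives $o(\sqrt{s})$ for the linear term $t\,\Exp{}\langle\nabla u(\nabla\varphi_{0\to t}(X_0)),R_s\rangle$, which is not enough. You must split $R_s$ into two pieces:
\begin{itemize}
\item the stochastic integral $\int_0^s\big(\sqrt{\nabla^2\varphi(X_u)}-\sqrt{\nabla^2\varphi(X_0)}\big)\,dB_u$, which has zero conditional mean given $X_0$;
\item a drift part, which is $O(s^{3/2})$ in $L^1$.
\end{itemize}
With that split, every remainder is $o(s)$.

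Two smaller points concern Step 3. First, the stationarity heuristic is only correct for the primal MLD built with mirror map $\nabla\varphi_{0\to t}$, whose diffusion coefficient is $(\nabla^2\varphi_{0\to t})^{-1/2}$. The image of the original MLD under $\nabla\varphi_{0\to t}$ is also stationary at $\rho_t^0$, but it has a different diffusion matrix, so its stationarity does not give $\int\overline{\cL}_t u\,d\rho_t^0=0$. Second, in your divergence-form aside the matrix should be $\nabla^2\varphi_{0\to t}\circ\nabla\varphi_{t\to 0}$, without the inverse. Neither point affects your direct integration-by-parts verification, which is the actual proof.
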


Let us explain the intuition behind this lemma. Note that $U(t,0) = \nabla \varphi_{0 \to t}(X_0) \sim \rho_{t}^{0}$, the time $t$ marginal on the McCann interpolation. 
To better interpret the curve $s \in [0,+\infty) \mapsto \rho_t^{s}$, for each $t \in [0,1]$, observe that
    $U(t,s) := \nabla \varphi_{0 \to t}(X_0) + t(X_s^*-X_0^*) + \sqrt{t(1-t)}W_s$.
Reasoning informally for $s \approx 0$, using the SDE \eqref{defn:MLD},  
\begin{align}\label{defn:ust-approx}
    U(t,s) - \nabla \varphi_{0 \to t}(X_0) &\approx -\frac{t}{2}\nabla f(X_0)ds + t \cdot \sqrt{\nabla^2 \varphi(X_0)} dB_s + \sqrt{t(1-t)} dW_s \\
    &\approx t\left(-\frac{1}{2}\nabla f(X_0) ds + \sqrt{\nabla^2 \varphi_{0 \to t}(X_0)} d\widetilde{B}_s \right),
\end{align}
where $(\widetilde{B}_t, t \geq 0)$ is another standard BM. 
Now, recall that the $t$-dual MLD defined in \eqref{defn:t-mld-interp-defn-intro} satisfies the following SDE and has stationary measure equal to $\rho_t^0 = (\nabla \varphi_{0 \to t})_{\#}e^{-f}$:
\begin{align}\label{defn:t-mld-interp}
    dZ_s = -\frac{1}{2}\nabla f(X_s) ds + \sqrt{\nabla ^2\varphi_{0\to t}(X_s)}dB_s. 
\end{align}
Observe that the difference in \eqref{defn:ust-approx} is approximately equal to $t \in [0,1]$ times the differential in  \eqref{defn:t-mld-interp}. 

Hence, one may interpret the family of random variables $(U(t,s), s \geq 0)$ for $s \approx 0$, for fixed $t$, as follows. Each $(U(t,s),s \geq 0)$ behaves approximately like the $t$-dual MLD, with initial distribution equal to its stationary distribution $\rho_t^0$, where time is scaled by its index $t \in [0,1]$. Since the curve $(\rho_t^{s}, s \geq 0)$ is at $s \approx 0$ the time marginal laws of a stationary stochastic process, the corresponding velocity in $s$ is zero, giving us \eqref{eq:zero-in-dist}.

\begin{proof}[Proof of Lemma \ref{lem:zero-initial-velocity}]
    The result is immediate when $t = 0$ as $U(0,s) = X_0$ for all $s \geq 0$ and thus $\rho_0^{s} = e^{-f}$ for all $s \geq 0$. Similarly, at $t = 1$ it holds that $U(1,s) = X_s^*$. By stationarity, it holds that $\rho_1^{s} = e^{-h}$ for all $s \geq 0$. 

    The nontrivial case is when $t \neq 0,1$. Fix $t \in (0,1)$ and let $\xi \in C_{c}^{\infty}(\mathbb{R}^{d})$. We introduce the following notation for matrices $A,B \in \mathbb{R}^{d \times d}$
    \begin{align}\label{eq:a-semi-b}
        A:B = \Tr(A^{T}B) = \sum_{i,j} A_{i,j}B_{i,j}.
    \end{align}
    Condition on $X_0$, then by $\Ito$'s formula (in the $s$ variable) and the SDE for $(X_u^{*}, u \geq 0)$ in \eqref{defn:MLD}, 
    \begin{align}\label{eqn:ito-t-s}
        \xi(U(t,s))-\xi(\nabla \varphi_{0 \to t}(X_0)) &= \int_0^{s} \langle \nabla \xi(U(t,u)), tdX_u^* \rangle + \frac{t^2}{2}\int_{0}^{s} \nabla^2 \xi(U(t,u)):\nabla^2 \varphi(X_u)  du \\
        &+ \int_0^{s} \langle \nabla \xi(U(t,u)),\sqrt{t(1-t)}dW_u \rangle + \frac{1}{2}\int_0^{s} \Delta \xi(U(t,u)) t(1-t)du.  
    \end{align}
    Recall from \eqref{eq:mccann-interp} that $\nabla \varphi_{0 \to t}(x) = (1-t)x + t\nabla \varphi(x)$. Computing the Hessians gives $\nabla^2 \varphi_{0 \to t} = (1-t)\Id + t \nabla^2 \varphi$. The two deterministic integrals in \eqref{eqn:ito-t-s} combine to give
    \begin{align}
        \frac{t^2}{2}\int_{0}^{s} \nabla^2 \xi(U(t,u)):\nabla^2 \varphi(X_u)  du  &+ \frac{1}{2}\int_0^{s} \Delta \xi(U(t,u)) t(1-t)du\\
        &= \frac{t}{2}\int_0^{s} \nabla^2 \xi(U(t,u)):\nabla^2 \varphi_{0 \to t}(X_u) du. 
    \end{align}
    Altogether then, \eqref{eqn:ito-t-s} becomes
    \begin{align}\label{eqn:ito-t-s-2}
        \xi(U(t,s))-\xi(\nabla \varphi_{0 \to t}(X_0)) &= \int_0^{s} \langle \nabla \xi(U(t,u)), tdX_u^* \rangle +  \frac{t}{2}\int_0^{s} \nabla^2 \xi(U(t,u)):\nabla^2 \varphi_{0 \to t}(X_u) du \\
        &+ \int_0^{s} \langle \nabla \xi(U(t,u)),\sqrt{t(1-t)}dW_u \rangle.
    \end{align}
    In \eqref{eqn:ito-t-s-2}, take $\Exp{}[\cdot |X_0]$. As $\xi \in C_c^{\infty}(\mathbb{R}^{d})$, the stochastic integrals are bounded and thus martingales and have zero expectation. Next, take full expectation with respect to $X_0 \sim e^{-f}$. Finally then, take the derivative with respect to $s$ to obtain
    \begin{align}\label{eq:d-ds-before-cond}
        \frac{d}{ds}\Exp{}[\xi(U(t,s))] &= \Exp{}\left[\langle\nabla \xi(U(t,s)),-\frac{t}{2}\nabla f(X_s) \rangle + \frac{t}{2} \nabla^2 \xi(U(t,s)):\nabla^2 \varphi_{0 \to t}(X_s) \right].
    \end{align}
    Next, send $s \downarrow 0$ in \eqref{eq:d-ds-before-cond}. This is justified on the right hand side by the path continuity of $s \mapsto X_s$ and $s \mapsto U(t,s)$ and the boundedness of $\xi$ and all its derivatives. Recall that $U(t,0) = \nabla \varphi_{0 \to t}(X_0)$, so it holds that
    \begin{align}\label{eq:initial-velocity}
        \left.\frac{d}{ds}\right|_{s=0}\Exp{}[\xi(U(t,s))] &= \Exp{}\left[\langle\nabla \xi(\nabla \varphi_{0 \to t}(X_0)),-\frac{t}{2}\nabla f(X_0) \rangle + \frac{t}{2} \nabla^2 \xi(\nabla \varphi_{0 \to t}(X_0)):\nabla^2 \varphi_{0 \to t}(X_0) \right].
    \end{align}
    Importantly: observe that this right hand side is $t$ times the action of the generator of the dual $t$-MLD in \eqref{defn:t-mld-interp} on $\xi$. 
    
    We will now perform an integration by parts of the second term on the right hand side of \eqref{eq:initial-velocity}. First, observe that
    \begin{align}\label{eq:ip-ident}
        \sum_{i=1}\partial_i \left[(\partial_{i} \xi)(\nabla \varphi_{0 \to t}(x))\right] = \sum_{i,j} \partial_{ij}\xi(\nabla \varphi_{0 \to t}(x))\partial_{ji}\varphi_{0 \to t}(x) = \nabla^2 \xi(\nabla \varphi_{0 \to t}(x)):\nabla^2 \varphi_{0 \to t}(x).  
    \end{align}
    From the identity in \eqref{eq:ip-ident}, integration by parts gives
    \begin{align*}
        \Exp{}&\left[\nabla^2 \xi(\nabla \varphi_{0 \to t}(X_0)):\nabla^2 \varphi_{0 \to t}(X_0)\right] = \int_{\mathbb{R}^{d}} \sum_{i=1}\partial_i \left[(\partial_{i} \xi)(\nabla \varphi_{0 \to t}(x)))\right] e^{-f(x)}dx \\
        &= \int_{\mathbb{R}^{d}} \langle \nabla f(x),\nabla \xi(\nabla \varphi_{0 \to t}(x))\rangle e^{-f(x)}dx = \Exp{}\left[\langle \nabla \xi(\nabla \varphi_{0 \to t}(X_0)),\nabla f(X_0)\rangle\right]. 
    \end{align*}
    With this calculation, \eqref{eq:d-ds-before-cond} becomes
        $\left.\frac{d}{ds}\int_{\mathbb{R}^{d}} h(x) \rho_t^s(dx)\right|_{s=0} = \left.\frac{d}{ds}\Exp{}[\xi(U(t,s))]\right|_{s=0} = 0$. 
    Hence, $\left.\partial_s \rho_t^{s}\right|_{s = 0} = 0$ in the sense of distributions. Under Assumption \ref{assumption:technical-rd}, $\left.\partial_s \rho_t^{s}\right|_{s = 0}$ classically exists and is a continuously differentiable function. Hence, $\left.\partial_s \rho_t^{s}\right|_{s = 0}$ identically vanishes on $\mathbb{R}^{d}$.
\end{proof} 

\begin{proof}[Proof of Theorem \ref{lem:ke-calc}]
We break this argument down into three steps. The first two are more routine manipulations, whereas the last step is technical where we use the full force of Assumption \ref{assumption:technical-rd} and its uniform polynomial bound \eqref{eq:polynomial-dom-to-zero}. 

\textbf{Step 1: Identify deviation from $\Was{2}$.} First, we show
\begin{align}\label{eq:deviation-velocity}
    \int_0^1 |(\rho_t^{\eps})'|^2 dt &= \Was{2}^2(e^{-f},e^{-h})+ \int_{0}^{1} \norm{v_t^\eps - v_t^0}_{L^2(\rho_t^{\eps})}^2 dt.
\end{align}
Fix $s > 0$. By definition, $(\rho_t^s, v_{t}^{s}, t \in [0,1])$ is a weak solution to the continuity equation  \eqref{defn:weak-soln}. For the test function to this continuity equation, use the Hamilton-Jacobi solution $(\psi_{t}, t \in [0,1])$ defined in \eqref{eq:hj-pde}. Note that we can pass from smooth, compactly supported test functions to $(\psi_t, t \in [0,1])$ due to the integrability established in Proposition \ref{prop:sb-in-tangent-space} in the Appendix. It holds that
\begin{align*}
    &\int_{\mathbb{R}^{d}} \psi_1 d\rho_1 - \int_{\mathbb{R}^{d}} \psi_0 d\rho_0 = \int_0^1 \int_{\mathbb{R}^{d}} \left(\partial_t \psi_t + \langle \nabla \psi_t , v_{t}^{s} \rangle \right)d\rho_t^s dt \\
    &= \int_0^{1} \int_{\mathbb{R}^{d}}\left(-\frac{1}{2}\norm{v_t^{0}}^2 + \langle v_t^{0}, v_{t}^{s}\rangle\right) d\rho_t^s dt = -\frac{1}{2}\int_{0}^{1} \int_{\mathbb{R}^{d}} \norm{v_t^s - v_t^0}^2 d\rho_{t}^{s}dt + \frac{1}{2}\int_{0}^{1} \norm{v_t^{s}}_{L^{2}(\rho_t^{s})}^{2} dt. 
\end{align*}
By \eqref{eq:hjb-ce-identity}, the LHS is equal to $\frac{1}{2}\Was{2}^2(e^{-f},e^{-h})$. As $\norm{v_t^{s}}_{L^{2}(\rho_t^{s})}^{2} = |(\rho_t^{s})'|^2$ a.s., \eqref{eq:deviation-velocity} holds. 

\textbf{Step 2: Pass to Negative Sobolev Norm.} Rather than working explicitly with the tangent velocity fields on the right hand side of \eqref{eq:deviation-velocity}, we will work with an equivalent distribution and corresponding negative Sobolev norm. This will allow us to apply the Poincaré inequality with uniform constant established in Proposition \ref{prop:uniform-poincare-constant}.

Recall the weak definition of $\sigma + \nabla \cdot (v\mu) = 0$ for a mean zero distribution $\sigma$ given in \eqref{defn:div-weak-sense}. For each $s \geq 0$ and $t \in [0,1]$ define the distribution $r_t^{s}$ as
\begin{align}\label{defn:r-s-t-dist}
    r_t^{s} &:= - \nabla \cdot (\rho_t^{s}(v_t^s - v_t^0)). 
\end{align}
Additionally, define the following quantities
\begin{align}\label{defn:q-s-t-diff}
    q_{t}^s := \log \left(\rho_t^{s}/\rho_t^{0}\right), \quad D_{t} := \partial_t + v_t^0 \cdot \nabla. 
\end{align}
The main inequality of this step is as follows. There exists $C > 0$ such that for all $s > 0$ small enough and $t \in [0,1]$
\begin{align}\label{eq:pass-to-dist}
    \norm{v_t^s - v_t^0}_{L^2(\rho^s_t)}^2 = \norm{r_{t}^{s}}_{\dot{H}^{-1}(\rho_t^{s})}^2 \leq C\norm{D_{t}q_{t}^{s}}_{L^2(\rho_t^s)}^2.
\end{align}
First, note that at $t = 0,1$ we have $v_t^{s} = v_t^0 = 0$ for all $s \geq 0$ because $\rho_1^{s} = e^{-h}$ and $\rho_0^{s} = e^{-f}$ for all $s \geq 0$. Since the LHS is zero, the inequality in \eqref{eq:pass-to-dist} holds immediately.

Fix $t \in (0,1)$. The first equality in \eqref{eq:pass-to-dist} follows from the definition of $\norm{\cdot}_{\dot{H}^{-1}(\mu)}$, which holds because $(v_t^s - v_t^0) \in \Tan{\rho_t^s}$ by Proposition \ref{prop:sb-in-tangent-space} in the Appendix. To obtain the inequality in \eqref{eq:pass-to-dist}, we argue as follows. Recall the definitions in \eqref{defn:q-s-t-diff}. Then, for all $t \in (0,1)$ and $s > 0$,
\begin{align*}
    r_{t}^{s} &= \partial_t (\exp(q_{t}^{s})\rho_t^0)+\nabla \cdot(\exp(q_{t}^{s})\rho_t^0 v_t^0) = \rho_t^0 \left(\partial_t + v_t^0 \cdot \nabla \right)\exp(q_{t}^{s}) \\
    &= \rho_t^{0} D_{t}\exp(q_t^{s})
    = \left(\rho_{t}^{0}\exp(q_{t}^{s})\right) \cdot \exp(-q_{t}^{s}) D_{t}\exp(q_{t}^{s}) 
    = \rho_t^{s} D_{t}q_{t}^{s}. 
\end{align*}
By Assumption \ref{assumption:standard-assumptions}, the chain rule application in the last step holds because these quantities classically exist for $(s,t) \in (0,+\infty) \times (0,1)$. By the definition \eqref{defn:r-s-t-dist}, $r_{t}^{s}$ is a mean zero distribution. Using the identity $r_{t}^{s} = \rho_t^{s} D_{t}q_{t}^{s}$ that we just derived, we can apply Proposition \ref{prop:neg-sob-norm-poincare} in the Appendix to obtain the inequality in \eqref{eq:pass-to-dist} with $C> 0$ being a common Poincaré constant across $(\rho_t^{s}, s \in (0,s_0), t \in (0,1))$, guaranteed to exist by Proposition \ref{prop:uniform-poincare-constant}.

\textbf{Step 3: Bound on Negative Sobolev Norm. } Now we have shown that the remainder quantity $\int_0^{1} \norm{v_t^{s}-v_t^0}_{L^2(\rho_t^{s})}^2dt$ in Step 1 can be bounded above by a constant multiple of $\int_0^{1} \norm{D_t q_t^{s}}^2_{L^2(\rho_t^s)}dt$. It remains to show that this upper bound decays with the desired rate $o(s^2)$. Hence, in this last step we will establish that under Assumption \ref{assumption:technical-rd},
\begin{align}\label{eq:bdd-on-sob-norm}
    \int_0^{1} \norm{D_{t}q_{t}^{s}}_{L^{2}(\rho_t^{s})}^2 dt  = o(s^2). 
\end{align}
To do this, we take a derivative with respect to $s$, apply the Fundamental Theorem of Calculus, and interchange the derivatives in $D_t = (\partial_t + v_t^0 \cdot \nabla)$ with $\partial_u$. These maneuvers are justified by the regularity in Assumption \ref{assumption:technical-rd}. Altogether then,
\begin{align}
    D_{t}q_{t}^{s}-D_{t}q_{t}^{0} &=  \int_0^{s} \partial_{u} (D_{t} q_{t}^{u}) du = \int_0^{s} D_{t}(\partial_{u}q_{t}^{u}) du.  
\end{align}
By definition $q_t^{0} = 0$ for all $t \in [0,1]$. By Jensen's inequality and Fubini
\begin{align}\label{eq:first-upper-bdd}
    \int_0^{1}\norm{D_{t}q_{t}^{s}}_{L^{2}(\rho_t^{s})}^2 dt &= \int_0^{1}\norm{\int_0^{s} D_{t}(\partial_{u}q_{t}^{u}) du}_{L^{2}(\rho_t^{s})}^2 dt \leq s^2  \cdot \left(\frac{1}{s}\int_0^{s}\int_0^{1}\norm{D_{t}(\partial_{u}q_{t}^{u})}^{2}_{L^2(\rho_t^{s})} dt du\right).
\end{align}
Let $s_0>0$ be given from Assumption \ref{assumption:technical-rd}. To control the right hand side of \eqref{eq:first-upper-bdd}, define the function
\begin{align}\label{eq:upper-bdd-fnc}
    U(u) := \sup\limits_{s \in [0,s_0]} \int_0^{1}\int_{\mathbb{R}^{d}} \norm{D_t\partial_u \log \rho_t^u}^2 \rho_t^{s}(dx) dt. 
\end{align}
Note that $\partial_u q_t^u = \partial_u \log \rho_t^u$. It then follows from \eqref{eq:first-upper-bdd} that for all $s \leq s_0$
\begin{align}\label{eq:second-upper-bdd}
    \int_0^{1}\norm{D_{t}q_{t}^{s}}_{L^{2}(\rho_t^{s})}^2 dt \leq s^2 \cdot \frac{1}{s}\int_0^{s} U(u)du. 
\end{align}
It now remains to show that $U(u) \to 0$ as $u \downarrow 0$. Recall the polynomial bound \eqref{eq:polynomial-dom-to-zero} from Assumption \ref{assumption:technical-rd}. In the Appendix, Proposition \ref{prop:sb-in-tangent-space} furnishes a $C > 0$ such that $\norm{v_t^0(x)} \leq C(1+\norm{x})$ for all $x \in \mathbb{R}^{d}$ and $t \in [0,1]$. By Cauchy–Schwarz, these facts combine to give a $K > 0$ and $k \geq 2$ such that
\begin{align}\label{eq:poly-upper-bdd}
    F(t,u,x):= \norm{D_{t}\partial_{u}\log\rho_{t}^{u}(x)}^2 \leq K(1+\norm{x}^k), \quad \text{for all $(t,u,x) \in [0,1]\times[0,s_0]\times \mathbb{R}^{d}$.}
\end{align}
Fix $R > 0$, and observe that
\begin{align}\label{eq:u-split}
    U(u) \leq \sup\limits_{t \in [0,1],\norm{x} \leq R} F(t,u,x) + K \sup\limits_{s \in [0,s_0]} \int_{0}^{1} \int_{\norm{x} \geq R} (1+\norm{x}^{k}) \rho_t^{s}(x)dxdt. 
\end{align}
From Lemma \ref{lem:zero-initial-velocity}, it holds for all $(t,x) \in[0,1]\times \mathbb{R}^{d}$ that $D_t\partial_u \log\rho_t^u(x) \to 0$ as $u \to 0$. Thus, by the joint continuity of $F(t,u,x)$ from Assumption \ref{assumption:technical-rd} and the compactness of the region $(t,u,x) \in [0,1] \times [0,s_0] \times \{\norm{x} \leq R\}$, it holds that
\begin{align}\label{eq:f-t-u-x-limit}
    \lim\limits_{u \downarrow 0} \left(\sup\limits_{t \in [0,1], \norm{x} \leq R} F(t,u,x)\right) = 0.
\end{align}
For the other term in \eqref{eq:u-split}, recall from Proposition \ref{prop:uniform-poincare-constant} that the $(\rho_t^{s},t \in [0,1], u \in [0,s])$ are uniformly sub-Gaussian. There then exists a $K_1 > 0$ such that $\Exp{\rho_t^{s}} \left[(1+\norm{X}^k)^{2} \right] \leq K_1^2$ for all $(t,s) \in [0,1] \times [0,s_0]$. 
Hence, by Cauchy–Schwarz
\begin{align}\label{eq:junk-term}
    \sup\limits_{s \in [0,s_0]} \int_{0}^{1} \int_{\norm{x} \geq R} (1+\norm{x}^{k}) \rho_t^{s}(x)dxdt \leq K_1  \cdot \sup\limits_{(t,s) \in [0,1]\times [0,s_0]}\left(\rho_t^{s}\left(\norm{x}\geq R\right)\right)^{1/2}.
\end{align}
Thus, in \eqref{eq:u-split} fix $R > 0$ and send $u \downarrow 0$. From \eqref{eq:f-t-u-x-limit} and \eqref{eq:junk-term} it holds that
\begin{align}\label{eq:almost-lim-sup}
    \limsup\limits_{u \downarrow 0} U(u) \leq KK_{1}\cdot \sup\limits_{(t,s) \in [0,1]\times [0,s_0]}\left(\rho_t^{s}\left(\norm{x}\geq R\right)\right)^{1/2}.
\end{align}
By the uniform sub-Gaussianity of the family of probability measures, the right hand side of \eqref{eq:almost-lim-sup} vanishes as $R \to +\infty$. 
Thus, from \eqref{eq:almost-lim-sup} it follows that $U(u) \to 0$ as $u \downarrow 0$. Returning to \eqref{eq:second-upper-bdd}, it now holds that $\int_0^{1}\norm{D_{t}q_{t}^{s}}_{L^{2}(\rho_t^{s})}^2dt$ is $o(s^2)$. This completes Step 3.

Combining the three steps, it altogether follows that
\begin{align*}
     \int_0^1 |(\rho_t^{\eps})'|^2 dt &= \Was{2}^2(e^{-f},e^{-h})+ \int_{0}^{1} \norm{v_t^\eps - v_t^0}_{L^2(\rho_t^{\eps})}^2 dt \\
     &\leq \Was{2}^2(e^{-f},e^{-h}) + C\int_0^{1} \norm{D_{t}q_{t}^{\eps}}_{L^2(\rho_t^\eps)}^2 dt = \Was{2}^2(e^{-f},e^{-h}) + o(\eps^2).
\end{align*}
\end{proof}

\section{The Torus Setting}\label{sec:torus-case}
Let $\mathbb{T}^{d} := \mathbb{R}^{d}/\mathbb{Z}^{d}$ denote the flat torus. In this compact setting, the argument presented in Sections \ref{sec:rel-ent-approx} and \ref{sec:kinetic-energy-bdds} can be established under more simply stated hypotheses. The key difference is that we must (1) define an analogue of the MLD defined in \eqref{defn:MLD} on the torus, and (2) establish the analogous approximation result to Theorem \ref{thm:mp-mld-approx-of-sb}.

\subsection{MLD Analogue on $\mathbb{T}^{d}$.}
We will fix the following setting. Let $e^{-f},e^{-h} \in \cP(\mathbb{T}^{d})$ denote the marginals and let $T: \mathbb{T}^{d} \to \mathbb{T}^{d}$ denote the quadratic cost optimal transport map from $e^{-f}$ to $e^{-h}$, i.e.\ the argmin of \eqref{defn:was2} with cost function given by $d_{\mathbb{T}^{d}}^2(\cdot,\cdot)$, the distance function on the torus. The existence of an optimal transport map in this setting follows from \cite{mccann-ot-maifold01}. Additionally, we will assume that $T$ is obtained by passing an equivariant map from $\mathbb{R}^{d} \to \mathbb{R}^{d}$, i.e.\ a function $\Phi: \mathbb{R}^{d} \to \mathbb{R}^{d}$ such that $\Phi(x+k) = \Phi(x)+k$ for all $x \in \mathbb{R}^{d}$ and $k \in \mathbb{Z}^{d}$, through the quotient. The exact construction we use is as follows. Let $\psi: \mathbb{R}^{d} \to \mathbb{R}$ be a smooth, $\mathbb{Z}^{d}$-periodic function. We set $\varphi(x) = \frac{1}{2}\norm{x}^2 + \psi(x)$, then assume that there are $m,M > 0$ such that
\begin{align}\label{eq:transport-map-reg}
    m \Id \leq \nabla^2 \varphi(x) \leq M \Id, \quad \text{ for all $x \in \mathbb{R}^{d}$.}
\end{align}
The map $T$ is then obtained by passing $\nabla \varphi$ through the quotient. 

We form the Mirror Langevin diffusion on $\mathbb{T}^{d}$ by first lifting the relevant quantities to $\mathbb{R}^{d}$ and then taking the quotient over $\mathbb{Z}^{d}$. Let $\widetilde{h}$ denote the periodic lift of $h$ to $\mathbb{R}^{d}$ and define the following SDE on $\mathbb{R}^{d}$
\begin{align}\label{eq:lifted-mld-torus}
    d\widetilde{X}_t &= -\frac{1}{2}\nabla \widetilde{h}(\nabla \varphi(\widetilde{X}_t))dt + \sqrt{\nabla^2 \varphi^*(\nabla \varphi(\widetilde{X}_t))}dB_t,
\end{align}
where $(B_t, t \geq 0)$ is Euclidean Brownian motion. Define now
\begin{align}
    X_t = \widetilde{X}_t \mod \mathbb{Z}^{d}
\end{align}
for all $t \geq 0$. For each test function $\xi \in C^{\infty}(\mathbb{T}^{d})$, take its periodic lift to $C^{\infty}(\mathbb{R}^{d})$ and apply $\Ito$'s formula to $(\widetilde{X}_t, t \geq 0)$. In this way, it holds that $(X_t, t \geq 0)$ is a $\mathbb{T}^{d}$-valued diffusion that is a solution to the martingale problem with the following generator, written in coordinates and defined for $u \in C^{\infty}(\mathbb{T}^{d})$ by
\begin{align}\label{defn:td-mld}\tag{$\mathbb{T}^{d}$-MLD}
    Lu = \frac{1}{2} \sum_{i,j=1}^{d} (\nabla^2 \varphi)^{-1}_{ij}\partial^2_{ij}u - \frac{1}{2}\langle \nabla h \circ \nabla \varphi, \nabla u\rangle.
\end{align}
We will call this process $(X_t, t \geq 0)$ the MLD on $\mathbb{T}^{d}$.

\subsection{Analogous Construction and Result}
We now construct the tangent Markov projection of the reciprocal process formed from the MLD on the torus. First, we define a flow of marginals analogous to \eqref{eq:recip-marginal-flow}. Let $X_0 \sim e^{-f}$, and let $\widetilde{X}_0$ denote a lift of $X_0$ to $\mathbb{R}^{d}$. Then, let $(\widetilde{X}_t, t \geq 0)$ denote the solution to \eqref{eq:lifted-mld-torus} with this initial data. Let $(W_t, t \geq 0)$ be an independent Euclidean Brownian motion and define, as in \eqref{eq:defn-ust},
\begin{align}\label{eq:sheet-on-torus}
    U(t,s) := (1-t)\widetilde{X}_0 + t\nabla \varphi(\widetilde{X}_{s}) + \sqrt{t(1-t)}W_s \mod \mathbb{Z}^{d}, \quad \rho_t^{s} = \mathrm{Law}(U(t,s)). 
\end{align}
We will also use $\rho_t^{s}$ to denote the density with respect to the volume measure on the torus, $\vol_{\mathbb{T}^{d}}$. A notion of tangent space for measures on manifolds exists. For a reference on optimal transport on manifolds and AC curves of probability measures on manifolds, see \cite{gigli-ot-manifold}. Define for $\mu \in \cP_2(\mathbb{T}^{d})$,
    $\Tan{\mu}:= \overline{\left\{\nabla \varphi: \varphi \in C^{\infty}(\mathbb{T}^{d})\right\}}^{L^2(\mu)}$,
and for each $\eps \geq 0$
\begin{align}\label{eq:recip-marginal-flow-td}
    (\rho_t^{\eps},v_t^{\eps}, t \in [0,1]), \quad v_t^{\eps} \in \Tan{\rho_t^{\eps}}.
\end{align}
Observe again that at $\eps = 0$ the curve $(\rho_t^0,v_t^0, t \in [0,1])$ is the McCann interpolation.

We now state the assumptions in our current setting.
\begin{assumption}[Torus Setting]\label{assumption:torus}
    Assume that $f,h \in C^{\infty}(\mathbb{T}^{d})$. The optimal transport map $T$ is then a diffeomorphism. Let $\psi: \mathbb{R}^{d} \to \mathbb{R}$ be a smooth $\mathbb{Z}^{d}$-periodic function, and let $T$ be formed by passing the gradient of the map $\varphi = \frac{1}{2}\norm{x}^2+\psi(x)$ through the quotient. Assume that there exists $m,M > 0$ such that
    \begin{align}
        m \Id \leq \nabla^2 \varphi(x) \leq M \Id, \quad \text{ for all $x \in \mathbb{R}^{d}$.}
    \end{align}
    Additionally, we insist that $T(x)$ is not in the cut locus of $x$ (\cite[page 308]{riemannian-manifolds-lee}) for each $x\in \mathbb{T}^{d}$. For each $t \in [0,1]$, define $T_t : \mathbb{T}^{d} \to \mathbb{T}^{d}$ by passing through the quotient the map $x \mapsto (1-t)x+t\nabla \varphi(x)$. 
    
    Lastly, assume that
    \begin{align}\label{eq:parabolic-regularity-td}
        (t,s,x) \mapsto \rho_t^{s}(x), \text{ defined in \eqref{eq:sheet-on-torus} is in } C^{\infty}([0,1] \times [0,+\infty) \times \mathbb{T}^{d}). 
    \end{align}
\end{assumption}
Observe that $\min(1,m)\Id \leq DT_t \leq \max(1,M)\Id$ for all $t \in [0,1]$, where $D$ denotes the Jacobian.

\begin{remark}\label{remark:td-regul}
    We remark that the key point in \eqref{eq:parabolic-regularity-td} is the regularity up to and including the boundary, i.e.\ at $t = 0,1$ and $s = 0$. Indeed, by standard parabolic regularity, smoothness is immediate over $(t,s,x) \in (0,1)\times (0,+\infty) \times \mathbb{T}^{d}$. 
\end{remark} 

\begin{remark}\label{remark:poincare-torus}
Under Assumption \ref{assumption:torus}, there are global positive upper and lower bounds on the densities $(\rho_t^{s}, t \in [0,1], s \in [0,s_0])$ for all $s_0 > 0$. The volume measure on $\mathbb{T}^{d}$ satisfies a Poincaré inequality because $\mathbb{T}^{d}$ is compact and thus $-\frac{1}{2}\Delta_{\mathbb{T}^{d}}$ has a discrete spectrum \cite[Theorem 10.13]{heat-kernel-manifold-grig09}.
By directly comparing the measures in this family with the volume measure, it holds that there is a uniform upper bound on the Poincaré constant of the above family. This provides the torus analogue of Proposition \ref{prop:uniform-poincare-constant}.
\end{remark}

We can now finally define the tangent Markov projection in our setting. Based on Proposition \ref{prop:relaxed-drift-calculation} and with the quantities in \eqref{eq:recip-marginal-flow-td}, set $\overline{\beta}_t^{\eps} := v_t^{\eps} + \frac{\eps}{2}\nabla \log \rho_t^{\eps}$. Denote by $\overline{Q}^{\eps} \in \cP(C([0,1];\mathbb{T}^{d}))$ the time-inhomogeneous Markov process with initial condition $e^{-f}$ and generator
\begin{align}
    u \in C^{\infty}([0,1] \times \mathbb{T}^{d}) \mapsto \langle \overline{\beta}_t^{\eps}(\cdot), \nabla u(t,\cdot)\rangle + \frac{1}{2}\Delta_{\mathbb{T}^{d}}u(t,\cdot).
\end{align}
The definition and results regarding the tangent Markov projection developed in Section \ref{subsec:markov-proj} remain unchanged in this manifold setting once the analogous definitions on the manifold are supplied. 

We now follow the exact same argument outlined in Section \ref{sec:sketch-of-proof}. Let $P^{\eps} \in \cP(C([0,1];\mathbb{T}^{d}))$ denote the $\eps$-dynamic $\Schro$ bridge from $e^{-f}$ to $e^{-h}$ with reference measure $R^{\eps}$. Here, $R^{\eps}$ denotes the law of temperature $\eps$ Wiener measure on $\mathbb{T}^{d}$. Importantly, $P^{\eps}$ still has a description as a time-inhomogeneous Markov process, with drift in gradient form as \eqref{eq:sb-sde}-- the semigroup is just now that of the torus Brownian motion. Thus, arguing as in Theorem \ref{thm:mp-mld-approx-of-sb},
\begin{align}\label{eq:pythag-on-torus}
    H(\overline{Q}^{\eps}|P^{\eps}) = H(\overline{Q}^{\eps}|R^{\eps})-H(P^{\eps}|R^{\eps}).
\end{align}
The same expansion about $\eps = 0$ for $\eps \mapsto H(P^{\eps}|R^{\eps})$ in \cite[Theorem 1.6]{conforti21deriv} holds on $\mathbb{T}^{d}$:
\begin{align}\label{eq:dynam-sb-exp-torus}
    H(P^{\eps}|R^{\eps}) &= \frac{1}{2\eps}\Was{2}^2(e^{-f},e^{-h})+\frac{1}{2}\left(H(e^{-f}|\vol_{\mathbb{T}^{d}})+H(e^{-h}|\vol_{\mathbb{T}^{d}})\right)+\frac{\eps}{8}\int_0^{1} I(\rho_t^0|\vol_{\mathbb{T}^{d}})dt + o(\eps). 
\end{align}
It then remains to obtain a matching expansion from the analogue of Theorem \ref{thm:rel-ent-tangent-markov-proj}: 
\begin{align}
    H(\overline{Q}^{\eps}|R^{\eps}) &= \frac{1}{2\eps} \int_0^{1} |(\rho_t^{\eps})'|^2 dt + \frac{1}{2}\left(H(e^{-f}|\vol_{\mathbb{T}^{d}})+H(e^{-h}|\vol_{\mathbb{T}^{d}})\right)+\frac{\eps}{8}\int_0^{1} I(\rho_t^\eps|\vol_{\mathbb{T}^{d}})dt.
\end{align}

This leads us to establish the following proposition. 
\begin{proposition}[Analogue to Theorems \ref{thm:mp-mld-approx-of-sb} and \ref{lem:ke-calc} on $\mathbb{T}^{d}$]\label{prop:td-approx-analogue}
    Under Assumption \ref{assumption:torus}, let $(\rho_t^{\eps}, v_t^{\eps}, t \in [0,1])$ be as defined in \eqref{eq:recip-marginal-flow-td}, i.e.\ the marginal flow of the tangent Markov projection $\overline{Q}^{\eps}$. It holds that
    \begin{align}\label{eq:mld-approx-torus}
         \int_0^{1} |(\rho_t^{\eps})'|^2 dt = \Was{2}^2(e^{-f},e^{-h}) + O(\eps^4),\quad \int_0^{1} I(\rho_t^\eps|\vol_{\mathbb{T}^{d}})dt &= \int_0^{1} I(\rho_t^0|\vol_{\mathbb{T}^{d}})dt + O(\eps).
    \end{align}
    As a result of \eqref{eq:pythag-on-torus} and \eqref{eq:dynam-sb-exp-torus}, it holds that $H(\overline{Q}^{\eps}|P^{\eps}) = O(\eps^2)$.
\end{proposition}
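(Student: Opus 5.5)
The plan is to repeat the three-step argument from the proof of Theorem \ref{lem:ke-calc} on $\mathbb{T}^{d}$, using smoothness up to the boundary (Assumption \ref{assumption:torus}) to upgrade the $o(\cdot)$ rates to Taylor-type $O(\cdot)$ rates. The starting point is the torus version of Lemma \ref{lem:zero-initial-velocity}. The Itô computation \eqref{eqn:ito-t-s}--\eqref{eq:initial-velocity} is local: we work with the lifted process $\widetilde{X}$, apply it to a periodic test function $\xi$, and integrate by parts on $\mathbb{T}^{d}$ against the periodic density $e^{-f}$. Here $\nabla\varphi_{0\to t}$ is equivariant, so $x\mapsto (\partial_i\xi)(\nabla\varphi_{0\to t}(x))$ is periodic and produces no boundary terms. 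This gives $\partial_s\rho_t^s|_{s=0}=0$ for every $t\in[0,1]$.

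\textbf{Kinetic energy.} Step 1 of Theorem \ref{lem:ke-calc} carries over verbatim. We test the continuity equation for $(\rho_t^s,v_t^s)$ against the Hamilton--Jacobi solution $\psi_t$. On the torus we take $\psi_0=\psi$ (periodic) and use the cut-locus assumption so that $\psi_t$ is smooth and periodic with $\nabla\psi_t=v_t^0$. This yields
\[
\int_0^1|(\rho_t^\eps)'|^2dt=\Was{2}^2(e^{-f},e^{-h})+\int_0^1\norm{v_t^\eps-v_t^0}^2_{L^2(\rho_t^\eps)}dt.
\]
Step 2 uses Proposition \ref{prop:neg-sob-norm-poincare} with the uniform Poincaré constant from Remark \ref{remark:poincare-torus}, and gives $\norm{v_t^\eps-v_t^0}^2_{L^2(\rho_t^\eps)}\le C\norm{D_tq_t^\eps}^2_{L^2(\rho_t^\eps)}$, where $q_t^s=\log(\rho_t^s/\rho_t^0)$. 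The densities are bounded above and below, so $q$ is $C^\infty$ on $[0,1]\times[0,s_0]\times\mathbb{T}^{d}$. Also $q_t^0=0$, and $\partial_s q_t^s|_{s=0}=0$ by the lemma. Taylor's theorem in $s$ then gives $D_tq_t^s=\tfrac{s^2}{2}D_t\partial_s^2q_t^{u}$ for some intermediate $u$. By compactness, $\sup_{t,x,u\le s_0}|D_t\partial_u^2 q|<\infty$, so $\norm{D_tq_t^s}_{L^2}=O(s^2)$ uniformly in $t$. Squaring gives $\int_0^1\norm{v_t^\eps-v_t^0}^2dt=O(\eps^4)$, which is the first claim in \eqref{eq:mld-approx-torus}.

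\textbf{Fisher information.} $I(\rho_t^\eps|\vol)=\int\norm{\nabla\log\rho_t^\eps}^2\rho_t^\eps$ is a smooth function of $\eps$ on $[0,s_0]$, uniformly in $t$, because the integrand is jointly $C^\infty$ and $\mathbb{T}^{d}$ is compact. Its derivative in $\eps$ is therefore bounded, and the mean value theorem gives $O(\eps)$; the derivative even vanishes at $0$, which would give $O(\eps^2)$.

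\textbf{Conclusion.} We substitute into \eqref{eq:pythag-on-torus}. The torus analogue of Theorem \ref{thm:rel-ent-tangent-markov-proj} and \eqref{eq:dynam-sb-exp-torus} give
\[
H(\overline{Q}^\eps|P^\eps)=\tfrac{1}{2\eps}O(\eps^4)+\tfrac{\eps}{8}O(\eps)+(\text{remainder of }\eqref{eq:dynam-sb-exp-torus}).
\]
The $O(\eps^2)$ conclusion therefore requires the remainder in the Conforti--Tamanini expansion to be $O(\eps^2)$ rather than $o(\eps)$. I would try to obtain this from the smooth torus setting, where their expansion has a further term.

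\textbf{Main obstacle.} I expect the hardest step to be justifying the Hamilton--Jacobi test function and the tangency of $v_t^s-v_t^0$ on the torus. The cut-locus assumption must keep $\psi_t$ globally smooth for all $t\in[0,1]$. Proposition \ref{prop:sb-in-tangent-space} must also be replaced by a compactness argument showing that $v_t^s$ is a smooth gradient; this should follow by solving the elliptic equation $\nabla\cdot(\rho_t^s\nabla\phi)=-\partial_t\rho_t^s$ on $\mathbb{T}^{d}$.
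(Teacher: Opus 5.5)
Your argument for \eqref{eq:mld-approx-torus} is essentially the paper's. It has three parts: the torus version of Lemma \ref{lem:zero-initial-velocity} via the periodic lift; Steps 1--2 of Theorem \ref{lem:ke-calc} transferred with the uniform Poincar\'e constant of Remark \ref{remark:poincare-torus}; and a second-order expansion of $s\mapsto D_tq_t^s$ at $s=0$, bounded by compactness to give $CK^2\eps^4/4$. The paper writes that expansion as $\int_0^\eps\int_0^s D_t\partial_u^2q_t^u\,du\,ds$.

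On the Fisher information you do better than the paper. The paper only invokes bounded convergence, which gives the limit but not the stated rate. Your mean-value argument gives $O(\eps)$. Moreover, since $\partial_\eps\rho_t^\eps|_{\eps=0}\equiv 0$, the $\eps$-derivative of $I(\rho_t^\eps|\vol_{\mathbb{T}^{d}})$ vanishes at $\eps=0$, so the error is in fact $O(\eps^2)$ uniformly in $t$.

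The obstacle you anticipate is milder than you fear. On the lift, $\Id+\nabla^2\psi\ge m\Id$ makes $x\mapsto\frac{1}{2t}\norm{x-y}^2+\psi(x)$ uniformly convex for $t\in(0,1]$. So the Hopf--Lax function $\psi_t$ is smooth and periodic with $\nabla\psi_t=v_t^0$, and $v_t^\eps-v_t^0\in\Tan{\rho_t^\eps}$ holds directly, with no elliptic problem to solve.

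The genuine gap is the last sentence of the statement, and your diagnosis of it is correct. Write $\mathrm{Rem}(\eps)$ for the remainder in \eqref{eq:dynam-sb-exp-torus}. The entropy terms cancel exactly, and your estimates give $H(\overline{Q}^{\eps}|P^{\eps})=-\mathrm{Rem}(\eps)+O(\eps^3)$. Hence $H(\overline{Q}^{\eps}|P^{\eps})=O(\eps^2)$ is \emph{equivalent} to $\mathrm{Rem}(\eps)=O(\eps^2)$. The expansion as stated only gives $\mathrm{Rem}(\eps)=o(\eps)$, and therefore only $H(\overline{Q}^{\eps}|P^{\eps})=o(\eps)$.

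The paper's proof does not close this either. It stops after \eqref{eq:mld-approx-torus} and treats the $O(\eps^2)$ bound as an immediate consequence of \eqref{eq:pythag-on-torus} and \eqref{eq:dynam-sb-exp-torus}, which it is not. So neither your proposal nor the paper proves that assertion. It needs a second-order refinement of the entropic-cost expansion on $\mathbb{T}^{d}$, and your remark about a further term is a plan, not an argument. Without that refinement the conclusion should read $o(\eps)$.
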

Notice the improved error bound in each of the approximations compared to the Euclidean case. 

\begin{proof}
    As a consequence of \eqref{eq:parabolic-regularity-td}, it follows from the Bounded Convergence Theorem that
\begin{align}
    \lim\limits_{\eps \downarrow 0}\int_{0}^{1} I(\rho_t^{\eps}|\vol_{\mathbb{T}^{d}}) dt = \int_{0}^{1}I(\rho_t^{0}|\vol_{\mathbb{T}^{d}})dt.
\end{align}

It now remains to establish the kinetic energy estimate analogous to Theorem \ref{lem:ke-calc}. Recall that the first step in this argument was the $\Ito$ calculus calculation in Lemma \ref{lem:zero-initial-velocity} that established $\left.\partial_{s} \rho_t^{s}\right|_{s=0} = 0$ for all $t \in [0,1]$ in the sense of distributions. This same result holds on the torus by nearly the same argument. Fix $\xi \in C^{\infty}(\mathbb{T}^{d})$, and let $\widetilde{\xi}$ denote its periodic lift. Apply $\Ito$'s formula on the $\mathbb{R}^{d}$-process in \eqref{eq:sheet-on-torus} to $\widetilde{\xi}$, then by repeating the same argument in Lemma \ref{lem:zero-initial-velocity}, we obtain the analogous formula to \eqref{eq:initial-velocity}:
\begin{align}\label{eq:torus-vel-0-calc}
        \left.\frac{d}{ds}\right|_{s=0}\Exp{}[\xi(U(t,s))] &= \Exp{}\left[\langle\nabla \xi(T_t(X_0)),-\frac{t}{2}\nabla f(X_0) \rangle + \frac{t}{2} \nabla^2 \xi(T_t(X_0)):DT_t(X_0) \right],
    \end{align}
where we have again used the identity $t^2 DT(x) + t(1-t)\Id = t DT_t(x)$. Next, observe that
\begin{align}\label{eq:div-form}
   DT_t(x) : \nabla^2 \xi(T_t(x)) = \nabla_{x} \cdot \left(\nabla \xi(T_{t}(x))\right).
\end{align}
By the identity in \eqref{eq:div-form}, integrate by parts the second expression in \eqref{eq:torus-vel-0-calc} to obtain that (recall $\mathbb{T}^{d}$ has no boundary) 
\begin{align*}
    \Exp{}\left[DT_t(X_0) : \nabla^2 \xi(T_t(X_0))\right] &= \int_{\mathbb{T}^{d}} \nabla_{x} \cdot \left(\nabla \xi(T_{t}(x))\right) e^{-f(x)}\vol_{\mathbb{T}^{d}}(dx)\\
    &=\Exp{}\left[\langle \nabla f(X_0), \nabla \xi(T_t(X_0))\rangle\right].
\end{align*}
Altogether then, \eqref{eq:torus-vel-0-calc} is equal to zero. This establishes the analogue of Lemma \ref{lem:zero-initial-velocity} on the torus. 

Next, we use this calculation to bound the kinetic energy as in Theorem \ref{lem:ke-calc}. Fortunately, Steps 1 and 2 in the proof of Theorem \ref{lem:ke-calc} follow identically. Set $q_{t}^{s} = \log(\rho_t^{s}/\rho_t^0)$ and $D_t = \partial_t + \langle v_t^0, \nabla (\cdot) \rangle$ as in \eqref{defn:q-s-t-diff}.  Let $C > 0$ denote the uniform upper bound on the Poincaré constants developed in Remark \ref{remark:poincare-torus}. It holds for all $\eps > 0$ small enough that
\begin{align}\label{eq:ke-norm-torus-1}
    \int_{0}^{1} \norm{\partial_t \rho_t^{\eps}}_{\dot{H}^{-1}(\rho_t^{\eps})}^2 dt &\leq \Was{2}^{2}(e^{-f},e^{-h}) + C \int_{0}^{1}\norm{D_{t}q_t^{\eps}}^2_{L^2(\rho_t^{\eps})}dt. 
\end{align}
From \eqref{eq:parabolic-regularity-td}, $D_{t}q_{t}^{\eps}$ exists classically and is equal to $0$ at $\eps = 0$ as $q_t^{0}$ identically vanishes. As in Remark \ref{remark:poincare-torus}, for all small enough $\eps$ the families $(\rho_t^{\eps}, t \geq 0)$ possess a uniform positive lower bound. This implies that $\left.\partial_{s}q_{t}^{s}\right|_{s = 0}$ identically vanishes. Applying the Fundamental Theorem of Calculus and using \eqref{eq:parabolic-regularity-td} to justify the interchanging of derivatives gives for each $t \in [0,1]$
\begin{align}\label{eq:fotc-torus}
    D_t q_t^\eps = D_t q_t^\eps - D_t q_t^{0} &= \int_{0}^{\eps} D_t \partial_{s}q_{t}^{s}ds = \int_0^{\eps}\int_0^{s}  D_t \partial_{u}^2q_t^{u}duds.
\end{align}
Fix some $\eps_0 > 0$, by \eqref{eq:parabolic-regularity-td} and compactness there is some $K > 0$ such that $\abs{D_t \partial_{u}^2q_t^{u}(x)} \leq K$ for all $(t,u,x) \in [0,1] \times [0,\eps_0] \times \mathbb{T}^{d}$. From \eqref{eq:ke-norm-torus-1} and \eqref{eq:fotc-torus} it then follows that
\begin{align}
    \int_0^{1} |(\rho_t^{\eps})'|^2dt = \int_{0}^{1} \norm{\partial_t \rho_t^{\eps}}^2_{\dot{H}^{-1}(\rho_t^{\eps})}dt &\leq \Was{2}^2(e^{-f},e^{-h}) + CK^2 \cdot \frac{\eps^4}{4}.
\end{align}
This establishes \eqref{eq:mld-approx-torus}. 
\end{proof}

\section{Appendix}\label{sec:appendix}
Here we collect calculations involving the Mirror Langevin diffusion and McCann interpolation, recall the notation in \eqref{eq:mccann-interp-notation}. 

First, we define our notation for the divergence of a matrix.  For a matrix $A(x) = (A_{ij}(x))$, we write the $i$th entry of the vector $\nabla_x \cdot A(x)$ as 
\begin{align}\label{defn:matrix-divergence}
    \left(\nabla_{x} \cdot A(x)\right)_{i} = \sum_{j} \partial_{j}A_{ij}(x).
\end{align}
This notion of matrix divergence is helpful because it will facilitate several later integration by parts calculations. Importantly, we note that the generator of the dual Mirror Langevin diffusion defined in \eqref{defn:dMLD} has the following divergence form 
\begin{align}\label{eq:div-form-mld-dual}
    u \in C^2_c(\mathbb{R}^{d}) \mapsto \frac{1}{2e^{-h(y)}}\nabla_{y} \cdot \left(e^{-h(y)}\left(\nabla^2 \varphi^*(y)\right)^{-1}\nabla u(y)\right)
\end{align}

\begin{proposition}[Matrix Divergence Identity]\label{prop:matrix-divergence-identity}
    Let $\varphi \in C^{3}(\mathbb{R}^{d})$ be a strictly convex function. This implies that $\nabla \varphi^* = (\nabla \varphi)^{-1}$ and thus $\nabla^2 \varphi(x) = \left(\nabla^2 \varphi^*(\nabla \varphi(x))\right)^{-1}$. It holds that
    \begin{align}\label{eq:matrix-div-identity}
        \nabla_{x} \cdot \left(\nabla^2 \varphi(x)\right)^{-2} &= \left(\nabla^2 \varphi(x)\right)^{-2}\nabla \log \det \nabla^2 \varphi^*(\nabla \varphi(x)) + \Delta \nabla \varphi^* (\nabla \varphi(x)).
    \end{align} 

    As a consequence, if $e^{-\alpha}, e^{-\beta} \in \cP_2(\mathbb{R}^{d})$ are such that $\alpha,\beta \in C^1(\mathbb{R}^{d})$ and $(\nabla \varphi)_{\#}e^{-\alpha} = e^{-\beta}$, it holds that
    \begin{align}\label{eq:annoying-cov-identity}
        \left(\nabla^2 \varphi(x) \right)^{-1}\nabla \beta(\nabla \varphi(x)) &= \left(\nabla^2 \varphi(x)\right)^{-2}\nabla \alpha(x)-\nabla_x \cdot \left(\nabla^2 \varphi(x)\right)^{-2} + \Delta \nabla \varphi^*(\nabla \varphi(x)). 
    \end{align}
\end{proposition}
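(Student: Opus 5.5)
We prove the Matrix Divergence Identity by a direct computation in coordinates, organized around the inverse function relation $\nabla^2\varphi(x) = (\nabla^2\varphi^*(y))^{-1}$ with $y = \nabla\varphi(x)$. Write $M(x) := (\nabla^2\varphi(x))^{-1} = \nabla^2\varphi^*(\nabla\varphi(x))$, so the object of interest is $\nabla_x\cdot M^2$. The idea is to write $M^2$ as $M \cdot M$, where the first factor is absorbed into a chain rule and the second is differentiated through the composition with $\nabla\varphi$.

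\textbf{Step 1: differentiate $M$ via the chain rule.} Set $G(y) := \nabla^2\varphi^*(y)$, so that $M(x) = G(\nabla\varphi(x))$. Then
\[
\partial_{x_k} M_{ij}(x) = \sum_l \partial_{y_l} G_{ij}(y)\, \partial_{x_k}\partial_{x_l}\varphi(x).
\]
Hence
\[
(\nabla_x\cdot M^2)_i = \sum_{j,k}\partial_{x_k}(M_{ij}M_{jk}) = \sum_{j,k}(\partial_k M_{ij})M_{jk} + \sum_{j,k} M_{ij}\,\partial_k M_{jk}.
\]
In the first sum we use $\sum_k \partial_{x_k}\partial_{x_l}\varphi\, M_{jk} = \delta_{lj}$, because $\nabla^2\varphi\, M = \Id$. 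This collapses it to $\sum_j \partial_{y_j}G_{ij}(y) = \sum_j \partial_{y_j}\partial_{y_i}\partial_{y_j}\varphi^*(y) = (\Delta\nabla\varphi^*)_i(y)$.

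\textbf{Step 2: the second sum.} Here $\sum_k \partial_{x_k} M_{jk} = \sum_{k,l}\partial_{y_l}G_{jk}\,(\nabla^2\varphi)_{lk}$. We use the symmetry of third derivatives, $\partial_l G_{jk} = \partial_j G_{lk}$, to rewrite this as $\sum_{k,l}\partial_{y_j}G_{lk}\,(G^{-1})_{kl} = \Tr(G^{-1}\partial_{y_j}G) = \partial_{y_j}\log\det G(y)$ by Jacobi's formula. The second sum is therefore $M\,(\nabla_y\log\det\nabla^2\varphi^*)(y)$, which has one factor of $M$ rather than $M^2$.

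\textbf{Main obstacle: matching the power of $M$.} Reconciling this with the stated $(\nabla^2\varphi)^{-2}$ prefactor requires care. The gradient in \eqref{eq:matrix-div-identity} must be read as an $x$-gradient of the composition, $\nabla_x[\log\det\nabla^2\varphi^*(\nabla\varphi(x))] = \nabla^2\varphi(x)\,(\nabla_y\log\det G)(y)$. With that reading, $M^2\nabla_x[\cdot] = M(\nabla_y\log\det G)$, which matches Step 2. I would verify this convention against the $d=1$ case and state it explicitly. This gives \eqref{eq:matrix-div-identity}.

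\textbf{Consequence.} For \eqref{eq:annoying-cov-identity}, we use the change of variables (Monge–Ampère) equation $e^{-\alpha(x)} = e^{-\beta(\nabla\varphi(x))}\det\nabla^2\varphi(x)$. Taking $\log$ gives $\alpha(x) = \beta(\nabla\varphi(x)) + \log\det\nabla^2\varphi^*(\nabla\varphi(x))$. Taking the $x$-gradient,
\[
\nabla\alpha(x) = \nabla^2\varphi(x)\,\nabla\beta(\nabla\varphi(x)) + \nabla_x\log\det\nabla^2\varphi^*(\nabla\varphi(x)).
\]
Multiplying by $M^2$ yields $M\nabla\beta\circ\nabla\varphi = M^2\nabla\alpha - M^2\nabla_x\log\det(\cdot)$. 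Substituting $M^2\nabla_x\log\det(\cdot) = \nabla_x\cdot M^2 - \Delta\nabla\varphi^*\circ\nabla\varphi$ from \eqref{eq:matrix-div-identity} then gives exactly \eqref{eq:annoying-cov-identity}.

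Regularity is mild throughout. We need $\varphi\in C^3$ so that third derivatives exist and commute. We need $\alpha,\beta\in C^1$ so that the Monge–Ampère identity holds pointwise, which follows since $\nabla\varphi$ is a $C^2$ diffeomorphism by strict convexity.
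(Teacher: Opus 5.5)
Your proof is correct and is essentially the paper's argument: the same product/chain-rule expansion of $\partial_j\bigl(\sum_k G_{ik}G_{kj}\bigr)$, with the first sum collapsing via $\nabla^2\varphi\,M=\Id$ to $\nabla\Delta\varphi^*$, and the second identified with the log-determinant term through Jacobi's formula. The paper also reads $\nabla\log\det\nabla^2\varphi^*(\nabla\varphi(x))$ as the $x$-gradient of the composition, exactly as you resolved it, and your derivation of \eqref{eq:annoying-cov-identity} is the paper's change-of-variables step with the sign handled more explicitly; like the paper, you actually need positive definiteness of $\nabla^2\varphi$ (for the $C^2$ diffeomorphism), which strict convexity alone does not provide.
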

\begin{proof}
    We first expand the quantity on the left hand size of \eqref{eq:matrix-div-identity}. The $i$th entry is equal to
    \begin{align*}
        &\left(\nabla_{x} \cdot \left(\nabla^2 \varphi(x)\right)^{-2}\right)_{i} = \sum_{j} \partial_{j} \left(\sum_{k} \partial^2_{ik}\varphi^*(\nabla \varphi(x))\partial^2_{kj}\varphi^*(\nabla \varphi(x))\right) \\
        &= \sum_{jk\ell} \partial^{3}_{ik\ell}\varphi^*(\nabla \varphi(x))\partial^2_{\ell j}\varphi(x)\partial^2_{kj}\varphi^*(\nabla\varphi(x))+ \sum_{jk\ell} \partial^2_{ik}\varphi^*(\nabla \varphi(x))\partial^{3}_{kj\ell}\varphi^*(\nabla \varphi(x))\partial^2_{\ell j}\varphi(x).
    \end{align*}
    We now analyze the two summands in the above expression. First, since $\nabla^2 \varphi(x) = \left(\nabla^2 \varphi^*(\nabla \varphi(x))\right)^{-1}$ and the two matrices are symmetric,
    \begin{align*}
        \sum_{jk\ell} \partial^{3}_{ik\ell}\varphi^*(\nabla \varphi(x))\partial^2_{\ell j}\varphi(x)\partial^2_{kj}\varphi^*(\nabla\varphi(x)) &= \sum_{k}  \partial^{3}_{ikk}\varphi^*(\nabla \varphi(x)) = \nabla \Delta \varphi^*(\nabla \varphi(x)). 
    \end{align*}
    Hence, \eqref{eq:matrix-div-identity} holds once we establish that
    \begin{align}\label{eq:log-det-ident-summand}
        (\nabla^2 \varphi(x))^{-2}\nabla \log \det \nabla^2 \varphi^*(\nabla \varphi(x)) &= \sum_{jk\ell} \partial^2_{ik}\varphi^*(\nabla \varphi(x))\partial^{3}_{kj\ell}\varphi^*(\nabla \varphi(x))\partial^2_{\ell j}\varphi(x).
    \end{align}
    To see this, we recall that for a matrix valued function, $\frac{d}{dx} \log \det A(x) = \sum_{ij} A_{ij}^{-1}(x) \partial_{x} A_{ij}(x)$. Hence, the $j$th entry of $\nabla \log \det \nabla^2 \varphi^*(\nabla \varphi(x))$ is 
    \begin{align*}
        \left(\nabla \log \det \nabla^2 \varphi(x)\right)_{j} &= \sum_{k\ell u} \partial^2_{k\ell}\varphi(x)\partial^3_{k\ell u}\varphi^*(\nabla \varphi(x))\partial^2_{uj}\varphi(x). 
    \end{align*}
    From this, the $i$th entry of $\left(\nabla^2 \varphi(x)\right)^{-2}\nabla \log \det \nabla^2 \varphi^*(\nabla \varphi(x))$ is equal to
    \begin{align*}
        &\left(\left(\nabla^2 \varphi(x)\right)^{-2}\nabla \log \det \nabla^2 \varphi^*(\nabla \varphi(x))\right)_{i} \\
        &= \sum_{vk\ell u j} \partial_{iv}^2\varphi^*(\nabla \varphi(x))\partial_{vj}^2\varphi^*(\nabla \varphi(x))\cdot \partial^2_{k\ell}\varphi(x)\partial^3_{k\ell u}\varphi^*(\nabla \varphi(x))\partial^2_{uj}\varphi(x) \\
        &= \sum_{vk\ell u}\partial_{iv}^2\varphi^*(\nabla \varphi(x))\cdot \partial^2_{k\ell}\varphi(x)\partial^3_{k\ell u}\varphi^*(\nabla \varphi(x))\cdot \mathbbm{1}(u = v) \\
        &= \sum_{klu} \partial^2_{iu}\varphi^*(\nabla \varphi(x))\partial^2_{k\ell}\varphi(x)\partial^3_{k\ell u}\varphi^*(\nabla \varphi(x))
    \end{align*}
    This establishes \eqref{eq:log-det-ident-summand} and thus the desired matrix divergence identity in \eqref{eq:matrix-div-identity}.

    We now see how \eqref{eq:annoying-cov-identity} follows. By the change of variables formula, it holds that
    \begin{align}\label{eq:cov-formula}
        \beta(\nabla \varphi(x)) = \alpha(x) + \log \det \nabla^2 \varphi(x).
    \end{align}
    Next, take the gradient in $x$ on both sides of \eqref{eq:cov-formula}. This results in
    \begin{align}\label{eq:cov-formula-step2}
        \nabla^2 \varphi(x) \nabla \beta (\nabla \varphi(x)) = \nabla \alpha(x) + \nabla \log \det \nabla^2 \varphi(x).
    \end{align}
    Multiply both sides of \eqref{eq:cov-formula-step2} by $(\nabla^{2} \varphi(x))^{-2}$ to get
    \begin{align}\label{eq:cov-formula-step3}
        \left(\nabla^2 \varphi(x) \right)^{-1}\nabla \beta(\nabla \varphi(x)) &= \left(\nabla^2 \varphi(x)\right)^{-2}\nabla \alpha(x) + \left(\nabla^2 \varphi(x) \right)^{-2}\nabla \log \det \nabla^2 \varphi(x).
    \end{align}
    Now, use the identity in \eqref{eq:matrix-div-identity} to replace the $\left(\nabla^2 \varphi(x) \right)^{-2}\nabla \log \det \nabla^2 \varphi(x)$ term. 
\end{proof}

\begin{proposition}[Step 1 in Proposition \ref{prop:d-dt-matrix-identity}]\label{prop:noisy-mccann-drift-into-divergence}
Retain the setting and notation of Proposition \ref{prop:d-dt-matrix-identity}. The drift in \eqref{eq:pushforward-drift-mccann} can be written in the following divergence form
    \begin{align}\label{eq:simple-transformed-drift-apx}
        \gamma_t^{\eps}(z) = -\frac{\eps}{2}\left(A_{t}(z)\nabla h(z) - \nabla_z \cdot A_{t}(z)\right),
    \end{align}
    where  $A_{t}(z) := \left(\nabla^2 \varphi_{1 \to t}(z)\right)^{-2}$.
\end{proposition}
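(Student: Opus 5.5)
We compute $\gamma_t^{\eps}$ term by term and group the result into an $O(1)$ part and an $O(\eps)$ part. The $O(1)$ part must cancel identically. The $O(\eps)$ part we rewrite using the change-of-variables identity \eqref{eq:annoying-cov-identity} of Proposition \ref{prop:matrix-divergence-identity}.

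Throughout, fix $t$ and write $y=\nabla\varphi_{1\to t}(z)$. The map $\nabla \varphi_{t\to 1}$ is the inverse of $\nabla\varphi_{1\to t}$, so its Jacobian at $y$ is $(\nabla^2\varphi_{1\to t}(z))^{-1}=a_t(z)$. This is already the factor multiplying $\alpha_t^\eps$ in \eqref{eq:pushforward-drift-mccann}.

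\textbf{Cancellation of the zeroth-order terms.} Write $\alpha_t^\eps = v_t^0 + \frac{\eps}{2}\nabla\log\rho_t^0$. We claim that
\begin{align*}
a_t(z)v_t^0(y)+\partial_t[\nabla\varphi_{t\to 1}](y)=0.
\end{align*}
This is the statement that $t\mapsto \nabla\varphi_{t\to1}(Y^0_t)$ is constant along the McCann flow $\dot Y^0_t=v_t^0(Y^0_t)$. Indeed, along the flow $Y^0_t=\nabla\varphi_{0\to t}(x)$ we have $\nabla\varphi_{t\to1}(Y^0_t)=\nabla\varphi(x)$ for all $t$. Differentiating this identity in $t$ and evaluating at $Y^0_t=y$ gives exactly the display above.

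\textbf{The first-order terms.} What remains is
\begin{align*}
\gamma_t^\eps(z)=\frac{\eps}{2}\Big(a_t(z)\nabla\log\rho_t^0(y)+\Delta\nabla\varphi_{t\to1}(y)\Big).
\end{align*}
We now apply \eqref{eq:annoying-cov-identity} with the convex map $\nabla\varphi_{1\to t}$ in the role of $\nabla\varphi$. Its source is $e^{-\alpha}=e^{-h}$, so $\alpha=h$. Its target is $e^{-\beta}=\rho_t^0$, so $\beta=-\log\rho_t^0$. Its inverse $\nabla(\varphi_{1\to t})^*$ is $\nabla\varphi_{t\to1}$. The identity then reads
\begin{align*}
-a_t(z)\nabla\log\rho^0_t(y)=A_t(z)\nabla h(z)-\nabla_z\cdot A_t(z)+\Delta\nabla\varphi_{t\to1}(y).
\end{align*}
Substituting this into the previous display, the $\Delta\nabla\varphi_{t\to1}(y)$ terms cancel and we obtain exactly \eqref{eq:simple-transformed-drift-apx}.

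\textbf{Regularity, the main obstacle.} Proposition \ref{prop:matrix-divergence-identity} needs $\nabla \varphi_{1\to t}$ to be the gradient of a $C^3$ strictly convex function, with $C^1$ log-densities at both ends. Under Assumption \ref{assumption:standard-assumptions}, $\nabla\varphi_{0\to t}=(1-t)\Id+t\nabla\varphi$ has a Hessian bounded above and below uniformly in $t$, and $\varphi\in C^4$. The inverse function theorem then makes $\nabla\varphi_{t\to1}=\nabla\varphi\circ(\nabla\varphi_{0\to t})^{-1}$ a $C^3$ diffeomorphism. Likewise $\nabla\varphi_{1\to t}=\nabla\varphi_{0\to t}\circ\nabla\varphi^*$ is the gradient of a strictly convex $C^4$ function. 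Finally, $\log\rho_t^0$ is $C^1$ by the change-of-variables formula. One must also justify that differentiating in $t$ commutes with composing with the inverse maps. This follows from joint smoothness of $(t,x)\mapsto\nabla\varphi_{0\to t}(x)$ together with the implicit function theorem. The one delicate endpoint is $t=1$, where $\nabla\varphi_{1\to1}=\Id$ and the formula holds trivially.
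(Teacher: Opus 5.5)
Your proof is correct and takes the same route as the paper. Like the paper, you split $\gamma_t^{\eps}$ into an $O(1)$ part and an $O(\eps)$ part. The $O(1)$ part is killed by differentiating $\nabla\varphi_{t\to 1}\circ\nabla\varphi_{1\to t}=\Id$ in $t$; your identity $\nabla\varphi_{t\to1}(\nabla\varphi_{0\to t}(x))=\nabla\varphi(x)$ is the same statement, since $\nabla\varphi_{0\to t}(\nabla\varphi^*(z))=\nabla\varphi_{1\to t}(z)$. The $O(\eps)$ part is handled by \eqref{eq:annoying-cov-identity} applied to $\nabla\varphi_{1\to t}$ with $e^{-\alpha}=e^{-h}$ and $e^{-\beta}=\rho_t^0$. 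Your regularity paragraph adds justifications the paper leaves implicit.
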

\begin{proof}[Proof of Proposition \ref{prop:noisy-mccann-drift-into-divergence}]
    We proceed by splitting $\gamma_t^{\eps} = \gamma_t^{(1)}+\gamma_t^{(2)}$, where
    \begin{align}\label{eq:gamma-t-1}
        \gamma_t^{(1)}(z) &:= \left(\nabla^2 \varphi_{1 \to t}(z)\right)^{-1} v_t^0(\nabla \varphi_{1 \to t}(z))+\partial_{t}[\nabla \varphi_{t \to 1}](\nabla \varphi_{1 \to t}(z)),
    \end{align}
    \begin{align}
        \gamma_t^{(2)}(z) &= \frac{\eps}{2}\left(\left(\nabla^2 \varphi_{1 \to t}(z)\right)^{-1}\nabla \log \rho_t^0(\nabla \varphi_{1 \to t}(z))+\Delta \nabla \varphi_{t \to 1}(\nabla \varphi_{1 \to t}(z))\right). 
    \end{align}
    First, we show that $\gamma_t^{(1)}(z) = 0$. To see this, recall that $v_t^0(x) = \nabla \varphi_{t \to 1}(x)-\nabla \varphi_{t \to 0}(x)$ and thus 
    \begin{align}\label{eq:shift-vel}
        v_t^0(\nabla \varphi_{1 \to t}(z)) = z - \nabla \varphi_{1 \to 0}(z).
    \end{align}
    Next, recall that $\nabla \varphi_{1 \to t}(z) = tz + (1-t)\nabla \varphi_{1 \to 0}(z)$. By the chain rule and \eqref{eq:shift-vel}, 
\begin{align*}
    0 &= \partial_{t}\left(\nabla \varphi_{t \to 1} \circ \nabla \varphi_{1 \to t}\right)(z) \\
    &= \nabla^2 \varphi_{t \to 1}(\nabla \varphi_{1 \to t}(z)) \cdot \left(\partial_{t} \nabla \varphi_{1 \to t}\right)(z) + \partial_{t}\left[\nabla \varphi_{t \to 1}\right](\nabla \varphi_{1 \to t}(z)) \\
    &= \left(\nabla^2 \varphi_{1 \to t}(z)\right)^{-1} (z-\nabla \varphi_{1 \to 0}(z)) + \partial_{t}\left[\nabla \varphi_{t \to 1}\right](\nabla \varphi_{1 \to t}(z)) = \gamma_{t}^{(1)}(z). 
\end{align*}
Thus, we must now show that $\gamma_t^{(2)}$ is equal to the right hand side of \eqref{eq:simple-transformed-drift-apx}. But this is exactly the calculation in Proposition \ref{prop:matrix-divergence-identity}, specifically \eqref{eq:annoying-cov-identity} where $e^{-\beta} = \rho_{t}^{0}$, $e^{-\alpha} = e^{-h}$, and the optimal transport map is $\nabla \varphi_{1 \to t}$. 
\end{proof}

\begin{proposition}\label{prop:sqrd-inv-hessian-iden}
Let $\varphi \in C^{2}(\mathbb{R}^{d})$ be a strictly convex function, then
    \begin{align}
        \int_0^{1} \left(\nabla^2 \varphi_{1 \to t}(y)\right)^{-2} dt = \left(\nabla^2 \varphi_{1 \to 0}(y)\right)^{-1}.
    \end{align}
\end{proposition}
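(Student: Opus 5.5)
The plan is to write the Hessian of $\varphi_{1\to t}$ explicitly in terms of that of $\varphi_{1\to 0}$, then integrate the resulting scalar function of the eigenvalues. Recall from the proof of Proposition \ref{prop:noisy-mccann-drift-into-divergence} that
\begin{align*}
\nabla \varphi_{1 \to t}(y) = t y + (1-t)\nabla \varphi_{1 \to 0}(y).
\end{align*}
This holds because along the McCann interpolation the map $\nabla\varphi_{1\to t}$ is the displacement interpolation between $\Id$ and $\nabla\varphi_{1\to 0}=\nabla\varphi^*$, read backwards in time. For a general strictly convex $\varphi$ we adopt this formula as the definition, with $\varphi_{1\to 0}:=\varphi^*$. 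Differentiating once more gives
\begin{align*}
\nabla^2 \varphi_{1\to t}(y) = t\Id + (1-t)M, \qquad M := \nabla^2\varphi_{1\to 0}(y).
\end{align*}
Here $M$ is symmetric positive definite, since $\varphi^*$ is strictly convex and $C^2$ on the interior of its domain.

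Fix $y$ and diagonalize $M = O\,\mathrm{diag}(\lambda_1,\dots,\lambda_d)O^T$ with $O$ orthogonal and each $\lambda_i>0$. Then $t\Id+(1-t)M$ is diagonal in the same basis, with eigenvalues $t+(1-t)\lambda_i$. These are positive for every $t\in[0,1]$, because each is a convex combination of $1$ and $\lambda_i$. Consequently
\begin{align*}
\int_0^1 \left(\nabla^2\varphi_{1\to t}(y)\right)^{-2}dt = O\,\mathrm{diag}\left(\int_0^1 \frac{dt}{(t+(1-t)\lambda_i)^2}\right)O^T ,
\end{align*}
and the matrix identity reduces to a scalar integral for each eigenvalue.

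For the scalar integral, substitute $u = t+(1-t)\lambda = \lambda + t(1-\lambda)$. When $\lambda\neq 1$,
\begin{align*}
\int_0^1 \frac{dt}{(\lambda+t(1-\lambda))^2} = \frac{1}{1-\lambda}\left[-\frac1u\right]_{\lambda}^{1} = \frac{1}{1-\lambda}\left(\frac1\lambda - 1\right) = \frac1\lambda .
\end{align*}
When $\lambda=1$ the integrand is identically $1$, so the integral equals $1=1/\lambda$ as well. The diagonal matrix on the right therefore equals $\mathrm{diag}(1/\lambda_i)$, and conjugating back by $O$ gives $M^{-1}=(\nabla^2\varphi_{1\to 0}(y))^{-1}$, which is the claim.

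There is no real obstacle. The only point needing care is the identification of $\nabla\varphi_{1\to t}$ as the affine combination $t\Id+(1-t)\nabla\varphi^*$, so that its Hessian is affine in $t$ and commutes with $M$; this commutation is what allows the simultaneous diagonalization.
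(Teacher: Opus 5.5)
Your argument is correct and follows the paper's proof: write $\nabla^2\varphi_{1\to t}(y) = t\Id + (1-t)\nabla^2\varphi_{1\to 0}(y)$, diagonalize, and reduce to the scalar identity $\int_0^1 ((1-t)a+t)^{-2}\,dt = 1/a$. One small caveat: $M$ is positive definite because $\nabla^2\varphi$ is nondegenerate (as under Assumption \ref{assumption:standard-assumptions}), not because of strict convexity alone (think of $\varphi(x)=x^4$ at $0$), and the paper's proof relies on the same implicit assumption.
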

\begin{proof}[Proof of Proposition \ref{prop:sqrd-inv-hessian-iden}]
    First, recall that $\nabla^2 \varphi_{1 \to t}(y) = t\Id + (1-t)\nabla^2 \varphi_{1 \to 0}(y)$. Next, diagonalize $\nabla^2 \varphi_{1 \to 0}(y)$ and observe that the computation reduces to the following one-dimensional integral: for $a > 0$
\begin{align*}
    \int_{0}^{1} \frac{1}{((1-t)a+t)^2} dt = \frac{1}{a}. 
\end{align*}
\end{proof}

\subsection{Proof of Proposition \ref{prop:uniform-poincare-constant}}
In this subsection, we prove Proposition \ref{prop:uniform-poincare-constant}. That is, we will establish the well-posedness of the MLD SDE given in \eqref{defn:MLD}, the well-posedness of the $\Schro$ bridge, and various other properties. Importantly, the majority of this section is devoted to establishing a uniform bound on the Poincaré constants for the curve of measures in \eqref{eq:recip-marginal-flow}.

\textbf{Basic Facts and Estimates.} By the uniform positive lower bound on the Hessians of $f$ and $h$, the measures $e^{-f}, e^{-h} \in \cP_2(\mathbb{R}^{d})$ have sub-Gaussian tails \cite[Theorem 5.2.15]{vershynin-hdp}. Recall the reciprocal surface $(U(t,s),t \in [0,1], s \geq 0)$ as defined in \eqref{defn:ust-approx}. As each $U(t,s)$ is the sum of sub-Gaussian random variables and the sub-Gaussian norm $\norm{\cdot}_{\psi_{2}}$ in \cite[Definition 2.5.6]{vershynin-hdp} is a norm, it holds that $\norm{U(t,s)}_{\psi_{2}}$ is bounded over $(t,s) \in [0,1] \times [0,\eps_0]$ for all $\eps_0 > 0$. Thus, for moments of each order there is a uniform bound across the family of measures. Similarly, there is a uniform tail bound.
It then follows that $\Was{p}(\rho_t^{\eps},\rho_t^0) \to 0$ as $\eps \downarrow 0$ for all $t \in [0,1]$. This establishes point (3) in the statement of Proposition \ref{prop:uniform-poincare-constant}.

Next, as $\rho_t^0 = (\nabla \varphi_{0 \to t})_{\#}e^{-f}$, the change of variables formula gives that
\begin{align}\label{eq:mccann-cov}
    -\log \rho_t^{0}(\nabla \varphi_{0 \to t}(x)) = f(x) + \log \det \nabla^2 \varphi_{0 \to t}(x).
\end{align}
By the uniform global bounds on the second, third, and fourth derivatives of $\varphi$, it follows from differentiating \eqref{eq:mccann-cov} twice that $-\log \rho_t^0$ has uniform bounds on its Hessian (although these bounds are not necessarily positive). From these observations, it follows that $\abs{-\log \rho_t^0} \leq C(1+\norm{x}^2)$ for a constant $C >0$ that is uniform in $t \in [0,1]$. Similarly, $\norm{\nabla \log \rho_t^0} \leq K(1+\norm{x})$ for $K > 0$ that is also uniform in $t \in [0,1]$. Altogether, these facts imply that $\Ent(\rho_t^0)$ and  $I(\rho_t^0)$ are finite for all $t \in [0,1]$ and $\int_0^{1} I(\rho_t^0) dt < +\infty$ as well. 

As $\Ent(e^{-f})$ and $\Ent(e^{-h})$ are finite, it follows from \cite[Proposition 2.8]{schroLeonard13} that the $\eps$-$\Schro$ bridge joining $e^{-f}$ to $e^{-h}$ exists for all $\eps > 0$. This establishes point (2) in the statement of Proposition \ref{prop:uniform-poincare-constant}.

As for the well-posedness of the MLD SDE, observe that since $\nabla^2 f(x) \geq \Lambda_1 \Id > 0$ for all $x \in \mathbb{R}^{d}$, it holds for all $x \in \mathbb{R}^{d}$ that
\begin{align*}
    x \cdot \nabla f(x) \geq x \cdot \nabla f(0) + \Lambda_1 \norm{x}^2.
\end{align*}
Thus, we see that $\inf\limits_{x \in \mathbb{R}^{d}}x \cdot \nabla f(x) > -\infty$, so from \cite[Propositions 4.1, 4.5]{klartag-log-concave14} there exists a unique weak solution to \eqref{defn:MLD} with a.s.\ infinite explosion time.

As for the claim that $H(\ell_{\eps}|r_{\eps}) < +\infty$ for each $\eps > 0$, this is established in \cite[Theorem 4]{MP25}. To be more precise, see the application of global heat kernel bounds under $\mathrm{CD}(\kappa,\infty)$ (defined below) in the proof of \cite[Theorem 4]{MP25}. Hence, point (1) in the statement of Proposition \ref{prop:uniform-poincare-constant} is established aside from the uniform Poincaré constant. 

\textbf{Fisher Information Proofs.}
Next, we prove point (4) in  Proposition \ref{prop:uniform-poincare-constant}, that is,
\begin{align}\label{eq:fi-limits-appendix}
    \lim\limits_{\eps \downarrow 0}\int_0^{1} I(\rho_t^{\eps}) dt = \int_0^{1} I(\rho_t^0) dt, \quad \lim\limits_{\eps \downarrow 0} \int_0^{1} I(\rho_t^{\eps}|\rho_t^0)dt = 0. 
\end{align}
As $\rho_t^{\eps}$ converges weakly to $\rho_t^0$ as $\eps \downarrow 0$ for each $t \in [0,1]$, it follows from the lowersemicontinuity of Fisher information with respect to weak convergence \cite[Proposition 13.2]{bobkov-fisher-22} and Fatou's lemma that 
    \begin{align}\label{eq:fi-lsc}
        \liminf\limits_{\eps \downarrow 0} \int_0^{1} I(\rho_t^{\eps}) dt \geq \int_0^{1} I(\rho_t^0)dt.
    \end{align}
Hence, it remains to justify that the complementary $\limsup$ is furnished under the conditions in Assumption \ref{assumption:technical-rd}.

From \eqref{eq:polynomial-dom-to-zero} in Assumption \ref{assumption:technical-rd}, there are $C > 0$ and $k \geq 1$ such that for all $\eps > 0$ small enough and $t \in [0,1]$ it holds that $\norm{\nabla \log \rho_t^{\eps}(x)}^2 \leq K(1+\norm{x}^k)$.
We have shown that $\Was{p}(\rho_t^{\eps},\rho_t^0) \to 0$ as $\eps \downarrow 0$ for all $p \geq 1$, so take $p = k$. Thus, 
\begin{align}\label{eq:moment-limit}
    \lim\limits_{\eps \downarrow 0}\int_{0}^{1} K(1+\norm{x}^k) \rho_t^{\eps}(x)dxdt = \int_0^{1}K(1+\norm{x}^k)\rho_t^0(dx)dt. 
\end{align}
Since the integrand on the left hand side of \eqref{eq:moment-limit} provides a pointwise upper bound to $\norm{\nabla \log \rho_t^{\eps}(x)}^2 \rho_t^{\eps}$, it follows from the generalized Dominated Convergence Theorem that $\int_0^{1}I(\rho_t^{\eps})dt \to \int_0^{1} I(\rho_t^0)dt$ as $\eps \downarrow 0$.

Next, we establish the second limit in \eqref{eq:fi-limits-appendix}. To see this, we note that
    \begin{align}\label{eq:rel-fi-split}
         I(\rho_t^{\eps}|\rho_t^0) &= \Exp{\rho_t^{\eps}}\norm{\nabla \log \rho_t^{\eps}-\nabla \log \rho_t^0}^2 = I(\rho_t^\eps) + 2 \int_{\mathbb{R}^{d}} \Delta \log \rho_{t}^0 d\rho_t^{\eps} + \int_{\mathbb{R}^{d}} \norm{\nabla \log \rho_t^0}^2 \rho_t^{\eps}(dx),
    \end{align}
    where the Laplacian term follows from applying integration by parts to the cross term. Under Assumption \ref{assumption:standard-assumptions}, it holds that $\Delta \log \rho_t^0$ is globally bounded and $\norm{\nabla \log \rho_t^0}^2$ has a global quadratic bound. Moreover, these bounds can be chosen to hold over all $t \in [0,1]$. As $\Was{2}(\rho_t^{\eps},\rho_t^0) \to 0$ as $\eps \downarrow 0$, it follows from generalized Dominated Convergence that 
    \begin{align*}
        \lim\limits_{\eps \downarrow 0}\int_0^{1}\int_{\mathbb{R}^{d}} \Delta \log \rho_{t}^0 d\rho_t^{\eps}dt &= \int_0^{1}  \int_{\mathbb{R}^{d}} \Delta \log \rho_{t}^0 d\rho_t^{0}dt, \\
        \lim\limits_{\eps \downarrow 0}\int_0^{1}\int_{\mathbb{R}^{d}} \norm{\nabla \log \rho_t^0}^2 d\rho_t^{\eps}dt &= \int_0^{1}  \int_{\mathbb{R}^{d}} \norm{\nabla \log \rho_t^0}^2 d\rho_t^{0}dt
    \end{align*}
    Recall that for a measure $\rho$, integration by parts gives $I(\rho) = -\int_{\mathbb{R}^{d}} (\Delta \log \rho) d\rho$. Thus, the above limits combine with \eqref{eq:rel-fi-split} to establish the second limit in \eqref{eq:fi-limits-appendix}. 

\textbf{Uniform Poincaré Constant.}
The rest of this subsection will be concerned with the proof of \eqref{eq:uniform-poincare-constant}, that is, the uniform Poincaré constant under Assumption \ref{assumption:standard-assumptions}. To do so, we will quickly introduce some relevant technical notions. Let $(M,g)$ denote a Riemannian manifold. We use $\langle \cdot,\cdot\rangle_{g}$ to denote the induced inner product on the tangent space $T_{x}M$ for $x \in M$, $\ricci_x$ denote the Ricci tensor at $x$, and $\nabla_{g}f$ to denote the gradient of differentiable functions $f: M \to \mathbb{R}$. We say that $\mu \in \cP(M)$ satisfies a (tight) \textbf{Logarithmic Sobolev Inequality} (LSI) with constant $C > 0$ if for all $f:M \to \mathbb{R}$ in the domain of the Dirichlet form associated with $\mu$,
\begin{align}\label{defn:lsi}
    \Ent_{\mu}(f^2) := \int_{M} f^2 \log f^2 d\mu - \left(\int_{M} f^2 d\mu\right)\log\left(\int_{M} f^2 d\mu\right) \leq 2C\int_{M} \norm{\nabla_g f}_{g}^{2} d\mu.
\end{align}
We will refer to the LSI constant of $\mu$ as the smallest constant $C > 0$ for which \eqref{defn:lsi} holds. 

The key technical notion we need is that of the $\mathrm{CD}(\kappa,\infty)$ condition. Let $\mu \in \cP(M)$ be such that $d\mu/d\vol = e^{-U}$, for some $U \in C^{2}(M)$, where $\vol$ denotes the volume measure on $(M,g)$. 
\begin{definition}[Curvature Dimension Condition]
    For $\kappa \in \mathbb{R}$, the triplet $(M,g,e^{-U}d\vol)$ satisfies
$\text{CD}(\kappa,\infty)$ if the following inequality holds for all $x \in M$ 
\begin{align}\label{eq:cd-defn}\tag{CD($\kappa,\infty$)}
    \ricci + \mathrm{Hess}(U) \geq \kappa g. 
\end{align}
\end{definition}
Let $(P_{t}, t \geq 0)$ denote the semigroup of the diffusion process on $M$ with generator equal to 
\begin{align}
    u \in C_{c}^{\infty}(M) \mapsto - \frac{1}{2} \langle \nabla_g u, \nabla_g U \rangle_{g} + \frac{1}{2}\Delta_{g}u. 
\end{align}
Note that $e^{-U}d\vol$ is the invariant measure of this diffusion process. In this notation, under $\mathrm{CD}(\kappa,\infty)$ a so-called \textbf{local LSI} holds (see \cite[Section 5.7]{bgl-markov}). That is, for all $u: M \to (0,+\infty)$ continuously differentiable and $t > 0$
\begin{align}\label{defn:local-lsi}
    P_{t}(u\log u) - (P_{t}u)\log(P_t u) \leq \frac{1}{2}\left(\int_0^{t}\exp(-\kappa s)ds\right)P_{t} \left(\frac{\norm{\nabla_g u}_{g}^2}{u}\right)
\end{align}

For the rest of this subsection, we will let $(M,g)$ denote the \textbf{Hessian manifold}. That is, the $C^{3}$ manifold obtained by setting $M = \mathbb{R}^{d}$ and $g = \nabla^2 \varphi$. We will identify $M$ with $\mathbb{R}^{d}$ and take care to distinguish manifold quantities from Euclidean quantities. The MLD as defined in \eqref{defn:MLD} can be equivalently written as a diffusion on the Hessian manifold, see \cite[Corollary 1]{MP25}. Let $\mu \in \cP(M)$ denote the stationary measure of the MLD treated as a manifold diffusion. By \cite[Theorem 4.3]{kolesnikov-hessian-metric}, $(M,\nabla^2 \varphi,\mu)$ satisfies $\mathrm{CD}(C,\infty)$ if the following inequality in terms of the \textbf{Euclidean} derivatives of $f,h,\nabla \varphi$ holds for all $x \in \mathbb{R}^{d}$
\begin{align}\label{eq:koles-4-3}
    (\nabla^2 \varphi(x))^{-1/2}\nabla^2 f(x) (\nabla^2 \varphi(x))^{-1/2} + (\nabla^2 \varphi(x))^{1/2}\nabla^2 h(\nabla \varphi(x)) (\nabla^2 \varphi(x))^{1/2} \geq 2C \Id.
\end{align}
From the uniform positive upper and lower bounds on the Euclidean Hessians of $f,h,\varphi$ in Assumption \ref{assumption:standard-assumptions}, \cite[Theorem 4.3]{kolesnikov-hessian-metric} gives the following remark. 
\begin{remark}\label{remark:mld-curv-dim}
    Under Assumption \ref{assumption:standard-assumptions}, the MLD as a diffusion on the Hessian manifold satisfies $\mathrm{CD}(\kappa,\infty)$ for $\kappa > 0$.
\end{remark}

\textbf{Proof of Uniform Poincaré Constant in \eqref{eq:uniform-poincare-constant}.} This proof follows the LSI mixing argument of \cite[Lemma 4.1]{conforti-weak-semi24}. The only modification is in Step 1 of the proof of \cite[Lemma 4.1]{conforti-weak-semi24}, in which the key is to express the local LSI that the MLD satisfies on the Hessian manifold in Euclidean quantities. 

Indeed, fix $u: \mathbb{R}^{d} \to (0,+\infty)$ continuously differentiable. It then follows that $u$ is positive and continuously differentiable as a function on the Hessian manifold. Moreover, as there exists $\alpha, \beta > 0$ such that $\alpha \Id \leq \nabla^2 \varphi(x) \leq \beta \Id$ for all $x \in \mathbb{R}^{d}$, we see that $\norm{\nabla_g u}^2_g \leq \alpha^{-1}\norm{\nabla u}^2$. By Remark \ref{remark:mld-curv-dim} and \eqref{defn:local-lsi}, it holds that
\begin{align}\label{eq:local-lsi-like-mld}
    P_{t}(u\log u) - (P_{t}u)\log(P_t u) \leq \frac{\alpha^{-1}}{2}\left(\int_0^{t}\exp(-\kappa s)ds\right)P_{t} \left(\frac{\norm{\nabla u}^2}{u}\right).
\end{align}
Here, $(P_t, t \geq 0)$ is the semigroup of the primal MLD. 
That is, we establish that the MLD semigroup satisfies a local LSI-type inequality on Euclidean space. 

We now have all the necessary components. Recall the definitions of $(X_t, t \geq 0)$, $(W_t, t \geq 0)$, and $(U(t,s), t \in [0,1], s \geq 0)$ from \eqref{eq:defn-ust}. We will refer to the random variables and their laws satisfying LSI interchangeably, and we are now exclusively working on Euclidean space. First, as $X_0 \sim e^{-f}$ and the Hessian of $f$ is lower bounded by a positive constant, $e^{-f}$ satisfies an LSI inequality \cite[Corollary 5.7.2]{bgl-markov}. From \eqref{eq:local-lsi-like-mld}, it follows that $X_{\eps}|X_{0} = x$ satisfies a local LSI-like inequality. Notice that the constant on the RHS of \eqref{eq:local-lsi-like-mld} is bounded over $\eps \in (0,\eps_0]$. By repeating verbatim the mixing argument in \cite[Lemma 4.1]{conforti-weak-semi24}, $(X_0,X_{\eps})$ satisfies LSI with a uniformly bounded constant over $\eps \in (0,\eps_0]$. By tensorization \cite[Proposition 5.2.7]{bgl-markov}, $(X_0,X_{\eps},W_{\eps})$ has a uniform bound on its LSI constant over $\eps \in (0,\eps_0]$. Recall the notation $z^* =\nabla \varphi(z)$. As $(x,y,z) \mapsto (1-t)x + ty^* + \sqrt{t(1-t)}z$ is Lipschitz (in the Euclidean sense) with bounded Lipschitz constant over $t \in [0,1]$, it follows from \cite[Proposition 5.4.3]{bgl-markov} that $\rho_t^{s} = \mathrm{Law}(U(t,s))$ satisfies LSI with a uniformly bounded constant. As the LSI inequality implies the Poincar\'{e} inequality, this completes the argument.

\subsection{Additional Lemmas}
We start with the following general lemma. We now let $(P_t, t \geq 0)$ denote the standard Brownian motion semigroup and $(p_t(\cdot,\cdot), t > 0)$ its transition densities. 

\begin{proposition}\label{prop:poly-bdd-nabla-log-semigrp}
    Let $f: \mathbb{R}^{d} \to (0,+\infty)$ be such that there is a $\tau > 0$ with $P_{\tau}f(0) < +\infty$. Suppose that there is $C >0$ and integer $k \geq 0$ such that $\norm{\nabla \log f(x)} \leq C(1+\norm{x}^{k})$ for all $x \in \mathbb{R}^{d}$. For each $t_0 \in (0,\tau)$, there exists $K > 0$ such that
    \begin{align}\label{eq:heat-kernel-bdd}
        \norm{\nabla \log P_{t}f(x)} \leq K(1+\norm{x}^k) \text{ for all $x \in \mathbb{R}^{d}$,  $t \in [0,t_0/2)$.}
    \end{align}
\end{proposition}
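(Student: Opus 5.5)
We write $\nabla \log P_t f(x)$ as a posterior expectation and then bound that expectation by controlling Gaussian moments. Differentiating under the integral sign (justified below) gives
\begin{align*}
    \nabla \log P_t f(x) = \frac{\int \nabla f(y)\, p_t(x,y)\,dy}{\int f(y)\,p_t(x,y)\,dy} = \Exp{}\left[\nabla \log f(Y)\right], \quad Y \sim \mu_{t,x}(dy) \propto f(y)p_t(x,y)dy,
\end{align*}
so that
\[
\norm{\nabla \log P_t f(x)} \le C\left(1 + \Exp{\mu_{t,x}}\norm{Y}^k\right).
\]
The problem thus reduces to showing $\Exp{\mu_{t,x}}\norm{Y}^k \le K'(1+\norm{x}^k)$ uniformly for $t\in(0,t_0/2)$; the case $t=0$ is trivial.

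\textbf{Step 1 (differentiation under the integral).} Integrating the gradient bound along rays gives
\[
\abs{\log f(y) - \log f(x)} \le C\left(1+\norm{x}^k+\norm{y}^k\right)\norm{x-y},
\]
which is too weak when $k\ge 2$ to dominate the Gaussian directly. Instead we use finiteness of $P_\tau f(0)$. For $t<\tau$, the comparison
\[
p_t(x,y) \le c(x,t)\, p_\tau(0,y)\, e^{-\delta\norm{y}^2}, \qquad \delta = \tfrac{1}{2t}-\tfrac{1}{2\tau}>0,
\]
shows that $f(y)\norm{y}^m p_t(x,y)$ is integrable for every $m$, locally uniformly in $(x,t)$. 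This justifies differentiating under the integral, together with the Gaussian integration by parts $\int \nabla f\, p_t = \int f\, \nabla_y p_t$ (alternatively, write the derivative as $\Exp{}[(Y-x)/t]$ and use the same integrability).

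\textbf{Step 2 (main obstacle: moment bound for $\mu_{t,x}$).} The key fact is that the posterior density
\[
\mu_{t,x}(y) \propto \exp\!\left(-\tfrac{\norm{y-x}^2}{2t} + \log f(y)\right)
\]
has an extra Gaussian confinement of strength $1/(2t) - 1/(2\tau) \ge 1/t_0 - 1/(2\tau) > 0$ relative to the reference weight $f(y)p_\tau(0,y)$. Writing
\[
\mu_{t,x}(y) \propto g(y)\exp\!\left(-\delta\norm{y}^2 + \tfrac{\langle x,y\rangle}{t} - \tfrac{\norm{x}^2}{2t}\right), \qquad g(y) = f(y)p_\tau(0,y),\ \int g<\infty,
\]
we bound the numerator $\int \norm{y}^k g\, e^{-\delta\norm{y}^2 + \langle x,y\rangle/t}$ by splitting into the regions $\norm{y}\le 2\norm{x}/(\delta t)+R$ and its complement. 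On the complement the quadratic term dominates the linear one.

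The delicate part is the lower bound on the denominator: we need to show the mass is not pushed far from the point where $\langle x, y\rangle/t - \delta\norm{y}^2$ peaks, namely $y\approx x/(2\delta t)$. The estimate for $\Exp{}\norm{Y}^k$ then scales like $\norm{x}^k/(\delta t)^k$. That is uniform in $t$ only because $\delta t \to 1/2$ as $t\to 0$; more precisely, $\delta t = \tfrac12 - \tfrac{t}{2\tau}\in[\tfrac12-\tfrac{t_0}{4\tau},\tfrac12]$, which is bounded away from $0$ since $t_0<\tau$.

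For the denominator lower bound, the plan is to use the local log-Lipschitz bound. On the ball $B(x,\sqrt t)$, $\log f$ varies by at most $C(1+\norm{x}^k)\sqrt t$, so the denominator is at least $c\, f(x)\, e^{-C(1+\norm{x}^k)\sqrt{t}}$. The numerator is then compared to $f(x)$ using the same ray-integration bound weighted against the Gaussian tail. If this crude comparison produces exponential-in-$\norm{x}$ losses, the fallback is to work directly with the identity
\[
\nabla\log P_tf(x)=\Exp{}\left[(Y-x)/t\right] = \Exp{}\left[\nabla\log f(Y)\right]
\]
via a self-bounding argument: combine Cauchy–Schwarz for $\Exp{}\norm{Y-x}^2 \le dt + t\,\Exp{}\langle Y-x,\nabla \log f(Y)\rangle$ (Gaussian integration by parts) with the polynomial growth bound. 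This yields $\Exp{}\norm{Y-x}^2\lesssim t(1+\norm{x}^{2k})$ for small $t$, and hence $\Exp{}\norm{Y}^k\lesssim 1+\norm{x}^k$. Restricting to $t<t_0/2$ provides the slack needed to absorb the constants, which completes \eqref{eq:heat-kernel-bdd}.
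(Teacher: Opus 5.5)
Your reduction matches the paper's: write $\nabla\log P_tf(x)=\Exp{\mu_{t,x}}[\nabla\log f(Y)]$ and reduce to $\Exp{\mu_{t,x}}\norm{Y}^k\le K'(1+\norm{x}^k)$ uniformly in $t<t_0/2$. Step 1 is fine. But Step 2, which is the entire content of the proposition, is never established, and neither of the routes you offer works.

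\textbf{The local route.} The local denominator bound $P_tf(x)\gtrsim f(x)e^{-C'(1+\norm{x}^k)\sqrt t}$ already loses a factor $e^{C'(1+\norm{x}^k)\sqrt t}$. Comparing the numerator to $f(x)$ by ray integration loses more: as you note yourself, for $k\ge2$ the factor $e^{C\norm{y}^k\norm{x-y}}$ is not even dominated by the Gaussian. The result is exponential in $\norm{x}$, not polynomial.

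\textbf{The self-bounding fallback.} This does not close. Applying Cauchy--Schwarz in $\Exp{}\norm{Y-x}^2=dt+t\,\Exp{}\langle Y-x,\nabla\log f(Y)\rangle$ brings in $\Exp{}\norm{\nabla\log f(Y)}^2\lesssim 1+\norm{x}^{2k}+\Exp{}\norm{Y-x}^{2k}$. Once $k\ge2$, that is a strictly higher moment than the one you are controlling. Testing the integration by parts against $(y-x)\norm{y-x}^{2j}$ only raises the exponent further. Even for $k=1$, absorbing the term $tC\,\Exp{}\norm{Y-x}^2$ requires $tC<1$, and $t<t_0/2$ does not give this, since $C$ is unrelated to $\tau$. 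More fundamentally, this argument never uses $P_\tau f(0)<\infty$ quantitatively, and some global control of $f$ is exactly what the statement needs.

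\textbf{The missing idea.} You need a global, not local, lower bound on the denominator, paired with a far region chosen to beat it. This is what the paper does.
\begin{itemize}
\item Fix $R$ with $m=\int_{B(0,R)}f>0$. Then $P_tf(x)\ge m(2\pi t)^{-d/2}e^{-(\norm{x}+R)^2/(2t)}$.
\item On $\Omega_x=\{\norm{y}>A(\norm{x}+R+1)\}$ one has $\norm{x-y}^2-(\norm{x}+R)^2\ge(1-2/A)\norm{y}^2$. So the Gaussian kernel cancels the denominator's exponential loss and leaves $\frac1m\int\norm{y}^k e^{-a\norm{y}^2/(2t)}f(y)\,dy$ with $a=1-2/A$.
\item Choose $A$ so that $a>t_0/\tau$. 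Using $t<t_0/2$, this integral is at most a constant times $P_\tau f(0)$, uniformly in $x$ and $t$.
\item Off $\Omega_x$, trivially $\norm{y}^k\le A^k(\norm{x}+R+1)^k$. No local analysis of $f$ near $x$ is needed.
\end{itemize}
In your notation, the threshold $2\norm{x}/(\delta t)+R$ only makes the far-region exponent at most $-\delta\norm{y}^2/2$. It must instead be a sufficiently large multiple of $\norm{x}/(\delta t)+R$, so that the remaining decay also absorbs the factor $e^{\delta R^2+\norm{x}R/t}$ lost when the denominator is bounded below by its mass on $B(0,R)$.
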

\begin{proof}
    Fix $x \in \mathbb{R}^{d}$ and $t \in [0,t_0/2)$. Define the probability measure
    \begin{align}\label{eq:heat-measure-bayes}
        Q_{t,x}(dy) := \frac{1}{P_{t}f(x)}p_{t}(x-y)f(y)dy.
    \end{align}
    Under the assumption on $\nabla \log f$ and that $P_{\tau}f(0) <+\infty$, we see that
    \begin{align}\label{eq:nabla-log-semigrp}
        \norm{\nabla \log P_t f(x)} \leq \int_{\mathbb{R}^{d}} \norm{\nabla \log f(y)}Q_{t,x}(dy) \leq C\left(1+\int_{\mathbb{R}^{d}} \norm{y}^{k} Q_{t,x}(dy)\right). 
    \end{align}
    Let $R > 0$ be large enough such that 
    \begin{align}
        m := \int_{B(0,R)} f(y) dy > 0.
    \end{align}
    Then we see that
    \begin{align}\label{eq:semi-lbb}
        P_t f(x) \geq \int_{B(0,R)} p_{t}(x,y)f(y)dy \geq (2\pi t)^{-d/2}\exp\left(-\frac{1}{2t}\left(\norm{x}+R\right)^2\right)m.
    \end{align}
    Next, let $A > 2$ be large enough that
    \begin{align}\label{eqn:a-for-integ}
        a := 1-\frac{2}{A} > \frac{t_0}{\tau}.
    \end{align}
    Suppose that $\norm{y} > A(\norm{x}+R+1)$, then since $A > 1$ it holds that
    \begin{align*}
        \norm{y-x} \geq \norm{y}-(\norm{x} +R) > 0.
    \end{align*}
    Altogether then,
    \begin{align*}
        \norm{x-y}^2 - \left(\norm{x}+R\right)^2 \geq  \left(\norm{y}-\left(\norm{x} +R\right)\right)^2 -  (\norm{x}+R)^2  \geq \left(1-\frac{2}{A}\right)\norm{y}^2 = a\norm{y}^2. 
    \end{align*}
    Let $\Omega_{x} := \left\{\norm{y} > A(\norm{x}+R+1)\right\}$. We then compute that
    \begin{align}\label{eq:omega_x-numer}
        \int_{\Omega_{x}} \norm{y}^k p_{t}(x,y)f(y)dy \leq (2\pi t)^{-d/2}\exp\left(-\frac{1}{2t}\left(\norm{x}+R\right)^2\right)\int_{\mathbb{R}^{d}} \norm{y}^{k}\exp\left(-\frac{a}{2t}\norm{y}^{2}\right)f(y) dy.
    \end{align}
    Observe that the integral rewrites as
    \begin{align}\label{eq:split-exp}
        \int_{\mathbb{R}^{d}} \norm{y}^{k}\exp\left(\frac{1}{2\tau}\left(1-\frac{a\tau}{t}\right)\norm{y}^{2}\right)\exp\left(-\frac{1}{2\tau}\norm{y}^2\right)f(y) dy
    \end{align}
    By the choice of $a$ in \eqref{eqn:a-for-integ} and the fact that $0 < t < t_0/2$, observe that
    \begin{align*}
        1-\frac{a\tau}{t} \leq 1 - \frac{t_0}{t} \leq -1.
    \end{align*}
    Next, let $M = \sup\limits_{y \in \mathbb{R}^{d}} \norm{y}^{k}\exp\left(-\frac{1}{2\tau}\norm{y}^2\right)$. 
    By \eqref{eq:semi-lbb}, \eqref{eq:omega_x-numer}, and \eqref{eq:split-exp}, 
    \begin{align}\label{eq:bdd-1}
        \int_{\Omega_{x}} \norm{y}^{k} Q_{t,x}(dy) \leq \frac{1}{m} \int_{\mathbb{R}^{d}} \norm{y}^{k}\exp\left(-\frac{a}{2t}\norm{y}^{2}\right)f(y) dy \leq \frac{M}{m}P_{\tau}f(0). 
    \end{align}
    Observe that this upper bound is finite and does not depend on the choice of $x \in \mathbb{R}^{d}$. Moreover, it is uniform over all $t \in (0,t_0/2)$. On the other hand, as $Q_{t,x}$ is a probability measure,
    \begin{align}\label{eq:bdd-2}
        \int_{\mathbb{R}^{d} \setminus \Omega_{x}} \norm{y}^{k} Q_{t,x}(dy) \leq A^{k}(\norm{x}+R+1)^{k}
    \end{align}
    Combining \eqref{eq:bdd-1} and \eqref{eq:bdd-2} to control the upper bound in \eqref{eq:nabla-log-semigrp} finally establishes \eqref{eq:heat-kernel-bdd}.  
    
\end{proof}

This implies that the drift of the dynamic $\Schro$ bridge for each fixed $\eps > 0$, $\beta_{t}^{\eps}$, is in the appropriate tangent space to apply the restricted Pythagorean Theorem. We summarize in the following proposition.
\begin{proposition}[Consequence of Proposition \ref{prop:poly-bdd-nabla-log-semigrp}]\label{prop:sb-in-tangent-space}
   Under Assumption \ref{assumption:standard-assumptions}, it follows that the drift of the dynamic $\Schro$ bridge, $\beta_{t}^{\eps}$ defined in \eqref{eq:sb-sde}, satisfies $\beta_{t}^{\eps} \in \Tan{\rho_t^{\eps}}$ for all $t \in [0,1]$ and $\eps > 0$.  Additionally, $\nabla \varphi_{0 \to t}, \nabla \varphi_{t \to 0}, \nabla \varphi_{1 \to t}, \nabla \varphi_{t \to 1}$ as defined in \eqref{eq:mccann-interp} are all in $\Tan{\rho_t^{s}}$ for all $t \in [0,1]$ and $s \geq 0$.
\end{proposition}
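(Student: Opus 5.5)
The statement has two parts. The first is that the Schr\"odinger drift $\beta_t^{\eps}=\eps\nabla\log R^{\eps}_{1-t}b^{\eps}$ from \eqref{eq:sb-sde} lies in $\Tan{\rho_t^{\eps}}$. The second is that the McCann maps $\nabla\varphi_{0\to t},\nabla\varphi_{t\to 0},\nabla\varphi_{1\to t},\nabla\varphi_{t\to 1}$ lie in $\Tan{\rho_t^{s}}$. Both reduce to one principle: a vector field $\nabla u$ with $u\in C^1$ and $\norm{\nabla u(x)}\le K(1+\norm{x}^k)$ belongs to $\Tan{\mu}$ whenever $\mu$ has finite moments of order $2k$.

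To prove this principle, I would take a smooth cutoff $\chi_R(x)=\chi(x/R)$ with $\chi\equiv 1$ on $B(0,1)$ and $\mathrm{supp}\,\chi\subset B(0,2)$, and set $u_R=\chi_R(u-u(0))$. Then
\begin{align*}
\nabla u_R-\nabla u=(\chi_R-1)\nabla u+(u-u(0))\nabla\chi_R .
\end{align*}
The polynomial bound on $\nabla u$ gives $\abs{u-u(0)}\le K'(1+\norm{x}^{k+1})$, and $\norm{\nabla\chi_R}\le C/R$ is supported on $\{R\le\norm{x}\le 2R\}$. So the second term is bounded by $C(1+\norm{x}^{k})$ on the annulus. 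By dominated convergence, using finite $2k$-th moments, $\nabla u_R\to\nabla u$ in $L^2(\mu)$. Finally, $u_R\in C^1_c$ can be mollified to land in $C_c^\infty$ with $C^1$-uniform convergence, so $\nabla u\in\Tan{\mu}$.

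For the McCann maps: by Assumption \ref{assumption:standard-assumptions} and Caffarelli, each of these maps is the gradient of a convex $C^2$ potential with Hessian bounded uniformly in $t$, so it has linear growth and $k=1$. The measures $\rho_t^{s}$ are uniformly sub-Gaussian by Proposition \ref{prop:uniform-poincare-constant}(3), hence have all moments, and the principle applies. The inverse maps such as $\nabla\varphi_{t\to 0}$ are also Lipschitz, since $\nabla^2\varphi_{0\to t}\ge\min(1,\sqrt{\Lambda_1/\Lambda_2})\Id$.

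For the Schr\"odinger drift, I would apply Proposition \ref{prop:poly-bdd-nabla-log-semigrp} to $f=b^{\eps}$, with the Brownian semigroup run at the appropriate time scale $\eps(1-t)$. This needs two inputs.
\begin{itemize}
\item The first is $P_\tau b^{\eps}(0)<\infty$ for some $\tau$ beyond $\eps$. This follows from the Schr\"odinger system: $e^{-h(y)}=b^{\eps}(y)\int r_\eps(x,y)a^{\eps}(x)dx$ together with integrability of $a^{\eps}$ against a Gaussian.
\item The second is a polynomial bound on $\nabla\log b^{\eps}=\eps^{-1}\nabla g_{\eps}$. Under the strong log-concavity in (A1), the results of \cite{chewi2022entropic} give two-sided Hessian bounds on the entropic potentials, hence linear growth of $\nabla g_\eps$, so $k=1$.
\end{itemize}
With these, Proposition \ref{prop:poly-bdd-nabla-log-semigrp} yields $\norm{\beta_t^{\eps}(x)}\le K(1+\norm{x})$ for $t$ with $\eps(1-t)$ small. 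For the remaining $t$, away from $t=1$, the semigroup is Gaussian-smoothing and the same Bayes-measure argument works with a larger $\tau$, or one can argue directly from convexity bounds on $-\log R^\eps_{1-t}b^\eps$.

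Since $\beta_t^{\eps}$ is a gradient of a $C^1$ function, the cutoff principle again gives membership in the tangent space, provided the ambient measure has finite second moments. For the Schr\"odinger drift, the relevant measure is the entropic interpolation marginal, which is sub-Gaussian.

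I expect the main obstacle to be uniformity near $t=1$ and verifying $P_\tau b^{\eps}(0)<\infty$ with $\tau$ strictly larger than the semigroup time. I would try first to obtain this from the Hessian bounds of \cite{chewi2022entropic}: they show $b^{\eps}$ is at most $\exp(C\norm{y}^2/\eps)$ with a constant $C<1/2$ relative to the heat scale, which is exactly what is needed.
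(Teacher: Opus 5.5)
Your proposal is correct and follows the paper's overall route. Linear growth of $\nabla\log b^{\eps}$ comes from \cite{chewi2022entropic} and is transferred to $\beta_t^{\eps}$ by Proposition \ref{prop:poly-bdd-nabla-log-semigrp}. Caffarelli handles the four McCann maps. Finally, ``gradient type and in $L^2$'' gives tangency; your cutoff-and-mollify lemma spells out this step, which the paper asserts in one line.

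The real difference is at $t=0$, where the semigroup time is the full $\eps$. This, not $t=1$, is the delicate end. The paper uses only $\tau=\eps$, i.e.\ $P_{\eps}b^{\eps}=e^{-f}/a^{\eps}<\infty$ from the Schr\"odinger system. It handles this endpoint (labelled $t=1$ there) through the identity $\eps\nabla\log P_{\eps}b^{\eps}=-\eps\nabla f-\eps\nabla\log a^{\eps}$, which has linear growth by the same Chewi--Pooladian bounds. You instead upgrade to some $\tau>\eps$ so that one semigroup argument covers every $t$, at the cost of an extra estimate.

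Your first justification of that estimate points the wrong way. Integrability of $a^{\eps}$ bounds $P_{\eps}a^{\eps}$ from above, hence $b^{\eps}=e^{-h}/P_{\eps}a^{\eps}$ from below. The right input is the one in your closing remark, $\nabla^2\log b^{\eps}\le(\eps^{-1}-\Lambda_1)\Id$. It follows from $\nabla^2 h\ge\Lambda_1\Id$ and $\nabla^2\log P_{\eps}a^{\eps}=\eps^{-2}\mathrm{Cov}_{\pi^{\eps}(\cdot|y)}(X)-\eps^{-1}\Id\ge-\eps^{-1}\Id$. It gives $P_\tau b^{\eps}(0)<\infty$ whenever $\tau^{-1}>\eps^{-1}-\Lambda_1$.

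Three smaller points.

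First, Proposition \ref{prop:poly-bdd-nabla-log-semigrp} as stated only reaches semigroup times below $t_0/2<\tau/2$, and $\tau\ge 2\eps$ is not available in general. For $N(0,1)\to N(0,\sigma^2)$ one finds $\log b^{\eps}(y)\approx\frac{1}{2\eps}(1-\sigma^{-1})y^2$. So $P_\tau b^{\eps}(0)=\infty$ once $\tau$ exceeds roughly $\eps\sigma/(\sigma-1)$, which is below $2\eps$ when $\sigma>2$. Your ``larger $\tau$'' fix therefore does not work. Two repairs do:
\begin{itemize}
\item Observe that the proof of that proposition only needs $1-a\tau/t<0$, so it covers all $t\le t_1$ for any $t_1<\tau$. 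The paper's $\tau=\eps$ route tacitly needs this too, for $t\in(0,1/2]$.
\item Use your convexity alternative. With $\beta=\eps^{-1}-\Lambda_1$, positivity of the covariance and Brascamp--Lieb give $-s^{-1}\Id\le\nabla^2\log P_s b^{\eps}\le\frac{\beta}{1-s\beta}\Id$ for $s\in(0,\eps]$, where $1-s\beta\ge\min(1,\Lambda_1\eps)>0$.
\end{itemize}

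Second, the $\rho_t^{\eps}$ in the statement is the marginal flow \eqref{eq:recip-marginal-flow} of the MLD reciprocal process, not the entropic interpolation. That is how the result feeds Proposition \ref{prop:tangent-markov-pythag} in Theorem \ref{thm:mp-mld-approx-of-sb}. This is harmless, since those marginals are uniformly sub-Gaussian by Proposition \ref{prop:uniform-poincare-constant}(3).

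Third, your density lemma only needs finite second moments, so the sub-Gaussian bound above is more than enough.
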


\begin{proof}
When there exists $\Lambda_1, \Lambda_2 > 0$ such that $\Lambda_1 \Id \leq \nabla^2 f(x), \nabla^2 h(x) \leq \Lambda_2 \Id$ for all $x \in \mathbb{R}^{d}$, by \cite[Theorem 8]{chewi2022entropic} and the equivalence in \eqref{defn:entropic-potent}, for each $\eps > 0$ there exists $C > 0$ such that $\norm{\nabla \log b^{\eps}} \leq C(1+\norm{x})$. As $\beta_{t}^{\eps} = \eps \nabla \log R^{\eps}_{1-t}b^{\eps}$, it follows from Proposition \ref{prop:poly-bdd-nabla-log-semigrp} that $\norm{\beta_{t}^{\eps}}$ has linear growth for all $t \in [0,1)$. Recall that the $\Schro$ system forces that $R_{\eps}^{1}b^{\eps}$ is finite at each point. As $\rho_{t}^{\eps}$ has finite second moment, $\norm{\beta_{t}^{\eps}} \in L^2(\rho_{t}^{\eps})$. Thus, as $\beta_{t}^{\eps}$ is of gradient type, it follows that $\beta_t^{\eps} \in \Tan{\rho_{t}^{\eps}}$. At $t = 1$, we again use the $\Schro$ system to get that $\beta_{1}^{\eps} = -\eps \nabla f - \eps\nabla \log a^{\eps}$, is again of gradient type and linear growth, establishing the claim at $t = 1$. 

Similar argumentation establishes the second claim. By the Caffarelli Contraction Theorem, there exists $C >0$ such that for all $x \in \mathbb{R}^{d}$, $\norm{\nabla \varphi_{0 \to 1}(x)} \leq C(1+\norm{x})$. By the exact same argument of the previous paragraph, as each $\nabla \varphi_{0 \to t}$ is of gradient type and integrable in the appropriate space, $\nabla \varphi_{0 \to t} \in \Tan{\rho_t^{s}}$ for all $s \geq 0$ and $t \in [0,1]$. Identical argumentation holds for $\nabla \varphi_{1 \to t}$. For the other two terms, note that $\nabla \varphi_{t \to 0} = (\nabla \varphi_{0 \to t})^{-1}$ and $\nabla \varphi_{t \to 1} = (\nabla \varphi_{1\to t})^{-1}$ and then apply the same argument. 
\end{proof}

\begin{proposition}\label{prop:neg-sob-norm-poincare}
Let $\mu \in \cP(\mathbb{R}^{d})$ satisfy a Poincaré inequality with constant $C > 0$, recall the definition in \eqref{defn:poincare-inequality}. Fix $f \in H^{1}(\mu)$ such that $\int_{\mathbb{R}^{d}} f d\mu = 0$, and define a distribution $\sigma$ that acts on $\xi \in C_c^{\infty}(\mathbb{R}^{d})$ by
\begin{align*}
    \sigma : \nabla \xi \mapsto \int \xi fd\mu. 
\end{align*}
Then $\sigma = f\mu$ is an element of $\dot{H}^{-1}(\mu)$ and
\begin{align}\label{eq:pointcare-neg-sobolev-norm}
    \norm{f\mu}_{\dot{H}^{-1}(\mu)}^2 = \norm{\sigma}_{\dot{H}^{-1}(\mu)}^2 \leq C \norm{f}^2_{L^2(\mu)}. 
\end{align}
\end{proposition}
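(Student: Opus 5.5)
The plan is to prove this by the standard duality argument: bound the pairing of $\sigma$ against test functions using Cauchy--Schwarz and then the Poincaré inequality. By the definition of the dual norm, it suffices to show that for every $\xi \in H^{1}(\mu)$ with $\norm{\xi}_{\dot{H}^{1}(\mu)} = 1$ we have
\begin{align*}
\abs{\langle \xi, \sigma\rangle} = \abs{\int \xi f\, d\mu} \leq \sqrt{C}\,\norm{f}_{L^2(\mu)}.
\end{align*}
I would first establish this for $\xi \in C_c^{\infty}(\mathbb{R}^{d})$, where the pairing is given directly by the defining formula. I would then extend to all of $H^{1}(\mu)$ by density: approximate $\xi$ in the $H^1(\mu)$ norm by smooth compactly supported functions, noting that $\xi \mapsto \int \xi f\,d\mu$ is continuous in $L^2(\mu)$ because $f \in L^2(\mu)$.

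The key step uses the mean-zero condition on $f$. Since $\int f\,d\mu = 0$, we may subtract the constant $m_\xi := \int \xi\, d\mu$ without changing the pairing:
\begin{align*}
\int \xi f \,d\mu = \int (\xi - m_\xi) f\, d\mu.
\end{align*}
This is exactly why the statement requires $f$ to be centered: it makes $\sigma$ annihilate constants, so it is well defined on $\dot{H}^{1}(\mu)$, where functions are only determined modulo constants. Cauchy--Schwarz then gives
\begin{align*}
\abs{\int \xi f\,d\mu} \leq \norm{\xi - m_\xi}_{L^2(\mu)}\norm{f}_{L^2(\mu)}.
\end{align*}
The Poincaré inequality \eqref{defn:poincare-inequality} bounds the first factor by $\sqrt{C}\,\norm{\nabla \xi}_{L^2(\mu)} = \sqrt{C}$. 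Taking the supremum over admissible $\xi$ shows that $\sigma \in \dot{H}^{-1}(\mu)$ and $\norm{\sigma}_{\dot{H}^{-1}(\mu)}^2 \leq C\norm{f}_{L^2(\mu)}^2$, which is \eqref{eq:pointcare-neg-sobolev-norm}. The identification $\sigma = f\mu$ is just the statement that the pairing is integration against $f\,d\mu$.

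The only step that needs care is the density and well-definedness issue: extending from $C_c^\infty$ test functions to the whole Dirichlet domain where the Poincaré inequality holds, so that the supremum in the dual-norm definition may be taken over either class. I would handle this by invoking the standard fact that $C_c^\infty$ is dense in $H^1(\mu)$ for measures with positive smooth densities, which covers the $\rho_t^s$ appearing here, together with the $L^2$-continuity of the pairing noted above. For the way this proposition is used in Step 2 of Theorem \ref{lem:ke-calc}, I would take $f = D_t q_t^s$. Its mean zero under $\rho_t^s$ follows because $r_t^s$ is a mean-zero distribution: testing against constants yields $\int D_t q_t^s\, d\rho_t^s = 0$.
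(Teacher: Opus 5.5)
Your proposal is correct and follows essentially the same route as the paper: use $\int f\,d\mu = 0$ to replace $\xi$ by $\xi - \int \xi\, d\mu$, then bound the pairing with the Poincar\'e inequality. You spell out the Cauchy--Schwarz step and the density extension from $C_c^\infty$ test functions, which the paper leaves implicit by passing to the continuous extension on $\dot{H}^{1}(\mu)$ and its Riesz representative.
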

Before proving this proposition, we quickly recall that $\dot{H}^{1}(\mu)$ is a Hilbert space with inner product defined for all $\phi,\psi \in \dot{H}^{1}(\mu)$ by
\begin{align*}
    \langle \phi,\psi\rangle_{\dot{H}^{1}(\mu)} &:= \int_{\mathbb{R}^{d}}\langle \nabla \phi,\nabla \psi \rangle d\mu. 
\end{align*}
By the Riesz Representation Theorem, whenever $\norm{\sigma}_{\dot{H}^{-1}(\mu)} < +\infty$, there exists a $\phi(\sigma) \in \dot{H}^{1}(\mu)$ that agrees with the action of $\sigma$, i.e.\ such that
\begin{align}\label{eq:riesz-rep}
    \langle \sigma, \psi \rangle &= \langle \phi(\sigma),\psi\rangle_{\dot{H}^{1}(\mu)}, \quad \text{for all $\psi \in \dot{H}^{1}(\mu)$}. 
\end{align}
\begin{proof}[Proof of Proposition \ref{prop:neg-sob-norm-poincare}]
This argument is very similar to that of \cite[Theorem 8.3.1]{ambrosio2005gradient} around equations (8.3.10)-(8.3.12) therein. First, note $\sigma$ is well-defined as $f\mu$ integrates to zero. Next, because of the Poincaré inequality it holds for all $\xi \in C_c^{\infty}(\mathbb{R}^{d})$ that
\begin{align*}
     \int_{\mathbb{R}^{d}} \abs{\xi -\int_{\mathbb{R}^{d}}\xi d\mu}^2 d\mu \leq C\norm{\xi}_{\dot{H}^{1}(\mu)}.
\end{align*}
Thus, in the definition of $\sigma$ we can pick $\xi$ with mean zero, giving that $\sigma$ then has a continuous extension to $\xi \in \dot{H}^{1}(\mu)$. By the definition of $\norm{\cdot}_{\dot{H}^{-1}(\mu)}$ in \eqref{defn:neg-sob-norm} and by identifying $\sigma$ with its Riesz Representative in \eqref{eq:riesz-rep}, we see that \eqref{eq:pointcare-neg-sobolev-norm} holds. 
\end{proof}

\subsection{Gaussian Computations}\label{subsec:gaussian-computations}
Let us verify our assumptions for an example involving univariate Gaussian marginals. Fix $\sigma > 0$ and set $e^{-f} = N(0,1)$, $e^{-h} = N(0,\sigma^{2})$. Then the Brenier map is given by $\nabla \varphi(x) = \sigma x$, and $\ell_{\eps}$ is the density given by the bivariate Gaussian
\begin{align*}
    (X_0,X_{\eps}^*) \sim N\left(\begin{pmatrix}0\\0\end{pmatrix},\begin{pmatrix}1 & \sigma e^{-\eps/2\sigma} \\ \sigma e^{-\eps/2\sigma} & \sigma^2 \end{pmatrix}\right).
\end{align*}
Let $Z \sim N(0,1)$ be independent of the above pair and recall the notation in \eqref{eq:recip-marginal-flow}. For $t \in [0,1]$ it holds that $\rho_{t}^{\eps}$ is the law of the random variable $(1-t)X_0 + tX_{\eps}^* + \sqrt{\eps t(1-t)}Z$. Define $\sigma_t := (1-t)+t\sigma$, we compute that
\begin{align*}
    \Var_{Q^{\eps}}(\omega_t) &= (1-t)^{2}+t^2\sigma^2+2t(1-t)\sigma e^{-\eps/2\sigma}+\eps t(1-t)\\
    &= \sigma_t^2 + 2t(1-t)\sigma\left(e^{-\eps/2\sigma}-1+\frac{\eps}{2\sigma}\right) \\
    &= \sigma_t^2 + \frac{t(1-t)}{4\sigma}\eps^2 + o(\eps^2). 
\end{align*}
That is, in the notation of \eqref{eq:recip-marginal-flow} for all $t \in [0,1]$
\begin{align}\label{eq:gp-uncond}
   \rho_t^{\eps} \sim N\left(0,\sigma_t^2 + \frac{t(1-t)}{4\sigma}\eps^2 + o(\eps^2)\right).
\end{align}
Let $(\sigma_{t}^{\eps})^2$ denote the variance of $\rho_{t}^{\eps}$, and observe that this is smooth as a function of $(t,\eps) \in [0,1] \times [0,+\infty)$ and has a positive global lower bound. Thus, we see that Assumption \ref{assumption:technical-rd} holds in the case of univariate Gaussian marginals. 

We now manually compute the Fisher information and kinetic energy quantities that were essential in Theorems \ref{thm:rel-ent-tangent-markov-proj} and \ref{thm:mp-mld-approx-of-sb}.

\textbf{Fisher information.} First, recall for $\nu_{i} \sim N(\mu_i,\sigma_i^2)$ it holds that $I(\nu_i) = \sigma_i^{-2}$ and $I(\nu_1|\nu_2) = (\sigma_1^{-2}-\sigma_2^{-2})^{2}\sigma_{1}^2 + (\mu_1-\mu_2)^2\sigma_2^{-4}$. Thus, as the variance of $(\rho_t^{\eps}, t \in [0,1], \eps \geq 0)$ has a positive lower bound and is continuous in $t$ and $\eps$, it holds that $\int_0^1 I(\rho_t^{\eps})dt \to \int_0^{1} I(\rho_t^0)dt$ and $\int_0^{1} I(\rho_t^{\eps}|\rho_t^0)dt \to 0$ as $\eps \downarrow 0$. 
Moreover, by \eqref{eq:gp-uncond} it holds for each $t \in [0,1]$ that
\begin{align*}
     I(\rho_t^{\eps}) &= \frac{1}{\sigma_t^2 + \frac{t(1-t)}{4\sigma}\eps^2 + o(\eps^2)} = I(\rho_t^0)+O(\eps^2).
\end{align*}
Integrating in $t$, $\int_{0}^{1}I(\rho_t^\eps) dt = \int_0^1 I(\rho_t^0)dt + O(\eps^2)$.

\textbf{Kinetic energy.}  
Using the Taylor expansion about $x= 0$, $\sqrt{1+ax^2} = 1 + \frac{a}{2}x^2 +O(x^4)$, compute from \eqref{eq:gp-uncond} that
\begin{align*}
    \sqrt{\Var(\rho_t^{\eps})} &= \sigma_{t}\sqrt{1+\frac{t(1-t)}{4\sigma \sigma_t^2}\eps^2+o(\eps^2)} = \sigma_t\left(1+\frac{t(1-t)}{8\sigma \sigma_t^2}\eps^2 +O(\eps^4)\right).
\end{align*}
Thus, it holds for each $t \in [0,1]$ and each $h$ such that $(t+h) \in [0,1]$
\begin{align*}
    \Was{2}(\rho_{t+h}^{\eps},\rho_t^{\eps}) &=\abs{\sqrt{\Var(\rho_{t+h}^{\eps})}-\sqrt{\Var(\rho_t^{\eps})} }\\
    &= \abs{(\sigma_{t+h}-\sigma_t) + \left(\frac{(t+h)(1-(t+h))}{\sigma_{t+h}^2}-\frac{t(1-t)}{ \sigma_{t}^2}\right)\frac{\eps^2}{8\sigma} +O(\eps^4)}. 
\end{align*}
Dividing by $h$ and sending $h \to 0$ then gives for $\mathrm{Leb}$-a.e.\ $t \in [0,1]$
\begin{align*}
    |(\rho_t^{\eps})'| &= \lim\limits_{h \to 0}\frac{1}{\abs{h}}\Was{2}(\rho_{t+h}^{\eps},\rho_t^{\eps}) = \abs{(\sigma-1) + \frac{\eps^2}{8\sigma} \frac{d}{dt}\left(\frac{t(1-t)}{\sigma_t^2}\right) + O(\eps^4)}. 
\end{align*}
Altogether then, for small enough $\eps > 0$ it holds for $\mathrm{Leb}$-a.e.\ $t \in [0,1]$
\begin{align}\label{eq:ke-time-t}
    \norm{v_t^{\eps}}_{L^{2}(\rho_t^{\eps})}^2 &= (\sigma-1)^2 + \frac{d}{dt}\left(\frac{t(1-t)}{\sigma_t^2}\right) \frac{(\sigma-1)}{4\sigma}\eps^2 + O(\eps^4).
\end{align}
Now, integrating $t$ over $[0,1]$ finally gives
\begin{align}
    \frac{1}{2}\int_0^1 \norm{v_t^{\eps}}_{L^{2}(\rho_t^{\eps})}^2 dt &= \frac{1}{2}(\sigma-1)^2 + O(\eps^4) = \frac{1}{2}\Was{2}^{2}(e^{-f},e^{-h})+O(\eps^4). 
\end{align}

\subsection{A Nontrivial Class of Examples Satisfying Assumption \ref{assumption:technical-rd}.}

A nontrivial class of $f,h,\varphi': \mathbb{R} \to \mathbb{R}$ satisfying Assumption \ref{assumption:technical-rd}, specifically \eqref{eq:polynomial-dom-to-zero}, exists. The construction is as follows. Let $\eta \in C_c^{\infty}((-1,1))$ be nonzero and define the function
    \begin{align}\label{eqn:G-fnc}
        G(x) = x + \delta \eta(x),
    \end{align}
    where $\delta \neq 0$ is small enough that $G'(x) \geq c > 0$ for all $x \in \mathbb{R}$. Set $F = G^{-1}$.  Define a map $T: \mathbb{R} \to \mathbb{R}$ by
    \begin{align}\label{eq:T-fnc}
        T(x) = \int_0^{x} (G'(u))^2 du.
    \end{align}
    Note that since $T' > 0$, $T$ is the derivative of a convex function.

\begin{proposition}\label{prop:example-class}
    Let $ e^{-f(x)} := \frac{1}{\sqrt{2\pi}}\exp\left(-\frac{1}{2}G^2(x)\right)G'(x)$ and let $e^{-h} = T_{\#}e^{-f}$.
     Then Assumption \ref{assumption:technical-rd} is satisfied.
\end{proposition}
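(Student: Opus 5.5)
The plan rests on one structural observation: in this one-dimensional example the Mirror Langevin diffusion is an Ornstein--Uhlenbeck process in disguise. Here $\varphi'' = T' = (G')^2$, so the Hessian metric is $(G')^2\,dx^2$ and its arclength coordinate is $u = G(x)$. If $Z\sim N(0,1)$ then $F(Z)=G^{-1}(Z)$ has density $\frac{1}{\sqrt{2\pi}}e^{-G^2/2}G' = e^{-f}$. I would first verify, by It\^o's formula applied to \eqref{defn:MLD} with $V_s := G(X_s)$, that $dV_s = -\tfrac12 V_s\,ds + dB_s$. In the Riemannian coordinate the metric is flat and the stationary density is standard Gaussian, so the generator \eqref{defn:td-mld} transforms into the OU generator. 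Hence
\begin{align*}
U(t,s) = (1-t)F(V_0) + t\,T\bigl(F(e^{-s/2}V_0 + \sqrt{1-e^{-s}}\,Z')\bigr) + \sqrt{t(1-t)}\,W_s,
\end{align*}
with $V_0, Z', W_1$ independent standard Gaussians. The remaining work is therefore an explicit Gaussian-integral analysis.

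Next I would exploit the localization of all nonlinearity. Because $\eta$ is supported in $(-1,1)$, we have $F(v)=v$ for $|v|\ge 1+|\delta|\,\|\eta\|_\infty$. Also $T(x)=x$ for $x\le -1$ and $T(x)=x+\delta^2\!\int\eta'^2$ for $x\ge 1$. Consequently the map $\Phi_t(v) := (1-t)F(v)+tT(F(v))$ is affine off a compact set, and $\Phi_t' \ge c' > 0$ uniformly in $t$, since $F'>0$ and $T'\ge c^2$. I would write $U(t,s) = \Phi_t(V_0) + \Delta(t,s)$, where $\Delta = t\bigl(T\circ F(V_s) - T\circ F(V_0)\bigr) + \sqrt{t(1-t)s}\,W_1$. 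For a test function $\xi$ I would then Taylor expand $E[\xi(U(t,s))]$ in the Gaussian increments $\sqrt{1-e^{-s}}\,Z'$ and $\sqrt{s}\,W_1$. The odd moments vanish, so only integer powers of $s$ survive, which gives smoothness in $s$ up to and including $s=0$. To pass from test functions to the density, I would integrate by parts in $v$ against $\Phi_t'$, exploiting its positive lower bound. Concretely,
\begin{align*}
\rho_t^s(x) = E\Bigl[\delta_x\bigl(\Phi_t(V_0)+\Delta\bigr)\Bigr],
\end{align*}
and each $x$-derivative can be traded for a $v$-derivative divided by $\Phi_t'$. This should give $C^2$ regularity in $(t,s,x)$ on $[0,1]\times[0,\infty)\times\mathbb{R}$. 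The $t$-dependence is polynomial in $t$ and $\sqrt{t(1-t)}$, and the latter appears only squared after the Gaussian averaging.

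Finally, I would establish the polynomial bounds \eqref{eq:polynomial-dom-to-zero}. Following the Bayes-posterior idea from the proof of Proposition \ref{prop:poly-bdd-nabla-log-semigrp}, I would represent $\nabla\partial_s\log\rho_t^s(x)$ and $\partial_s\partial_t\log\rho_t^s(x)$ as conditional expectations, given $U(t,s)=x$, of explicit polynomial expressions in $(V_0, Z', W_1)$ and derivatives of $F$ and $T$. Those derivatives are bounded, since the maps are affine off compact sets. Outside a compact set in $x$ the law is exactly a Gaussian mixture with affine parameters, so the posterior moments of the Gaussian variables grow at most polynomially in $|x|$, and the bound $C(1+|x|^k)$ follows. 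The main obstacle is making the $s\downarrow 0$ expansion rigorous at the level of the density and its log-derivatives uniformly in $x$ and in $t$ near $0$ and $1$, where the $\sqrt{t(1-t)}$ noise degenerates. To handle this I would rely on the nondegenerate base map $\Phi_t$, not the added noise, to supply the density, and bound the Taylor remainder through the integration-by-parts representation together with Gaussian tail estimates.
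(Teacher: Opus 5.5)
Your architecture matches the paper's: the OU representation via $V_s=G(X_s)$, the explicit Gaussian formula for $U(t,s)$, letting the nondegenerate dependence on $V_0$ (not the degenerate noise $\sqrt{t(1-t)}$) supply the density near $t=0,1$, and posterior-moment bounds for the log-derivatives.

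The gap sits at the step you flag as the main obstacle, and the mechanism you offer does not close it. ``Odd moments vanish, so only integer powers of $s$ survive'' gives at best an asymptotic expansion of $E[\xi(U(t,s))]$, which controls values, not derivatives. Assumption \ref{assumption:technical-rd} needs two things: $C^2$ regularity of $\rho_t^s(x)$ up to $s=0$, and bounds on $\partial_s\partial_t\log\rho_t^s$ and $\partial_x\partial_s\log\rho_t^s$ that are uniform down to $s=0$. Every $\partial_s$ that falls on $\sqrt{1-e^{-s}}$ or $\sqrt{s}$ in your remainder produces a factor $s^{-1/2}$, and your plan does not show how these cancel for the density and its logarithm.

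The missing device is the paper's reparametrization $s=r^2$, writing $\sqrt{1-e^{-r^2}}=r\,c(r)$ with $c(r)=\sqrt{r^{-2}(1-e^{-r^2})}$ smooth and even. With this choice:
\begin{itemize}
\item The map $\Psi_{t,r}(z_0,z_1)=(1-t)F(z_0)+tJ(e^{-r^2/2}z_0+rc(r)z_1)$, where $J=T\circ F$, is smooth for all $r\in\mathbb{R}$.
\item The implicit-function formula \eqref{eq:density-without-noise} gives a density $n_{t,r}(x)$ jointly smooth in $(t,r,x)$. Its Jacobian $\partial_{z_0}\Psi_{t,r}=(1-t)F'(z_0)+te^{-r^2/2}J'(\cdot)$ is bounded below uniformly in $z_1$. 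For the exact expression $E[\delta_x(\Phi_t(V_0)+\Delta)]$, this Jacobian, not $\Phi_t'$, is what you must divide by, since $\Delta$ depends on $V_0$.
\item The noise enters exactly through $m_{t,r}=P_{r^2t(1-t)}n_{t,r}=\rho_t^{r^2}$.
\item $V\overset{d}{=}-V$ makes $m_{t,r}$ even in $r$, and evenness converts smoothness in $r$ into smoothness in $s$.
\end{itemize}
For the derivative bounds, let $B_{t,r}$ denote $\partial_t\log m_{t,r}$ or $\partial_x\log m_{t,r}$, and set $r=\sqrt{s}$. Evenness gives $\partial_rB_{t,0}=0$, so $\partial_sB_{t,\sqrt{s}}=(2r)^{-1}\int_0^r\partial_u^2B_{t,u}\,du$. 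This is bounded by $\tfrac12\sup_{|u|\le r}|\partial_u^2B_{t,u}|$, which is the required bound once $\partial_u^2B$ is shown to be polynomially bounded. The paper additionally invokes Lemma \ref{lem:zero-initial-velocity} to gain an extra factor $s$, but that is not needed for Assumption \ref{assumption:technical-rd}.

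Your justification of polynomial posterior moments is also false as stated. For $t\in(0,1)$ and large $|x|$, the law of $U(t,s)$ is not an affine Gaussian mixture. The event $\{U(t,s)=x\}$ still charges configurations with $V_0$ in the region where $F$ is nonlinear and $Z'$ large, so that $J(V_s)$ is large. The paper's argument works instead:
\begin{itemize}
\item The Jacobian bound gives $|z_0|\le C(1+|x|+|z_1|)$.
\item Restricting the mixing integral to $|z_1|\le 1$ gives $n_{t,r}(x)\ge K_2e^{-K_1(1+x^2)}$.
\item Combined with Gaussian tails, this yields $P(|V|>R\mid \Psi_{t,r}(U,V)=x)\le\min\{1,Ce^{C(1+x^2)-cR^2}\}$, so posterior moments grow polynomially in $|x|$.
\end{itemize}
The same reasoning survives the heat-semigroup smoothing, and it supplies the polynomial bound on $\partial_u^2B$ needed above.

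A harmless slip: $T$ is only a translation off $[-1,1]$. For example, $T(x)=x-2\delta\eta(0)-\delta^2\int_{-1}^0(\eta')^2$ for $x\le -1$, not $T(x)=x$.
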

\begin{proof}
    By the definitions above and the change of variables formula, it holds that
    \begin{align}\label{eq:f}
        f(x) &= \frac{1}{2}G^2(x)-\log G'(x) + \frac{1}{2}\log(2\pi),
    \end{align}
    \begin{align}\label{eq:ht}
        h(T(x)) &= f(x) + \log T'(x). 
    \end{align}
    Since $G'$ has a positive global lower bound and $\eta \in C_c^{\infty}((-1,1))$, it holds that $f,h,T$ are all $C^{\infty}(\mathbb{R})$. Since $G^{(k)} \in C_c^{\infty}((-1,1))$ for all $k \geq 2$, it follows that the second derivatives of $f$ and $h$ are bounded above and below by positive constants for $\abs{\delta}$ small enough, and the second and third derivatives of $T$ are bounded too. That is, Assumption \ref{assumption:standard-assumptions} holds.

    It remains now to show the regularity in Assumption \ref{assumption:technical-rd}. The way that we do this is by recognizing the primal MLD as the image of the Ornstein-Uhlenbeck (OU) process under the function $F = G^{-1}$. Let $(Z_t, t \geq 0)$ denote the stationary OU process
    \begin{align}\label{eq:ou}
        dZ_t = -\frac{1}{2}Z_t dt + dB_t, \quad Z_0 \sim N(0,1). 
    \end{align}
    By the definition of $e^{-f}$, it holds that $F(Z_s) \sim e^{-f}$. Set $X_s := F(Z_s)$, then by $\Ito$'s formula and the Inverse Function Theorem
    \begin{align}\label{eq:xs-ito}
        dX_t = \left(-\frac{G(X_t)}{2G'(X_t)}-\frac{G''(X_t)}{2G'(X_t)^{3}}\right)dt + \frac{1}{G'(X_t)}dB_t, \quad X_0 \sim e^{-f}. 
    \end{align}
    Observe that \eqref{eq:xs-ito} is actually the primal MLD in our setting as $\sqrt{\left(T'(x)\right)^{-1}} = 1/G'(x)$ and $-\frac{1}{2}h'(T(x))= -\frac{G(x)}{2G'(x)}-\frac{G''(x)}{2G'(x)^{3}}$ by differentiating the formulas in \eqref{eq:ht} and \eqref{eq:f}. 

    Via the identification of the primal MLD as the image of the OU process, we can now construct the reciprocal surface as in \eqref{eq:defn-ust} in terms of the image of Gaussians under smooth functions. Define the function
    \begin{align}\label{eq:defn-j}
        J(x) = T (F(x)) \implies J'(x) = T'(F(x))F'(x) = G'(F(x)).
    \end{align}
    Recall from \eqref{eq:defn-ust} that for all $s \geq 0$ and $t \in [0,1]$,
    \begin{align}\label{eq:ust-appendix}
        U(t,s) = (1-t)X_0 + tT(X_s) + \sqrt{t(1-t)}W_{s}, \quad \rho_t^{s} = \mathrm{Law}(U(t,s)), 
    \end{align}
    where $(W_t, t \geq 0)$ is an independent standard Brownian motion. Since $(X_0,X_{s}) = (F(Z_0),F(Z_s))$, it holds that
    \begin{align}
        (X_0,X_s) \overset{(d)}{=} (F(U),F(e^{-s/2}U+\sqrt{1-e^{-s}}V))
    \end{align}
    for $U,V$ independent standard Gaussians. Thus, letting $W$ be yet another independent standard Gaussian, the reciprocal surface writes as
    \begin{align}\label{eq:sheet-as-gaussian-images}
        U(t,s) = (1-t)F(U) + tJ(e^{-s/2}U+\sqrt{1-e^{-s}}V) + \sqrt{st(1-t)}W.
    \end{align}

    From the identification in \eqref{eq:sheet-as-gaussian-images}, we will now establish the regularity in \eqref{eq:polynomial-dom-to-zero}. Indeed, we will show an even stronger statement: there exists $s_0, C, N > 0$ such that
    \begin{align}\label{eq:poly-bdds-apdx}
        \abs{\partial_t \partial_s \log \rho_t^s(x)}+\abs{\partial_x \partial_s \log \rho_t^s(x)} \leq Cs(1+\abs{x}^{N}), \quad \text{for all $(t,s,x) \in [0,1]\times [0,s_0] \times \mathbb{R}$.}
    \end{align}
    To smooth out the square roots appearing in \eqref{eq:sheet-as-gaussian-images}, we define the following smooth even function $c: \mathbb{R} \to [0,+\infty)$ by
    \begin{align}
        c(r) = \begin{cases}
            \sqrt{r^{-2}(1-e^{-r^2})}, & \text{$r \neq 0$,} \\
            1, & \text{$r = 0$.}
        \end{cases}
    \end{align}
    and the function 
    \begin{align}\label{eq:psi}
        \Psi_{t,r}(z_0,z_1) = (1-t)F(z_0) + t J(e^{-r^2/2}z_0+rc(r)z_1). 
    \end{align}
    We emphasize that $\Psi_{t,r}$ is defined for all $r \in \mathbb{R}$ and that since $V \overset{(d)}{=} -V$,
    \begin{align}\label{eq:evenness}
        \Psi_{t,r}(U,V) \overset{(d)}{=} \Psi_{t,-r}(U,V)
    \end{align}

    \textbf{Step 1: Remove Noise and Square Root.} We first prove regularity without adding the independent Gaussian noise in $U(t,s)$. Define the map
    \begin{align}
        (t,r,x) \mapsto n_{t,r}(x) = \mathrm{Law}(\Psi_{t,r}(U,V)).
    \end{align}
    From \eqref{eq:evenness}, $(t,r,x) \mapsto n_{t,r}(x)$ is even in $r$. From this symmetry and the formula in \eqref{eq:density-without-noise} below, it holds that $n_{t,r}(x)$ is \textbf{smooth} on $(t,r,x) \in [0,1] \times \mathbb{R} \times \mathbb{R}$. We now show that $n_{t,r}$ has global bounds similar to \eqref{eq:poly-bdds-apdx}.
    This follows from the Implicit Function Theorem. 

    First, since $F'$ and $J'$ are globally bounded from above and below by positive constants, there exist $c_1,C_1 > 0$ such that
    \begin{align}\label{eq:post-deriv-bdds}
        0 < c_1 \leq \partial_{z_0} \Psi_{t,r}(z_0,z_1) \leq C_1
    \end{align}
    for all $\abs{r}$ small enough and all $t \in [0,1]$, $z_0,z_1 \in \mathbb{R}$. Hence, for fixed $z_1$, the map $\Psi_{t,r}(\cdot,z_1)$ is a bijection on $\mathbb{R}$ for $\abs{r}$ small enough. That is, for each $x$ there is a unique $z_0$ such that $\Psi_{t,r}(z_0,z_1) = x$. Use this fact to define a function $\chi_{t,r}(\cdot,\cdot)$ such that
    \begin{align}\label{eq:chi-t-r}
        \chi_{t,r}(x,z_1) := z_0 \text{ such that } \Psi_{t,r}(z_0,z_1) = x.
    \end{align}
    Recall that all derivatives (first order and higher) of $F$ and $J$ are bounded. From implicit differentiation and \eqref{eq:post-deriv-bdds}, it follows that all derivatives of $\chi_{t,r}$ in $x$, $z_1$, $r$, and $t$ have polynomial growth. We note that there then exists $C >0$ such that for all $x,z_1 \in \mathbb{R}$, $t \in [0,1]$, and $\abs{r}$ small enough
    \begin{align}\label{eq:z0-linear-bdd}
        \abs{z_0} = \abs{\chi_{t,r}(x,z_1)} \leq C(1+\abs{x}+\abs{z_1}). 
    \end{align}
    
    Using \eqref{eq:chi-t-r}, the density of $\Psi_{t,r}(U,V)$ is equal to
    \begin{align}\label{eq:density-without-noise}
        n_{t,r}(x) = \int_{\mathbb{R}} \frac{\gamma(\chi_{t,r}(x,z_1))\gamma(z_1)}{\partial_{z_0}\Psi_{t,r}(\chi_{t,r}(x,z_1),z_1)} dz_1,
    \end{align}
    where $\gamma$ is the $N(0,1)$ density. By passing the derivatives in $t$, $r$, and $x$ in \eqref{eq:density-without-noise} to the integrand, it follows from the aforementioned polynomial bounds on the $\chi_{t,r}(\cdot,\cdot)$ and its derivatives that derivatives of $n_{t,r}(x)$ in $t$, $r$, and $x$ have polynomial bounds. 
    
    It now remains to argue why derivatives in $t$, $r$, and $x$ of $\log n_{t,r}$ have polynomial bounds. Define $a(z_0,z_1) = \partial_{z_0}\Psi_{t,r}(z_0,z_1)$. It then follows that
    \begin{align}\label{eq:score-to-exp}
        \partial_x \log n_{t,r}(x) = \frac{\partial_x n_{t,r}(x)}{n_{t,r}(x)} = \Exp{}\left[-\frac{U}{a(U,V)}-\frac{\partial_{z_0}a(U,V)}{a(U,V)^2} \mid \Psi_{t,r}(U,V)=x\right].
    \end{align}
    Analogous but more complicated expressions will hold for \eqref{eq:score-to-exp} when we apply further derivatives in $r$, $t$, and $x$ to the left hand side. Importantly, combining \eqref{eq:z0-linear-bdd}, \eqref{eq:post-deriv-bdds}, and the polynomial growth for the derivatives of $a$, the polynomial growth of $\abs{\partial_x \log n_{t,r}(x)}$ follows from obtaining a moment bound of $V|\Psi_{t,r}(U,V) =x $ that is polynomial in $x$. 
    
    Thus, we proceed by producing polynomial bounds for the moments of $V|\Psi_{t,r}(U,V) =x $. We note from \eqref{eq:post-deriv-bdds} and \eqref{eq:density-without-noise} that there exists $K_1, K_2 >0$ such that
    \begin{align}\label{eq:n-t-r-dens-bdds}
        n_{t,r}(x) \leq K_1, \quad n_{t,r}(x) \geq K_2 \exp(-K_1(1+x^2)).
    \end{align}
    The latter bound is attained by restricting the integral in \eqref{eq:density-without-noise} to the ball $\abs{z_1} \leq 1$. It then holds that
    \begin{align}
        P(\abs{V} > R| \Psi_{t,r}(U,V) =x)\leq \min\left(1,C\exp(C(1+x^2)-cR^{2})\right).
    \end{align}
    Integrating the above tail bound gives that $V|\Psi_{t,r}(U,V) =x$ has moments that grow polynomially in $x$. From \eqref{eq:score-to-exp}, it follows that $\abs{\partial_x \log n_{t,r}(x)}$ grows polynomially in $x$. By applying this same logic to higher derivatives, all expressions of the form $\abs{\partial_{t}^{i}\partial_{r}^{j}\partial_{x}^{\ell}n_{t,r}(x)}/n_{t,r}(x)$ have uniform polynomial bounds.

    \textbf{Step 2: Add Back in Noise.} With $n_{t,r}$ as defined in \eqref{eq:density-without-noise}, we now add back in the independent Gaussian noise provided by the random variable $W$ in \eqref{eq:sheet-as-gaussian-images}. Let $(P_{t}, t \geq 0)$ denote the standard Euclidean heat semigroup. We now define for $t \in [0,1]$ and $r \in \mathbb{R}$
    \begin{align}\label{eq:mtr}
        m_{t,r}(x) = P_{r^2 t(1-t)}n_{t,r}(x).
    \end{align}
    We note that $m_{t,r} = \rho_t^{r^2}$. From the heat equation, observe that
    \begin{align}\label{eq:r-t-derivs}
        \partial_{r}m_{t,r}(x) &= P_{r^2t(1-t)}\left(\partial_{r}n_{t,r}\right)(x)+rt(1-t)m_{t,r}''(x),
    \end{align}
    \begin{align}\label{eq:r-t-derivs2}
        \partial_{t}m_{t,r}(x) &= P_{r^2 t(1-t)}\left(\partial_{t}n_{t,r}\right)(x)+\frac{r^2(1-2t)}{2}m_{t,r}''(x).
    \end{align}
    Importantly, from the regularity we have established on $n_{t,r}$, it follows that the derivatives in \eqref{eq:r-t-derivs} and \eqref{eq:r-t-derivs2} are regular at $r = 0$ and $t = 0,1$. To emphasize, the map $(t,r,x) \mapsto m_{t,r}(x)$ is smooth in $(t,r,x) \in [0,1] \times \mathbb{R} \times \mathbb{R}$.
    
    Since we are now just adding Gaussian noise, it follows from the same conditional expectation argument above (this is also very similar to Proposition \ref{prop:poly-bdd-nabla-log-semigrp}) that the required mixed derivatives in $r$, $t$, and $x$ of $\log m_{t,r}$ have polynomial growth. In particular,
    \begin{align}\label{eq:almost-there}
        \abs{\partial_r^4 \partial_t \log m_{t,r}(x)}+\abs{\partial_r^4 \partial_x \log m_{t,r}(x)} \leq C(1+\abs{x}^{N})
    \end{align}
    for some $C, N > 0$. 

    \textbf{Step 3: Use symmetry to smooth out at zero.} Recall again that $m_{t,r} = \rho_t^{r^2}$. Since $m_{t,r}$ is smooth and even in $r$, it holds that $\left.\partial_{r} m_{t,r}\right|_{r = 0} = 0$. Similarly, by the evenness in $r$ of $m_{t,r}$ it holds in the sense of distributions that
    \begin{align}\label{eq:deriv-at-zero}
        \frac{1}{2}\left.\partial_{r}^2 m_{t,r}\right|_{r=0} = \left.\partial_{s}\rho_t^{s}\right|_{s = 0}.
    \end{align}
    Under \textbf{only} Assumption \ref{assumption:standard-assumptions}, Lemma \ref{lem:zero-initial-velocity} implies that $\left.\partial_{s} \rho_t^{s}\right|_{s = 0}$ is zero in the sense of distributions. Thus, since the left hand side of \eqref{eq:deriv-at-zero} exists classically, it must hold that $\left.\partial_{r}^2 m_{t,r}\right|_{r=0} = 0$ classically. Since $m_{t,r}(x) > 0$ for all $(t,r,x) \in [0,1] \times \mathbb{R} \times \mathbb{R}$, it holds classically that
    \begin{align}
        \left.\partial_{r}^2 \log m_{t,r}\right|_{r = 0} = \left.\left(\frac{\partial_r^2 m_{t,r}}{m_{t,r}}-\left(\frac{\partial_r m_{t,r}}{m_{t,r}}\right)^2\right)\right|_{r = 0}= 0.  
    \end{align}
    By the smoothness of $\log m_{t,r}$, we can apply $\partial_t$ and $\partial_x$ to $\partial_r^2 \log m_{t,r}$ and the resulting quantity still vanishes identically over $(t,x) \in [0,1] \times \mathbb{R}$ when $r = 0$.

    Finally then, let $B_{t,r}(x) = \partial_t \log m_{t,r}$ or $\partial_x \log m_{t,r}$. We have argued that $\partial_{r}^2B_{t,r}$ vanishes at $r = 0$, and since $m_{t,r}$ is even in $r$ it holds that $\partial_r B_{t,r} = 0$ and $\partial_{r}^{3} B_{t,r} =0$ at $r = 0$ as well. Thus, Taylor's theorem and \eqref{eq:almost-there} imply that for all $x \in \mathbb{R}$, $t \in [0,1]$, and $\abs{r}$ small enough there exist $C, N > 0$ such that
    \begin{align}\label{eq:deriv-inr}
        \abs{\partial_r B_{t,r}(x)} \leq C\abs{r}^3(1+\abs{x}^{N}).
    \end{align}
    Recall finally then that $\rho_t^{r^2} = m_{t,r}$. From the change of variables $s = r^2$, it holds that $\partial_s = (2r)^{-1}\partial_{r}$ for $r > 0$. Employing this change of variables and using \eqref{eq:deriv-inr} to extend $\partial_{s}B_{t,\sqrt{s}}$ continuously to $s = 0$, it follows that 
    \begin{align}
        \abs{\partial_s B_{t,\sqrt{s}}(x)} \leq Cs(1+\abs{x}^{N}).
    \end{align}
    Thus, the bound in \eqref{eq:poly-bdds-apdx} holds. 
\end{proof}

\bibliographystyle{alpha}
\bibliography{sample}

@article {brunick-mimicking13,
    AUTHOR = {Brunick, Gerard and Shreve, Steven},
     TITLE = {Mimicking an {I}t\^o{} process by a solution of a stochastic
              differential equation},
   JOURNAL = {Ann. Appl. Probab.},
  FJOURNAL = {The Annals of Applied Probability},
    VOLUME = {23},
      YEAR = {2013},
    NUMBER = {4},
     PAGES = {1584--1628},
      ISSN = {1050-5164,2168-8737},
   MRCLASS = {60G99 (60H10 91G20)},
  MRNUMBER = {3098443},
MRREVIEWER = {Hannah\ Geiss},
       DOI = {10.1214/12-aap881},
       URL = {https://doi-org.offcampus.lib.washington.edu/10.1214/12-aap881},
}

@article{WongYang,
title={Pseudo-{R}iemannian geometry encodes information geometry in optimal transport},
author={Wong, T.-K. L. and Yang, J.},
journal={Inf. Geom.},
volume={5},
number={1},
pages={131--159},
year={2022}
}

@article{KimMcCann,
    author ={Kim, Y.-H. and Mc{C}ann, R.},
    title = {Continuity,
curvature, and the general covariance of optimal transportation},
    journal = {J. Eur. Math. Soc.},
    year = {2007},
    volume={12}
}

@article {gyongy-mimicking-86,
    AUTHOR = {Gy{\"{o}}ngy, I.},
     TITLE = {Mimicking the one-dimensional marginal distributions of
              processes having an {I}t\^o{} differential},
   JOURNAL = {Probab. Theory Relat. Fields},
  FJOURNAL = {Probability Theory and Related Fields},
    VOLUME = {71},
      YEAR = {1986},
    NUMBER = {4},
     PAGES = {501--516},
      ISSN = {0178-8051,1432-2064},
   MRCLASS = {60H10},
  MRNUMBER = {833267},
MRREVIEWER = {R.\ Sh.\ Liptser},
       DOI = {10.1007/BF00699039},
       URL = {https://doi.org/10.1007/BF00699039},
}

@unpublished{CKP26,
    title={Mirror {L}angevin diffusions: Convergence rates and {M}arkov chain approximations}, 
    author={Benjamin Capdeville and Young-Heon Kim and Soumik Pal},
    year={2026},
    note={ar{X}iv preprint [math.PR]}
}

@article {AHMP25,
    AUTHOR = {Agarwal, Medha and Harchaoui, Zaid and Mulcahy, Garrett and
              Pal, Soumik},
     TITLE = {{L}angevin diffusion approximation to same marginal
              {S}chr\"odinger bridge},
   JOURNAL = {J. Funct. Anal.},
  FJOURNAL = {Journal of Functional Analysis},
    VOLUME = {291},
      YEAR = {2026},
    NUMBER = {1},
     PAGES = {Paper No. 111494, 27},
      ISSN = {0022-1236,1096-0783},
   MRCLASS = {49Q22 (60J60)},
  MRNUMBER = {5054456},
       DOI = {10.1016/j.jfa.2026.111494},
       URL = {https://doi-org.offcampus.lib.washington.edu/10.1016/j.jfa.2026.111494},
}

@article{ahn2021efficient,
  title={Efficient constrained sampling via the mirror-{L}angevin algorithm},
  author={Ahn, Kwangjun and Chewi, Sinho},
  journal={Advances in Neural Information Processing Systems},
  volume={34},
  pages={28405--28418},
  year={2021}
}

@article{DGW,
title={Transportation cost-information inequalities and applications to
random dynamical systems and diffusions},
author={Djellout, H. and Guillin, A. and Wu, H.},
journal={Ann. Probab.},
volume={32},
number={3B},
pages={2702--2732},
year={2004}
}

@book{revuz2004continuous,
  title={Continuous Martingales and Brownian Motion},
  author={Revuz, D. and Yor, M.},
  isbn={9783540643258},
  lccn={98053189},
  series={Grundlehren der mathematischen Wissenschaften},
  url={https://books.google.com/books?id=1ml95FLM5koC},
  year={2004},
  publisher={Springer Berlin Heidelberg}
}

@article {schroLeonard13,
    AUTHOR = {L{\'{e}}onard, Christian},
     TITLE = {A survey of the {S}chr\"{o}dinger problem and some of its
              connections with optimal transport},
   JOURNAL = {Discrete Contin. Dyn. Syst.},
  FJOURNAL = {Discrete and Continuous Dynamical Systems. Series A},
    VOLUME = {34},
      YEAR = {2014},
    NUMBER = {4},
     PAGES = {1533--1574},
      ISSN = {1078-0947},
   MRCLASS = {60J25 (46N10 60F10)},
  MRNUMBER = {3121631},
MRREVIEWER = {Nicolas Juillet},
       DOI = {10.3934/dcds.2014.34.1533},
       URL = {https://doi-org.offcampus.lib.washington.edu/10.3934/dcds.2014.34.1533},
}

@article {chiarini2022gradient,
    AUTHOR = {Chiarini, Alberto and Conforti, Giovanni and Greco, Giacomo
              and Tamanini, Luca},
     TITLE = {Gradient estimates for the {S}chr\"odinger potentials:
              convergence to the {B}renier map and quantitative stability},
   JOURNAL = {Comm. Partial Differential Equations},
  FJOURNAL = {Communications in Partial Differential Equations},
    VOLUME = {48},
      YEAR = {2023},
    NUMBER = {6},
     PAGES = {895--943},
      ISSN = {0360-5302,1532-4133},
   MRCLASS = {47D07 (49Q22 53C21 60E15)},
  MRNUMBER = {4643676},
MRREVIEWER = {Yuguo\ Lin},
       DOI = {10.1080/03605302.2023.2215527},
       URL = {https://doi.org/10.1080/03605302.2023.2215527},
}

@book{ambrosio2005gradient,
      AUTHOR = {Ambrosio, Luigi and Gigli, Nicola and Savar\'{e}, Giuseppe},
     TITLE = {Gradient flows in metric spaces and in the space of
              probability measures},
    SERIES = {Lectures in Mathematics ETH Z\"{u}rich},
   EDITION = {Second},
 PUBLISHER = {Birkh\"{a}user Verlag, Basel},
      YEAR = {2008},
     PAGES = {x+334},
      ISBN = {978-3-7643-8721-1},
   MRCLASS = {49-02 (28A33 35K55 35K90 49Q20 60B05)},
  MRNUMBER = {2401600},
MRREVIEWER = {Pietro\ Celada},
}

@article {conforti21deriv,
    AUTHOR = {Conforti, Giovanni and Tamanini, Luca},
     TITLE = {A formula for the time derivative of the entropic cost and
              applications},
   JOURNAL = {J. Funct. Anal.},
  FJOURNAL = {Journal of Functional Analysis},
    VOLUME = {280},
      YEAR = {2021},
    NUMBER = {11},
     PAGES = {Paper No. 108964, 48},
      ISSN = {0022-1236},
   MRCLASS = {49Q22 (28A33 49J45 49N80 60J60)},
  MRNUMBER = {4232667},
       DOI = {10.1016/j.jfa.2021.108964},
       URL = {https://doi.org/10.1016/j.jfa.2021.108964},
}

@article {csiszar75idiv,
    AUTHOR = {Csisz\'{a}r, I.},
     TITLE = {{$I$}-divergence geometry of probability distributions and
              minimization problems},
   JOURNAL = {Ann. Probability},
  FJOURNAL = {The Annals of Probability},
    VOLUME = {3},
      YEAR = {1975},
     PAGES = {146--158},
      ISSN = {0091-1798},
   MRCLASS = {62B10 (60E05)},
  MRNUMBER = {365798},
MRREVIEWER = {I. J. Good},
       DOI = {10.1214/aop/1176996454},
       URL = {https://doi-org.offcampus.lib.washington.edu/10.1214/aop/1176996454},
}

@article{pal2019difference,
      title={On the difference between entropic cost and the optimal transport cost}, 
      author={Soumik Pal},
      year={2024},
      journal={The Annals of Applied Probability},
      volume={34},
      number={1B},
      pages={1003--1028}
}

@article {nutz-weisel-22,
    AUTHOR = {Nutz, Marcel and Wiesel, Johannes},
     TITLE = {Entropic optimal transport: convergence of potentials},
   JOURNAL = {Probab. Theory Related Fields},
  FJOURNAL = {Probability Theory and Related Fields},
    VOLUME = {184},
      YEAR = {2022},
    NUMBER = {1-2},
     PAGES = {401--424},
      ISSN = {0178-8051},
   MRCLASS = {49Q22 (49J45)},
  MRNUMBER = {4498514},
MRREVIEWER = {Yuxin Ge},
       DOI = {10.1007/s00440-021-01096-8},
       URL = {https://doi.org/10.1007/s00440-021-01096-8},
}

@misc{pooladian2022entropic,
      title={Entropic estimation of optimal transport maps}, 
      author={Aram-Alexandre Pooladian and Jonathan Niles-Weed},
      year={2022},
      eprint={2109.12004},
      archivePrefix={ar{X}iv},
      primaryClass={math.ST}
}

@article {chewi2022entropic,
    AUTHOR = {Chewi, Sinho and Pooladian, Aram-Alexandre},
     TITLE = {An entropic generalization of {C}affarelli's contraction
              theorem via covariance inequalities},
   JOURNAL = {C. R. Math. Acad. Sci. Paris},
  FJOURNAL = {Comptes Rendus Math\'ematique. Acad\'emie des Sciences. Paris},
    VOLUME = {361},
      YEAR = {2023},
     PAGES = {1471--1482},
      ISSN = {1631-073X,1778-3569},
   MRCLASS = {60E15 (49N60 49Q22)},
  MRNUMBER = {4683324},
       DOI = {10.5802/crmath.486},
       URL = {https://doi.org/10.5802/crmath.486},
}

@article {fathigozlan20,
    AUTHOR = {Fathi, Max and Gozlan, Nathael and Prod'homme, Maxime},
     TITLE = {A proof of the {C}affarelli contraction theorem via entropic
              regularization},
   JOURNAL = {Calc. Var. Partial Differential Equations},
  FJOURNAL = {Calculus of Variations and Partial Differential Equations},
    VOLUME = {59},
      YEAR = {2020},
    NUMBER = {3},
     PAGES = {Paper No. 96, 18},
      ISSN = {0944-2669,1432-0835},
   MRCLASS = {60A10 (28A33 49J55 49Q22)},
  MRNUMBER = {4098037},
       DOI = {10.1007/s00526-020-01754-0},
       URL = {https://doi.org/10.1007/s00526-020-01754-0},
}

@book {bgl-markov,
    AUTHOR = {Bakry, Dominique and Gentil, Ivan and Ledoux, Michel},
     TITLE = {Analysis and geometry of {M}arkov diffusion operators},
    SERIES = {Grundlehren der mathematischen Wissenschaften [Fundamental
              Principles of Mathematical Sciences]},
    VOLUME = {348},
 PUBLISHER = {Springer, Cham},
      YEAR = {2014},
     PAGES = {xx+552},
      ISBN = {978-3-319-00226-2; 978-3-319-00227-9},
   MRCLASS = {60J25 (58J65 60J35 60J60)},
  MRNUMBER = {3155209},
MRREVIEWER = {Ming\ Liao},
       DOI = {10.1007/978-3-319-00227-9},
       URL = {https://doi.org/10.1007/978-3-319-00227-9},
}

@article {bobkov-fisher-22,
    AUTHOR = {Bobkov, Sergey G.},
     TITLE = {Upper bounds for {F}isher information},
   JOURNAL = {Electron. J. Probab.},
  FJOURNAL = {Electronic Journal of Probability},
    VOLUME = {27},
      YEAR = {2022},
     PAGES = {Paper No. 115, 44},
      ISSN = {1083-6489},
   MRCLASS = {60E15 (60E05 60G50)},
  MRNUMBER = {4477011},
MRREVIEWER = {Fraser\ Alexander\ Daly},
       DOI = {10.1214/22-ejp834},
       URL = {https://doi.org/10.1214/22-ejp834},
}

@article {leo-sb-to-kp12,
    AUTHOR = {L{\'{e}}onard, Christian},
     TITLE = {From the {S}chr\"{o}dinger problem to the
              {M}onge-{K}antorovich problem},
   JOURNAL = {J. Funct. Anal.},
  FJOURNAL = {Journal of Functional Analysis},
    VOLUME = {262},
      YEAR = {2012},
    NUMBER = {4},
     PAGES = {1879--1920},
      ISSN = {0022-1236,1096-0783},
   MRCLASS = {49Q20 (46N10)},
  MRNUMBER = {2873864},
MRREVIEWER = {Luca\ Granieri},
       DOI = {10.1016/j.jfa.2011.11.026},
       URL = {https://doi.org/10.1016/j.jfa.2011.11.026},
}

@article {vonrenesse-conf18,
    AUTHOR = {Conforti, Giovanni and Von Renesse, Max},
     TITLE = {Couplings, gradient estimates and logarithmic {S}obolev
              inequality for {L}angevin bridges},
   JOURNAL = {Probab. Theory Related Fields},
  FJOURNAL = {Probability Theory and Related Fields},
    VOLUME = {172},
      YEAR = {2018},
    NUMBER = {1-2},
     PAGES = {493--524},
      ISSN = {0178-8051,1432-2064},
   MRCLASS = {28D20 (47A63 47D07 47D08 60J60)},
  MRNUMBER = {3851837},
MRREVIEWER = {Hac\`ene\ Djellout},
       DOI = {10.1007/s00440-017-0814-9},
       URL = {https://doi.org/10.1007/s00440-017-0814-9},
}

@book {vershynin-hdp,
    AUTHOR = {Vershynin, Roman},
     TITLE = {High-dimensional probability},
    SERIES = {Cambridge Series in Statistical and Probabilistic Mathematics},
    VOLUME = {47},
      NOTE = {An introduction with applications in data science,
              With a foreword by Sara van de Geer},
 PUBLISHER = {Cambridge University Press, Cambridge},
      YEAR = {2018},
     PAGES = {xiv+284},
      ISBN = {978-1-108-41519-4},
   MRCLASS = {60-01 (60B05 60B20 60E15 60Fxx 62H25)},
  MRNUMBER = {3837109},
MRREVIEWER = {Sasha\ Sodin},
       DOI = {10.1017/9781108231596},
       URL = {https://doi.org/10.1017/9781108231596},
}

@article {bgn-eot-gld,
    AUTHOR = {Bernton, Espen and Ghosal, Promit and Nutz, Marcel},
     TITLE = {Entropic optimal transport: geometry and large deviations},
   JOURNAL = {Duke Math. J.},
  FJOURNAL = {Duke Mathematical Journal},
    VOLUME = {171},
      YEAR = {2022},
    NUMBER = {16},
     PAGES = {3363--3400},
      ISSN = {0012-7094,1547-7398},
   MRCLASS = {49Q22 (60F10)},
  MRNUMBER = {4505361},
MRREVIEWER = {Marc\ Sedjro},
       DOI = {10.1215/00127094-2022-0035},
       URL = {https://doi.org/10.1215/00127094-2022-0035},
}

@unpublished{leonard2011stochastic,
      title={Stochastic derivatives and generalized h-transforms of {M}arkov processes}, 
      author={Christian L{\'{e}}onard},
      year={2011},
      note={ar{X}iv preprint [math.PR]}
}

@inproceedings{gentil2017analogy,
  title={About the analogy between optimal transport and minimal entropy},
  author={Gentil, Ivan and L{\'e}onard, Christian and Ripani, Luigia},
  booktitle={Annales de la Facult{\'e} des sciences de Toulouse: Math{\'e}matiques},
  volume={26},
  number={3},
  pages={569--600},
  year={2017}
}

@article {sinkhorn-OG,
    AUTHOR = {Sinkhorn, Richard and Knopp, Paul},
     TITLE = {Concerning nonnegative matrices and doubly stochastic
              matrices},
   JOURNAL = {Pacific J. Math.},
  FJOURNAL = {Pacific Journal of Mathematics},
    VOLUME = {21},
      YEAR = {1967},
     PAGES = {343--348},
      ISSN = {0030-8730,1945-5844},
   MRCLASS = {15.65},
  MRNUMBER = {210731},
MRREVIEWER = {J.\ G.\ Mauldon},
       URL = {http://projecteuclid.org/euclid.pjm/1102992505},
}

@book {fpk-bogachev,
    AUTHOR = {Bogachev, Vladimir I. and Krylov, Nicolai V. and R\"{o}ckner,
              Michael and Shaposhnikov, Stanislav V.},
     TITLE = {Fokker-{P}lanck-{K}olmogorov equations},
    SERIES = {Mathematical Surveys and Monographs},
    VOLUME = {207},
 PUBLISHER = {American Mathematical Society, Providence, RI},
      YEAR = {2015},
     PAGES = {xii+479},
      ISBN = {978-1-4704-2558-6},
   MRCLASS = {35-02 (60J35 60J60)},
  MRNUMBER = {3443169},
MRREVIEWER = {Zhen-Qing\ Chen},
       DOI = {10.1090/surv/207},
       URL = {https://doi.org/10.1090/surv/207},
}

@article{cuturi2013sinkhorn,
  title={Sinkhorn distances: Lightspeed computation of optimal transport},
  author={Cuturi, Marco},
  journal={Advances in Neural Information Processing Systems},
  volume={26},
  year={2013}
}

@incollection {azencott84,
    AUTHOR = {Azencott, Robert},
     TITLE = {Densit\'e{} des diffusions en temps petit: d\'eveloppements
              asymptotiques. {I}},
 BOOKTITLE = {Seminar on probability, {XVIII}},
    SERIES = {Lecture Notes in Math.},
    VOLUME = {1059},
     PAGES = {402--498},
 PUBLISHER = {Springer, Berlin},
      YEAR = {1984},
      ISBN = {3-540-13332-1},
   MRCLASS = {60J60 (58G11 58G32)},
  MRNUMBER = {770974},
MRREVIEWER = {Kazuaki\ Taira},
       DOI = {10.1007/BFb0100057},
       URL = {https://doi.org/10.1007/BFb0100057},
}

@article {leo-reciprocal14,
    AUTHOR = {L\'eonard, Christian and R{\oe}lly, Sylvie and Zambrini,
              Jean-Claude},
     TITLE = {Reciprocal processes. {A} measure-theoretical point of view},
   JOURNAL = {Probab. Surv.},
  FJOURNAL = {Probability Surveys},
    VOLUME = {11},
      YEAR = {2014},
     PAGES = {237--269},
      ISSN = {1549-5787},
   MRCLASS = {60J25 (60A10 60G07 60G60)},
  MRNUMBER = {3269228},
MRREVIEWER = {Martynas\ Manstavi\v cius},
       DOI = {10.1214/13-PS220},
       URL = {https://doi.org/10.1214/13-PS220},
}

@article {conforti-weak-semi24,
    AUTHOR = {Conforti, Giovanni},
     TITLE = {Weak semiconvexity estimates for {S}chr\"odinger potentials
              and logarithmic {S}obolev inequality for {S}chr\"odinger
              bridges},
   JOURNAL = {Probab. Theory Related Fields},
  FJOURNAL = {Probability Theory and Related Fields},
    VOLUME = {189},
      YEAR = {2024},
    NUMBER = {3-4},
     PAGES = {1045--1071},
      ISSN = {0178-8051,1432-2064},
   MRCLASS = {49Q22 (35G50 39B62 49L12 60J60)},
  MRNUMBER = {4771110},
       DOI = {10.1007/s00440-024-01264-6},
       URL = {https://doi.org/10.1007/s00440-024-01264-6},
}

@article {THEbrenier,
    AUTHOR = {Brenier, Yann},
     TITLE = {Polar factorization and monotone rearrangement of
              vector-valued functions},
   JOURNAL = {Comm. Pure Appl. Math.},
  FJOURNAL = {Communications on Pure and Applied Mathematics},
    VOLUME = {44},
      YEAR = {1991},
    NUMBER = {4},
     PAGES = {375--417},
      ISSN = {0010-3640,1097-0312},
   MRCLASS = {46E40 (35Q99 46E99 49Q99)},
  MRNUMBER = {1100809},
MRREVIEWER = {Robert\ McOwen},
       DOI = {10.1002/cpa.3160440402},
       URL = {https://doi.org/10.1002/cpa.3160440402},
}

@article {divol2024tightstabilityboundsentropic,
    AUTHOR = {Divol, Vincent and Niles-Weed, Jonathan and Pooladian,
              Aram-Alexandre},
     TITLE = {Tight stability bounds for entropic {B}renier maps},
   JOURNAL = {Int. Math. Res. Not. IMRN},
  FJOURNAL = {International Mathematics Research Notices. IMRN},
      YEAR = {2025},
    NUMBER = {7},
     PAGES = {Paper No. rnaf078, 17},
      ISSN = {1073-7928,1687-0247},
   MRCLASS = {49Q22 (49K40)},
  MRNUMBER = {4888259},
MRREVIEWER = {Sinho\ Chewi},
       DOI = {10.1093/imrn/rnaf078},
       URL = {https://doi-org.offcampus.lib.washington.edu/10.1093/imrn/rnaf078},
}

@InProceedings{zhang-mld-20a,
  title = 	 {Wasserstein Control of {M}irror {L}angevin {M}onte {C}arlo},
  author =       {Zhang, Kelvin Shuangjian and Peyr\'e, Gabriel and Fadili, Jalal and Pereyra, Marcelo},
  booktitle = 	 {Proceedings of Thirty Third Conference on Learning Theory},
  pages = 	 {3814--3841},
  year = 	 {2020},
  editor = 	 {Abernethy, Jacob and Agarwal, Shivani},
  volume = 	 {125},
  series = 	 {Proceedings of Machine Learning Research},
  month = 	 {09--12 Jul},
  publisher =    {PMLR},
}

@article {varadhan-diff-ldp67,
    AUTHOR = {Varadhan, S. R. S.},
     TITLE = {Diffusion processes in a small time interval},
   JOURNAL = {Comm. Pure Appl. Math.},
  FJOURNAL = {Communications on Pure and Applied Mathematics},
    VOLUME = {20},
      YEAR = {1967},
     PAGES = {659--685},
      ISSN = {0010-3640,1097-0312},
   MRCLASS = {60.62 (35.00)},
  MRNUMBER = {217881},
MRREVIEWER = {H.\ P.\ McKean, Jr.},
       DOI = {10.1002/cpa.3160200404},
       URL = {https://doi.org/10.1002/cpa.3160200404},
}

@article{benamou2000computational,
  title={A computational fluid mechanics solution to the {M}onge-{K}antorovich mass transfer problem},
  author={Benamou, Jean-David and Brenier, Yann},
  journal={Numerische Mathematik},
  volume={84},
  number={3},
  pages={375--393},
  year={2000},
  publisher={Springer-Verlag Berlin/Heidelberg}
}

@book{shima2007geometry,
  title={The Geometry Of Hessian Structures},
  author={Shima, H.},
  isbn={9789814477024},
  url={https://books.google.com/books?id=wLPICgAAQBAJ},
  year={2007},
  publisher={World Scientific Publishing Company}
}

@book {riemannian-manifolds-lee,
    AUTHOR = {Lee, John M.},
     TITLE = {Introduction to {R}iemannian manifolds},
    SERIES = {Graduate Texts in Mathematics},
    VOLUME = {176},
   EDITION = {Second},
 PUBLISHER = {Springer, Cham},
      YEAR = {2018},
     PAGES = {xiii+437},
      ISBN = {978-3-319-91754-2; 978-3-319-91755-9},
   MRCLASS = {53-01 (53B20 53B30 53C20 53C21)},
  MRNUMBER = {3887684},
MRREVIEWER = {Robert\ J.\ Low},
}

@unpublished{gozlan2025globalregularityestimatesoptimal,
      title={Global Regularity Estimates for Optimal Transport via Entropic Regularisation}, 
      author={Nathael Gozlan and Maxime Sylvestre},
      year={2025},
      note={ar{X}iv preprint [math.FA]} 
}

@article {kolesnikov-hessian-metric,
    AUTHOR = {Kolesnikov, Alexander V.},
     TITLE = {Hessian metrics, {$CD(K,N)$}-spaces, and optimal
              transportation of log-concave measures},
   JOURNAL = {Discrete Contin. Dyn. Syst.},
  FJOURNAL = {Discrete and Continuous Dynamical Systems. Series A},
    VOLUME = {34},
      YEAR = {2014},
    NUMBER = {4},
     PAGES = {1511--1532},
      ISSN = {1078-0947,1553-5231},
   MRCLASS = {35J60 (35J96 46E35 58J60)},
  MRNUMBER = {3121630},
MRREVIEWER = {Ahmed\ Mohammed},
       DOI = {10.3934/dcds.2014.34.1511},
       URL = {https://doi.org/10.3934/dcds.2014.34.1511},
}

@article {benarous-expan-88,
    AUTHOR = {Ben Arous, G.},
     TITLE = {D\'eveloppement asymptotique du noyau de la chaleur
              hypoelliptique hors du cut-locus},
   JOURNAL = {Ann. Sci. \'Ecole Norm. Sup. (4)},
  FJOURNAL = {Annales Scientifiques de l'\'Ecole Normale Sup\'erieure.
              Quatri\`eme S\'erie},
    VOLUME = {21},
      YEAR = {1988},
    NUMBER = {3},
     PAGES = {307--331},
      ISSN = {0012-9593},
   MRCLASS = {60H99 (35H05 58G11 58G32 62F10)},
  MRNUMBER = {974408},
MRREVIEWER = {R\'emi\ L\'eandre},
       URL = {http://www.numdam.org/item?id=ASENS_1988_4_21_3_307_0},
}

@article {gigli-ot-manifold,
    AUTHOR = {Gigli, Nicola},
     TITLE = {Second order analysis on {$(\mathscr{P}_2(M),W_2)$}},
   JOURNAL = {Mem. Amer. Math. Soc.},
  FJOURNAL = {Memoirs of the American Mathematical Society},
    VOLUME = {216},
      YEAR = {2012},
    NUMBER = {1018},
     PAGES = {xii+154},
      ISSN = {0065-9266,1947-6221},
      ISBN = {978-0-8218-5309-2},
   MRCLASS = {58B20 (49Q20)},
  MRNUMBER = {2920736},
MRREVIEWER = {Beno\^it\ Kloeckner},
       DOI = {10.1090/S0065-9266-2011-00619-2},
       URL = {https://doi.org/10.1090/S0065-9266-2011-00619-2},
}

@article {mccann-ot-maifold01,
    AUTHOR = {McCann, Robert J.},
     TITLE = {Polar factorization of maps on {R}iemannian manifolds},
   JOURNAL = {Geom. Funct. Anal.},
  FJOURNAL = {Geometric and Functional Analysis},
    VOLUME = {11},
      YEAR = {2001},
    NUMBER = {3},
     PAGES = {589--608},
      ISSN = {1016-443X,1420-8970},
   MRCLASS = {58E15 (46N10 49Q20 53C20)},
  MRNUMBER = {1844080},
MRREVIEWER = {Lucio\ Renato\ Berrone},
       DOI = {10.1007/PL00001679},
       URL = {https://doi.org/10.1007/PL00001679},
}

@article{shi2023diffusion,
  title={Diffusion {S}chr{\"o}dinger bridge matching},
  author={Shi, Yuyang and De Bortoli, Valentin and Campbell, Andrew and Doucet, Arnaud},
  journal={Advances in Neural Information Processing Systems},
  volume={36},
  pages={62183--62223},
  year={2023}
}

@book {heat-kernel-manifold-grig09,
    AUTHOR = {Grigor'yan, Alexander},
     TITLE = {Heat kernel and analysis on manifolds},
    SERIES = {AMS/IP Studies in Advanced Mathematics},
    VOLUME = {47},
 PUBLISHER = {American Mathematical Society, Providence, RI; International
              Press, Boston, MA},
      YEAR = {2009},
     PAGES = {xviii+482},
      ISBN = {978-0-8218-4935-4},
   MRCLASS = {58J35 (31B05 31C12 35K08 35P15 35R01 47D07 58J50)},
  MRNUMBER = {2569498},
MRREVIEWER = {Thierry\ Coulhon},
       DOI = {10.1090/amsip/047},
       URL = {https://doi.org/10.1090/amsip/047},
}

@unpublished{mordant24selfEOT,
    title={The entropic optimal (self-)transport problem: Limit distributions for decreasing regularization with application to score function estimation}, 
      author={Gilles Mordant},
      year={2024},
      note={ar{X}iv preprint [math.ST], 2412.12007v2}
}

@article {conforti-recip-characteristics18,
    AUTHOR = {Conforti, Giovanni},
     TITLE = {Fluctuations of bridges, reciprocal characteristics and
              concentration of measure},
   JOURNAL = {Ann. Inst. Henri Poincar\'e{} Probab. Stat.},
  FJOURNAL = {Annales de l'Institut Henri Poincar\'e{} Probabilit\'es et
              Statistiques},
    VOLUME = {54},
      YEAR = {2018},
    NUMBER = {3},
     PAGES = {1432--1463},
      ISSN = {0246-0203,1778-7017},
   MRCLASS = {60J27 (60J75)},
  MRNUMBER = {3825887},
       DOI = {10.1214/17-AIHP844},
       URL = {https://doi.org/10.1214/17-AIHP844},
}

@article {mccann-interp97,
    AUTHOR = {McCann, Robert J.},
     TITLE = {A convexity principle for interacting gases},
   JOURNAL = {Adv. Math.},
  FJOURNAL = {Advances in Mathematics},
    VOLUME = {128},
      YEAR = {1997},
    NUMBER = {1},
     PAGES = {153--179},
      ISSN = {0001-8708,1090-2082},
   MRCLASS = {82B05 (26B25 90C08)},
  MRNUMBER = {1451422},
MRREVIEWER = {Carlos\ Matr\'an},
       DOI = {10.1006/aima.1997.1634},
       URL = {https://doi-org.offcampus.lib.washington.edu/10.1006/aima.1997.1634},
}

@unpublished{MP25,
    title={Diffusion Approximations to {S}chr\"odinger Bridges on Manifolds},
    author={Mulcahy, G. and Pal, S.},
    year={2025},
    note={ar{X}iv preprint [math.PR]}
}

@unpublished{nutz2026entropicregularizationmongesproblem,
      title={Entropic regularization of {M}onge's problem}, 
      author={Marcel Nutz and Chenyang Zhong},
      year={2026},
      note={ar{X}iv preprint [math.OC]}, 
}

@unpublisched{aryan2025entropicselectionprinciplemonges,
      title={Entropic Selection Principle for {M}onge's Optimal Transport}, 
      author={Shrey Aryan and Promit Ghosal},
      year={2025},
      note={ar{X}iv preprint [math.PR]}, 
}

@incollection {leo-gis12,
    AUTHOR = {L{\'e}onard, Christian},
     TITLE = {Girsanov theory under a finite entropy condition},
 BOOKTITLE = {S\'eminaire de {P}robabilit\'es {XLIV}},
    SERIES = {Lecture Notes in Math.},
    VOLUME = {2046},
     PAGES = {429--465},
 PUBLISHER = {Springer, Heidelberg},
      YEAR = {2012},
      ISBN = {978-3-642-27460-2},
   MRCLASS = {60G07 (60G44 60J60 60J75)},
  MRNUMBER = {2953359},
MRREVIEWER = {Vilmos\ Prokaj},
       DOI = {10.1007/978-3-642-27461-9\_20},
       URL = {https://doi.org/10.1007/978-3-642-27461-9_20},
}

@article {THEcaffarelli,
    AUTHOR = {Caffarelli, Luis A.},
     TITLE = {The regularity of mappings with a convex potential},
   JOURNAL = {J. Amer. Math. Soc.},
  FJOURNAL = {Journal of the American Mathematical Society},
    VOLUME = {5},
      YEAR = {1992},
    NUMBER = {1},
     PAGES = {99--104},
      ISSN = {0894-0347,1088-6834},
   MRCLASS = {35B65 (35A30 35J60)},
  MRNUMBER = {1124980},
       DOI = {10.2307/2152752},
       URL = {https://doi.org/10.2307/2152752},}

@article{peluchetti-23,
  author  = {Stefano Peluchetti},
  title   = {Diffusion Bridge Mixture Transports, {S}chr{\"{o}}dinger Bridge Problems and Generative Modeling},
  journal = {Journal of Machine Learning Research},
  year    = {2023},
  volume  = {24},
  number  = {374},
  pages   = {1--51},
  url     = {http://jmlr.org/papers/v24/23-0527.html}
}

@inproceedings{SCD-dsb-25,
  title={Exponential Convergence Guarantees for {I}terative {M}arkovian {F}itting},
  author={Silveri, Marta Gentiloni and Conforti, Giovanni and Durmus, Alain Oliviero},
  booktitle={The Thirty-ninth Annual Conference on Neural Information Processing Systems},
  year={2025},
}

@article{gentiloni2024theoretical,
  title={Theoretical guarantees in {KL} for diffusion flow matching},
  author={Gentiloni Silveri, Marta and Durmus, Alain and Conforti, Giovanni},
  journal={Advances in Neural Information Processing Systems},
  volume={37},
  pages={138432--138473},
  year={2024}
}

@incollection {klartag-log-concave14,
    AUTHOR = {Klartag, Bo'az},
     TITLE = {Logarithmically-concave moment measures {I}},
 BOOKTITLE = {Geometric aspects of functional analysis},
    SERIES = {Lecture Notes in Math.},
    VOLUME = {2116},
     PAGES = {231--260},
 PUBLISHER = {Springer, Cham},
      YEAR = {2014},
      ISBN = {978-3-319-09476-2; 978-3-319-09477-9},
   MRCLASS = {58J65 (53C55 58J50 60J60)},
  MRNUMBER = {3364690},
MRREVIEWER = {Ming\ Liao},
       DOI = {10.1007/978-3-319-09477-9\_16},
       URL = {https://doi.org/10.1007/978-3-319-09477-9_16},
}

@article {mikami04,
    AUTHOR = {Mikami, Toshio},
     TITLE = {{M}onge's problem with a quadratic cost by the zero-noise limit
              of {$h$}-path processes},
   JOURNAL = {Probab. Theory Related Fields},
  FJOURNAL = {Probability Theory and Related Fields},
    VOLUME = {129},
      YEAR = {2004},
    NUMBER = {2},
     PAGES = {245--260},
      ISSN = {0178-8051,1432-2064},
   MRCLASS = {60J25},
  MRNUMBER = {2063377},
MRREVIEWER = {Vadim\ A.\ Ka\u imanovich},
       DOI = {10.1007/s00440-004-0340-4},
       URL = {https://doi.org/10.1007/s00440-004-0340-4},
}

@article {carlierEOTgeneralcost23,
    AUTHOR = {Carlier, Guillaume and Pegon, Paul and Tamanini, Luca},
     TITLE = {Convergence rate of general entropic optimal transport costs},
   JOURNAL = {Calc. Var. Partial Differential Equations},
  FJOURNAL = {Calculus of Variations and Partial Differential Equations},
    VOLUME = {62},
      YEAR = {2023},
    NUMBER = {4},
     PAGES = {Paper No. 116, 28},
      ISSN = {0944-2669,1432-0835},
   MRCLASS = {49Q22 (49K40 49N15 94A17)},
  MRNUMBER = {4565039},
MRREVIEWER = {Hao\ Wu},
       DOI = {10.1007/s00526-023-02455-0},
       URL = {https://doi.org/10.1007/s00526-023-02455-0},
}

@article {malamut-syl25,
    AUTHOR = {Malamut, Hugo and Sylvestre, Maxime},
     TITLE = {Convergence rates of the regularized optimal transport:
              disentangling suboptimality and entropy},
   JOURNAL = {SIAM J. Math. Anal.},
  FJOURNAL = {SIAM Journal on Mathematical Analysis},
    VOLUME = {57},
      YEAR = {2025},
    NUMBER = {3},
     PAGES = {2533--2558},
      ISSN = {0036-1410,1095-7154},
   MRCLASS = {49Q22 (49K40 94A17)},
  MRNUMBER = {4907179},
MRREVIEWER = {Lukas\ Koch},
       DOI = {10.1137/23M1591554},
       URL = {https://doi.org/10.1137/23M1591554},
}

@article {nutzweisel-stab23,
    AUTHOR = {Nutz, Marcel and Wiesel, Johannes},
     TITLE = {Stability of {S}chr\"odinger potentials and convergence of
              {S}inkhorn's algorithm},
   JOURNAL = {Ann. Probab.},
  FJOURNAL = {The Annals of Probability},
    VOLUME = {51},
      YEAR = {2023},
    NUMBER = {2},
     PAGES = {699--722},
      ISSN = {0091-1798,2168-894X},
   MRCLASS = {60F15 (49Q22 60F05 90C05)},
  MRNUMBER = {4546630},
MRREVIEWER = {Oliver\ Johnson},
       DOI = {10.1214/22-aop1611},
       URL = {https://doi.org/10.1214/22-aop1611},
}

@book {statOTbook,
    AUTHOR = {Chewi, Sinho and Niles-Weed, Jonathan and Rigollet, Philippe},
     TITLE = {Statistical optimal transport},
    SERIES = {Lecture Notes in Mathematics},
    VOLUME = {2364},
      NOTE = {\'Ecole d'\'Et\'e{} de Probabilit\'es de Saint-Flour
              XLIX---2019,
              \'Ecole d'\'Et\'e{} de Probabilit\'es de Saint-Flour.
              [Saint-Flour Probability Summer School]},
 PUBLISHER = {Springer, Cham},
      YEAR = {[2025] \copyright 2025},
     PAGES = {xiv+258},
      ISBN = {978-3-031-85159-9; 978-3-031-85160-5},
   MRCLASS = {49-01 (49Q22 60D05 62Gxx)},
  MRNUMBER = {4901218},
       DOI = {10.1007/978-3-031-85160-5},
       URL = {https://doi.org/10.1007/978-3-031-85160-5},
}

@article {follmergantert97,
    AUTHOR = {F\"ollmer, Hans and Gantert, Nina},
     TITLE = {Entropy minimization and {S}chr\"odinger processes in infinite
              dimensions},
   JOURNAL = {Ann. Probab.},
  FJOURNAL = {The Annals of Probability},
    VOLUME = {25},
      YEAR = {1997},
    NUMBER = {2},
     PAGES = {901--926},
      ISSN = {0091-1798,2168-894X},
   MRCLASS = {60J60 (60F10 60J45)},
  MRNUMBER = {1434130},
MRREVIEWER = {Luis\ G.\ Gorostiza},
       DOI = {10.1214/aop/1024404423},
       URL = {https://doi.org/10.1214/aop/1024404423},
}

@article {jamison-rp-74,
    AUTHOR = {Jamison, Benton},
     TITLE = {Reciprocal processes},
   JOURNAL = {Z. Wahrscheinlichkeitstheorie und Verw. Gebiete},
  FJOURNAL = {Zeitschrift f\"ur Wahrscheinlichkeitstheorie und Verwandte
              Gebiete},
    VOLUME = {30},
      YEAR = {1974},
     PAGES = {65--86},
   MRCLASS = {60J35},
  MRNUMBER = {359016},
MRREVIEWER = {Mamoru\ Kanda},
       DOI = {10.1007/BF00532864},
       URL = {https://doi.org/10.1007/BF00532864},
}

@article {legervialard23,
    AUTHOR = {L{\'{e}}ger, Flavien and Vialard, Fran{\c{c}}ois-Xavier},
     TITLE = {A geometric {L}aplace method},
   JOURNAL = {Pure Appl. Anal.},
  FJOURNAL = {Pure and Applied Analysis},
    VOLUME = {5},
      YEAR = {2023},
    NUMBER = {4},
     PAGES = {1041--1080},
      ISSN = {2578-5893,2578-5885},
   MRCLASS = {53C15 (49Q22 53B12)},
  MRNUMBER = {4680531},
MRREVIEWER = {Gabriel\ Khan},
       DOI = {10.2140/paa.2023.5.1041},
       URL = {https://doi.org/10.2140/paa.2023.5.1041},
}

@article{GalichonSalanie09,
  author  = {Galichon, Alfred and Salani{\'e}, Bernard},
  title   = {Matching with Trade-Offs: Revealed Preferences Over Competing Characteristics},
  journal = {SSRN Electronic Journal},
  year    = {2009},
  doi     = {10.2139/ssrn.1487307}
}

@article {trevisan16,
    AUTHOR = {Trevisan, Dario},
     TITLE = {Well-posedness of multidimensional diffusion processes with
              weakly differentiable coefficients},
   JOURNAL = {Electron. J. Probab.},
  FJOURNAL = {Electronic Journal of Probability},
    VOLUME = {21},
      YEAR = {2016},
     PAGES = {Paper No. 22, 41},
      ISSN = {1083-6489},
   MRCLASS = {60J60 (35B30 35Q84 35R60)},
  MRNUMBER = {3485364},
MRREVIEWER = {Gabriela\ Marinoschi},
       DOI = {10.1214/16-EJP4453},
       URL = {https://doi.org/10.1214/16-EJP4453},
}

@article {kadota70,
    AUTHOR = {Kadota, T. T. and Shepp, L. A.},
     TITLE = {Conditions for absolute continuity between a certain pair of
              probability measures},
   JOURNAL = {Z. Wahrscheinlichkeitstheorie und Verw. Gebiete},
  FJOURNAL = {Zeitschrift f\"ur Wahrscheinlichkeitstheorie und Verwandte
              Gebiete},
    VOLUME = {16},
      YEAR = {1970},
     PAGES = {250--260},
   MRCLASS = {60.40},
  MRNUMBER = {278344},
MRREVIEWER = {T.\ Pitcher},
       DOI = {10.1007/BF00534599},
       URL = {https://doi.org/10.1007/BF00534599},
}

\end{document}